\newtheorem{thm}{Theorem}[section]
\newtheorem{cor}[thm]{Corollary}
\newtheorem{prop}[thm]{Proposition}
\newtheorem{lem}[thm]{Lemma}
\newtheorem{conven}[thm]{Convention}
\theoremstyle{definition}
\newtheorem{defn}[thm]{Definition}
\newtheorem{rmk}[thm]{Remark}
\newtheorem*{ack}{Acknowledgments}
\newtheorem*{conv}{Convention}
\numberwithin{equation}{section}
\newcommand{\T}{\mathbb T}
\newcommand{\U}{\mathcal U}
\newcommand{\ot}{\otimes}
\newcommand{\gl}{\mathfrak{gl}}
\newcommand{\h}{\mathfrak{h}}
\newcommand{\hloc}{\mathfrak{h}^{(\Sigma_\bullet,\U)}}
\newcommand{\sgn}{{\text{sgn}}}
\newcommand{\R}{\mathbb R}
\newcommand{\C}{\mathbb C}
\newcommand{\Z}{\mathbb {Z}}
\newcommand{\N}{\mathbb {N}}
\newcommand{\End}{\operatorname{End}}
\newcommand{\Sets}{\mathcal Sets}
\newcommand{\NN}{\mathcal N}
\newcommand{\Om}{\Omega}
\newcommand{\pri}[1]{{^{(}}{}'_{#1}{^{)}}}
\newcommand{\Si}{\Sigma}
\newcommand{\ou}[2]{{\scriptsize {\begin{matrix}#1\\ #2\end{matrix}}}}
\newcommand{\ehor}{{\overleftrightarrow{e}}}
\newcommand{\ever}{{\updownarrow e}}
\newcommand{\Oma}{{\mathfrak A}}
\newcommand{\OmA}{{\mathfrak A}}
\newcommand{\Ch}{It}
\newcommand{\ie}{{\emph{i.e.}}}
\newcommand{\cf}{{\emph{c.f.}}}
\newcommand{\inc}[1]{W_#1}
\newcommand{\vf}{{\mathbf {t}}}
\newcommand{\Vf}{{\mathbf {t}}}
\newcommand{\wf}{{\mathbf {u}}}
\title{Equivariant Holonomy for Bundles and Abelian Gerbes}
\author[T.~Tradler]{Thomas~Tradler}
  \address{Thomas Tradler,
  Department of Mathematics, College of Technology, City University of New York, 300 Jay Street, Brooklyn, NY 11201, USA, and
  Max-Planck-Institut f\"ur Mathematik, Vivatsgasse 7, 53111 Bonn, Germany}
  \email{ttradler@citytech.cuny.edu}
\author[S.~Wilson]{Scott O. Wilson}
  \address{Scott O. Wilson, Department of Mathematics, Queens College, City University of New York, 65-30 Kissena Blvd., Flushing, NY 11367}
  \email{scott.wilson@qc.cuny.edu}
\author[M.~Zeinalian]{Mahmoud~Zeinalian}
  \address{Mahmoud Zeinalian, Department of Mathematics, C.W. Post Campus of Long Island University, 720 Northern Boulevard, Brookville, NY 11548, USA} 
  \email{mzeinalian@liu.edu}
\keywords{Equivariant Chern character, Abelian gerbe, Holonomy, Higher Hochschild complex}
\begin{document}

\begin{abstract} This paper generalizes Bismut's equivariant Chern character to the setting of abelian gerbes. In particular, associated to an abelian gerbe with connection, an equivariantly closed differential form  is constructed on the space of maps of a torus into the manifold. These constructions are made explicit using a new local version of the higher Hochschild complex, resulting in differential forms given by iterated integrals.  Connections to two dimensional topological field theories are indicated. Similarly, this local higher Hochschild complex is used to calculate the 2-holonomy of an abelian gerbe along any closed oriented surface, as well as the derivative of 2-holonomy, which in the case of a torus fits into a sequence of higher 
holonomies and their differentials.
\end{abstract}

\maketitle

\allowdisplaybreaks

\setcounter{tocdepth}{2}
\tableofcontents

\section{Introduction}

In \cite{B}, Bismut introduces the \emph{equivariant Chern character} of vector bundle with connection over a manifold, $E\to M$. This is done via analytic means, which results in a periodically closed form on the free loop space of the base of the bundle. The idea for constructing such a class was to provide a twisted counterpart for the view, suggested by Witten and carried out by Atiyah \cite{A}, that the index of the Dirac operator, given by a path integral over the loop space, can be calculated by stationary phase approximation of an invariant symplectic form on the loop space of the manifold. Bismut's equivariant Chern character is the contribution to the integrand of this path integral over the loop space, necessary to take into account the effects of the connection on the axillary bundle used for twisting the Dirac operator. Thus, localization techniques, that gave rise to $\int_M \hat{A}(TM)$ in the untwisted case, would now yield $\int_M \hat{A}(TM)\wedge Ch(E)$, as expected by the classical index theorem of the twisted Dirac operator.

More recently in \cite{H}, the Bismut Chern character has appeared in the passage from a $1|1$ supersymmetric (SUSY) field theory on $M$, determined by a connection on a vector bundle on $M$,  to a $0|1$ supersymmetric field theory on $LM$. This work relies on constructing a SUSY field theory out of a vector bundle and a superconnection as described in \cite{D}. Naively, a bundle and a connection can be thought of as a $1$-dimensional field theory in which to zero dimensional objects, points, one assigns a vector space, the fibre of the bundle above the point, and to one dimensional objects, paths in $M$, one assigns a map between the vector spaces assigned to the end points. In \cite{D}, this scheme is generalized to $0|1$ dimensional objects, super points in $M$, and $1|1$ dimensional objects, or super paths in $M$. Furthermore, by thinking of the space of maps $\mathbb R^{1|1} \to M$ as the space of maps $\mathbb{R}^{0|1} \to \{  \mathbb R^{1|0} \to M\}$, one can get from a $1|1$ theory on $M$ to a $0|1$ theory on the space of paths in $M$ and similarly $LM$, as described in \cite{H}.

Further work is on the way, as part of the general Stolz and Teichner program (\cite{ST}) of understanding the concordance classes of supersymmetric field theories, to construct $2|1$ supersymmetric field theories associated to gerbes, in much the same way $1|1$ theories were constructed out of a bundle endowed with a superconnection, see \cite{D}. One can then interpret such $2|1$ theory on $M$ as a $0|1$ theory on $M^\T$, the space of maps from the torus $\T$ to $M$, using the adjunction $\{\mathbb R^{2|1} \to M\} \simeq \{\mathbb{R}^{0|1} \to \{  \mathbb R^{2|0} \to M\}\}$. We learned this picture from P. Teichner and S. Stolz. 
 
Inspired by such connections to supersymmetric and non-supersymmetric field theories, we address the naturally arising question of the extension of the holonomy of an abelian gerbe to a torus equivariant class on the mapping space of the torus $\T$ into $M$.

In \cite{GJP} the authors reinterpret Bismut's construction using Hochschild complex of matrix valued forms, given an imbedding of the bundle and connection into a trivial bundle. This work shed light on the Bismut class and gave a specific Hochschild cocycle manufactured from the connection and its curvature.  This class produced was later shown in \cite{Z} to be independent of the imbedding and connection.

In this paper, as a warm up, we review and provide a conceptual and more fundamental, pre-trace, interpretation of this construction in terms of holonomy, its covariant derivative, and other naturally constructed higher degree forms and their covariant derivatives. More explicitly, we think of holonomy, $hol\in \Omega^0(LM,\mathcal E)$, as a section of the pullback of the endomorphism bundle on the free loop space via the map that sends a loop to it's base point,
\[
\xymatrix{
 \mathcal{E} \ar[r] \ar[d] & End(E) \ar[d] \\
 LM \ar[r]^-{ev_0} & M }
\]
This bundle on the loop space has an induced connection with respect to which we can take the covariant derivate of the holonomy as a section. The result is a $1$-form on the free loop space with values in this pullback bundle. We then show that there is a naturally defined $2$-form with values in this bundle whose contraction with the natural velocity vector field on the loop space is the covariant derivative of the holonomy. One then asks what the covariant derivative of this naturally defined $2$-form is, and the answer turns out to be a $3$-form which itself is given by the contraction of a naturally defined $4$-from on the loop space. This process continues ad infinitum to produced an infinite sequence of naturally defined even degree forms on the free loop space, starting with the holonomy  $0$-form,
\setcounter{section}{2}  \setcounter{thm}{17}
\begin{thm}
For all $k \geq 0$ we have forms $hol_{2k}\in \Omega^{2k}(LM,\mathcal{E})$, where $hol_0=hol$ is the holonomy, such that
\[
\nabla^* hol_{2k} = - \iota_{\vf} hol_{2k+2} \in \Omega^{2k+1}(LM , \mathcal{E}).
\]
where $\iota_{\vf}$ is contraction by the natural vector field on $LM$ given by the circle action.
\end{thm}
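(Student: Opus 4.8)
The plan is to define the whole family $hol_{2k}$ at once by an explicit iterated-integral formula built from parallel transport and the curvature $R=\nabla^2\in\Omega^2(M,\End(E))$, and then to verify the recursion by differentiating under the integral sign. For $\gamma\in LM$ let $\Phi^b_a(\gamma)\colon E_{\gamma(a)}\to E_{\gamma(b)}$ denote parallel transport along $\gamma$, so that $hol=\Phi^1_0$ under $E_{\gamma(1)}=E_{\gamma(0)}$, and write $R_s:=\mathrm{ev}_s^*R$ for the pulled-back curvature, an $\mathrm{ev}_s^*\End(E)$-valued $2$-form on $LM$. Set
\[
hol_{2k}\;:=\;(-1)^k\int_{\Delta_k}\Phi^1_{s_k}\,R_{s_k}\,\Phi^{s_k}_{s_{k-1}}\,R_{s_{k-1}}\cdots R_{s_1}\,\Phi^{s_1}_0\;ds_1\cdots ds_k\;\in\;\Omega^{2k}(LM,\mathcal E),
\]
where $\Delta_k=\{0\le s_1\le\cdots\le s_k\le 1\}$ and the displayed composition is valued in $\mathrm{Hom}(E_{\gamma(0)},E_{\gamma(1)})=\End(E_{\gamma(0)})=\mathcal E_\gamma$; for $k=0$ this is $hol_0=\Phi^1_0=hol$. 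Since $\Delta_k$ does not depend on $\gamma$, one may differentiate under the integral sign, so $\nabla^*$ commutes with $\int_{\Delta_k}$ with no simplex-boundary term, and it suffices to compute $\nabla^*$ of the integrand.

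Two geometric inputs are needed. First, the Bianchi identity in the form $d^{\mathrm{ev}_s^*\nabla}(\mathrm{ev}_s^*R)=\mathrm{ev}_s^*(d^\nabla R)=0$, which says that the covariant exterior derivative kills each factor $R_{s_i}$. Second, the first variation of parallel transport, measured covariantly at both endpoints inside $\mathrm{Hom}(\mathrm{ev}_a^*E,\mathrm{ev}_b^*E)$:
\[
\nabla\Phi^b_a\;=\;\int_a^b\Phi^b_u\,\bigl(\iota_{\vf}R_u\bigr)\,\Phi^u_a\;du\ \in\ \Omega^1\bigl(LM,\mathrm{Hom}(\mathrm{ev}_a^*E,\mathrm{ev}_b^*E)\bigr),
\]
where $(\iota_{\vf}R_u)(X)=R_u(\vf,X)=R_{\gamma(u)}(\dot\gamma(u),X(u))$. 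Applying the Leibniz rule for the relevant covariant exterior derivatives to the integrand of $hol_{2k}$ — using that composition of the homomorphism bundles is parallel — every term in which the derivative lands on some $R_{s_i}$ vanishes by Bianchi, leaving only the $k+1$ terms in which it lands on one of the parallel transports $\Phi^{s_{i+1}}_{s_i}$, $i=0,\dots,k$ (set $s_0=0$, $s_{k+1}=1$); the Koszul sign in each is $+1$, since to the left of $\Phi^{s_{i+1}}_{s_i}$ sits an even-degree product of $R$'s. By the variation formula the $i$-th surviving term acquires a new integration variable $u\in[s_i,s_{i+1}]$ and a new contracted curvature $\iota_{\vf}R_u$ inserted between $\Phi^u_{s_i}$ and $\Phi^{s_{i+1}}_u$.

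Reindexing $(s_1,\dots,s_i,u,s_{i+1},\dots,s_k)\mapsto(t_1,\dots,t_{k+1})$ identifies the $i$-th term with an integral over $\Delta_{k+1}$ in which the lone contracted curvature $\iota_{\vf}R_{t_{i+1}}$ occupies position $i+1$; summing over $i=0,\dots,k$ produces exactly $\sum_{j=1}^{k+1}$ of the integrand of $hol_{2k+2}$ with its $j$-th curvature contracted by $\vf$. On the other hand, $\iota_{\vf}hol_{2k+2}$ is computed by applying the derivation $\iota_{\vf}$ to that same integrand: the $\Phi$'s, being $0$-forms, are passed through, the Koszul signs are again all trivial, and one obtains precisely the same sum with overall constant $(-1)^{k+1}$. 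Comparison with the constant $(-1)^k$ in $\nabla^*hol_{2k}$ then gives $\nabla^*hol_{2k}=-\iota_{\vf}hol_{2k+2}$; the case $k=0$ is just $\nabla^*hol=-\iota_{\vf}hol_2$.

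I expect the only genuinely non-formal step to be the second input: one must realize parallel transport as a smooth section of the appropriate homomorphism bundle over $LM$, equip each $\mathrm{ev}_a^*E$ with its induced connection, and prove the covariant first-variation formula in this framework — and, in particular, observe that because parallel transport identifies $E_{\gamma(0)}$ with $E_{\gamma(1)}$, the variation of $hol=\Phi^1_0$ measured with $\mathrm{ev}_0^*\nabla$ carries no residual endpoint term. Granting that, the remainder is the bookkeeping indicated above: differentiating under the iterated integral (no simplex-boundary terms, precisely because $\Delta_k$ is fixed), using Bianchi to discard the curvature-hitting terms, and matching the reindexed sum and the Koszul signs against the expansion of $\iota_{\vf}hol_{2k+2}$.
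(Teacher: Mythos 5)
Your proposal is correct in outline but takes a genuinely different route from the paper's. You define $hol_{2k}$ in ``condensed'' form, with parallel transports $\Phi^{s_{i+1}}_{s_i}$ inserted between the curvature factors, and verify the recursion by differentiating under the integral sign using the covariant Leibniz rule, the Bianchi identity $d^\nabla R=0$, and the first variation of parallel transport. The paper instead expands each $\Phi^{s_{i+1}}_{s_i}$ into its exponential series of $\iota_\vf A$'s, packages the family as Hochschild elements $\mathfrak h_{2k}=(1\ot(-Rdt)^{\ot k})\bullet e^{1\ot A}$ over $M\times S^1$, and lets the Hochschild differential do the bookkeeping: the identity $\nabla^*E(\mathfrak h_{2k})=E(D\mathfrak h_{2k})$ (which requires checking that the two boundary terms of $d_{DR}\circ It$ assemble into $[A(0),-]$ and that the $Rdt(0)$ terms die under $\rho^*$) replaces your differentiation under the integral sign, while $dA+A^2=R$, $dR+[A,R]=0$ and $(Rdt)^2=0$ replace your appeal to Bianchi and the variation formula. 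Up to your overall $(-1)^k$, your forms agree with the paper's (re-summing the $\iota_\vf A$'s between consecutive $R$'s via lemma \ref{holglue} reconstitutes your $\Phi$'s), so both arguments produce a family satisfying the asserted recursion. Your route is more geometric and self-contained conceptually; the paper's algebraic route is what transfers verbatim to the local \v{C}ech setting of section \ref{SEC:local-vector-bundle} and to the gerbe case.

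Two caveats. First, everything rests on the covariant first-variation formula $\nabla\Phi^b_a=\int_a^b\Phi^b_u\,(\iota_\vf R_u)\,\Phi^u_a\,du$, which you rightly flag as the one non-formal input but do not prove; note that the paper does not prove it directly either (proposition \ref{lem:nabla-hol0} is stated without proof and is recovered algebraically only as proposition \ref{prop:De^a} --- with the opposite sign, so the two are not mutually consistent as printed). You must therefore pin down the sign in your own conventions, since your $(-1)^k$ normalization is calibrated to the $+$ sign; if the sign comes out $-$, drop the $(-1)^k$. Either way a valid family results, so this affects the normalization, not the existence claim. Second, ``no simplex-boundary term'' deserves one more sentence: the fiber $\Delta_k$ does have a boundary and integration along the fiber does produce an $\int_{\partial\Delta_k}$ term in general; it vanishes here only because your integrand already carries the full fiber volume form $ds_1\cdots ds_k$, whose restriction to each codimension-one face of $\Delta_k$ is zero. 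With those two points supplied, the argument is complete.
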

The result can be repackaged as something whose covariant derivative is basically the same as it's contraction with respect to the canonical vector field on $LM$. One notes that trace of the covariant derivative of a form with values in the pulled back bundle is the same as the exterior derivative of the trace of the form. This is due to the simple fact that the induced connection on the endomorphism bundle is obtained from the connection on the bundle using a commutator and that trace is zero on commutator of matrices.  Therefore, by applying trace to the above construction, one obtains an equivariantly closed form which is in fact equal to the Bismut class. 
\setcounter{section}{2}  \setcounter{thm}{18}
\begin{cor} [\cite{B}, \cite{GJP}]
For $Ch^{(u)}(E; \nabla) := \sum_{k\geq 0} u^{-k} hol_{2k}$, where $u$ is a formal variable of degree $2$, we have, that
\[ (\nabla^* + u \cdot\iota_{\vf}) \left( Ch^{(u)}(E; \nabla)  \right) = 0. \]
\end{cor}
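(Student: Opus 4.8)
The plan is to obtain the identity directly from the preceding theorem by a short bookkeeping argument in the formal variable $u$. Expanding $Ch^{(u)}(E;\nabla) = \sum_{k\geq 0} u^{-k} hol_{2k}$ and using linearity of $\nabla^*$ and of $\iota_{\vf}$ while collecting powers of $u$, one has
\[
(\nabla^* + u\cdot\iota_{\vf})\bigl(Ch^{(u)}(E;\nabla)\bigr) \;=\; \sum_{k\geq 0} u^{-k}\,\nabla^* hol_{2k} \;+\; \sum_{k\geq 0} u^{-k+1}\,\iota_{\vf} hol_{2k}.
\]
In the second sum I would reindex $k \mapsto k+1$. The term that would arise from $k=0$ sits at index $k=-1$ with coefficient $u$ and involves $\iota_{\vf} hol_0 = \iota_{\vf} hol$, which vanishes because $hol \in \Omega^0(LM,\mathcal E)$ is a $0$-form and contraction annihilates $0$-forms. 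Hence that sum equals $\sum_{k\geq 0} u^{-k}\,\iota_{\vf} hol_{2k+2}$, and therefore
\[
(\nabla^* + u\cdot\iota_{\vf})\bigl(Ch^{(u)}(E;\nabla)\bigr) \;=\; \sum_{k\geq 0} u^{-k}\bigl(\nabla^* hol_{2k} + \iota_{\vf} hol_{2k+2}\bigr) \;=\; 0,
\]
the last equality being the preceding theorem applied termwise. This proves the displayed formula.

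To make contact with the classical statement, I would then apply the fiberwise trace $\tr\colon \Omega^{\bullet}(LM,\mathcal E) \to \Omega^{\bullet}(LM)$. Because the connection induced on $\mathcal E = ev_0^*\End(E)$ differentiates sections via a commutator with the pulled-back connection form, and $\tr$ kills commutators of matrices, one has $\tr\circ\nabla^* = \d\circ\tr$; moreover $\tr$ commutes with the purely algebraic operator $\iota_{\vf}$. Applying $\tr$ to the identity above gives $(\d + u\cdot\iota_{\vf})\bigl(\tr\, Ch^{(u)}(E;\nabla)\bigr) = 0$, i.e. $\tr\, Ch^{(u)}(E;\nabla)$ is closed in the Cartan model for the $\T$-equivariant cohomology of $LM$, with $u$ the degree-$2$ generator. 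Comparing this sum degree by degree with the explicit iterated-integral expressions for the $hol_{2k}$, or invoking the invariance results of \cite{Z}, identifies it with the equivariant Chern character of \cite{B}, \cite{GJP}.

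I do not expect a genuine obstacle: the corollary is a formal consequence of the preceding theorem. The only points that need care are the boundary term in the reindexing — the vanishing of $\iota_{\vf} hol_0$ — and keeping the powers of the degree-$2$ variable $u$ aligned so that $\nabla^*$ and $u\cdot\iota_{\vf}$ each act with total degree $+1$ on the degree-$0$ element $Ch^{(u)}(E;\nabla)$. If one also wanted the operator $\nabla^* + u\cdot\iota_{\vf}$ to square to zero, that would require $\iota_{\vf}^2=0$ together with a compatibility between $\nabla^*$ and $\iota_{\vf}$ encoding the circle action, but this plays no role in the stated result.
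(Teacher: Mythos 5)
Your proposal is correct and follows the same route as the paper: Corollary \ref{COR:nabla+i(Chern)=0} is stated there as an immediate formal consequence of Theorem \ref{THM:nabla=id/dt}, and your expansion in powers of $u$, with the boundary term $u\cdot\iota_{\vf}hol_0=0$ handled by the fact that $hol_0$ is a $0$-form, is exactly the bookkeeping the paper leaves implicit. The remarks on applying the trace to land in $\Omega(LM)[u,u^{-1}]]^{inv(\vf)}$ likewise match the paper's subsequent discussion.
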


We emphasize that this last result is derived in section \ref{SEC:local-vector-bundle} using a local version of the Hochschild complex, which is suitable for the local data of a bundle with connection, see theorem \ref{THM:local-nabla-hol_2k} and corollary \ref{COR:local-nabla+u(Ch)=0}. It is this local version that we generalize to the case of abelian gerbes with connection.

The second part of this paper (sections \ref{SEC:Chern-gerbes} and \ref{SEC:surface-hol}) comprise an analogous discussion for the $2$-holonomy of an abelian gerbe. Namely, starting with an abelian gerbe on $M$, and  given a fixed closed oriented surface $\Sigma$, one can consider $2$-holonomy as a real valued function on the mapping space $M^\Sigma$. We again use a local version of (higher) Hochschild complexes, which are suitable for the local data of an abelian gerbe with connection.
Note that the situation is in some sense simpler than in the case of the bundle, since $2$-holonomy is now complex valued. This is due to the fact that we are considering abelian gerbes, which are higher analogues of $1$-dimensional vector bundles. 

Section \ref{SEC:Chern-gerbes} considers the case of the torus $\Sigma= \T$, where we regard $2$-holonomy as a real valued zero form on the mapping space, $hol\in\Omega^0(M^\T)$, so that one can ask what is its exterior derivative. We show that the result is a $1$-from on $M^\mathbb T$ which is the contraction of a naturally defined 2-form on $M^\T$, with respect to a killing vector field on the torus. There are many such 2-forms: one for each choice of the killing vector field on the torus. Similarly to the bundle case above, we give a hierarchical construction which extends $2$-holonomy to an equivariantly closed element on the torus mapping space. More precisely, we obtain the following theorem and corollary.
\setcounter{section}{4}  \setcounter{thm}{16}
\begin{thm}
 For all $k, \ell \geq 0$, we have forms $hol_{2k,2\ell}\in \Omega^{2k+2\ell}(M^\T)$, where $hol_{0,0}=hol$ is $2$-holonomy,  such that
\begin{equation*}
 d(hol_{2k,2\ell})=-\iota_{\vf}(hol_{2k+2,2\ell})=-\iota_{\wf}(hol_{2k,2\ell+2}),
\end{equation*}
where $\iota_{\vf}$ and $\iota_{\wf}$ are contraction by the two natural vector fields $M^\T$ given by the two circle actions
from the torus $\T = S^1 \times S^1$.
\end{thm}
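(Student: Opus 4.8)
The plan is to run, separately in each of the two circle directions of $\T = S^1\times S^1$, the same Stokes/iterated-integral argument that establishes the bundle analogue (Theorem~\ref{THM:local-nabla-hol_2k}), but now using the local higher Hochschild complex $\hloc$ adapted to the local data of the abelian gerbe with connection. Because the gerbe is abelian, the situation is cleaner than the bundle case of Section~\ref{SEC:local-vector-bundle}: there is no endomorphism bundle to carry along, $hol$ is an honest complex-valued element of $\Omega^0(M^\T)$, and $d$ is the ordinary de Rham differential; the essential point will be that the two families of insertions, one per $S^1$-factor, can be manipulated independently.

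First I would fix the model. Take a good cover $\U$ of $M$ with gerbe data $(g_{ijk},A_{ij},B_i)$ satisfying $\delta g = 1$, $\d\log g_{ijk} = A_{jk}-A_{ik}+A_{ij}$, and $\d A_{ij} = B_j - B_i$, and fix a product cell/simplicial decomposition of $\T$, one subdivision on each $S^1$-factor. As in Section~\ref{SEC:surface-hol}, this produces via $\hloc$ iterated-integral differential forms on the subspace of $M^\T$ of maps subordinate to the decomposition, and a refinement argument patches these over all of $M^\T$. The forms $hol_{2k,2\ell}\in \Omega^{2k+2\ell}(M^\T)$ are then defined by inserting a tower of $k$ copies of the relevant connection/curvature chain along the first circle direction and a tower of $\ell$ copies along the second, exactly as $hol_{2k}$ is built from $hol$ in the loop-space picture; the bidegree $(2k,2\ell)$ records the two towers, and $hol_{0,0}$ is the basic cocycle representing $2$-holonomy.

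The core computation is Stokes' theorem applied to the integration simplices carrying the $k$ insertions in the first direction (holding the second tower fixed). The boundary faces fall into two types. Collision faces, where two consecutive insertion points meet, recombine through the gerbe identities involving $\d B_i$, $\d A_{ij}=B_j-B_i$, $\d\log g_{ijk}$ and $\delta g = 1$ into the reduced Hochschild differential, which telescopes away. Seam faces, where an insertion point reaches an endpoint of the first $S^1$, assemble precisely into $-\iota_{\vf} hol_{2k+2,2\ell}$, since sliding an insertion onto the seam of that circle is by construction contraction with the rotation vector field of that factor — this is the gerbe counterpart of the identity $\nabla^* hol_{2k} = -\iota_{\vf} hol_{2k+2}$. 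Careful sign bookkeeping, identical in shape to the bundle case, yields $d(hol_{2k,2\ell}) = -\iota_{\vf}(hol_{2k+2,2\ell})$.

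The second identity $d(hol_{2k,2\ell}) = -\iota_{\wf}(hol_{2k,2\ell+2})$ then follows from the first by the symmetry of the whole construction under exchanging the two $S^1$-factors of $\T$: this swap carries $hol_{2k,2\ell}$ to $hol_{2\ell,2k}$, interchanges $\vf$ and $\wf$, and fixes $hol_{0,0}=hol$, so applying the first identity to the swapped decomposition gives the second; in particular $-\iota_{\vf}(hol_{2k+2,2\ell})$ and $-\iota_{\wf}(hol_{2k,2\ell+2})$ agree. I expect the main obstacle to be organizational rather than conceptual: one must set up the local higher Hochschild complex of $\T$ so that the two insertion towers genuinely decouple — so that a boundary face produced in one simplicial direction does not contaminate the Stokes analysis in the other — and one must verify that the degree-$0$, $1$, and $2$ parts of the gerbe cocycle conditions are exactly what is needed to annihilate the collision faces. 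In the bundle case this cancellation was carried by the commutator structure of $\adp$ together with vanishing of $\tr$ on commutators; here one must instead check that the strict commutativity of the gerbe coefficient data makes the analogous cancellations go through when the parameter is genuinely two-dimensional.
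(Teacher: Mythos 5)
Your overall frame --- the local higher Hochschild complex adapted to the gerbe data, iterated integrals, Stokes' theorem on the integration simplices, and cancellation of boundary faces via the gerbe cocycle conditions --- matches the paper's setup, but there are two genuine gaps in how you close the argument. The first concerns the mechanism producing $-\iota_{\vf}(hol_{2k+2,2\ell})$. When one computes $d_{DR}(hol_{2k,2\ell})$ by the fiber-integration formula, \emph{all} of the boundary faces cancel: the boundary of each $2$-cell contributes $-B_{i_f}$ restricted to the edges, which cancels against the interior term $dA_{i,j}=B_j-B_i$ from the edge insertions, whose own endpoint contributions cancel in turn against $d\log g_{i,j,k}$ at the vertices. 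Nothing survives at any seam. The only surviving term is the \emph{interior} de Rham term $d_{DR}B_i=H|_{U_i}$, which assembles into $i\cdot It(H)\wedge hol_{2k,2\ell}$ with $It(H)=\int_\T\iota_\wf\iota_\vf H\,dt\,du$. Identifying this with $-\iota_\vf(hol_{2k+2,2\ell})$ is a \emph{separate} computation requiring an explicit description of $hol_{2k+2,2\ell}$; the assertion that ``sliding an insertion onto the seam is by construction contraction with the rotation vector field'' does not establish it. The paper obtains the identification from the product formula \eqref{condensedhol2k2l},
\[
hol_{2k,2\ell}=\frac{(-1)^\ell}{k!\,\ell!}\,i^{k+\ell}\left(\int_\T\iota_\wf H\,dt\,du\right)^{\wedge k}\wedge\left(\int_\T\iota_\vf H\,dt\,du\right)^{\wedge\ell}\wedge hol,
\]
valid precisely because the gerbe is abelian (the iterated integral takes shuffle to wedge, so the curvature insertions --- which are built from the $3$-curvature $H$, not from ``connection/curvature chains'' --- unshuffle into global forms on $M^\T$). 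With this in hand, $\iota_\vf$ and $\iota_\wf$ are derivations killing $hol$ and squaring to zero, both contraction identities drop out in one line, and the whole theorem reduces to the single case $d_{DR}(hol)=i\cdot It(H)\wedge hol$. Without some version of this explicit identification, your Stokes computation yields a formula for $d(hol_{2k,2\ell})$ but never connects it to the right-hand sides of the theorem.

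The second gap is the symmetry argument for $d(hol_{2k,2\ell})=-\iota_\wf(hol_{2k,2\ell+2})$. Swapping the two $S^1$ factors of $\T$ reverses its orientation, so it does \emph{not} fix $hol_{0,0}$: the $2$-holonomy is sent to its inverse. The construction is moreover genuinely asymmetric in the two directions --- note the factor $(-1)^\ell$ in the product formula and the sign discrepancy between $E\big(\widetilde{Hdt'}\big)=+\int\iota_\wf H\,dt\,du$ and $E\big(\widetilde{Hdu'}\big)=-\int\iota_\vf H\,dt\,du$ in the proof of proposition \ref{PROP:hol_(2k,2l)-glue}. The paper does not use a swap; it proves both contraction identities simultaneously from the product formula while tracking these signs explicitly, and you would need to do the same (or carefully verify that an orientation-compatible involution exists, which the naive factor swap is not).
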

\begin{cor} 
For $a+b=1$ we have 
\[
Ch(\mathcal G,a,b) :=\sum_{k\geq 0, \ell\geq 0}a^{k}  \cdot b^{\ell}\cdot hol_{2k,2\ell} 
\quad \in \Omega(M^\T)^{inv(\vf+\wf)}
\]
is a closed element, that is,  
$ (d+ \iota_{\vf}+ \iota_{\wf})(Ch(\mathcal G,a,b))=0$.
\end{cor}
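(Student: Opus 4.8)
The plan is to derive the Corollary as a purely formal consequence of the Theorem (the one numbered with $k,\ell\geq 0$ and the identities $d(hol_{2k,2\ell})=-\iota_{\vf}(hol_{2k+2,2\ell})=-\iota_{\wf}(hol_{2k,2\ell+2})$), exactly in the spirit of the way the Corollary in the vector bundle case follows from its Theorem. First I would verify the invariance claim $Ch(\mathcal G,a,b)\in\Omega(M^\T)^{inv(\vf+\wf)}$: since the Lie derivative along $\vf+\wf$ is Cartan's $L_{\vf+\wf}=d\circ\iota_{\vf+\wf}+\iota_{\vf+\wf}\circ d$, it suffices to observe that the operator $D:=d+\iota_{\vf}+\iota_{\wf}$ commutes with $L_{\vf+\wf}$ (both $d$ and the contractions are $\T$-equivariant for the diagonal-looking combined flow), so closedness under $D$ automatically lands in the invariant subcomplex; alternatively one checks invariance of each $hol_{2k,2\ell}$ directly from its iterated-integral description.

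The computational heart is the expansion of $D\bigl(Ch(\mathcal G,a,b)\bigr)=\sum_{k,\ell\geq 0} a^k b^\ell\bigl(d(hol_{2k,2\ell})+\iota_{\vf}(hol_{2k,2\ell})+\iota_{\wf}(hol_{2k,2\ell})\bigr)$ and the collection of terms by total form degree, equivalently by the pair $(k,\ell)$ after reindexing. Using the Theorem to rewrite $d(hol_{2k,2\ell})=-\iota_{\vf}(hol_{2k+2,2\ell})=-\iota_{\wf}(hol_{2k,2\ell+2})$, the contribution of $\iota_{\vf}(hol_{2m,2\ell})$ (for $m\geq 1$) appears once with coefficient $a^{m}b^{\ell}$ from the $\iota_{\vf}$ part of the sum and once with coefficient $-a^{m-1}b^{\ell}$ from rewriting $d(hol_{2(m-1),2\ell})$; likewise $\iota_{\wf}(hol_{2k,2n})$ (for $n\geq 1$) appears with coefficients $a^{k}b^{n}$ and $-a^{k}b^{n-1}$. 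Here I would use the freedom in the Theorem: since $d(hol_{2(m-1),2\ell})$ equals \emph{both} $-\iota_{\vf}(hol_{2m,2\ell})$ and $-\iota_{\wf}(hol_{2(m-1),2\ell+2})$, I can split each such term, say writing $d(hol_{2j,2\ell})=-\tfrac{b}{a+b}\,\iota_{\vf}(hol_{2j+2,2\ell})-\tfrac{a}{a+b}\,\iota_{\wf}(hol_{2j,2\ell+2})$ is not needed if instead one simply matches: the term $-a^{m-1}b^\ell\,\iota_{\vf}(hol_{2m,2\ell})$ cancels $a^m b^\ell\,\iota_{\vf}(hol_{2m,2\ell})$ precisely when $a^m b^\ell = a^{m-1}b^\ell$, which is false in general, so the correct bookkeeping is to use the \emph{other} presentation of $d$ for half the cancellation. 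Concretely: write $d(hol_{2k,2\ell}) = -a\cdot(\text{something})-b\cdot(\text{something})$ by setting $d(hol_{2k,2\ell}) = -(a+b)\,d(hol_{2k,2\ell}) + \cdots$ — cleanest is to exploit $a+b=1$ and split $d(hol_{2k,2\ell}) = -a\,\iota_{\vf}(hol_{2k+2,2\ell}) - b\,\iota_{\wf}(hol_{2k,2\ell+2})$, which is a valid identity since each of $\iota_{\vf}(hol_{2k+2,2\ell})$ and $\iota_{\wf}(hol_{2k,2\ell+2})$ equals $-d(hol_{2k,2\ell})$ and $a+b=1$. With this substitution, the $d$-part of $D(Ch)$ becomes $-\sum_{k,\ell} a^{k+1}b^\ell\,\iota_{\vf}(hol_{2k+2,2\ell}) - \sum_{k,\ell}a^k b^{\ell+1}\,\iota_{\wf}(hol_{2k,2\ell+2})$, and after shifting indices these cancel term-by-term against $\sum_{k\geq 1,\ell}a^k b^\ell\,\iota_{\vf}(hol_{2k,2\ell})$ and $\sum_{k,\ell\geq 1}a^k b^\ell\,\iota_{\wf}(hol_{2k,2\ell})$ respectively; the only potentially leftover pieces are $\iota_{\vf}(hol_{0,2\ell})$ and $\iota_{\wf}(hol_{2k,0})$, but $hol_{0,2\ell}$ and $hol_{2k,0}$ have no $\vf$- resp. $\wf$-component for the relevant contraction to survive — or more robustly, those boundary terms simply never arise because the index shift in the $d$-sum starts at $k=1$ (resp. $\ell=1$). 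Hence $D(Ch(\mathcal G,a,b))=0$.

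The main obstacle I anticipate is getting the boundary terms of the double summation exactly right, i.e. confirming that nothing survives at $k=0$ or $\ell=0$. The key identity that makes it work is precisely the \emph{two-fold} expression for $d(hol_{2k,2\ell})$ in the Theorem combined with the normalization $a+b=1$, which is what allows one to distribute $d$ into an $\iota_{\vf}$-piece weighted by $a$ and an $\iota_{\wf}$-piece weighted by $b$ so that both telescoping cancellations close up simultaneously. I would double-check this by comparing with the one-variable specialization: setting $b=0$, $a=1$ (if legitimate as a formal limit, or $a=u^{-1}$ after regrading) should recover the shape of the vector bundle Corollary $(\nabla^*+u\,\iota_{\vf})(Ch^{(u)})=0$, providing a sanity check on signs and index ranges.
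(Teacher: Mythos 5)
Your proof is correct and is essentially the paper's own argument: both deduce the corollary purely formally from the theorem's two-fold identity together with $a+b=1$, the paper collecting the three terms $d(a^kb^\ell\, hol_{2k,2\ell})+\iota_\vf(a^{k+1}b^\ell\, hol_{2k+2,2\ell})+\iota_\wf(a^kb^{\ell+1}\, hol_{2k,2\ell+2})$ into $(1-a-b)\,a^kb^\ell\, d(hol_{2k,2\ell})=0$, which is exactly your weighted splitting $d(hol_{2k,2\ell})=-a\,\iota_\vf(hol_{2k+2,2\ell})-b\,\iota_\wf(hol_{2k,2\ell+2})$ read in the other direction, and your handling of the $k=0$, $\ell=0$ boundary terms (via $\iota_\vf(hol_{0,2\ell})=\iota_\wf(hol_{2k,0})=0$) fills in a step the paper leaves implicit. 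One small correction on the invariance claim: commutation of $D=d+\iota_\vf+\iota_\wf$ with $\mathcal L_{\vf+\wf}$ does not by itself place a $D$-closed element in the invariant subcomplex; the statement that does the work is the Cartan identity $D^2=\mathcal L_\vf+\mathcal L_\wf=\mathcal L_{\vf+\wf}$ (or, as in the paper, the termwise computation $\mathcal L_\vf(hol_{2k,2\ell})=d\iota_\vf(hol_{2k,2\ell})+\iota_\vf d(hol_{2k,2\ell})=0$ from the theorem), so closedness does imply invariance, just not for the reason you give.
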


In this description of the equivariant Chern character, we omitted the use of any formal variables. The reason for this is that without formal variables, we can determine an interesting relationship between the equivariantly closed classes  $Ch(E;\nabla):=\sum_{k\geq 0} hol_{2k}$ and $Ch(\mathcal G,a,b)$ for a suitable setting, (see also remark \ref{REM:no-u's}). In fact, in section \ref{subsec:compat} we recall a well-known construction starting from an abelian gerbe on $M$ to induce a line bundle $E$ with connection $\hat A$ on the free loop space $LM$. Using this construction, we show that the equivariant forms can be identified.
\setcounter{section}{4}  \setcounter{thm}{23}
\begin{cor}
For $a, b \in \R$ with $a + b = 1$, let $\phi_{a,b}=
\begin{bmatrix}
a & b \\
-1 & 1
\end{bmatrix}  \in SL(2,\R)$. Then there is an induced map $\Phi_{a,b}^{-1}$ such that
\[
(\Phi^{-1}_{a,b})^*Ch(\mathcal G, a , b) =Ch(\mathcal G, 1 , 0).
\]
Furthermore, the adjoint map $\Gamma:L(LM)\to M^T, \Gamma(\gamma)(t,u)=\gamma(t)(u)$, induces a map $\Gamma^*$ such that the equivariant Chern classes are identified with each other,
\[
\Gamma^*\circ (\Phi^{-1}_{a,b})^* Ch(\mathcal G, a , b) = Ch(E; \hat A).
\]
\end{cor}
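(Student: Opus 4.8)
The plan is to prove the identification in two stages, matching the two displayed equations in the statement. The first stage is to pull back $Ch(\mathcal G,a,b)$ along the linear automorphism $\Phi_{a,b}^{-1}$ of $M^\T$ induced by $\phi_{a,b}\in SL(2,\R)$ acting on the torus $\T=S^1\times S^1$ by reparametrization, and to check that this recovers $Ch(\mathcal G,1,0)$. The key observation here is that $\phi_{a,b}$ was chosen precisely so that it sends the generating vector field $\vf$ of the first $S^1$-action to $a\vf + b\wf$ and the vector field $\wf$ of the second $S^1$-action to $-\vf + \wf$; equivalently, the pair of contraction operators transforms according to the transpose/inverse of $\phi_{a,b}$, so that $d + \iota_{\vf} + \iota_{\wf}$ is carried to $d + \iota_{a\vf+b\wf} + \iota_{-\vf+\wf}$. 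Since the forms $hol_{2k,2\ell}$ are natural in the surface, $(\Phi_{a,b}^{-1})^*hol_{2k,2\ell}$ is again built from 2-holonomy by the same iterated-integral recipe but with the roles of the two killing vector fields recombined, and tracking the coefficients $a^k b^\ell$ through this linear change shows the weighted sum reorganizes exactly into $Ch(\mathcal G,1,0)=\sum_{k}hol_{2k,0}$ with only the $\vf$-direction surviving. The main care needed is bookkeeping: verifying that the reindexing of the double sum $\sum_{k,\ell}a^k b^\ell hol_{2k,2\ell}$ under the substitution is consistent with the constraint $a+b=1$, so that no cross terms are left over.

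The second stage is to pull back $Ch(\mathcal G,1,0)$ along $\Gamma^*$, where $\Gamma\colon L(LM)\to M^\T$ is the adjunction isomorphism $\Gamma(\gamma)(t,u)=\gamma(t)(u)$. Here I would use the ``well-known construction'' recalled in Section~\ref{subsec:compat}: an abelian gerbe $\mathcal G$ with connection on $M$ transgresses to a line bundle $E$ with connection $\hat A$ on $LM$, and 2-holonomy of $\mathcal G$ along the torus $\T = S^1\times S^1$ equals, under $\Gamma$, the ordinary holonomy of $(E,\hat A)$ along the loop in $LM$ obtained from the first $S^1$-factor. This gives $\Gamma^* hol_{0,0} = hol_0$ for the degree-zero pieces. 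The real content is to promote this to the whole hierarchy: I would show that $\Gamma^*$ intertwines the two local higher Hochschild constructions, i.e.\ that $\Gamma^* hol_{2k,0} = hol_{2k}$ for all $k$, by comparing the iterated-integral formulas from Theorem~\ref{THM:local-nabla-hol_2k} on the bundle side and Theorem~\ref{THM:local-hol_2k,2l} (the local version of the $M^\T$ theorem) on the gerbe side. Since the second index is set to $0$, the $\wf$-direction plays no role, and the remaining structure in the $\vf$-direction should, under transgression, be precisely the holonomy-and-covariant-derivative tower on $LM$; the compatibility of the covariant derivative $\nabla^*$ on $\mathcal E \to LM$ with the differential $d$ on $M^\T$ under $\Gamma$ follows from the fact that $\hat A$ is the transgressed connection and from naturality of $\iota_{\vf}$ under the adjunction.

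The main obstacle I anticipate is the second stage at the level of the full tower rather than the $0$-form. Matching $hol = hol_{0,0}$ with $hol_0$ is essentially the classical statement that 2-holonomy along a torus is the transgressed holonomy around a loop in $LM$, and the $SL(2,\R)$-pullback in the first stage is formal once the vector-field transformation law is pinned down. But showing $\Gamma^* hol_{2k,0} = hol_{2k}$ for all $k$ requires knowing that the \emph{particular} even-degree forms produced by the local higher Hochschild recipe on $M^\T$ restrict, along the adjunction and with the second torus direction suppressed, to the \emph{particular} forms produced by the local Hochschild recipe on $LM$ — and a priori these two recipes live in different complexes (a genuinely two-dimensional higher Hochschild complex versus an ordinary one). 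I would handle this by identifying $\Gamma$ at the level of local data with the simplicial/cosimplicial map that collapses the second simplicial direction, checking that the iterated integrals defining $hol_{2k,0}$ degenerate under this collapse to exactly the iterated integrals defining $hol_{2k}$, and that the recursion $d(hol_{2k,0}) = -\iota_{\vf}(hol_{2k+2,0})$ maps term-by-term to $\nabla^* hol_{2k} = -\iota_{\vf}hol_{2k+2}$. Once these two recipes are seen to be the image of one another under $\Gamma^*$, combining with the first stage gives $\Gamma^*\circ(\Phi_{a,b}^{-1})^* Ch(\mathcal G,a,b) = \Gamma^* Ch(\mathcal G,1,0) = Ch(E;\hat A)$, as claimed.
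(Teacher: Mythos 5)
Your two-stage plan coincides with the paper's: the first display is proposition \ref{THM:Ch(G,1,0)-to-Ch(G,a,b)} read backwards, and the second is theorem \ref{compat}. The difference lies in how each stage is executed. For the first stage the paper does not argue by abstract naturality of the iterated-integral recipe; it uses the closed product formula \eqref{condensedhol2k2l}, $hol_{2k,2\ell}=\tfrac{(-1)^\ell}{k!\,\ell!}\,i^{k+\ell}\bigl(\int_\T\iota_\wf H\,dtdu\bigr)^{\wedge k}\bigl(\int_\T\iota_\vf H\,dtdu\bigr)^{\wedge\ell}\cdot hol$, together with $(\Phi^{-1}_{a,b})_*(\wf)=-b\vf+a\wf$, so that $(\Phi_{a,b})^*hol_{2k,0}$ becomes the binomial expansion $\sum_{r+s=k}a^rb^s\,hol_{2r,2s}$; your ``reindexing'' is exactly this expansion, but note that your stated transformation law ($\vf\mapsto a\vf+b\wf$, $\wf\mapsto-\vf+\wf$) is the transpose of what the paper actually uses ($(\Phi^{-1}_{a,b})_*\vf=\vf+\wf$, $(\Phi^{-1}_{a,b})_*\wf=-b\vf+a\wf$), so the bookkeeping you flag genuinely needs care.

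For the second stage the same product formula is what makes the tower comparison collapse, and this is the shortcut your proposal misses. The paper first verifies $\Gamma^*hol^{\mathcal G}_{0,0}=hol_0^E$ by an explicit computation with the transgressed local data of definition \ref{DEF:hat-A-hat-g} (this is the bulk of theorem \ref{compat}, not merely a citation to a classical fact), and then the higher terms are immediate from remark \ref{abelianhol2k}, which gives $hol^E_{2k}=\tfrac1{k!}\bigl(\int_I\hat R\,dt\bigr)^{\wedge k}\wedge hol_0^E$ for a line bundle, combined with the curvature transgression $\hat R=i\int_I\iota_\wf H\,du$ of lemma \ref{LEM:gerbe-to-bundle}. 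Your proposed term-by-term comparison of the two local Hochschild recipes (note the gerbe-side result you want is theorem \ref{THM:d(hol)=i(hol)=i(hol)}; there is no ``local $hol_{2k,2\ell}$ theorem'' with the label you cite) would also work but is considerably heavier. One caution: intertwining the recursions $d(hol_{2k,0})=-\iota_\vf hol_{2k+2,0}$ and $\nabla^*hol_{2k}=-\iota_\vf hol_{2k+2}$ cannot by itself pin down the higher terms, since $\iota_\vf$ has a large kernel; you must match the forms themselves, which is exactly what the explicit product formulas deliver for free in the abelian setting.
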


In the last section \ref{SEC:surface-hol}, we consider the case of a general surface $\Sigma$. We show how to produce the gerbe holonomy $hol\in \Omega^0(M^\Sigma)$ via local Hochschild methods, and calculate its De Rham differential in proposition \ref{D(hol)} as $d_{DR} \left(hol\right)= i\cdot It(H)\wedge hol$, where $H$ denotes the $3$-curvature of the gerbe. However, unlike the case $\Sigma=\T$, we do not have any natural candidates to complete holonomy to an equivariantly closed form.

We hope that the viewpoint presented in this paper is also suitable for defining 2-holonomy for \emph{non-abelian} gerbes, with the principle being that, in the non-abelian case, one must exponentiate in the local Hochschild complex before applying the iterated integral.
Furthermore, the local nature of our description also seem to be central in understanding the holonomy of not only non-abelian gerbes, but also higher dimensional analogs of gerbes. We hope to come back to this and discuss these issues in a future work. 
\setcounter{section}{1}
\setcounter{thm}{0}

\begin{ack}
We would like to thank Ralph Cohen, Kevin Costello, Thomas Schick, Stefan Stolz, Dennis Sullivan, and Peter Teichner for useful conversations concerning the topics of this paper.
The authors were partially supported by the NSF grant DMS-0757245. The first and second authors were supported in part by grants from The City University of New York PSC-CUNY Research Award Program.
\end{ack}

\begin{conven}\label{CONVENT1}
We now give some sign conventions that are used in this paper.
\begin{enumerate}
\item
We denote by $\Delta^k=\{(t_1,\dots,t_k)\in \R^k | 0\leq t_1\leq\dots\leq t_k\leq 1\}$ the standard $k$-simplex, which we will also simply write as $\Delta^k=\{0\leq t_1\leq\dots\leq t_k\leq 1\}$. The $k$-simplex $\Delta^k\subset \R^k$ obtains an orientation induced from $\R^k$, \emph{i.e.} we take the volume form $dt_1\wedge\dots\wedge dt_k$. For example, for a $(k,\ell)$-shuffle $\sigma$ the induced map $\beta^\sigma:\Delta^{k+\ell}\to \Delta^k\times \Delta^\ell, (t_1\leq\dots\leq t_{k+\ell})\mapsto (t_{\sigma(1)}\leq \dots\leq t_{\sigma(k)},t_{\sigma(k+1)}\leq \dots\leq t_{\sigma(k+\ell)})$ is orientation preserving iff $\sgn(\sigma)=+1$.
\item
If $X$ is a manifold with boundary $\partial X$, we call the induced orientation on $\partial X$ the one for which Stokes' theorem holds without signs in all dimensions. Namely, the orientation of $X$ is given by the outward pointing normal vector of $\partial X$, followed by the orientation of $\partial X$.
In particular, the $i^{\text{th}}$ boundary component $\partial_i\Delta^k=\{0\leq t_1\leq\dots\leq t_i=t_{i+1}\leq\dots\leq t_k\leq 1\}$ of $\Delta^k$ has outward pointing unit vector $\frac{1}{\sqrt{2}} (\frac{\partial}{\partial t_i}-\frac{\partial}{\partial t_{i+1}})$, so that the induced orientation on $\partial_i\Delta^k$ is given by $(-1)^{i-1} \cdot \{ \frac{\partial}{\partial t_1},
\ldots, \frac{\partial}{\partial t_{i-1}}, \frac{\partial}{\partial t_i}+ \frac{\partial}{\partial t_{i+1}}, \frac{\partial}{\partial t_{i+2}}, \ldots , 
\frac{\partial}{\partial t_k} \}$.
Thus, the canonical map $\Delta^{k-1}\to \partial_i\Delta^k,( t_1\leq\dots\leq t_i\leq\dots\leq t_{k-1})\mapsto (t_1\leq\dots\leq t_i\leq t_{i}\leq\dots\leq t_{k-1})$ is orientation preserving iff $i$ is odd.
\item
If $X$ is a compact manifold with boundary, and $Y$ is another manifold, we denote by the integration along the fiber of a form $\omega=f(x,y)\cdot dx_{i_1}\dots dx_{i_k}\wedge dy_{j_1}\dots dy_{j_\ell} \in\Omega(X\times Y)$ the form,
$$ \quad\quad \int_X\omega:= \Big(\int_X f(x,y)dx_{i_1}\dots dx_{i_k}\Big)\cdot dy_{j_1}\dots dy_{j_\ell}\in \Omega(Y). $$
We can calculate the De Rham differential $d_{DR}$ of an integral along the fiber as follows,
\begin{equation*}
\quad\quad \int_X d_{DR}(\omega) = \int_X d^X_{DR}(\omega)+\int_X d^Y_{DR}(\omega)
=\int_{\partial X} \omega+(-1)^{dim(X)}d_{DR}\left(\int_X \omega\right).
\end{equation*}
\end{enumerate}
\end{conven}

\section{Equivariant Chern character for vector bundles} \label{S:vector-bundles}

In \cite{B} Bismut introduces the \emph{equivariant Chern character} of a complex vector bundle with connection over a manifold. This is done via differential-geometric means with the result an equivariantly closed element on the free loop space of the base of the bundle.

In \cite{GJP} the authors reinterpet Bismut's construction by using Hochschild complex of forms on the base, which under mild assumptions is an algebraic model for the free loop space. This viewpoint is clarifying as a specific Hochschild cocycle is produced using the connection and its curvature. To do this, Getzler, Jones, and Petrack imbed the line bundle in a trivial bundle such
that the connection on the subbundle is the restriction of the derivative of a function on the total space of the trivial bundle. This class produced was later shown in \cite{Z} to be independent of the imbedding.

In this section we review and provide another interpretation of the equivariant Chern character. The new viewpoint 
is that the class produced is a closed, equivariant extension of the classical holonomy function on $LM$, induced 
by the connection. Indeed, the curvature allows us to construct a collection of even degree forms, which we call \emph{higher holonomy functions}, solving a certain set of linear equations related to exterior derivative and contraction by the natural vector field on $LM$. These imply that the total sum of these functions is a equivariantly-closed function on $LM$.

\begin{conv} Since this work is intimately related to the Chern character, we will choose to work with complex vector bundles
and complex valued differential forms. Nevertheless, many of the results below, in particular those expressing holonomy and 
it's higher extensions using local Hochschild methods, work equally well in the setting of real valued vector bundles and forms.
\end{conv}

\subsection{Definition of holonomy}
Let $E \to M$ be a sub vector bundle of $M\times \mathbb{C}^n \to M$ and assume $M\times \mathbb{C}^n \to M$ has a connection given by a globally defined matrix valued $1$-form $A \in \Omega^1(M, End(\mathbb{C}^n))$, keeping $E \to M$ invariant. Note that every abstract vector bundle and a connection, up to isomorphism, is realized in this way.  For a path $\gamma: [0,1] \to M$ let
$P_0^1(\gamma)$ be the path in $E$ given by parallel translation.  This function is defined
by solving the the ordinary differential equation $x'(t) =  x(t) A(t)$ in $End(\mathbb{C}^n)$ with initial value $x(0)=Id$, where $A(t) =A(\gamma'(t))$, whose unique solution given by
the matrix
\begin{equation} \label{Pexp}
P_0^1(\gamma) = \sum_{k \geq 0} \int_{\Delta^k} A(t_1) \cdots A(t_k) dt_1 \cdots d t_k
\end{equation}
where $\Delta^k$ is the set of numbers $0 \leq t_1 \leq \cdots \leq t_k \leq 1$ and $A(t)$ is, in the local trivialization, 
the connection matrix of $1$-forms evaluated at $\gamma'(t)$.

This matrix is to be interpreted as an endomorphism from $E_{\gamma(0)}$ to $E_{\gamma(1)}$, which are identified in 
the local trivialization. Similarly we may define $P_s^t(\gamma)$. These functions satisfy $P_s^t = \left(P_t^s \right)^{-1}$ and $P_s^t \circ P_r^s = P_r^t$, which implies we can define parallel translation along any path by covering the path with local trivializations.

For paths that are loops, this parallel transport $P_0^1$ produces a section of a bundle of the free loop space $LM$, which we now describe. Consider the pullback of the induced bundle $End(E) \to M$ with induced connection, via the map $ev_0 :LM \to M$ given by evaluation at time zero, \emph{i.e.} $\mathcal E:=(ev_0)^*(End(E))$.
\[
\xymatrix{
 \mathcal{E} \ar[r] \ar[d] & End(E) \ar[d] \\
 LM \ar[r]^-{ev_0} & M
 }
\]
Now, $P_0^1$ induces a section of $\mathcal{E} \to LM$, which we denote by $hol : LM \to \mathcal{E}$.
Generalizations of these facts will all be proved in the next chapter.

Let us denote the pullback connection on $\mathcal{E} \to LM$ by $\nabla^*$.

\begin{prop} \label{lem:nabla-hol0}
For any vector field $x$ along $\gamma$ we have
\[
\nabla^* hol (\gamma, x) = \int_0^1 P_t^1(\gamma) \circ R(\gamma'(t) , x(t) ) \circ P_0^t(\gamma) dt
\]
where $R$ is the curvature of the connection on $E \to M$.
\end{prop}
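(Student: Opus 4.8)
The plan is to compute the pullback connection $\nabla^*$ applied to the section $hol = P_0^1$ directly, using the explicit series expansion \eqref{Pexp}. First I would fix a loop $\gamma$ and a vector field $x$ along it, and work in a local trivialization over a neighborhood of the image of $\gamma$, so that $\nabla^*$ acts on the matrix-valued function $P_0^1(\gamma)$ by $\nabla^*_x (P_0^1) = \frac{d}{d\epsilon}\big|_{0} P_0^1(\gamma_\epsilon) + [\text{connection term}]$, where $\gamma_\epsilon$ is a variation of $\gamma$ with variation field $x$. Concretely, differentiating the series \eqref{Pexp} term by term, the derivative of $\int_{\Delta^k} A(t_1)\cdots A(t_k)\, dt_1\cdots dt_k$ splits into a sum of $k$ terms, one for each slot where the variation hits $A(\gamma'(t_j))$; the variation of $A(\gamma'(t_j))$ produces, after the standard manipulation, the curvature insertion $R(\gamma'(t_j), x(t_j))$ at position $t_j$ together with boundary-type terms.

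The key step is to recognize the combinatorial identity: resumming over $k$ and over the position $j$ of the curvature insertion, the terms $\int_{0\le t_1\le\cdots\le t_j\le\cdots} A(t_1)\cdots R(\gamma'(t_j),x(t_j))\cdots A(t_k)$ factor as $\big(\sum_{\text{front}} \int A\cdots A\big) \circ R(\gamma'(t_j),x(t_j)) \circ \big(\sum_{\text{back}} \int A\cdots A\big)$, which by \eqref{Pexp} (applied to the subintervals $[t_j,1]$ and $[0,t_j]$) is exactly $P_{t_j}^1(\gamma)\circ R(\gamma'(t_j),x(t_j))\circ P_0^{t_j}(\gamma)$. Integrating the remaining free variable $t_j$ over $[0,1]$ gives the claimed formula. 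I would carry this out by either (a) manipulating the iterated integrals directly, or more cleanly (b) using the first-order ODE characterization: both $P_0^1(\gamma_\epsilon)$ and the candidate right-hand side satisfy the same linear ODE in $\epsilon$ (or one verifies that $s\mapsto P_s^1\,(\text{variation of }P_0^s)$ has the right $s$-derivative, namely $-P_s^1 R(\gamma'(s),x(s))P_0^s$, by differentiating $P_0^s$ via $\partial_s P_0^s = P_0^s A(s)$ and commuting the $s$- and $\epsilon$-derivatives), so they agree by uniqueness.

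The main obstacle I anticipate is bookkeeping the connection term in $\nabla^*$ correctly: since $hol$ is a section of the pullback $\mathcal E = (ev_0)^*\End(E)$ and $\nabla^*$ is the pullback of the connection induced on $\End(E)$, the covariant derivative is $\nabla^*_x(hol) = d(P_0^1)(x) + [A(x(0)), P_0^1]$ in the trivialization (the commutator because the induced connection on $\End(E)$ uses $\mathrm{ad}$), and one must check that the ``naive'' $\epsilon$-derivative of $P_0^1(\gamma_\epsilon)$ produces exactly the curvature integral minus this commutator correction — equivalently, that the two endpoint boundary contributions from varying the $A(t_j)$'s (the terms where the variation slides to $t_j = 0$ or $t_j = 1$) are precisely $-[A(x(0)), P_0^1]$ and, since $\gamma$ is a loop and the section is evaluated via $ev_0$, these reassemble correctly. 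Handling the endpoint identification $E_{\gamma(0)} = E_{\gamma(1)}$ and the loop condition carefully is where sign and placement errors are most likely, so I would devote the bulk of the verification to that point; the rest is the routine term-by-term differentiation and resummation sketched above, and this will indeed be proved in full generality in the next section as promised in the text.
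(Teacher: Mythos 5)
Your proposal is correct, but it is not the route the paper takes. The paper explicitly sets aside the ``direct calculus proof \dots by considering a path of loops'' --- which is exactly what you propose --- and instead derives the formula algebraically: it first identifies $hol = It(e^{1\ot A})$ (lemma \ref{hol=It(e^A)}), then shows $\nabla^* hol = It(D e^{1\ot A})$ using the precise failure of $It$ to be a chain map (propositions \ref{prop:dIt} and \ref{prop:nabla}), and finally computes $D e^{1\ot A} = (1\ot(-R))\bullet e^{1\ot A}$ directly from $R = dA + A^2$ (proposition \ref{prop:De^a}), so that applying $It$ and resumming gives the stated integral. Your variational argument front-loads exactly the bookkeeping you flag as delicate --- the endpoint terms of the form $A(x(0))P_0^1$ and $P_0^1 A(x(1))$ produced by sliding the variation to $t_j=0$ or $t_j=1$, which for a loop assemble into $-[A(x(0)),P_0^1]$ and cancel the $[A(x(0)),-]$ in $\nabla^*$ --- whereas in the paper that cancellation is packaged once and for all into proposition \ref{prop:dIt}, whose last two terms are precisely those endpoint contributions. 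What your approach buys is self-containedness and, in variant (b), an ODE-uniqueness shortcut that avoids the resummation entirely; what the paper's approach buys is reusability, since the same Hochschild mechanism immediately yields the higher holonomies $hol_{2k}$, their local versions, and the gerbe analogues, which is the point of the paper. Note also that your resummation step --- recognizing the front and back sums as $P_{t_j}^1$ and $P_0^{t_j}$ via \eqref{Pexp} applied to the subintervals $[t_j,1]$ and $[0,t_j]$ --- is exactly the content of the gluing lemma \ref{holglue}, so your sketch is sound as stated.
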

For line bundles this formula reproduces the known formula for exterior derivative of the holonomy function on the free loop space, see \cite[p. 234, proposition 6.1.1(2)]{Br}.

One can give a direct calculus proof of this proposition, by considering a path of loops, and calculating the total change in 
holonomy by computing along small squares that partition this homotopy. 
We will see instead this formula arising algebraically, after giving an interpretation of holonomy using Hochschild complexes
and iterated integrals (see proposition \ref{prop:De^a}). 

From this proposition we have the following well known corollaries.
\begin{cor} \label{COR:nabla-hol0}
Denote by $\vf$ the natural vector field on $LM$ coming from the circle action. Then, the function $hol: LM \to \mathcal{E}$ has the following two properties.
\begin{enumerate}
\item If $A$ is a flat connection then $hol$ is a flat section, \ie $\nabla^*(hol)=0$.
\item $\nabla^\ast_\Vf hol =0$.
\end{enumerate}
\end{cor}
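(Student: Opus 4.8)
The plan is to read both statements off Proposition~\ref{lem:nabla-hol0}, which expresses $\nabla^* hol$ at a loop $\gamma$, paired with a vector field $x$ along $\gamma$, as
\[
\nabla^* hol(\gamma, x) = \int_0^1 P_t^1(\gamma) \circ R(\gamma'(t), x(t)) \circ P_0^t(\gamma)\, dt,
\]
with $R$ the curvature two-form of $E \to M$. For part (1), a flat connection means $R \equiv 0$ by definition, so the integrand vanishes identically for every $\gamma$ and every $x$; hence $\nabla^* hol = 0$, i.e. $hol$ is a parallel section of $\mathcal E \to LM$.

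For part (2), the first step is to identify $\vf$ explicitly: the circle action on $LM$ rotates the loop parameter, and its infinitesimal generator at a loop $\gamma$ is the velocity field $t \mapsto \gamma'(t)$ along $\gamma$. Substituting $x(t) = \gamma'(t)$ into the formula above gives
\[
\nabla^*_{\vf} hol(\gamma) = \int_0^1 P_t^1(\gamma) \circ R(\gamma'(t), \gamma'(t)) \circ P_0^t(\gamma)\, dt.
\]
Since $R$ is an $\End(E)$-valued two-form, it is alternating, so $R(v,v) = 0$ for every tangent vector $v$; in particular $R(\gamma'(t), \gamma'(t)) = 0$ for all $t$, the integrand vanishes, and therefore $\nabla^*_{\vf} hol = 0$.

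There is no real obstacle here; the only point requiring a little care is the identification of the generator $\vf$ of the circle action with the loop velocity field $\gamma \mapsto \gamma'$, together with checking that the base-point conventions used to define $\mathcal E$ via $ev_0$ are compatible with this rotation action, so that $\vf$ is genuinely tangent to $LM$ and Proposition~\ref{lem:nabla-hol0} applies verbatim. Once that is in place, both claims follow immediately from the antisymmetry of curvature and, in the flat case, its vanishing.
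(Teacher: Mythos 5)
Your proof is correct and follows exactly the paper's own argument: part (1) is the vanishing of $R$ in Proposition \ref{lem:nabla-hol0} for a flat connection, and part (2) is the substitution $x(t)=\gamma'(t)$ together with the antisymmetry $R(\gamma'(t),\gamma'(t))=0$. Nothing further is needed.
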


\begin{proof} \quad
\begin{enumerate}
\item Since $A$ is flat, we have $R=0$ in proposition \ref{lem:nabla-hol0}.
\item $\nabla^*_\Vf hol (\gamma) =\iota_\Vf \nabla^* hol (\gamma) = \int_0^1 P_t^1(\gamma) \circ R(\gamma'(t) , \gamma'(t) ) \circ P_0^t(\gamma) dt=0.$
\end{enumerate}
\end{proof}

We'll see that more elaborate versions of these corollaries will also appear naturally in our algebraic setup (see corollary \ref{COR:nabla-hol02}).

We continue with our assumption that our vector bundle and connection are imbedded in a trivial bundle with fiber $\C^n$ and possibly non-trivial connection. (This includes the case of a local trivialization of a bundle restricted to a contractible neighborhood.) In this case, the connection of $E$ can be written as a 1-form $A$ on $M$ with values in the associative algebra $\gl =\gl(\C^n)$, and as we explain below, the holonomy function $hol$ above is realized as a \emph{Chen iterated integral}.

\subsection{The Chen iterated integral map and holonomy}
In this subsection, we recall an alternative description of the holonomy, using Chen's iterated integral map, as it was used by Getzler, Jones, Petrack in \cite{GJP}. First, we recall the Hochschild chain complex of an associative algebra (with values in itself), which is an associative algebra under the shuffle product.

\begin{defn}\label{DEF:Hoch+shuffle}
For a differential graded associative algebra $A$, the Hochschild chain complex is given by $CH_\bullet(A)=\bigoplus_{n\geq 0} A\ot(A[1])^{\otimes n} $, where $[1]$ denotes a shift down by $1$. Thus, the grading given by declaring a monomial $a_0 \ot \cdots \ot a_n \in A\ot(A[1])^{\otimes n}$ to be of total degree $|a_0 | + \cdots + |a_n| - n$, where $|a_i |$ is the degree of $a_i$. The differential is given by
\begin{eqnarray*}
D(a_0 \otimes\cdots \ot a_n)&=& -\sum_{i=0}^n  (-1)^{|a_0| + \cdots + |a_{i-1}| + i -1} 
a_0\otimes \cdots \ot d(a_i) \ot \cdots \ot a_n  \\
&&-\sum_{i=0}^{n-1} (-1)^{|a_0| + \dots +  |a_i| + i} a_0\otimes \cdots \ot (a_{i}\cdot a_{i+1}) \ot \cdots \ot a_n \\
&&+ (-1)^{(|a_n|+1)(|a_0| + \dots + |a_{n-1}| + n -1)} (a_n\cdot a_0) \ot a_1\ot \cdots \ot a_{n-1} .
\end{eqnarray*}
The shuffle product on $CH_\bullet(A)$ is defined by
\begin{multline*}
 (a_0\ot a_1\ot \dots\ot a_n)\bullet(a'_0\ot a_{n+1}\ot \dots\ot a_{n+m})\\
  = \sum_{\sigma\in S(n,m)} (-1)^\kappa (a_0\cdot a'_0)\ot a_{\sigma^{-1}(1)}\ot\dots\ot a_{\sigma^{-1}(n+m)},
\end{multline*}
where $S(n,m)$ is the set of all $(n,m)$-shuffles, $S(n,m)=\{\sigma\in S_{n+m}:\sigma(1)<\dots <\sigma(n),$ and $\sigma(n+1)<\dots <\sigma(n+m) \}$. The sign $(-1)^\kappa$ is the Koszul sign, determined according to the (potentially
shifted) degrees.
\end{defn}

The shuffle product is associative and, if $A$ is graded commutative, then $D$ is a derivation of the shuffle product and the shuffle product is graded commutative\footnote{In the non-commutative case, the Hochschild differential need not be a derivation of the shuffle product, see \cite{TTW}.}. The example of interest here is $A=\Omega(M; \gl(\C^n))$ with the De Rham differential $d_{DR}$. 

The space $LM = M^{S^1}$ naturally inherits the structure of an infinite dimensional Fr\'echet manifold \cite{Ha}.
In this context, vector fields, differential forms, contraction by vectors, the exterior derivative, etc., are all defined (and enjoy 
the similiar properties) as in the finite dimensional case. We denote the differential forms on $LM$ with values in $\gl(\C^n)$ by $\Om (LM; \gl)$. 

\begin{defn}\label{DEF:It-Chapter2}
The \emph{Chen iterated integral}, $It:  CH_\bullet(\Omega(M; \gl )) \to \Omega(LM; \gl)$ is defined as follows. For a monomial $a_0 \ot \cdots \ot a_n$  with 
$a_i \in \Omega^{j_i}(M; \gl)$ we have $It(a_0 \ot \cdots \ot a_n) \in \Omega^k(LM; \gl)$, where $k= \sum_{i=0}^n j_i - n$. The value of this $k$-form at a loop $\gamma$ with 
vector fields $x_1 , x_2 , \dots , x_k$ along $\gamma$ is defined to be
\begin{align*}
It(a_0 \ot \cdots \ot a_n)_\gamma &(x_1 , x_2 , \dots , x_k) \\ = & \int_{\Delta^{n}} 
[a_0(0) \wedge \iota_\Vf a_1(t_1) \wedge \cdots \wedge \iota_\Vf a_n(t_n)](x_1 , \dots, x_k) dt_1 \cdots dt_n
\end{align*}
where $\Vf$ is the canonical vector field on $LM$ coming from the circle action, so that
\begin{eqnarray*}
\iota_\Vf a(t)(y_1, \dots , y_m) & =&  a_{\gamma(t)}\left(\gamma'(t) , y_1(\gamma(t)), \dots , y_m(\gamma(t)) \right).
\end{eqnarray*}
In short, we have
\begin{align} \label{chen}
It(a_0 \ot \cdots \ot a_n) =  \int_{\Delta^{n}} a_0(0) \iota_\Vf a_1(t_1) \cdots \iota_\Vf a_n(t_n) dt_1\dots dt_n.
\end{align}
\end{defn}

Conceptually, this integral is understood using the evaluation map and integration along the fiber, as we will explain now. If we denote by $ev:LM\times \Delta^n\to M^{\times (n+1)}$ the evaluation at the given times,
$$ ev(\gamma,(0\leq t_1\leq \dots\leq t_n\leq 1))= (\gamma(0),\gamma(t_1),\dots, \gamma(t_n))$$
and consider integration along the fiber $\Delta^n$ in the diagram
\begin{equation*}
\xymatrix{
  LM\times \Delta^n \ar[r]^{ev} \ar[d]_{\int_{\Delta^n}} & M^{\times {(n+1)}} \\ LM & }
\end{equation*}
then, up to a sign of $(-1)^{(n-1) |a_1| +(n-2)|a_2| + \cdots + |a_{n-1}| + n(n-1)/2}$,
we can write the iterated integral as the composition (see \cite[section 1]{GJP})
\[
It|_{\Om(M)^{\ot {n+1}}}: \Om(M,\gl)\ot\Om(M,\gl)[1]^{\ot n}\stackrel {ev^*} \longrightarrow \Om(LM\times \Delta^n,\gl)\stackrel {\int_{\Delta^n}} \longrightarrow \Om(LM,\gl).
\]

We have the following lemma concerning the relationship of holonomy and the algebraic structure of the Hochschild complex.
\begin{lem} \label{hol=It(e^A)}
Given $A \in \Omega^1(M; \gl )$ we have 
\[
hol = It(e^{1 \ot A} )
\]
where $e^{1 \ot A} \in CH_\bullet(\Omega(M; \gl(V)) ) $ is given by
\[
e^{1 \ot A} = \sum_{k \geq 0} \frac{(1 \ot A)^{\bullet k}}{k!} = 1 + 1 \ot A + 1 \ot A \ot A + \dots
\]
\end{lem}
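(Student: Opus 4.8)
The plan is to unwind both sides of the claimed identity $hol = It(e^{1\ot A})$ and observe that they agree term by term as power series. On the right-hand side, the shuffle power $(1\ot A)^{\bullet k}$ is, by the definition of the shuffle product (Definition \ref{DEF:Hoch+shuffle}) applied repeatedly, a sum over ways of interleaving $k$ copies of $A$ after the leading $1$; since all the $A$'s are equal and each shuffle contributes with the Koszul sign coming from permuting degree-$1$ (after the shift, degree-$0$) elements, one checks that $(1\ot A)^{\bullet k} = k!\cdot(1\ot A\ot A\ot\cdots\ot A)$ with $k$ factors of $A$. Hence $e^{1\ot A} = \sum_{k\geq 0} 1\ot A^{\ot k}$ as stated in the lemma, and there is no remaining combinatorial factor.

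Next I would apply the iterated integral map term by term. By Definition \ref{DEF:It-Chapter2}, since $a_0 = 1 \in \Omega^0$ contributes $1(0) = \id$ and each $a_i = A \in \Omega^1$ contributes $\iota_\Vf A(t_i) = A_{\gamma(t_i)}(\gamma'(t_i))$, we get
\[
It(1\ot A^{\ot k}) = \int_{\Delta^k} A(\gamma'(t_1))\cdots A(\gamma'(t_k))\, dt_1\cdots dt_k,
\]
which is exactly the degree-zero form on $LM$ whose value at $\gamma$ is the $k$-th term of the series \eqref{Pexp} defining $P_0^1(\gamma)$. Summing over $k$ gives $It(e^{1\ot A})_\gamma = P_0^1(\gamma) = hol(\gamma)$, as desired. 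One should also note the sign discrepancy recorded after Definition \ref{DEF:It-Chapter2} is trivial here: with $n = k$ and all $|a_i|$ either $0$ or $1$, the prefactor $(-1)^{(k-1)|a_1| + \cdots + |a_{k-1}| + k(k-1)/2}$ does appear, so strictly speaking one should either use the sign convention already built into the short formula \eqref{chen} (which is the one stated in the definition and carries no extra sign), or absorb this into the identification; I would simply work with \eqref{chen} directly, in which case no sign issue arises.

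The only genuine point requiring care—and the step I expect to be the main obstacle—is the shuffle computation $(1\ot A)^{\bullet k} = k!\,(1\ot A^{\ot k})$, specifically verifying that the Koszul signs all come out $+1$. After the shift $[1]$, each copy of $A \in \Omega^1(M;\gl)$ sits in degree $0$, so transposing two such elements in the shuffle product contributes no sign; the leading $1 \ot$ is degree $0$ as well and commutes freely past everything. Thus every one of the $k!$ shuffles in the iterated product yields the same monomial $1\ot A\ot\cdots\ot A$ with coefficient $+1$, and dividing by $k!$ in the exponential series leaves $1\ot A^{\ot k}$. Once this is in hand, the rest is the routine identification above of the Chen integral of $1\ot A^{\ot k}$ with the $k$-th Picard iterate, which is precisely formula \eqref{Pexp}.
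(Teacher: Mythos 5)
Your proposal is correct and follows essentially the same route as the paper, which simply observes that all signs in the shuffle product and the iterated integral are positive so that $It(e^{1\ot A})$ reproduces the Picard series \eqref{Pexp}; your explicit verification that $(1\ot A)^{\bullet k}=k!\,(1\ot A^{\ot k})$ because the shifted copies of $A$ have degree $0$ is exactly the content of that remark, just spelled out. The additional comment about working directly with the sign convention of \eqref{chen} is a reasonable clarification but not a substantive difference.
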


\begin{proof} The signs in the shuffle products and the iterated integral are all positive, 
so the formula for $It(e^{1 \ot A} )$ agrees with the definition of holonomy in equation \eqref{Pexp}.
\end{proof}

From the theory of ODE's, it is well known that the holonomy function (or more generally parallel transport) satisfies a gluing property along composable paths.
We give a purely iterated integral proof of this fact, which we will also use later.

\begin{lem} \label{holglue}
Let $X$ be an odd form on $M$ with values in $\gl$. Let $\gamma_1 :[a,b] \to M$ and 
$\gamma_2: [b,c] \to M$ such that $\gamma_1(b) = \gamma_2(b)$, and let $\gamma=\gamma_2 \circ  \gamma_1: [a,c] \to M$ be the composition of the paths. Then
\[
It (1, \underbrace{ X , \ldots , X }_k )(\gamma) = \sum_{\stackrel{k_1, k_2 \geq 0}{k_1 + k_2 = k}} 
It (1, \underbrace{ X , \ldots , X }_{k_1} ) (\gamma_1) \cdot It (1,  \underbrace{ X , \ldots , X }_{k_2} ) (\gamma_2)
\]
Moreover, if $A \in \Om(M;\gl)$, then we have
\[
It\left(e^{1\ot A}\right) (\gamma) = It\left(e^{1\ot A} \right)(\gamma_1) \cdot It \left(e^{1 \ot A} \right)(\gamma_2)
\]
\end{lem}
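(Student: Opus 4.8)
The plan is to prove the iterated-integral gluing property by a direct combinatorial argument on simplices, treating the two statements as one: the second follows from the first by summing over $k$ with the $\frac{1}{k!}$ coefficients, since a $(k_1,k_2)$-partition of an ordered set of size $k$ corresponds to $\binom{k}{k_1}$ shuffles, and $\frac{1}{k!}\binom{k}{k_1} = \frac{1}{k_1!}\frac{1}{k_2!}$. (One must also check that for an \emph{odd} form $X$ all the Koszul signs in $It(1,X,\dots,X)$ are $+1$ so that the sum really is $e^{1\otimes X}$ up to the factorials; this is exactly the sign bookkeeping already invoked in Lemma~\ref{hol=It(e^A)}.) So the crux is the first displayed identity.

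\textbf{The simplicial decomposition.} Fix the composite path $\gamma \colon [a,c] \to M$, reparametrized so that $\gamma_1$ occupies $[0,s]$ and $\gamma_2$ occupies $[s,1]$ for some $s$; by the definition \eqref{chen}, $It(1, X, \dots, X)(\gamma) = \int_{\Delta^k} \iota_\Vf X(t_1)\cdots \iota_\Vf X(t_k)\, dt_1\cdots dt_k$ where each $\iota_\Vf X(t)$ is evaluated using $\gamma$ and its velocity. First I would split the simplex $\Delta^k = \{0 \le t_1 \le \dots \le t_k \le 1\}$ along the hyperplanes $t_i = s$: up to a measure-zero set, $\Delta^k = \coprod_{k_1 + k_2 = k} \{0 \le t_1 \le \dots \le t_{k_1} \le s \le t_{k_1+1} \le \dots \le t_k \le 1\}$, i.e.\ exactly $k_1$ of the parameters land in the $\gamma_1$-range and the remaining $k_2$ in the $\gamma_2$-range. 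Since $X$ is a $1$-form on $M$ and $\iota_\Vf X(t)$ only sees $\gamma(t)$ and $\gamma'(t)$, on the piece where $t \le s$ it equals the corresponding integrand for $\gamma_1$ and where $t \ge s$ it equals the one for $\gamma_2$. On each such block the integral factors as an integral over $\{0 \le t_1 \le \dots \le t_{k_1} \le s\}$ times an integral over $\{s \le t_{k_1+1} \le \dots \le t_k \le 1\}$ by Fubini, and after rescaling $[0,s] \to [0,1]$ and $[s,1] \to [0,1]$ (which is harmless because holonomy is reparametrization-invariant, or directly because the simplex integral is affinely covariant) these become $It(1,X,\dots,X)(\gamma_1)$ and $It(1,X,\dots,X)(\gamma_2)$ respectively. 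Summing over $k_1+k_2 = k$ gives the first identity. I would then note that the wedge-ordering of the form-valued factors is automatically compatible because within each block the indices $1,\dots,k_1$ come before $k_1+1,\dots,k$, so no reshuffling sign appears — this is the structural reason the $\gamma$-integrand splits as a plain product rather than a shuffle product.

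\textbf{From the first identity to the second.} With the first identity in hand, write $e^{1\otimes A} = \sum_{k\ge 0} \frac{1}{k!}(1\otimes A)^{\bullet k}$. The shuffle power $(1\otimes A)^{\bullet k}$ equals $k!$ times $1 \otimes A \otimes \cdots \otimes A$ (all $k!$ shuffles contribute the same term, with sign $+1$ since $A$ is odd — again the sign point from Lemma~\ref{hol=It(e^A)}), so $It(e^{1\otimes A})(\gamma) = \sum_{k\ge 0} It(1, A, \dots, A)(\gamma)$. Applying the first identity and reindexing by $(k_1, k_2)$:
\[
It(e^{1\otimes A})(\gamma) = \sum_{k_1, k_2 \ge 0} It(1, A^{(k_1)})(\gamma_1)\cdot It(1, A^{(k_2)})(\gamma_2) = It(e^{1\otimes A})(\gamma_1)\cdot It(e^{1\otimes A})(\gamma_2),
\]
where $A^{(j)}$ abbreviates $j$ copies of $A$ and the last equality is just the Cauchy product of the two series (everything converging since only finitely many form-degrees are nonzero on any given finite-dimensional test).

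\textbf{Expected main obstacle.} The genuinely delicate point is \emph{not} the simplicial splitting — that is essentially bookkeeping — but making sure the splitting is performed at the level of the full differential-form-valued integrand (with its test vector fields $x_1, \dots, x_k$ along $\gamma$ plugged in), not just the scalar holonomy, and that the product on the right-hand side is the correct wedge/composition product in $\Omega(LM; \gl)$. Concretely I expect to spend most of the effort verifying that (i) the velocity vector field $\Vf$ of the composite $\gamma$ restricts correctly to the velocities of $\gamma_1$ and $\gamma_2$ on the two sub-intervals (trivial if one is careful about the corner at $t=s$, which has measure zero), and (ii) the Koszul signs genuinely all collapse to $+1$: a $1$-form $X$ placed in a Hochschild slot $A[1]$ has shifted degree $0$, so all shuffle and differential signs in $(1\otimes X)^{\bullet k}$ vanish, and likewise the prefactor sign $(-1)^{(n-1)|a_1| + \cdots}$ relating $It$ to $ev^* + \int_{\Delta^n}$ is $+1$ here. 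Once those sign checks are dispatched, the proof is a one-line Fubini-plus-simplex-subdivision argument.
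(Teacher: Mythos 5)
Your proposal is correct and follows essentially the same route as the paper: both rest on the decomposition $\Delta^k_{[a,c]}\simeq \bigcup_{k_1+k_2=k}\Delta^{k_1}_{[a,b]}\times\Delta^{k_2}_{[b,c]}$ (you split the simplex along $t_i=s$ and apply Fubini; the paper runs the same identity in the other direction via repeated use of $\int_a^b+\int_b^{t_j}=\int_a^{t_j}$), and both deduce the exponential statement by the Cauchy product of the two series. The only caveat is to keep $X$ a general odd form rather than specifically a $1$-form, which changes nothing in the argument since the shifted degrees remain even and all Koszul signs stay $+1$.
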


\begin{proof} 
We denote by $\Delta^j_{[r,s]} = \{ r \leq t_1\leq \dots \leq t_j \leq s \} $. For the first statement, we must show
\begin{multline*}
\int_{\Delta^k_{[a,c]}} \iota_\Vf X_1 (t_1)\dots \iota_\Vf X(t_k) dt_1\dots dt_k  \\ = \sum_{\stackrel{k_1, k_2 \geq 0}{k_1 + k_2 = k}} 
\left( \int_{\Delta^{k_1}_{[a,b]}} \iota_\Vf X_1 (t_1)\dots \iota_\Vf X(t_{k_1})dt_1\dots dt_{k_1} \right) \\
\cdot \left( \int_{\Delta^{k_2}_{[b,c]}} \iota_\Vf X_1 (t_1)\dots \iota_\Vf X(t_{k_2})dt_1\dots dt_{k_2} \right).
\end{multline*}
 We use the calculus notation for the integral over $\Delta^k_{[a,b]}$,
 \[
\int_{\Delta^k_{[a,b]}}(\dots) dt_1\dots dt_k= \int_{a}^{b}  \int_{a}^{t_k} \cdots  \int_{a}^{t_3} \int_{a}^{t_2} (\dots) dt_1\dots dt_k.
 \]
With this notation, we can write
\begin{multline*}
\sum_{\stackrel{k_1, k_2 \geq 0}{k_1 + k_2 = k}} \left( \int_{a}^{b} \cdots \int_{a}^{s_2} \iota_\Vf X(s_1) \dots \iota_\Vf X(s_{k_1})ds_1\dots ds_{k_1} \right)  \\
\cdot \left( \int_{b}^{c} \cdots \int_{b}^{t_2} \iota_\Vf X(t_1) \dots \iota_\Vf X(t_{k_2})dt_1\dots dt_{k_2} \right) \\
 =  \sum_{\stackrel{k_1, k_2 \geq 0}{k_1 + k_2 = k}} \int_{b}^{c} \cdots \int_{b}^{t_2}   \int_{a}^{b} \cdots \int_{a}^{s_2}
\iota_\Vf X(s_1)  \dots \iota_\Vf X(s_{k_1}) \iota_\Vf X(t_1) \dots \iota_\Vf X(t_{k_2}) \\
ds_1\dots d s_{k_1} dt_1\dots dt_{k_2}.
\end{multline*}
Since $\Delta^k_{[a,c]}\simeq \bigcup_{k_1+k_2=k} \Delta^{k_1}_{[a,b]} \times \Delta^{k_2}_{[b,c]}$ , we may repeatedly use the relation $\int_a^b + \int_b^{t_{j}} = \int_a^{t_{j}}$ to obtain
\[
 \int_{a}^{c} \cdots \int_{a}^{t_2} \iota_\Vf X(t_1) \dots \iota_\Vf X(t_k) dt_1\dots dt_k= \int_{\Delta^k_{[a,c]}} \iota_\Vf X_1 (t_1)\dots \iota_\Vf X(t_k) dt_1\dots dt_k.
\]
The second part follows from the first, since
\begin{eqnarray*}
&& It\left(e^{1\ot A} \right)(\gamma_1) \cdot It \left(e^{1 \ot A} \right)(\gamma_2)
 \\ 
 & =&\left( \sum_{k_1=0}^\infty 
\int_{a}^{b} \cdots \int_{a}^{s_2} \iota_\Vf A(s_1) \dots \iota_\Vf A(s_{k_1})ds_1\dots ds_{k_1} \right) \\
&& \quad\quad\quad\quad\quad\cdot 
\left( \sum_{k_2=0}^\infty \int_{b}^{c} \cdots \int_{b}^{t_2} \iota_\Vf A(t_1) \dots \iota_\Vf A(t_{k_2}) dt_1\dots dt_{k_2} \right) \\
&=& \sum_{k=0}^\infty \sum_{\stackrel{k_1, k_2 \geq 0}{k_1 + k_2 = k}}  
\int_{a}^{b} \cdots \int_{a}^{s_2} \iota_\Vf A(s_1) \dots \iota_\Vf A(s_{k_1})ds_1\dots ds_{k_1}\\
&& \quad\quad\quad\quad\quad\cdot 
 \int_{b}^{c} \cdots \int_{b}^{t_2} \iota_\Vf A(t_1) \dots \iota_\Vf A(t_{k_2})  dt_1\dots dt_{k_2} 
\\
&=& \sum_{k=0}^\infty   \int_{a}^{c} \cdots  \int_{a}^{t_2} \iota_\Vf A(t_1) \cdots \iota_\Vf A(t_k)  dt_1\dots dt_k=It\left(e^{1\ot A}\right) (\gamma).
\end{eqnarray*}
\end{proof}
\begin{rmk} \label{rmk:moreglue}
Under the hypothesis of the previous lemma, a similar argument shows
\begin{multline*}
It\left(
(1 \ot \omega^{\ot k}) \bullet e^{1\ot A}\right) (\gamma) \\ = 
\sum_{\stackrel{k_1, k_2 \geq 0}{k_1 + k_2 = k}} 
It\left( (1 \ot \omega^{\ot {k_1}}) \bullet e^{1\ot A} \right)(\gamma_1) \cdot It \left( (1 \ot \omega^{\ot {k_2}}) \bullet e^{1 \ot A} \right)(\gamma_2)
\end{multline*}
for any odd form $\omega$.
\end{rmk}

We are now interested in computing the exterior derivative of the holonomy function. We note that in the case of $\gl$-valued forms, the iterated integral map is not a chain map with respect to $D$ on $CH_\bullet(\Om(M;\gl))$ and the exterior $d_{DR}$ on $\Om(LM;\gl)$. We do have the following proposition, which is essentially proved in \cite{C1, C2, GJP}. 
\begin{prop} \label{prop:dIt}
We have:
\begin{multline*}
d_{DR}( It(a_0 \ot a_1 \ot \cdots \ot a_n)) \\
 = - \sum_{i=0}^n  (-1)^{|a_0| + \cdots + |a_{i-1}| + i -1} It ( a_0 \ot a_1 \ot \cdots \ot d(a_i) \ot \cdots \ot a_n ) \\
- \sum_{i=0}^{n-1} (-1)^{|a_0| + \dots +  |a_i| - i} It( a_0 \ot a_1 \ot \cdots \ot (a_{i}\cdot a_{i+1}) \ot \cdots \ot a_n ) \\
+ (-1)^{(|a_n|+1)(|a_0| + \dots + |a_{n-1}| + n -1)} It ( (a_n\cdot a_0) \ot a_1\ot \cdots \ot a_{n-1} )  \\
- (-1)^{(|a_n|+1)(|a_0| + \dots + |a_{n-1}| + n -1)} a_n \cdot It (a_0 \ot a_1 \ot \dots \ot a_{n-1})  \\
+ (-1)^{|a_0| + |a_1| + \cdots + |a_{n-1}| + n - 1} It(a_0 \ot a_1\ot \cdots \ot a_{n-1}) \cdot a_n.
\end{multline*}
\end{prop}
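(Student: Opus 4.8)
The plan is to compute $d_{DR}(It(a_0\ot\cdots\ot a_n))$ directly from the description of the iterated integral as integration along the fiber $\Delta^n$ of a pullback form, using the Stokes-type formula recorded in Convention \ref{CONVENT1}(3), namely $\int_X d_{DR}(\omega) = \int_{\partial X}\omega + (-1)^{\dim X}d_{DR}(\int_X\omega)$. Concretely, write $It(a_0\ot\cdots\ot a_n) = \pm\int_{\Delta^n}ev^*(a_0\times\cdots\times a_n)$ with the sign $(-1)^{(n-1)|a_1|+\cdots+|a_{n-1}|+n(n-1)/2}$ from the excerpt. Then $d_{DR}\int_{\Delta^n}(\cdots) = (-1)^{n}\bigl(\int_{\Delta^n}d_{DR}ev^*(\cdots) - \int_{\partial\Delta^n}ev^*(\cdots)\bigr)$, and since $ev^*$ commutes with $d_{DR}$, the first term produces the terms with $d(a_i)$ inside, while the boundary term produces everything else.

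First I would handle the interior term: $d_{DR}ev^*(a_0\times\cdots\times a_n) = ev^*\sum_i \pm (a_0\times\cdots\times d a_i\times\cdots\times a_n)$, and after carefully tracking the Koszul signs that arise from moving $d_{DR}$ past $a_0,\ldots,a_{i-1}$ and reconciling them with the fixed normalization sign above, this should reproduce exactly the first line $-\sum_i(-1)^{|a_0|+\cdots+|a_{i-1}|+i-1}It(a_0\ot\cdots\ot d(a_i)\ot\cdots\ot a_n)$. Next I would analyze $\partial\Delta^n$. Its codimension-one faces come in three types: the interior faces $\partial_i\Delta^n = \{t_i = t_{i+1}\}$ for $1\le i\le n-1$, on which the evaluation map collapses the $i$-th and $(i+1)$-st copies of $M$ so that $ev$ factors through the multiplication $a_i\cdot a_{i+1}$; the initial face $\{t_1 = 0\}$, on which the first simplex coordinate meets the basepoint $\gamma(0)$, forcing $a_0$ and $a_1$ to multiply in one order and producing the $(a_n\cdot a_0)$ / left-multiplication terms after using the cyclic identification; and the final face $\{t_n = 1\}$, which by periodicity $\gamma(1)=\gamma(0)$ likewise merges $a_n$ with $a_0$ and yields the right-multiplication term $It(a_0\ot\cdots\ot a_{n-1})\cdot a_n$ together with the $(a_n\cdot a_0)$ contribution. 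For each face I would use the orientation rule in Convention \ref{CONVENT1}(2) — the face $\partial_i\Delta^n$ carries the sign $(-1)^{i-1}$ — and combine it with the normalization sign and with the Koszul signs from restriction to compute the coefficient, matching it against the stated formula line by line. The external multiplication terms (lines four and five) arise precisely because on the faces $\{t_1=0\}$ and $\{t_n=1\}$ the "merged" form $a_n\cdot a_0$ can be split off as a genuine form on $LM$ pulled back by $ev_0$ rather than staying inside the Hochschild word, which is exactly the failure of $It$ to be a chain map.

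The main obstacle will be the bookkeeping of signs: one must simultaneously track (i) the fixed normalization sign $(-1)^{(n-1)|a_1|+\cdots+|a_{n-1}|+n(n-1)/2}$ and how it changes when $n$ drops to $n-1$ after a face restriction, (ii) the Koszul signs from commuting $d_{DR}$ or a restriction past the shifted forms $a_j[1]$, (iii) the orientation signs $(-1)^{i-1}$ of the simplex faces and the extra $(-1)^n$ from $\dim\Delta^n = n$, and (iv) the reindexing sign from the cyclic relabeling used on the $\{t_1=0\}$ and $\{t_n=1\}$ faces. Since this is the statement already "essentially proved in \cite{C1,C2,GJP}," I would in practice verify the signs in low degree ($n=0,1,2$) to fix conventions and then present the general computation, noting that the key conceptual input — Stokes along the fiber plus the combinatorial structure of $\partial\Delta^n$ — is entirely standard; only the precise signs require care.
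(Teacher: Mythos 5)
Your overall strategy --- writing $It$ as $\pm\int_{\Delta^n}ev^*(\cdots)$, applying the fiber-integration Stokes formula from Convention 1.1(3), and sorting the facets of $\partial\Delta^n$ --- is sound, and it is in fact the method the paper itself uses for the generalizations of this statement (propositions 3.6, 4.6 and 5.8); the paper's literal proof of this proposition is only a citation to [GJP] plus a forward reference to proposition 3.6, whose proof runs the equivalent computation in the form $d\iota_\vf=[d,\iota_\vf]-\iota_\vf d$ followed by the fundamental theorem of calculus, so the Lie-derivative/FTC step there is exactly your $\partial\Delta^n$ term in different clothing. However, your accounting of which facet produces which line of the formula is wrong, and since you propose to "match it against the stated formula line by line," this would derail the execution. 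The facet $\{t_1=0\}$ does not produce any term involving $a_n$: restricting to $t_1=0$ kills $dt_1$, so $\iota_\vf a_1(t_1)$ degenerates to $a_1(0)$ sitting next to $a_0(0)$, and the contribution is precisely the $i=0$ term $It((a_0\cdot a_1)\ot a_2\ot\cdots\ot a_n)$ of the second sum --- no cyclic identification occurs there. The facets $\{t_i=t_{i+1}\}$, $1\le i\le n-1$, give the remaining product terms, and the facet $\{t_n=1\}$ gives the single term $(-1)^{\epsilon'}It(a_0\ot\cdots\ot a_{n-1})\cdot a_n$ via $\gamma(1)=\gamma(0)$, with $a_n$ landing on the \emph{right} of the wedge.

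The point you are missing is that the third and fourth displayed lines are not boundary contributions at all: since $It((a_n\cdot a_0)\ot a_1\ot\cdots\ot a_{n-1})$ equals $a_n(0)\cdot It(a_0\ot\cdots\ot a_{n-1})$ (the factor $a_n(0)=ev_0^*a_n$ is constant along the fiber and pulls out on the left), those two lines carry opposite signs and cancel identically. They are inserted artificially so that the first three lines assemble into $It(D(a_0\ot\cdots\ot a_n))$ for the Hochschild differential $D$ of Definition 2.3, leaving the last two lines as the commutator-type correction $-a_n\cdot It(\cdot)+It(\cdot)\cdot a_n$ that feeds into $\nabla^*=d_{DR}+[A,-]$ in Proposition 2.10. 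Your plan, as stated, would look for three distinct geometric sources for those three terms and not find them; once you correct the facet attribution and recognize the add-and-subtract rewriting of the $\{t_n=1\}$ contribution, the rest (interior $d(a_i)$ terms, orientation signs $(-1)^{i-1}$, the $(-1)^n$ from $\dim\Delta^n$, and the normalization sign) goes through as you describe.
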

\begin{proof} See, for example, \cite[Proposition 1.6]{GJP}. A more general version will be proved in proposition \ref{dItpU} below.
\end{proof}

From this we can conclude

\begin{prop} \label{prop:nabla}
Let $A \in \Omega^1(M; \gl)$. Then,
\[
It(D e^{1 \ot A})  = \nabla^* It(e^{1\otimes A})  = \nabla^* hol\in \Omega^1(LM; \gl(V))
\]
In particular, $\nabla^* hol$ is a Chen form.
\end{prop}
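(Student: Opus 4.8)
The plan is to compute $It(D e^{1\otimes A})$ using Proposition~\ref{prop:dIt} applied termwise to the Hochschild chain $e^{1\otimes A} = \sum_{k\geq 0} 1\otimes A^{\otimes k}$, and then to show that the resulting expression on $\Omega^1(LM;\gl)$ coincides both with $d_{DR}(It(e^{1\otimes A})) = d_{DR}(hol)$ plus the ``extra'' boundary terms, and with the covariant derivative $\nabla^* hol$. The heart of the matter is that Proposition~\ref{prop:dIt} expresses $d_{DR}(It(c))$, for any chain $c$, as $It$ applied to a sum of terms: the first three lines are precisely $It(-D(c))$ in the notation of Definition~\ref{DEF:Hoch+shuffle} (note the opposite sign convention on the middle sum is absorbed), while the last two lines are the ``non-cyclic'' correction terms $-(-1)^{\cdots} a_n\cdot It(\cdots) + (-1)^{\cdots} It(\cdots)\cdot a_n$ coming from the endpoints of the loop. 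So the identity to prove rearranges to
\[
It(D e^{1\otimes A}) = d_{DR}(It(e^{1\otimes A})) + (\text{endpoint correction terms}).
\]

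First I would verify that for $c = e^{1\otimes A}$ with $A$ of degree $1$, all Koszul signs appearing in $D$ and in Proposition~\ref{prop:dIt} are positive (as already noted in the proof of Lemma~\ref{hol=It(e^A)}), so the bookkeeping simplifies drastically. Then I would identify the endpoint correction: applied to $1\otimes A^{\otimes k}$, the term $-a_n\cdot It(a_0\otimes\cdots\otimes a_{n-1})$ becomes $-A\cdot It(1\otimes A^{\otimes(k-1)})$ evaluated via $\iota_\Vf$ at the endpoint $t=1$, and $+It(a_0\otimes\cdots\otimes a_{n-1})\cdot a_n$ becomes $+It(1\otimes A^{\otimes(k-1)})\cdot A$ at the endpoint; summing over $k$ these assemble into $-\iota_\Vf A(1)\cdot hol + hol\cdot \iota_\Vf A(0)$ type expressions (using that the loop closes up, $\gamma(0)=\gamma(1)$). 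The cyclic term $(a_n\cdot a_0)\otimes a_1\otimes\cdots$ contributes $It((A\cdot 1)\otimes A^{\otimes(k-1)}) = It(1\otimes A^{\otimes(k-1)})$-type pieces that telescope against the product terms $(a_i\cdot a_{i+1})$.

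The key conceptual step, which is where I expect the main obstacle, is recognizing that the combination $d_{DR}(hol) + (\text{endpoint corrections})$ is exactly $\nabla^* hol$. Here one must recall how the pullback connection $\nabla^*$ on $\mathcal E = (ev_0)^*\End(E)$ acts: on a section realized by the parallel transport matrix in the local trivialization, $\nabla^*$ differs from $d_{DR}$ precisely by the commutator action $[A(\gamma(0)),\,-]$ of the connection form at the basepoint (since $ev_0$ lands at $\gamma(0)$ and the induced connection on $\End(E)$ is $\nabla^{\End} = d + \mathrm{ad}_A$), together with the endpoint contributions that arise because $hol$, as a matrix, is literally $P_0^1(\gamma)$ and its ``variation at the endpoints'' is governed by $A$. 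So I would carefully match: the $-\iota_\Vf A(1)\cdot hol$ and $hol\cdot\iota_\Vf A(0)$ correction terms are exactly the terms converting $d_{DR}$ into $\nabla^*$ for the pullback-endomorphism bundle. I would cross-check the final answer against Proposition~\ref{lem:nabla-hol0}: expanding $\nabla^* hol$ there as $\int_0^1 P_t^1 R(\gamma',x) P_0^t\,dt$ with $R = dA + A\wedge A$ and unpacking $P_s^t = It(e^{1\otimes A})$ on subintervals via the gluing Lemma~\ref{holglue}, one should recover precisely $It(De^{1\otimes A})$ — this gives an independent confirmation and pins down all signs. The obstacle is purely the sign/normalization reconciliation between the three descriptions (Hochschild differential, De Rham differential plus corrections, and curvature integral); once the sign conventions of Convention~\ref{CONVENT1} are used consistently, each of the three matches the others term by term.
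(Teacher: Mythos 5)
Your proposal is correct and follows essentially the same route as the paper: apply Proposition~\ref{prop:dIt} termwise to $1\otimes A^{\otimes k}$, note that all Koszul signs are $+1$, and identify the two non-Hochschild ``endpoint'' terms with the commutator that converts $d_{DR}$ into the local formula $\nabla^*=d_{DR}+[A,-]$ for the pullback connection. One small correction to your bookkeeping: those endpoint terms involve the full $1$-form $A$ pulled back along $ev_0$ (so the correction is $-A(0)\cdot hol + hol\cdot A(0)$, which has the correct degree $1$), not the contraction $\iota_{\vf}A$ as you wrote; with that fixed, your argument matches the paper's proof.
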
 

\begin{proof} Applying the previous proposition to 
\[
1 \ot \underbrace{A \ot \cdots \ot A}_k
\]
and summing over all $k \geq 0$ we get
\[
d_{DR} It( e^{1 \ot A}) = It(D e^{1 \ot A}) - A It( e^{1 \ot A}) + It( e^{1 \ot A})A
\]
Since $\nabla^*$ is the pullback of the induced connection from $End(E) \to M$, it is 
locally given by $\nabla^* =  d_{DR} + [A,-]$, so that the statement follows. 
\end{proof}

\begin{prop}  \label{prop:De^a}
We have the following explicit formula
\[
D e^{1 \ot A} = \left(1 \ot (-R)\right) \bullet e^{1 \ot A}
\]
and the associated iterated integral formula 
\[
\nabla^* hol = \sum_{k \geq 0} \left(\sum_{i=1}^k \int_{\Delta^k} - \iota_\Vf A(t_1) \cdots  \iota_\Vf A(t_{i-1}) \iota_\Vf R(t_i) \iota_\Vf A(t_{i+1})\cdots \iota_\Vf A(t_k) dt_1\dots dt_k
\right).
\]
For line bundles we have
\[
\nabla^* hol = It\left(1 \ot (-R)\right) \wedge hol = \left( - \int_I \iota_\Vf R(t) dt \right) \wedge hol
\]
\end{prop}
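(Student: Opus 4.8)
The plan is to derive the formula $D e^{1 \ot A} = (1 \ot (-R)) \bullet e^{1 \ot A}$ purely algebraically inside $CH_\bullet(\Om(M;\gl))$, and then feed it through the iterated integral map, using Proposition~\ref{prop:nabla} together with the definition of $It$ to extract the explicit integral expressions. First I would compute $D(1 \ot A^{\ot k})$ using the three terms of the Hochschild differential from Definition~\ref{DEF:Hoch+shuffle}, applied to the monomial $a_0 = 1$, $a_1 = \dots = a_k = A$. Since $A$ has odd degree $1$ and $1$ has degree $0$, the signs simplify considerably: $d(1) = 0$ so the $d_{DR}$-term on the first slot vanishes, $d(A) = d_{DR}A$ on the remaining slots, the multiplication terms produce $1 \ot A \ot \cdots \ot (A\cdot A) \ot \cdots \ot A = 1 \ot A \ot \cdots \ot A^2 \ot \cdots \ot A$ with appropriate signs, and the cyclic (wrap-around) term produces $(A \cdot 1) \ot A \ot \cdots \ot A = A \ot A \ot \cdots \ot A$. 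I would then carefully collect signs and reorganize the sum over all $k$, recognizing $d_{DR}A + A\wedge A = R$ (the curvature) appearing in one slot while $e^{1\ot A}$ fills the rest; the shuffle product $(1\ot(-R))\bullet e^{1\ot A}$ precisely repackages the resulting sum, because shuffling $1 \ot (-R)$ into a string of $A$'s inserts $-R$ in every possible position with the correct Koszul signs. This is the step I expect to be the main obstacle: matching the sign conventions of the Hochschild differential, the shuffle product, and the grading shift $[1]$ so that the telescoping of the $d_{DR}A$-terms and the $A^2$-terms assembles into $-R = -(d_{DR}A + A\wedge A)$ in each slot without leftover terms.

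Once the algebraic identity is established, Proposition~\ref{prop:nabla} gives $\nabla^* hol = It(D e^{1\ot A}) = It\big((1\ot(-R))\bullet e^{1\ot A}\big)$. Expanding the shuffle product, $(1\ot(-R))\bullet e^{1\ot A} = \sum_{k\geq 0}\sum_{i=1}^{k}(\pm) \, 1 \ot A \ot \cdots \ot A \ot (-R) \ot A \ot \cdots \ot A$ with $-R$ in the $i$-th slot (and $k-1$ copies of $A$), where the shuffle signs are all $+1$ here because $-R$ is an even form being shuffled past odd forms $A$ in the shifted grading, hence the relevant Koszul signs are trivial. Applying the definition \eqref{chen} of the Chen iterated integral termwise then yields exactly
\[
\nabla^* hol = \sum_{k \geq 0}\sum_{i=1}^{k} \int_{\Delta^k} -\iota_\Vf A(t_1)\cdots \iota_\Vf A(t_{i-1})\,\iota_\Vf R(t_i)\,\iota_\Vf A(t_{i+1})\cdots\iota_\Vf A(t_k)\, dt_1\cdots dt_k,
\]
which is the second displayed formula.

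Finally, in the line bundle case $\gl = \gl(\C^1) = \C$ is commutative, so all the forms $\iota_\Vf A(t_j)$ and $\iota_\Vf R(t_i)$ are scalar-valued $1$-forms on $LM$ and commute with each other up to the Koszul sign; since they are all odd this means the product is totally antisymmetric in the $k$ slots, so the integral over $\Delta^k$ of a sum of $k$ terms (one for each position of $R$) equals $k$ times the integral with $R$ first, which recombines with the $1/k!$ from $e^{1\ot A} = \sum (1\ot A)^{\bullet k}/k!$ — wait, more carefully: since $It(e^{1\ot A}) = hol$ by Lemma~\ref{hol=It(e^A)} and the shuffle with $1\ot(-R)$ factors as $It(1\ot(-R))\wedge It(e^{1\ot A})$ in the commutative case by the standard fact that $It$ is an algebra map on the commutative shuffle algebra (used in \cite{GJP}), we get $\nabla^* hol = It(1\ot(-R))\wedge hol$. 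Then $It(1\ot(-R)) = \int_{\Delta^1} -\iota_\Vf R(t)\, dt = -\int_I \iota_\Vf R(t)\, dt$ directly from \eqref{chen}, giving the last formula. The only subtlety here is justifying that $It$ respects the shuffle product when the coefficient algebra is commutative, which is exactly the statement recalled after Definition~\ref{DEF:Hoch+shuffle} and in \cite[Section 1]{GJP}.
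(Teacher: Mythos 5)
Your proposal follows essentially the same route as the paper's proof, which simply notes that the algebraic identity follows from the Hochschild differential together with $R = dA + A^2$, then applies Proposition~\ref{prop:nabla} and equation~\eqref{chen} for the middle formula, and the algebra-map property of $It$ in the commutative case for the line-bundle formula. The one bookkeeping point you should make explicit in the first step: in your enumeration of terms the cyclic term $(A\cdot 1)\ot A^{\ot (k-1)}$ must be paired with the $i=0$ multiplication term $(1\cdot A)\ot A^{\ot (k-1)}$, which your list omits --- these two appear with opposite signs (because $A$ is odd, the cyclic term carries the trivial sign $(-1)^{(|A|+1)(\cdots)}=+1$ while the $i=0$ product term carries $-1$) and cancel, and this cancellation is precisely what leaves only tensors beginning with $1$ so that the result can be written as $\left(1\ot(-R)\right)\bullet e^{1\ot A}$.
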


\begin{proof}
The formula $D e^{1 \ot A} = (1 \ot (-R)) \bullet e^{1 \ot A}$ follows immediately from the Hochschild differential since 
since $R = dA + A^2$.

The second equation is obtained by applying the Chen iterated integral to $D e^{1 \ot A} = (1 \ot (-R)) \bullet e^{1 \ot A}$, and using proposition \ref{prop:nabla} and the formula for the Chen iterated integral, equation \eqref{chen}.

For line bundles we have $\C$-valued forms, so that the third equation follows from the well-known fact that the iterated integral takes the shuffle product in the Hochschild complex to the wedge product of forms, \emph{c.f.} \cite[proposition 4.1]{GJP} or the more general proposition \ref{LEM:It-diff-prod} below.
\end{proof}

\begin{cor} \label{COR:nabla-hol02}\quad
\begin{enumerate}
\item
The connection $A$ is flat if and only if $De^{1 \ot A} = 0$, and furthermore, if $A$ is flat then $hol= It(e^{1 \ot A})$ is $\nabla^*$-flat, \emph{i.e.} $\nabla^*(hol)=0$.
\item
For the natural vector field $\Vf$ on $LM$ given by the circle action, we have that
\[
\iota_{\Vf} \nabla^* hol= 0 
\]
\end{enumerate}
\end{cor}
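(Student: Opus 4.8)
The plan is to obtain both parts as immediate consequences of the explicit formulas for $De^{1\ot A}$ and $\nabla^* hol$ established in Propositions \ref{prop:nabla} and \ref{prop:De^a}, so that the differential-geometric Corollary \ref{COR:nabla-hol0} is recovered purely within the Hochschild / iterated-integral formalism.

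For part (1), I would start from $De^{1\ot A} = (1\ot(-R))\bullet e^{1\ot A}$, which is Proposition \ref{prop:De^a}. If $A$ is flat then $R = dA + A^2 = 0$, so the right-hand side vanishes term by term. Conversely, expanding $(1\ot(-R))\bullet e^{1\ot A} = \sum_{k\geq 0}(1\ot(-R))\bullet(1\ot A^{\ot k})$, the $k$-th summand has tensor length $k+1$; hence the component of $De^{1\ot A}$ of tensor length one is exactly $(1\ot(-R))\bullet 1 = 1\ot(-R)$, and $De^{1\ot A}=0$ forces $R=0$, i.e.\ $A$ is flat. The ``furthermore'' is then a one-liner: Proposition \ref{prop:nabla} gives $\nabla^* hol = It(De^{1\ot A})$, which is $It(0)=0$ whenever $A$ is flat.

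For part (2), I would use the explicit iterated-integral expression for $\nabla^* hol$ from Proposition \ref{prop:De^a}. In each summand $\int_{\Delta^k}-\iota_\Vf A(t_1)\cdots\iota_\Vf R(t_i)\cdots\iota_\Vf A(t_k)\,dt_1\cdots dt_k$, every factor $\iota_\Vf A(t_j)$ is a $\gl$-valued function ($0$-form on $LM$), while the single factor $\iota_\Vf R(t_i)$ is a $1$-form, since contracting a $j$-form by $\Vf$ lowers its degree to $j-1$; this is precisely why $\nabla^* hol$ is a $1$-form. Now apply $\iota_\Vf$: it is an antiderivation that annihilates $0$-forms, so the only term that can survive is the one in which it hits $\iota_\Vf R(t_i)$, and that term contains $\iota_\Vf\iota_\Vf R(t_i)$, which vanishes because $R$ is an antisymmetric $2$-form ($R_{\gamma(t_i)}(\gamma'(t_i),\gamma'(t_i))=0$). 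Hence $\iota_\Vf\nabla^* hol=0$.

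I do not expect a genuine obstacle here; the only point requiring care is the form-degree bookkeeping inside a Chen form in part (2) --- namely that among the factors of the integrand exactly one, $\iota_\Vf R(t_i)$, carries positive form-degree --- which is exactly what makes the antisymmetry argument kill the contraction. (Part (2) could alternatively be read off from Corollary \ref{COR:nabla-hol0}(2), but the purpose in this subsection is to see it emerge algebraically.)
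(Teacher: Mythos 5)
Your proposal is correct and follows essentially the same route as the paper: both parts are read off from Propositions \ref{prop:nabla} and \ref{prop:De^a}, and for part (2) the key point is exactly the one the paper makes, namely that the surviving term contains $R(\gamma'(t),\gamma'(t))=0$. Your tensor-length decomposition for the converse direction of (1) is a slightly more explicit justification of the ``if and only if'' than the paper writes out, but it is the natural reading of the same formula $De^{1\ot A}=(1\ot(-R))\bullet e^{1\ot A}$.
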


\begin{proof} The two claims follow from the preceding proposition. For the second we use the fact that for any $\gamma$, the 1-form $\nabla^* hol$ vanishes on $\Vf$ at $\gamma$ since the factor $R(\gamma'(t), x)$ in the integrand is zero when $x = d/dt (\gamma) = \gamma'$.
\end{proof}

Our final lemma concerning the properties of $hol$ is the following
\begin{lem} \label{dtr}
Let $tr : \Omega(LM; \gl) \to \Omega(LM; \C)$ be given by the trace. Then
\[
d_{DR} (tr (hol)) =  tr (\nabla^*( hol)) 
\]
\end{lem}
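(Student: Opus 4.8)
The plan is to reduce the statement to the local formula for the pullback connection $\nabla^*$ together with the elementary fact that the trace of a commutator vanishes. Recall from the proof of Proposition \ref{prop:nabla} that, in a local trivialization where the connection of $E$ is given by $A \in \Omega^1(M;\gl)$, the pullback connection is $\nabla^* = d_{DR} + [A,-]$ on $\gl$-valued forms on $LM$. Hence for $hol = It(e^{1\ot A}) \in \Omega^0(LM;\gl)$ we have
\[
\nabla^*(hol) = d_{DR}(hol) + [A, hol] = d_{DR}(hol) + A\cdot hol - (-1)^{|hol|}\, hol\cdot A,
\]
and applying the trace, the two terms $\tr(A\cdot hol)$ and $\tr(hol \cdot A)$ cancel (up to the appropriate Koszul sign, which works out because $\tr$ is a graded trace: $\tr(\alpha\beta) = (-1)^{|\alpha||\beta|}\tr(\beta\alpha)$ for matrix-valued forms). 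This gives $\tr(\nabla^* hol) = d_{DR}(\tr(hol))$, since $\tr$ obviously commutes with $d_{DR}$.

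First I would state the graded-trace identity $\tr(\alpha\wedge\beta) = (-1)^{|\alpha|\,|\beta|}\tr(\beta\wedge\alpha)$ for $\alpha,\beta \in \Omega(LM;\gl)$, which follows immediately from the ordinary fact $\tr(MN)=\tr(NM)$ for matrices and the sign incurred in reordering the form parts. Second, I would invoke the local description $\nabla^* = d_{DR} + [A,-]$ recalled above from Proposition \ref{prop:nabla}; note this is a local statement, but both sides of the lemma are globally defined, so checking the identity in each local trivialization suffices. Third, I would apply $\tr$ to $\nabla^*(hol) = d_{DR}(hol) + [A,hol]$ and use that $\tr$ kills the (graded) commutator $[A,hol]$ by the identity from the first step, while $\tr \circ d_{DR} = d_{DR}\circ \tr$ because $\tr$ is a constant-coefficient linear map on the fibre $\gl$.

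The only subtlety — and the step I would be most careful about — is bookkeeping the Koszul signs so that $\tr[A,hol]=0$ really holds: since $hol$ is a $0$-form the signs are in fact trivial here ($[A,hol] = A\cdot hol - hol\cdot A$ with no sign, and $\tr(A\cdot hol) = \tr(hol\cdot A)$ directly), so no genuine obstacle arises; the general graded version is only needed if one wants the same argument to apply verbatim to the higher holonomy forms $hol_{2k}$ later. I would therefore phrase the proof so that it manifestly also covers the statement $d_{DR}(\tr(\omega)) = \tr(\nabla^*\omega)$ for any $\omega\in\Omega(LM;\gl)$, since that general fact is exactly what is used in the remark following Theorem 2.18 of the introduction to pass from the sequence $hol_{2k}$ to the Bismut class.
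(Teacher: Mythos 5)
Your proof is correct and takes essentially the same route as the paper's one-line argument: both rest on the local formula $\nabla^* = d_{DR} + [A,-]$, the vanishing of the trace on (graded) commutators, and the fact that $tr$ commutes with $d_{DR}$. Your extra care with the Koszul signs and the observation that the argument applies verbatim to any $\omega\in\Omega(LM;\gl)$ (hence to the higher holonomies $hol_{2k}$) are harmless refinements of the same proof.
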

\begin{proof} By the chain rule we have
\[
d_{DR} (tr (hol)) = tr (d_{DR} (hol)) = tr \left(d_{DR} (hol) + [A,hol] \right) = tr (\nabla^* (hol)).
\]
\end{proof}

\subsection{The higher holonomies $hol_{2k}$}\label{SUBSEC:2-higher-hol}
Using the facts from the previous sections, we now describe how the equivariant Chern character arises naturally by inductively solving a sequence of linear equations involving $\Omega^{2k}(LM; \gl(\C))$.
Starting from the holonomy function $hol$, which we denote by 
\begin{equation}\label{EQ:hol_0}
 hol_0 := hol\in \Omega^0(LM; \gl(\C^n)),
\end{equation}
we seek higher forms $hol_{2k} \in \Omega^{2k}(LM; \gl(\C^n))$  such that for all $k\geq 0$ we have
\begin{equation}\label{zig-zag-w/trace}
\nabla^* (hol_{2k}) =  -\iota_{\Vf}(hol_{2k+2}).
\end{equation}
The last equation \eqref{zig-zag-w/trace} is depicted in figure \ref{FIG:nabla_hol=-i_hol}.
\begin{figure}
\begin{equation*}
\xymatrix{
hol_0 \ar[dr]^{\nabla^*} & & hol_2 \ar[dl]_{-\iota_\mathbf{ t}}  \ar[dr]^{\nabla^*} & & hol_4 \ar[dl]_{-\iota_\mathbf{ t}} \ar[dr]^{\nabla^*}& & hol_6 \ar[dl]_{-\iota_\bold t}\ar[dr]^{\nabla^*}\\
 & It(D(e^{1 \ot A})) & &\bullet &&  \bullet && \cdots
}
\end{equation*}
\caption{The relation $\nabla^* (hol_{2k}) =  -\iota_{\Vf}(hol_{2k+2})$}\label{FIG:nabla_hol=-i_hol}
\end{figure}
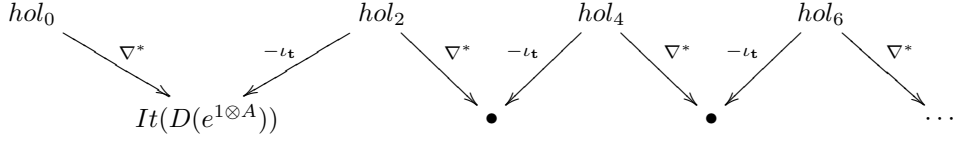
This is motivated in the first case, $\nabla^* hol_{0} =  -\iota_\Vf hol_{2}$, by the fact that
the necessary condition $\iota_\Vf \nabla^* hol_{0}=-(\iota_\Vf)^2 hol_2=0$ indeed holds by corollary \ref{COR:nabla-hol02}(2). To define the higher holonomies, it is convenient to introduce the following extended iterated integral map $E$.

\begin{defn} \label{Def:E}
Denote by $E: CH_\bullet(\Omega(M \times S^1;\gl)) \to \Omega(LM;\gl)$ the composition of the iterated integral map $It:CH_\bullet(\Omega(X\times S^1;\gl))\to \Om(L(X\times S^1);\gl)$ for the cartesian product $X\times S^1$, with the pullback of the map $\rho: \Omega(L(M \times S^1);\gl) \to \Omega(LM;\gl)$ of the map $\rho:LM \to L(M \times S^1)$ where $\rho(\gamma)(t)= (\gamma(t), -t)$. More explicitly, the extended iterated integral map is given by $E= \rho^* \circ It$,
\[
E :CH_\bullet(\Omega(M \times S^1;\gl)) \stackrel {It} \longrightarrow \Om(L(X\times S^1);\gl) \stackrel {\rho^*} \longrightarrow \Omega(LM;\gl).
\]
\end{defn}

\begin{rmk} The above definition is motivated by a similar extended iterated integral map used by Getzler, Jones, and Petrack in \cite{GJP} to define the equivariant Chern character on the free loop space. More precisely, the extended iterated integral map in \cite{GJP} is defined as the composition,
\begin{multline*}
CH_{k[u_2]} \Big(\Omega(X\times S^1)^{S^1}[u_2]\Big)[u_1]/((1,dt)-(1)) \otimes_{k[u_1,u_2]} \C[u,u^{-1}]] \\
\stackrel{It}\longrightarrow \Omega(L(X\times S^1))^{S^1}[u_1,u_2] \otimes_{k[u_1,u_2]} \C[u,u^{-1}]] \\
\stackrel{\rho^*}\longrightarrow \Omega(L(X))[u] \otimes_{k[u]} \C[u,u^{-1}]],
\end{multline*}
where $It$ and $\rho^*$ are extended linearly over $k[u]$, and $k[u]$ is thought of as a $k[u_1,u_2]$ module via $u_1\mapsto u$ and $u_2\mapsto u$.
\end{rmk}

An explicit formula for the map $E$ is given in \cite[section 5]{GJP} Namely,
\begin{multline} \label{E}
E((a_0 + b_0 dt) \ot  \dots \ot (a_n + b_n dt)) = \\
\int_{\Delta^n} a_0(0) (\iota_\Vf a_1(t_1) - b_1(t_1)) \dots  (\iota_\Vf a_n(t_n) - b_n(t_n)) dt_1 \cdots dt_n
\end{multline}
It follows that we can rewrite the holonomy using the map $E$.
\begin{lem}
We have $hol_0 =It(e^{1\otimes A})=E(e^{1\otimes A})$, where, by abuse of notation, the $1$-form $A$ in the last expression is taken as the pullback of $A\in\Om^1(M,\gl)$ along the projection $M \times S^1 \to M$.
\end{lem}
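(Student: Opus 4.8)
The plan is to unwind both sides of the claimed equality using their definitions and reduce to the explicit formula \eqref{E} for the map $E$. First I would recall that $E = \rho^* \circ It$ where $\rho: LM \to L(M\times S^1)$ is $\rho(\gamma)(t) = (\gamma(t), -t)$. The key observation is that the pullback $A$ on $M\times S^1$ in the expression $E(e^{1\otimes A})$ is, by the stated abuse of notation, $pr_M^* A \in \Omega^1(M\times S^1; \gl)$, which has zero $dt$-component; that is, in the notation of \eqref{E} it is of the form $a + b\,dt$ with $b = 0$ and $a = A$ (pulled back). So the term $(\iota_\Vf a_i(t_i) - b_i(t_i))$ appearing in \eqref{E} simply becomes $\iota_\Vf A(t_i)$.

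Next I would substitute $e^{1\otimes A} = \sum_{k\ge 0} (1\otimes A)^{\bullet k}/k! = \sum_{k\ge 0} 1\otimes A \otimes \cdots \otimes A$ (the shuffle powers collapse to the single tensor $1\otimes A^{\otimes k}$ since the shuffle of $1\otimes A$'s with positive sign coefficient $1/k!$ precisely reconstitutes $1\otimes A^{\otimes k}$ with multiplicity $k!$ — this was already noted in the proof of Lemma~\ref{hol=It(e^A)}). Then \eqref{E} applied to $1\otimes A^{\otimes k}$ yields $\int_{\Delta^k} A(0)\,\iota_\Vf A(t_1)\cdots \iota_\Vf A(t_k)\, dt_1\cdots dt_k$, where here $A(0) = 1$ contributes trivially and the remaining expression is exactly the defining formula \eqref{chen} for $It(1\otimes A^{\otimes k})$ evaluated with the pulled-back $A$. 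Crucially, because $A$ is pulled back along $pr_M$, the composition with $\rho^*$ has no effect on these terms: the $S^1$-coordinate of $\rho(\gamma)$ never enters, so $\rho^* It(e^{1\otimes pr_M^*A}) = It(e^{1\otimes A})$ computed directly on $M$. Assembling, $E(e^{1\otimes A}) = \sum_{k\ge 0}\int_{\Delta^k} \iota_\Vf A(t_1)\cdots \iota_\Vf A(t_k)\,dt_1\cdots dt_k = It(e^{1\otimes A}) = hol_0$, where the middle equality is Lemma~\ref{hol=It(e^A)} together with equation \eqref{Pexp}.

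The only real subtlety — and the step I would treat most carefully — is the bookkeeping around the pullback map $\rho^*$ and the identification of $L(M\times S^1)$-forms arising from $pr_M$-pulled-back data: one must check that $It$ applied on $M\times S^1$ to a tensor of $pr_M^*A$'s, then restricted via $\rho^*$, agrees termwise with $It$ applied on $M$ to the corresponding tensor of $A$'s. This is essentially the naturality of the iterated integral construction under the projection $M\times S^1 \to M$, combined with the fact that $pr_M \circ \rho: LM \to M^{S^1}$ on the level of loop spaces is just the identity (more precisely, $L(pr_M)\circ \rho = \id_{LM}$). Once that naturality is invoked, the remaining computation is the routine sign check already performed in Lemma~\ref{hol=It(e^A)}, where all signs in the shuffle products and iterated integrals are positive. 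Hence the statement follows.
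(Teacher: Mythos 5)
Your proof is correct and is precisely the verification the paper intends but omits: since the pullback of $A$ along $M\times S^1\to M$ has no $dt$-component, every $b_i$ in the explicit formula \eqref{E} vanishes, so $E(e^{1\otimes A})$ collapses termwise to the Chen formula \eqref{chen}, which equals $It(e^{1\otimes A})=hol_0$ by Lemma \ref{hol=It(e^A)}. The only nitpick is notational: the zeroth tensor factor evaluated at time $0$ is $1$, not $A(0)$, though your parenthetical makes clear this is what you mean.
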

We now define the higher holonomies $hol_{2k}\in\Om^{2k}(LM;\gl)$, which serve as corrections for the failure of $\nabla^* hol=0$. Here, the term ``higher'' refers to the higher degree of the forms.
\begin{defn}\label{DEF::hol_2k} 
Denote by $A\in\Om^1(M,\gl)$ our connection $1$-form, by $R=dA+A^2\in\Om^2(M,\gl)$ the curvature, and denote by $dt\in\Om^1(S^1)$ be the canonical $1$-form on the circle $S^1$. By abuse of notation we will use the same symbols for the forms on $M\times S^1$ given by pullback under the projections $M\times S^1\to M$ and $M\times S^1\to S^1$, and we denote by $Rdt\in\Om^3(M\times S^1,\gl)$ the product of $R$ and $dt$. We define $hol_{2k} \in \Omega^{2k}(LM;\gl)$ by,
\begin{eqnarray*}
\mathfrak h_{2k} &:=&   \bigg(1 \ot  \underbrace{(-Rdt )\ot\dots\ot(-Rdt)}_{k\text{ tensor factors}}\bigg) \bullet e^{1 \ot A} \in CH_\bullet(\Omega(M\times S^1;\gl)), \\
hol_{2k}&:=& E(\mathfrak h_{2k}).
 \end{eqnarray*}
\end{defn}
The following lemma expresses the higher holonomies as iterated integrals. Heuristically, $hol_{2k}$ is given by an iterated integral similar to $hol_0$, where we replace exactly $k$ copies of $\iota_\Vf A$ by the curvature $R$. 
\begin{lem} \label{lem:Ehol}
We have the identity
\[
hol_{2k} = \sum_{m \geq k} \, \,  \sum_{1 \leq j_1 < \dots < j_k \leq m} \int_{\Delta^m} X_1(t_1) \cdots X_m(t_m) dt_1 \cdots dt_m,
\]
where
\[
X_j (t_j) = \left\{
\begin{array}{rl}
R(t_j) & \text{if  } j \in \{ j_1, \dots , j_k\} \\
\iota_\Vf A(t_j)  & \text{otherwise}
\end{array} \right.
\]
Here $R(t_j)$ is a $2$-form taking in two vectors on a loop $\gamma$ at $\gamma(t_j)$. \end{lem}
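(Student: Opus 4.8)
The plan is to unwind both the extended iterated integral map $E$ and the shuffle product appearing in the definition of $\mathfrak h_{2k}$, and to observe that all signs collapse to $+1$, just as in Lemma~\ref{hol=It(e^A)} and Proposition~\ref{prop:De^a}. First I would expand the shuffle product $(1 \ot (-Rdt)^{\ot k}) \bullet e^{1\ot A}$ in $CH_\bullet(\Omega(M\times S^1;\gl))$. Since the base point slots multiply to $1 \cdot 1 = 1$ and the only nontrivial factors are $(-Rdt)$ and $A$ — all of which are pulled back from commuting factors $M$ and $S^1$ and carry Koszul signs that we will see cancel against the shuffle signs — the result is a sum over all ways of interleaving $k$ copies of $(-Rdt)$ with $m-k$ copies of $A$ into a word of length $m$, summed over $m \geq k$. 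Concretely, $\mathfrak h_{2k} = \sum_{m\geq k}\sum_{1\le j_1<\dots<j_k\le m} 1 \ot Y_1 \ot \cdots \ot Y_m$, where $Y_{j_i} = -Rdt$ and $Y_j = A$ otherwise; the shuffle sign is exactly the Koszul sign needed to make this a positive sum, because the forms being shuffled are the same $(-Rdt)$ or $A$ in each slot up to position and these are exactly the shuffles whose signs Getzler--Jones--Petrack computed to be compatible (this is the same phenomenon as in Lemma~\ref{holglue} and Remark~\ref{rmk:moreglue}).

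Next I would apply the explicit formula \eqref{E} for $E$ term by term. For a factor $Y_j = A$ (pulled back from $M$, so with no $dt$-component, i.e.\ $b_j = 0$) the rule $(\iota_\Vf a_j - b_j)$ produces $\iota_\Vf A(t_j)$. For a factor $Y_{j_i} = -Rdt$, writing $-Rdt = a + b\,dt$ with $a = 0$ and $b = -R$ (as a form on $M\times S^1$, $R$ pulled back from $M$ and $dt$ from $S^1$), the rule gives $(\iota_\Vf\, 0 - (-R(t_{j_i}))) = R(t_{j_i})$. Thus each term of $E(\mathfrak h_{2k})$ is $\int_{\Delta^m} X_1(t_1)\cdots X_m(t_m)\,dt_1\cdots dt_m$ with $X_{j_i} = R$ and $X_j = \iota_\Vf A$ otherwise, which is precisely the asserted formula. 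The bookkeeping that $R$ here is a genuine $2$-form (taking two tangent vectors on the loop at $\gamma(t_j)$) rather than a contracted $1$-form follows because $\rho(\gamma)(t) = (\gamma(t),-t)$ has $S^1$-component $-t$ whose velocity is the generator of the circle, so $\iota_\Vf(dt)$ on $L(M\times S^1)$ pulls back to $-1$ under $\rho^*$, which is exactly what converts $\iota_\Vf(Rdt) = (\iota_\Vf R)\,dt \pm R\,\iota_\Vf(dt)$ into the degree count that leaves $R$ uncontracted; I would spell this out carefully since it is where the "heuristic" in the statement becomes a proof.

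The main obstacle is the sign analysis: one must check that the Koszul signs in the shuffle product of Definition~\ref{DEF:Hoch+shuffle}, the shuffle-parametrization sign $(-1)^\kappa$, the sign $(-1)^{(n-1)|a_1| + \cdots + |a_{n-1}| + n(n-1)/2}$ relating $It$ to $ev^*$ followed by integration along $\Delta^n$, and the sign from $\rho^*$ acting on $dt$-factors, all conspire to yield the clean positive sum above. I expect this to go through exactly as in \cite[section 5]{GJP} and as in the proof of Proposition~\ref{prop:De^a}, where the same cancellation already occurred for $k=1$; the general case is the $k$-fold iteration of that computation, and the degrees involved ($|A|=1$, $|Rdt|=3$, both odd) make the shuffle product graded-commutative-compatible so that no extra signs survive. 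Once the signs are pinned down, the identity is a formal rewriting, so I would organize the write-up as: (1) expand the shuffle; (2) apply \eqref{E}; (3) collect the $\rho^*(dt)$ contributions; (4) verify signs by reduction to the GJP computation.
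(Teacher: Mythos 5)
Your proposal is correct and follows essentially the same route as the paper's own proof: expand the shuffle product $\bigl(1 \ot (-Rdt)^{\ot k}\bigr) \bullet e^{1\ot A}$ into a sum over positions $1 \leq j_1 < \dots < j_k \leq m$ with all Koszul signs equal to $+1$ because $A$ and $Rdt$ are both of odd degree, and then apply the explicit formula \eqref{E} for $E$, under which $A$ contributes $\iota_\Vf A(t_j)$ and $-Rdt$ (with $a=0$, $b=-R$) contributes $R(t_j)$. Your additional unwinding of why $\rho^*$ kills the $(\iota_\Vf R)\,dt$ term and leaves $R$ uncontracted is a correct elaboration of what the paper delegates to the cited formula from \cite[section 5]{GJP}.
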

\begin{proof}
If we apply the definition of shuffle product, all of the signs are positive, since $A$ and $Rdt$ are odd, and we obtain
\begin{equation*}
E(\mathfrak h_{2k} ) = E \left(    \sum_{m \geq k} \, \,  \sum_{1 \leq j_1 < \dots < j_k \leq m}  1 \ot X_1 \ot \cdots \ot X_m  \right)
\end{equation*}
where 
\[
X_j = \left\{
\begin{array}{rl}
-Rdt & \text{if  } j \in \{ j_1, \dots , j_k\} \\
A  & \text{otherwise}
\end{array} \right.
\]
Now using the formula for $E$ in \eqref{E}, we obtain the claim.
\end{proof}
We now prove our main theorem of this section.
\begin{thm}\label{THM:nabla=id/dt}
For all $k \geq 0$ we have that
\[
\nabla^* hol_{2k} = - \iota_\Vf hol_{2k+2}.
\]
These terms are given by the explicit formula
\[
\sum_{m \geq k+1}\sum_{\scriptsize
\begin{matrix}
1 \leq j_1 < \dots < j_{k} \leq m\\
1\leq s\leq m\text{, with }s\neq  j_1, \ldots , j_{k}
\end{matrix}
} \int_{\Delta^m} X_1(t_1) \cdots X_m(t_m) dt_1 \cdots dt_m,
\]
where
\[
X_j(t_j) = \left\{
\begin{array}{rl}
 R(t_j) & \text{if  } j \in \{ j_1, \dots , j_{k}\} \\
-\iota_\Vf R(t_j) & \text{if  } j =s \\
\iota_\Vf A(t_j)  & \text{otherwise.}
\end{array} \right.
\]
\end{thm}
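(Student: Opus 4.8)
The plan is to push everything through a single identity in the Hochschild complex of $\Omega(M\times S^1;\gl)$, namely
\[
D\mathfrak h_{2k} = \big(1\ot(-R\,dt)^{\ot k}\big)\bullet\big(1\ot(-R)\big)\bullet e^{1\ot A},
\]
and then to extract both sides of the theorem by applying the extended iterated integral $E$. (Here $-R$, with no $dt$, is the curvature pulled back from $M$, as opposed to the factor $-R\,dt$ occurring in $\mathfrak h_{2k}$; cf. Definition \ref{DEF::hol_2k}.) To prove this identity I would write $\mathfrak h_{2k}$ as the explicit sum over interleavings of $k$ copies of $-R\,dt$ with copies of $A$, exactly as in the proof of Lemma \ref{lem:Ehol}, and then repeat the Bianchi computation of Proposition \ref{prop:De^a} in the presence of these extra factors: (i) the terms of $D$ in which $d$ hits an $A$ combine with the adjacent $A\cdot A$ shuffle-merge via $dA+A^2=R$ to insert one $-R$ into each slot; (ii) for each interleaving the cyclic term of $D$ cancels the $i=0$ merge term of the cyclically rotated interleaving; (iii) the terms in which $d$ hits a factor $-R\,dt$ give $-(dR)\,dt=-(RA-AR)\,dt$ which, by the Bianchi identity $dR=[R,A]$, cancels the mixed shuffle-merges $A\cdot(-R\,dt)=-(AR)\,dt$ and $(-R\,dt)\cdot A=(RA)\,dt$, while $(-R\,dt)\cdot(-R\,dt)=0$ since $dt\wedge dt=0$. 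Since $A$ and $R\,dt$ both have odd total degree, hence even degree after the shift defining $CH_\bullet$, all Koszul signs on these monomials are $+1$, which is what makes the cancellations in (ii) and (iii) sign-compatible.

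Next I would show $\nabla^*hol_{2k}=E(D\mathfrak h_{2k})$. Because $E=\rho^*\circ It$ and $\rho^*$ commutes with $d_{DR}$, the $M\times S^1$ version of the $d_{DR}$-versus-$It$ formula of Proposition \ref{prop:dIt} (the general version being Proposition \ref{dItpU}) gives $d_{DR}E(\mathfrak h_{2k})=E(D\mathfrak h_{2k})+(\text{boundary})$, where the boundary is $\rho^*$ of the terms $\pm\,ev_0^*(a_n)\wedge It(\cdots)$ and $\pm\,It(\cdots)\wedge ev_0^*(a_n)$ summed over monomials $1\ot X_1\ot\cdots\ot X_n$ of $\mathfrak h_{2k}$ with last factor $a_n\in\{A,-R\,dt\}$. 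The $a_n=-R\,dt$ contributions vanish under $\rho^*$, since $ev_0\circ\rho$ maps $LM$ into $M\times\{0\}$ and $dt$ restricts to zero there; the $a_n=A$ contributions sum, using that deleting the final factor of an $A$-terminated interleaving reconstitutes $\mathfrak h_{2k}$, to $-A\cdot hol_{2k}+hol_{2k}\cdot A$. Since $\nabla^*$ is locally $d_{DR}+[A,-]$, these cancel, giving $\nabla^*hol_{2k}=E(D\mathfrak h_{2k})$.

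It then remains to identify both $E(D\mathfrak h_{2k})$ and $-\iota_\Vf hol_{2k+2}$ with the explicit sum in the theorem. Expanding $E(D\mathfrak h_{2k})$ by the formula \eqref{E}, a factor $A$ renders as $\iota_\Vf A$, a factor $-R\,dt$ renders as the uncontracted $2$-form $R$, and the single factor $-R$ renders as $\iota_\Vf(-R)=-\iota_\Vf R$; this is exactly the displayed formula, with $\{j_1<\cdots<j_k\}$ the positions of the $-R\,dt$-factors and $s$ the position of the $-R$-factor. On the other side, Lemma \ref{lem:Ehol} presents $hol_{2k+2}$ as a sum of iterated integrals with $k+1$ uncontracted curvature factors $R$ and all other factors $\iota_\Vf A$; since $\iota_\Vf$ is an odd derivation annihilating the $0$-forms $\iota_\Vf A$ and passing the even $2$-forms $R$ without sign, $-\iota_\Vf hol_{2k+2}$ is the sum over the choice of one curvature position $s$ at which $R$ is replaced by $-\iota_\Vf R$, the remaining $k$ positions $\{j_1<\cdots<j_k\}$ keeping $R$ — the same sum. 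This simultaneously yields the identity $\nabla^*hol_{2k}=-\iota_\Vf hol_{2k+2}$ and the explicit formula.

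I expect the only real difficulty to be Step~1: checking that in $D\mathfrak h_{2k}$ the $(AR)\,dt$ and $(RA)\,dt$ terms coming from the $d(-R\,dt)$ differentials and from the mixed shuffle-merges indeed cancel in pairs (each such term arising exactly once from each source, with opposite Hochschild sign). This is a finite but somewhat fussy sign check; it is the natural upgrade of the identity $De^{1\ot A}=(1\ot(-R))\bullet e^{1\ot A}$ of Proposition \ref{prop:De^a}, and once it is in hand, Steps 2 and 3 are formal manipulations with the $E$-formula \eqref{E} and the derivation property of $\iota_\Vf$.
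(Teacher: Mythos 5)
Your proposal is correct and follows essentially the same route as the paper's proof: establishing $\nabla^* E(\mathfrak h_{2k}) = E(D\mathfrak h_{2k})$ via the boundary terms of the iterated-integral differential (with the $-Rdt$ endpoint contributions killed by $\rho^*$ and the $A$ endpoint contributions absorbed into $[A(0),-]$), computing $D\mathfrak h_{2k}$ from $dA+A^2=R$, the Bianchi identity, and $(Rdt)^2=0$, and then matching the result against $-\iota_\Vf hol_{2k+2}$ using Lemma \ref{lem:Ehol} and the derivation property of $\iota_\Vf$. Your packaging of $D\mathfrak h_{2k}$ as the shuffle identity $\big(1\ot(-Rdt)^{\ot k}\big)\bullet\big(1\ot(-R)\big)\bullet e^{1\ot A}$ is just the closed form of the explicit sum the paper writes out, so the content is the same.
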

\begin{proof}
We first show that  $\nabla^* E (\mathfrak h_{2k} )= ED(\mathfrak h_{2k})$, where $\nabla^* = d_{DR} + [A(0), -]$. To do this we calculate $dE(\mathfrak h_{2k})$.
 Expanding $E(\mathfrak h_{2k})$ as in the proof of \ref{lem:Ehol}, and using the fact that $E = \rho^* \circ It$ and $\rho^*$ is a chain map, it follows from proposition \ref{prop:dIt} that
\begin{multline*}
d E (hol_{2k}) 
=  \rho^* \Bigg\{ \sum_{m \geq k} \, \,  \sum_{1 \leq j_1 < \dots < j_k \leq m} \bigg[  - \sum_{i=1}^m 
It( 1 \ot X_1 \ot \cdots \ot d X_i \ot \cdots \ot X_m ) \\
- \sum_{i=1}^{m-1} It( 1 \ot X_1 \ot \cdots \ot  (X_i X_{i+1}) \ot \cdots \ot X_m )  \\
- X_1(0) \cdot It(1 \ot X_2 \ot  \cdots \ot X_m )
+ It(1 \ot X_1 \ot \cdots \ot X_{m-1})\cdot X_m(0) ] \bigg] \Bigg\},
\end{multline*}
where
\[
X_j = \left\{
\begin{array}{rl}
-Rdt & \text{if  } j \in \{ j_1, \dots , j_k\} \\
A  & \text{otherwise.}
\end{array} \right.
\]
and $X_i(0) = ev^*_0 (X_i)$ with $ev_0: L(M \times S^1) \to M \times S^1$ being the evaluation at time zero. Note that since the composition $ev_0 \circ \rho: LM \to L(M \times S^1) \to M \times S^1$ sends $\gamma$ to $(\gamma(0), 0)$, the pullback $\rho^* (Rdt(0))$ is zero. Therefore, the last two terms in the sum are zero whenever $X_1 = -Rdt$, or 
$X_m = -Rdt$, respectively. So we have
\begin{multline} \label{dEhol2k}
d_{DR} \circ \rho^* \circ It (\mathfrak h_{2k})= \rho^* \circ d_{DR} \circ It (\mathfrak h_{2k}) \\
 =  \rho^* \Bigg\{ \sum_{m \geq k} \, \,  \sum_{1 \leq j_1 < \dots < j_k \leq m} \bigg[  -\sum_{i=1}^m
It( 1 \ot X_1 \ot \cdots \ot d X_i \ot \cdots \ot X_m ) \\
- \sum_{i=1}^{m-1} It( 1 \ot X_1 \ot \cdots \ot  (X_i X_{i+1}) \ot \cdots \ot X_m )  \\
- A(0) \cdot It(1 \ot X_1 \ot \cdots \ot X_m )
+ It(1 \ot X_1 \ot \cdots \ot X_{m})\cdot A(0) ] \bigg] \Bigg\}
\end{multline}
with the same $X_j$ as before. On the other hand, by definition \ref{DEF:Hoch+shuffle},
\begin{multline}\label{EQ:D(h2k)-Ch2}
D \mathfrak h_{2k} = 
\left\{ \sum_{m \geq k} \, \,  \sum_{1 \leq j_1 < \dots < j_k \leq m} \left[  -\sum_{i=1}^m  1 \ot X_1 \ot \cdots \ot d X_i \ot \cdots \ot X_m ) \right.\right.
\\
\left.\left. - \sum_{i=1}^{m-1}  1 \ot X_1 \ot \cdots \ot  (X_i X_{i+1}) \ot \cdots \ot X_m ) \right] \right\},
\end{multline}
since for all $X_1, \ldots , X_m$, with $X_j \in \{ A, -Rdt \}$, the terms  $A \ot X_1 \ot  \cdots \ot X_m$ and $Rdt \ot X_1 \ot \cdots \ot X_m$ all appear twice and cancel by sign due to $A$ and $Rdt$ being odd. Therefore equation \eqref{dEhol2k} shows that
\[
\nabla^* E (\mathfrak h_{2k} )= d_{DR}E (\mathfrak h_{2k} )+[A(0),E (\mathfrak h_{2k} )] =\rho^* (It (D(\mathfrak h_{2k}))) =ED(\mathfrak h_{2k}).
\]

Now we calculate $\nabla^* (hol_{2k})=ED(\mathfrak h_{2k})$ explicitly. Using the relations $dA + A^2 = R$, $dR + [A,R] = 0$, and $(Rdt)^2=0$, we obtain from equation \eqref{EQ:D(h2k)-Ch2} that
\begin{equation*}
D(\mathfrak h_{2k})= \sum_{m \geq k+1} \, \, \sum_{s=1}^m \,\,  \sum_{\stackrel{1 \leq j_1 < \dots < j_{k} \leq m}{ j_1, \ldots , j_k \neq s}}  1 \ot X_1 \ot \cdots \ot X_m,
\end{equation*}
where 
\[
X_j = \left\{
\begin{array}{rl}
-R & \text{if  } j =s \\
 -Rdt & \text{if  } j \in \{ j_1, \dots , j_{k}\} \\
A  & \text{otherwise}.
\end{array} \right.
\]
By the formula \eqref{E} for the extended iterated integral $E$, we have that
\begin{equation*}
\nabla^* (hol_{2k})=ED(\mathfrak h_{2k}) =   \sum_{m \geq k+1} \,\,\sum_{s=1}^m \,\,  \sum_{\stackrel{1 \leq j_1 < \dots < j_{k} \leq m}{ j_1, \ldots , j_k \neq s}} \int_{\Delta^m} X_1(t_1) \cdots X_m(t_m) dt_1 \cdots dt_m,
\end{equation*}
where 
\[
X_j(t_j) = \left\{
\begin{array}{rl}
-\iota_\Vf R(t_j) & \text{if  } j =s \\
 R(t_j) & \text{if  } j \in \{ j_1, \dots , j_{k}\} \\
\iota_\Vf A(t_j)  & \text{otherwise}
\end{array} \right.
\]
On the other hand, this equals
\[
-\iota_\Vf (hol_{2k+2})= -\iota_\Vf\Bigg( \sum_{m \geq k+1} \, \,  \sum_{1 \leq j_1 < \dots < j_{k+1} \leq m} \int_{\Delta^m} X_1(t_1) \cdots X_m(t_m) dt_1 \cdots dt_m\Bigg),
\]
where
\[
X_j = \left\{
\begin{array}{rl}
R(t_j) & \text{if  } j \in \{ j_1, \dots , j_{k}\} \\
\iota_\Vf A(t_j)  & \text{otherwise}
\end{array} \right.
\]
since contraction $\iota_\Vf$ is a linear graded derivation of square zero.
\end{proof}
For our starting data of a vector bundle $E \to M$ with connection $\nabla$, there is a nice way to combine all of this information as a closed periodic form in the following way. Consider 
\[
Ch^{(u)}(E; \nabla) := \sum_{k\geq 0} u^{-k}\cdot hol_{2k},
\]
where $u$ is a formal variable of degree $2$. From the discussion above we have that 
\begin{cor}\label{COR:nabla+i(Chern)=0}
$ (\nabla^* + u \cdot \iota_\Vf) \left( Ch^{(u)}(E; \nabla)  \right) = 0. $
\end{cor}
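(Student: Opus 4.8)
The plan is to obtain the corollary as an immediate formal consequence of Theorem~\ref{THM:nabla=id/dt}, by expanding the operator $\nabla^*+u\cdot\iota_\Vf$ against the defining series of $Ch^{(u)}(E;\nabla)$ and collecting terms according to powers of the formal parameter $u$.

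First I would use that $\nabla^*$ and $\iota_\Vf$ are linear over $\C[u,u^{-1}]$ to write
\[
(\nabla^*+u\cdot\iota_\Vf)\Big(\sum_{k\geq 0} u^{-k}\, hol_{2k}\Big)
= \sum_{k\geq 0} u^{-k}\,\nabla^* hol_{2k}\;+\;\sum_{k\geq 0} u^{-k+1}\,\iota_\Vf hol_{2k}.
\]
Next I would note that the $k=0$ term of the second sum is $u\cdot\iota_\Vf hol_{0}=u\cdot\iota_\Vf hol$, which is zero because $hol=hol_0\in\Omega^0(LM;\gl(\C^n))$ is a function and contraction of a $0$-form vanishes; after reindexing $j=k-1$, the second sum becomes $\sum_{j\geq 0} u^{-j}\,\iota_\Vf hol_{2j+2}$.

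Then I would invoke Theorem~\ref{THM:nabla=id/dt} to replace $\nabla^* hol_{2k}$ by $-\iota_\Vf hol_{2k+2}$ in the first sum, so that the first sum reads $-\sum_{k\geq 0} u^{-k}\,\iota_\Vf hol_{2k+2}$; adding this to the reindexed second sum, the two series cancel term by term and the total is $0$.

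I do not expect a genuine obstacle: the content of the corollary is entirely carried by Theorem~\ref{THM:nabla=id/dt}, and the remaining manipulations are bookkeeping. The only minor points deserving a sentence of care are the vanishing of the lowest-order term $u\cdot\iota_\Vf hol_0$ (which holds since $hol_0$ has form-degree zero) and, if one insists on working with infinite series, the observation that both reindexed sums lie in the same space of formal series in $u^{-1}$, so that term-by-term cancellation is justified.
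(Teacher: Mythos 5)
Your proposal is correct and is exactly the argument the paper intends: the corollary is stated there as an immediate consequence of Theorem~\ref{THM:nabla=id/dt}, and your expansion, the observation that $\iota_\Vf hol_0=0$ for degree reasons, the reindexing, and the term-by-term cancellation are precisely the bookkeeping the paper leaves implicit. No gaps.
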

We note that the formula in corollary \ref{COR:nabla+i(Chern)=0} holds before taking the trace. Upon taking trace, the element 
\[
tr(Ch(E; \nabla)) =  \sum_{k\geq 0} u^{-k}\cdot tr(hol_{2k} ) \in \Om(LM)[u,u^{-1}]]
\]
satisfies by Lemma \ref{dtr}, that $(d + u\cdot \iota_\Vf) tr(Ch^{(u)}(E;\nabla)) = 0$, and by theorem \ref{THM:nabla=id/dt} that its Lie derivative $\mathcal L_\vf=d \iota_\vf+\iota_\vf d$ along $\vf$ vanishes. This element $tr(Ch^{(u)}(E; \nabla))$ is referred to in \cite{GJP} as the \emph{equivariant Chern character}. We thus may consider the complex of invariant forms
\[
\Om(LM)[u,u^{-1}]]^{inv(\vf)}=\bigg\{ \sum_{i\leq r} u^i\cdot \omega_i \Big| r\in \Z, \omega_i\in \Omega(LM) \text{ with } \mathcal L_\vf(\omega_i)=0 \bigg\},
\]
which has a differential $d+u\cdot \iota_\vf$ of square zero, since $(d+u\cdot \iota_\vf)^2=u\cdot \mathcal L_\vf=0$ on $\Om(LM)[u,u^{-1}]]^{inv(\vf)}$. Then, $tr(Ch(E; \nabla))\in \Om(LM)[u,u^{-1}]]^{inv(\vf)}$ is a closed element of total degree $0$.

For later purposes, it will also be useful to define the equivariant Chern character in a complex without formal variables, see \cite{AB, B, W}. More precisely, we denote by $\Omega(LM)^{inv(\vf)}=\{\omega\in\Omega(LM)| \mathcal L_\vf(\omega)=0\}$ the space of invariant forms on the free loop space $LM$ with the Witten differential $d+\iota_\vf$. Using the same reasoning as above, we set
\[
Ch(E; \nabla) := \sum_{k\geq 0}  hol_{2k},
\]
and, after taking the trace, we obtain an element that is concentrated in even degrees, $tr(Ch(E;\nabla))\in \Omega^{even}(LM)^{inv(\vf)}$, and moreover closed in $\Omega(LM)^{inv(\vf)}$, \emph{i.e.} $(d+\iota_\vf)(tr(Ch(E; \nabla)))=0$.

\section{Equivariant Chern character locally defined}\label{SEC:local-vector-bundle}
In this section, we give a local approach to defining the equivariant Chern character on general bundles, inspired by a local formula for holonomy. To this we introduce a generalization of the Hochschild complex that allows as input differential forms on local charts spread out over a manifold.

\subsection{The local Hochschild complex and iterated integral}

For any $p\in \mathbb N$, and $p$ open sets $U_{i_1},\dots, U_{i_p}$ from the cover $\{U_i\}_i$, which we abbreviate by $\U=(U_{i_1},\dots, U_{i_p})$, there is an induced open subset $\NN(p,\U)\subset LM$ given by
\[
\NN(p,\U)=\left\{\gamma\in LM: \left(\gamma\Big|_{\big[\frac{j-1}{p},\frac{j}{p}\big]}\right)\subset U_{i_j}, \forall j=1,\dots,p \right\}.
\]
Note that the collection $\{\NN(p,\U)\}_{p,i_1,\dots,i_p}$ forms an open cover of $LM$.

We now introduce a version of the Hochschild complex that is appropriate for dealing with forms defined on various open sets $U_{i_j}$.

\begin{defn}\label{DEF:local-Hoch}
 Fix $p\in \N$, and open sets $U_{i_1},\dots, U_{i_p}\subset M$. We define the 
 \emph{Hochschild chain complex subject to the data $\U=(U_{i_1},\dots, U_{i_p})$}, by 
\begin{multline*}
CH_\bullet^{(p,\U)}=\bigoplus_{n_1,\dots,n_p\geq 0}\Om(U_{i_p}\cap U_{i_1})\ot \left(\Om(U_{i_1})[1]\right)^{\otimes n_1}\otimes \Om(U_{i_1}\cap U_{i_2})\otimes \left(\Om(U_{i_2})[1]\right)^{\otimes n_2}\\
\otimes \Om(U_{i_2}\cap U_{i_3})\otimes \left(\Om(U_{i_3})[1]\right)^{\otimes n_3}\otimes \dots \otimes 
\left(\Om(U_{i_p})[1]\right)^{\otimes n_p}.
\end{multline*}
Note that the total degree of a monomial in $CH_\bullet^{(p,\U)}$ is then given by (the total degree of all forms)$-(n_1+\dots+n_p)$. We denote by $CH_k^{(p,\U)}$ the subspace of elements of degree $k$. 
The differential in $CH_\bullet^{(p,\U)}$ is similar to the one in $CH_\bullet(A)$ from definition \ref{DEF:Hoch+shuffle}.
To define it, we use the fact that $\Om(U_{i_j}\cap U_{i_{j+1}})$ is a left-$\Om(U_{i_j})$ and right-$\Om(U_{i_{j+1}})$ module via the inclusions $U_{i_j}\cap U_{i_{j+1}}\hookrightarrow U_{i_j}$ and $U_{i_j}\cap U_{i_{j+1}}\hookrightarrow U_{i_{j+1}}$. 
Then the differential on $CH_\bullet^{(p,\U)}$ is given by applying the DeRham differential to each term, as well as  multiplying adjacent elements cyclically in all possible positions, without ever multiplying two module elements. Explicitly,
\begin{multline*}
D\Big(a^1_0 \ot (a^1_{1}\ot  \dots \ot  a^1_{n_1})\ot a^2_0 \ot \dots\ot a^p_0 \ot(a^p_1\ot\dots\ot a^p_{n_p}) \Big)
\\ = - \sum_{i=1}^p \sum_{j=0}^{n_j} (-1)^{\epsilon^i_{j-1}} a^1_0 \ot (a^1_{1}\ot  \dots \ot  a^1_{n_1})\ot a^2_0 \ot \dots \ot d_{DR} (a^i_j) \ot \dots \ot a^p_0 \ot(a^p_1\ot\dots\ot a^p_{n_p})   \\
- \sum_{i=1}^p \sum_{j=0}^{n_i -1} (-1)^{\epsilon^i_j} a^1_0 \ot\dots\ot(a^i_j\cdot a^i_{j+1})\ot\dots \ot a^p_{n_p}  \\
- \sum_{i=1}^{p-1} (-1)^{\epsilon^i_{n_i}} a^1_0 \ot\dots\ot(a^i_{n_i} \cdot a^{i+1}_0)\ot\dots \ot a^p_{n_p}  +
(-1)^{(|a^p_{n_p}| + 1)\cdot \epsilon^p_{n_p -1}} (a^p_{n_p} \cdot a^1_0)\ot a^1_1\ot \dots\ot a^p_{n_p-1}
\end{multline*}
where for $1 \leq i \leq p$ and $0 \leq j \leq n_i$ we have
\[
\epsilon^i_j = \sum_{k=1}^{i-1}\left( |a_0^k|+ \sum_{\ell=1}^{n_k} (|a^k_\ell| + 1) \right) +|a_0^i|+ |a^i_1| + \cdots + |a^i_j| + j.
\]
Note that the signs here can be interpreted via the Koszul rule where the elements $a^i_j$ for $j>0$ are shifted by one. For $p=1$ this complex is precisely the Hochschild complex $CH_\bullet(\Om(U_1))$. It follows that $D^2=0$.

We also have a shuffle product, which is given by individually shuffling each of the $p$ tensor factors,
\begin{multline*}
\bigotimes_{1\leq i\leq p}\left(a^i _0 \otimes \left(a^i_1\otimes \dots\ot a^i_{n_i}\right) \right)\bullet \bigotimes_{1\leq i\leq p}\left(b^i_0 \ot \left(a^i_{n_i+1}\otimes \dots\ot a^i_{n_i+m_i}\right) \right) \\
=\sum_{\scriptsize
\begin{matrix}
\sigma_1\in S(n_1,m_1)\\ \dots  \\ \sigma_p\in S(n_p,m_p)
\end{matrix}
} (-1)^\kappa
\bigotimes_{1\leq i\leq p}   \left((a^i_0 \cdot b^i_0 )\ot\left(a^i_{\sigma^{-1}_i(1)}\ot \dots\ot a^i_{\sigma^{-1}_i(n_i+m_i)} \right)\right).
\end{multline*}
Again, the signs here are determined as with the shuffle product on $CH_\bullet(A)$, via the Koszul rule where, for $j > 0$, $a^i_j$ and $b^i_j$ are shifted by one.

We remark that, as with $CH_\bullet(\Om (M;\gl))$, $D$ on $CH_\bullet^{(p,\U)}$ is not a derivation of the shuffle product on 
$CH_\bullet^{(p,\U)}$ (though it is in the case $\gl = \C$).
\end{defn}
\begin{defn} \label{localIt}
There is an iterated integral map 
\[
\Ch^{(p,\U)}:CH_\bullet^{(p,\U)}\to \Om( \NN(p,\U) ;  \mathcal E |_{\NN(p,\U)} )
\]
 given by
\begin{multline*}
 \Ch^{(p,\U)}(a_0^1\ot a^1_1\ot \dots \ot a^p_{n_p})(\gamma) \\
= \bigwedge_{j=1}^p \left( \int_{\Delta^{n_j}} a_0^j ( t_0 ) \wedge \iota_\vf a^j_1 ( t_1 ) \wedge \dots \wedge \iota_\vf a^j_{n_j} (t_{n_j} ) dt_1 \dots dt_{n_j} \right)
 \end{multline*}
where we have set for vector fields $y_1,\dots , y_m$ along $\gamma$,
\begin{multline*}
 a_0^j ( t_0 )(y_1, \ldots , y_m)  = (a_0^j)_{\gamma (r)} \left(y_1\left(\gamma\left(\frac{j-1}{p}\right)\right), \ldots , y_m\left(\gamma\left(\frac{j-1}{p}\right)\right) \right), \\
  \iota_\vf a^j_k (t) (y_1, \ldots , y_m)  = (a^j_k)_{\gamma(s )} \left(\gamma'(s), y_1\left(\gamma\left(\frac{j-1+t}{p}\right)\right), \ldots , y_m\left(\gamma\left(\frac{j-1+t}{p}\right)\right)\right).
 \end{multline*}
\end{defn}

We note that, as before, this iterated integral map can be interpreted using integration along the fiber in the diagram
\begin{equation*}
\xymatrix{
  \NN(p,\U)\times \left(\Delta^{n_1} \times \dots \times \Delta^{n_p} \right)  \ar[r]^-{ev} 
  \ar[d]_{\int_{\Delta^{n_1} \times \dots \times \Delta^{n_p}}} 
  & (U_{i_1})^{\times (n_1 +1)} \times \dots \times (U_{i_p})^{\times (n_p +1)} \\ 
  \NN(p,\U) & }
\end{equation*}
where the evaluation map $ev = ev^1 \times \dots \times ev^p$ is given by
\begin{eqnarray*}
\NN(p,\U)\times \Delta^{n_j} &\stackrel {ev^j} \longrightarrow & (U_{i_j})^{\times (n_j +1)}\\
(\gamma, (0 \leq t_1\leq \dots\leq t_{n_j} \leq 1)) & \mapsto & \left(\gamma \left( \frac{j-1}{p} \right) , \gamma\left(\frac {j-1+t_1} p \right),\dots, \gamma\left(\frac {j-1+t_{n_j}} p\right)\right).
\end{eqnarray*}
Then, up to a sign, we can write the iterated integral $It^{(p,\U)}$ as the composition
\[
\Ch^{(p,\U)}: CH_\bullet^{(p,\U)} \stackrel {ev^*} \to \Om \left( \NN(p,\U) \times (\Delta^{n_1} \times \dots \Delta^{n_p})  ; \mathcal E |_{\NN(p,\U)}   \right) \stackrel {\int}  \to \Om(\NN(p,\U) ; \mathcal E |_{\NN(p,\U)} )
\]

In the case where all open sets $U_i$ are chosen to be $U_i=M$, and $M$ is simply connected, then the iterated integral map is a quasi-isomorphism $It:CH_\bullet\to \Omega(LM)$, see K.-T. Chen's papers \cite{C1, C2} or also \cite[Proposition 2.5.3]{GTZ}.

\subsection{Holonomy expressed locally}
We assume we have a bundle and connection given by local data as follows. We are given an open cover $\{U_i\}_i$ of $M$, and the complex vector bundle $E\to M$ is given by the local transition functions $g_{i,j}:U_i\cap U_j\to Gl(\C^n)$. Furthermore, the connection $1$-form is given locally by $A_i\in \Om^1(U_i,\gl)$ such that $A_j=g_{i,j}^{-1}A_i g_{i,j}+g_{i,j}^{-1}d_{DR}(g_{i,j})$, or, in other words, $d_{DR}(g_{i,j})=g_{i,j} A_j-A_i g_{i,j}$. The curvature is given by $d_{DR}(A_i)+A_i \wedge A_i= R_i\in \Omega^2(U_i)$. With a choice of open covering, every abstract bundle with connection induces these data. For the remainder of this section, all differential forms are $\gl(\C)$-valued, and we remark $\gl=\gl (\C)\subset GL$.

We will locally define sections $hol_0^{(p,\U)}$ of $\mathcal{E} \to LM$ on open subsets $\NN(p,\U)$, and then show that these glue together to a global section $hol_0$ of $\mathcal{E} \to LM$ where $\mathcal{E}$ is the pullback 
in
\[
\xymatrix{
 \mathcal{E} \ar[r] \ar[d] & End(E) \ar[d] \\
 LM \ar[r]^-{ev_0} & M
 }
\]

Using the given local connection $1$-forms $A_i\in \Om^1(U_i,\gl)$ and transition functions $g_{i,j}\in \Om^0(U_i\cap U_j,Gl )$, we can define an element $\h^{(p,\U)}_0\in CH_0 ^{(p,\U)}$ as follows. The iterated integral of this element is exactly holonomy. Recall from the previous subsection, that we have chosen local data $\U=(U_{i_1},\dots, U_{i_p})$, which is used to determine the open set $\NN(p,\U)\subset LM$.
\begin{defn}\label{DEF:h0(p,U)}
Let $\h^{(p,\U)}_0\in CH_0 ^{(p,\U)}$ be given by 
$$ \h^{(p,\U)}_0=\sum_{n_1,\dots,n_p\geq 0}  g_{i_p,i_1}\ot A_{i_1}^{\otimes n_1}\otimes g_{i_1,i_2}\otimes A_{i_2}^{\otimes n_2} \otimes \dots \otimes g_{i_{p-1},i_p}\otimes A^{\otimes n_p}_{i_p}.
$$
Alternatively, we may write $ \h^{(p,\U)}_0$ as a shuffle product 
$$ \h^{(p,\U)}_0=\tilde g_{i_p,i_1}\bullet e^{\tilde A_{i_1}}\bullet \tilde g_{i_1,i_2} \bullet e^{\tilde A_{i_2}} \bullet \dots \bullet e^{\tilde A_{i_p}},$$
 where  
$$\tilde A_{i_j}= 1\otimes \dots\otimes A_{i_j}\otimes \dots \otimes 1\in\Om(U_{i_p}\cap U_{i_1})\ot \Om(U_{i_1})^{\otimes 0}\ot \dots\ot \Om(U_{i_j})^{\otimes 1}\otimes \dots \ot \Om(U_{i_p})^{\ot 0}$$
has the $1$-form $A_{i_j}\in \Omega^1(U_{i_j})$ at the $j^{\text{th}}$ open set $U_{i_j}$ and $1$s everywhere else, and 
$$ \tilde g_{i_j,i_{j+1}}= 1\otimes \dots\otimes g_{i_j,i_{j+1}}\otimes \dots \otimes 1\in \Om(U_{i_p}\cap U_{i_1})\ot \Om(U_{i_1})^{\otimes 0}\ot \dots\ot \Om(U_{i_j}\cap U_{i_{j+1}})\otimes \dots \ot \Om(U_{i_p})^{\ot 0}
$$
has the $0$-form $g_{i_j,i_{j+1}}\in\Omega^0(U_{i_j}\cap U_{i_{j+1}})$ in the $j^{\text{th}}$ module $\Om(U_{i_j}\cap U_{i_{j+1}})$ and $1$s everywhere else.

We denote by $hol_{0}^{(p,\U)}=\Ch^{(p,\U)}(\h^{(p,\U)}_0)\in \Om^0(\NN(p,\U);  \mathcal E |_{\NN(p,\U)} )$ the iterated integral applied to $\h^{(p,\U)}_0$.
\end{defn}
The locally defined sections $hol_{0}^{(p,\U)}$ glue together to give a globally defined section of $\mathcal{E} \to LM$.
\begin{prop}\label{prop:hol_0-locally}
The local sections $hol_{0}^{(p,\U)}\in \Om^0(\NN(p,\U);  \mathcal E |_{\NN(p,\U)} )$ determine a global section $hol_0\in\Om^0(LM; \mathcal{E})$ satisfying
$$ hol_0|_{\NN(p,\U)}=hol_0^{(p,\U)} \text{ on } \NN(p,\U).  $$
\end{prop}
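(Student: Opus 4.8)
The plan is to verify that the locally defined sections $hol_0^{(p,\U)}$ agree on overlaps $\NN(p,\U)\cap \NN(p',\U')$, which—by the sheaf property of sections of the bundle $\mathcal E \to LM$—suffices to produce the global section $hol_0$. The key observation is that each $hol_0^{(p,\U)}$ is nothing but the classical parallel transport matrix $P_0^1(\gamma)$ expressed through a particular choice of covering of the loop $\gamma$ by the charts $U_{i_1},\dots,U_{i_p}$: between consecutive charts one inserts the transition function $g_{i_j,i_{j+1}}$, and on each subinterval $[\tfrac{j-1}{p},\tfrac{j}{p}]$ one takes the iterated-integral expansion $e^{\tilde A_{i_j}}$ of the local holonomy. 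So the proof strategy is to reduce the statement to the well-known fact that parallel transport is independent of the choice of subdivision and local trivializations.

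First I would make precise, using Lemma \ref{hol=It(e^A)} and the product formula in Lemma \ref{holglue} (together with Remark \ref{rmk:moreglue} if needed), that $\Ch^{(p,\U)}(e^{\tilde A_{i_j}})$ evaluated at $\gamma$ equals the local holonomy $P_{(j-1)/p}^{j/p}(\gamma)$ computed in the trivialization over $U_{i_j}$, interpreted as an endomorphism of the fibre. Then, using the interpretation of $\Ch^{(p,\U)}$ as an evaluation-map-plus-integration-along-the-fiber (the diagram following Definition \ref{localIt}) — which makes the shuffle product go to the wedge/composition of the factors coming from the disjoint simplices $\Delta^{n_1},\dots,\Delta^{n_p}$ — I would show that
\[
hol_0^{(p,\U)}(\gamma) = g_{i_p,i_1}(\gamma(0))\cdot P^{1/p}_0(\gamma)\cdot g_{i_1,i_2}\big(\gamma(\tfrac1p)\big)\cdot P^{2/p}_{1/p}(\gamma)\cdots g_{i_{p-1},i_p}\big(\gamma(\tfrac{p-1}{p})\big)\cdot P^1_{(p-1)/p}(\gamma),
\]
where each parallel-transport factor is taken in the local trivialization indexed by the corresponding open set. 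The transition-function insertions are exactly the change-of-trivialization maps needed so that, under the identification of the endpoint fibres provided by each chart, the composite is the honest parallel transport $P_0^1(\gamma)$ around the loop, i.e. the section $hol$ of $\mathcal E$ of the previous section (using the cocycle relations $g_{ij}g_{jk}=g_{ik}$ and the compatibility $d g_{ij}=g_{ij}A_j - A_i g_{ij}$). In other words, $hol_0^{(p,\U)}$ is the restriction of the intrinsically defined $hol$ to $\NN(p,\U)$.

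Once each local section is identified with the restriction of the single intrinsic section $hol = P_0^1 \in \Om^0(LM;\mathcal E)$, gluing is automatic: on $\NN(p,\U)\cap\NN(p',\U')$ both sides equal $hol$. I expect the main obstacle to be the bookkeeping in the second step — carefully matching the iterated-integral expression $\Ch^{(p,\U)}(\h_0^{(p,\U)})$, with its module factors $g_{i_j,i_{j+1}}$ sitting between blocks of $A_{i_j}$'s, to a product of local parallel transports composed with change-of-trivialization maps, and checking that the signs coming from the shuffle product and the orientation conventions of Convention \ref{CONVENT1} are all positive here (as in the proof of Lemma \ref{hol=It(e^A)}, since $A$ is an odd form). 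The cocycle and gauge-compatibility identities for the $g_{i,j}$ and $A_i$ enter precisely at the moment one collapses the telescoping product of transition functions and local holonomies into the single well-defined endomorphism; verifying this collapse rigorously, rather than the gluing itself, is the technical heart of the argument.
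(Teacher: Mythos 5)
Your proposal is correct in substance, but it routes through a different organizing idea than the paper. You identify each $hol_0^{(p,\U)}$ with (the coordinate expression of) the classical, intrinsically defined parallel transport $P_0^1(\gamma)\in\End(E_{\gamma(0)})$ and let the gluing follow for free; the paper instead never invokes a pre-existing global object, and proves gluing by two direct pairwise comparisons: a \emph{subdivision} property (refining $p$ to $rp$ with repeated charts leaves $hol_0$ unchanged, via the gluing Lemma \ref{holglue}) and an \emph{overlap} property ($hol_0^{(p,\U')} = g_{i_p,j_p}^{-1}(\gamma(0))\,hol_0^{(p,\U)}\,g_{i_p,j_p}(\gamma(1))$, which is exactly the conjugation transition law of $\mathcal E$). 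The technical heart you correctly anticipate — the ``collapse'' using $dg_{ij}=g_{ij}A_j-A_ig_{ij}$ — is carried out in the paper as the single-chart identity \eqref{moveg}, proved by a purely iterated-integral computation resting on the multiplicative fundamental theorem of calculus \eqref{Gltrans}, $g(r)\,It(e^{1\ot g^{-1}dg})=g(s)$; the paper explicitly notes that one \emph{could} instead cite well-definedness of parallel transport from ODE theory, as you do, but deliberately declines. What each approach buys: yours is shorter and conceptually transparent if one accepts the classical fact (though note that in the purely local setting the ``intrinsic'' $P_0^1$ is itself only defined by the same covering-and-patching, so reducing to it quietly presupposes the independence you are trying to prove, or else an appeal to ODE uniqueness chart by chart); the paper's self-contained iterated-integral argument has the advantage that its two-property template transfers verbatim to the higher holonomies $hol_{2k}$ — higher-degree forms with no ODE characterization — where the subsequent gluing proposition simply says ``repeat the proof,'' replacing some $\iota_\vf A$'s by $R$ and using $g_{ij}R_i=R_jg_{ij}$.
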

\begin{proof}
We use the following two properties:
\begin{enumerate}
\item Subdivision: Fix $p$ and $\U = \{ U_{i_1},\dots, U_{i_p} \}$. Subdivide each of the $p$ intervals of $[0,1]$ into $r$ subintervals, and use the same open set $U_{i_j}$ for all of the $r$ subintervals of the $j^{\text{th}}$ interval, to give a new cover $\U'$ with 
\[
U'_{i_{m\cdot r-r+1}}= \dots =U'_{i_{m\cdot r}} =U_{i_m}
\]
 for $1 \leq m \leq p$. 
Then $\NN(p,\U)=\NN(r\cdot p, \U')$, and $hol_0^{(p,\U)}=hol_0^{(r\cdot p, \U')}$.
\item Overlap: For any $p$ and $\U = \{U_{i_1} , \ldots , U_{i_p} \}$ and 
 $\U' = \{U_{j_1} , \ldots , U_{j_p} \}$, we have 
\[
hol_0^{(p,\U')} (\gamma) = g_{i_p , j_p}^{-1}(\gamma(0)) hol_0^{(p,\U)}(\gamma) g_{i_p , j_p} (\gamma(1))
\]
for $\gamma \in  \NN(p,\U \cap \U')$ where 
where \[
\U \cap \U' = \{ (U_{i_1} \cap U_{j_1}), \ldots , (U_{i_p} \cap U_{j_p}) \}.
\]
\end{enumerate}
The lemma follows from these two facts, since for $\gamma\in \NN(p,U_{i_1},\dots)\cap \NN(p',U_{j_1},\dots)$, we may assume by (1) that $p=p'$, and then by $(2)$ the sections agree on the overlap, up to conjugation by $g_{i_p , j_p}$ 
at $\gamma(0) = \gamma(1)$. 
Since the fiber of $\mathcal{E}$ over $\gamma$ is $End(E_{\gamma(0)}, E_{\gamma(0)})$ and the 
transition functions $g_{ij}$ on $E$ induce transition functions on $\End(E) \to M$ given by conjugation by $g_{ij}$,
this implies we have a well defined section $hol_0$ of $\mathcal{E} \to LM$.

To prove (1), for any $1 \leq m \leq p$ and $1 \leq s \leq r-1$,  whenever $U_{i_{m\cdot r-s}} = U_{i_{m\cdot r-s+1}}$,
we have $g = g_{i_{m\cdot r-s} , i_{m\cdot r-s+1}} = id$, so that
\begin{multline}
\sum_{k_1, k_2 \geq 0} \int_{\Delta^{k_1} \times \Delta^{k_2}} \iota_\vf A(t_1) \cdots \iota_\vf A(t_{k_1}) g( t )
\iota_\vf A(s_1) \cdots \iota_\vf A(s_{k_2}) dt_1\dots ds_{k_2}
\\
 = \sum_{k \geq 0} \int_{\Delta^{k}} \iota_\vf A(t_1) \cdots \iota_\vf A(t_k) dt_1\dots dt_k
\end{multline}
by the gluing lemma \ref{holglue}. Applying this $r-1$ times on each of the $p$ subintervals, we obtain $hol_0^{(p,\U)}=hol_0^{(r\cdot p, \U')}$.

For the proof of $(2)$, let  $\U = \{U_{i_1}, \ldots, U_{i_p} \}$ and $\U' = \{ U_{j_1},  \ldots , U_{j_p}\}$. 
We will first restrict our attention to the first subinterval, and show that
\begin{equation} \label{moveg}
g_{i_m,j_m} \left( \frac{m-1}{p} \right) It (e^{1 \ot A_{j_m}}) = It (e^{1 \ot A_{i_m} }) g_{i_m,j_m} \left(\frac{m}{p} \right),
\end{equation}
where the iterated integral is computed over the interval $\left[\frac{m-1}{p},\frac{m}{p}\right]$ and $A_{i_m}$, $A_{j_m}$ are the local 1-forms on the sets $U_{i_m}$ and $U_{j_m}$, respectively. Applying the identities \eqref{moveg} and $g_{i_{m-1},i_m}g_{i_m,j_m} =g_{i_{m-1},j_m}= g_{i_{m-1},j_{m-1}}g_{j_{m-1},j_m}$ repeatedly to the expression
\begin{multline*}
hol_0^{(p,\U)} g_{i_p,j_p}(1) \\ = g_{i_p,i_1}(0) It(e^{1 \ot A_{i_1}}) g_{i_1,i_2}\left(\frac{1}{p}\right)  \cdots g_{i_{p-1},i_p}
\left(\frac{p-1}{p}\right) It(e^{1 \ot A_{i_p}}) g_{i_p,j_p}(1)
\end{multline*}
we obtain $g_{i_p,j_p} (0) hol_0^{(p,\U')}$, as desired in $(2)$.

Note the formula in \eqref{moveg} says that the iterated integral (or parallel transport) is independent of $A_i$ up to conjugation by $g_{ij}$, which is well know from bundle theory and the existence and uniqueness of ODE's. We will give a iterated integral proof here instead. 

We will prove \eqref{moveg} for $m=1$, noting that the other cases have a similar proof. Furthermore, we abbreviate $i=i_1$ and $j=j_1$. Recall that $A_j = g^{-1} A_i g + g^{-1} dg$ where $g = g_{i,j}: U_i \cap U_j \to GL$ is the coordinate transition function. We use the following multiplicative version of the fundamental theorem of calculus for the iterated integral. For $r < s$,
\begin{multline} \label{Gltrans}
 g(r) It(e^{1 \ot g^{-1}dg}) \\
  = g(r) \sum_{k \geq 0} \int_{\Delta^k_{[r,s]}} \iota_\vf(g^{-1} dg)(t_1) \dots \iota_\vf (g^{-1} dg)(t_{k}) dt_1\dots dt_k = g(s).
\end{multline}
Here the $k$-simplex used in the integral is $\Delta^k_{[r,s]}=\{r \leq t_1 \leq \dots \leq t_k \leq s\}$. (One proof of this is given observing the function
$f(s) = g(r) It(e^{1 \ot g^{-1}dg}) g(s)^{-1}$ satisfies $f(r) = Id$ and $f'(s) = 0$.) We use this to calculate 
\[
g(0) It \left(e^{1 \ot A_j} \right) = g(0) It \left(e^{1 \ot (g^{-1} A_i g + g^{-1} dg)} \right)   
\]
and show that it equals $It (e^{1 \ot A_i}) g\left(\frac{1}{k} \right)$.
From the definition of the shuffle product we have
\begin{multline*}
e^{1 \ot (g^{-1} A_i g + g^{-1} dg)} \\ = \sum_{\stackrel{k_1,\ldots , k_r \geq 0}{r\geq 1}} 
 1\otimes (g^{-1} dg)^{\ot {k_1}} \ot  g^{-1} A_i g \ot (g^{-1} dg)^{\ot {k_2}} \ot g^{-1} A_i g \ot  \cdots \ot g^{-1} A_i g \ot 
 (g^{-1} dg)^{\ot {k_r}}
\end{multline*}
and applying definition \ref{localIt} of $It$ we have
\begin{multline} \label{gItAj}
g(0) It (e^{1 \ot A_j}) \\
= g(0) \sum_{\stackrel{k_1,\ldots , k_r \geq 0}{r\geq 0}}  \int_{\Delta^{n_r+r-1}_{[0,1/p]}}
\iota_\vf (g^{-1} dg)(t_1) \cdots \iota_\vf (g^{-1} dg)(t_{k_1})  \wedge \iota_\vf (g^{-1} A_i g)(t_{k_1+1})\\
 \wedge \iota_\vf (g^{-1} dg)(t_{k_1+2}) 
\cdots \iota_\vf (g^{-1} dg)(t_{n_{r-1}+r-2}) \wedge \iota_\vf (g^{-1} A_i g)(t_{n_{r-1} +r-1})
\\
 \wedge \iota_\vf(g^{-1} dg)(t_{n_{r-1} + r}) \cdots \iota_\vf (g^{-1} dg)(t_{n_r+r-1}) dt_1\dots dt_{n_r+r-1},
\end{multline}
where $n_i = k_1 + \dots + k_i$.
For each $r$, we apply the identity in \eqref{Gltrans} $r$ times, showing all the iterated integrals of $g^{-1}dg$ that appear, collapse. In the first case we have
\[
g(0) \sum_{k_1 \geq 0}  \int_{\{0 \leq t_1\leq \dots \leq t_{k_1} \leq t_{k_1 +1} \}} \iota_\vf (g^{-1} dg)(t_1) \dots \iota_\vf (g^{-1} dg)(t_{k_1}) dt_1 \cdots dt_{k_1}= g(t_{k_1+1})
\]
reducing $g(0)It (e^{1 \ot A_j})$ to 
\begin{multline*}
g(0) It (e^{1 \ot A_j}) 
= \sum_{\stackrel{k_2,\ldots , k_r \geq 0}{r\geq 0}}  \int  \iota_\vf A_i(t_{k_1 +1}) g(t_{k_1+1}) \\
\wedge \iota_\vf (g^{-1} dg)(t_{k_1+2}) 
\cdots \iota_\vf( g^{-1} dg)(t_{n_{r-1} +r-2})  \wedge \iota_\vf(g^{-1} A_i g)(t_{n_{r-1} +r-1})
\\
\wedge \iota_\vf( g^{-1} dg)(t_{n_{r-1} + r}) \cdots \iota_\vf (g^{-1} dg)(t_{n_r+r-1}) dt_{k_1+1}\dots dt_{n_r+r-1},
\end{multline*}
since the terms $g(t_{k_1+1})$ and $g^{-1}(t_{k_1+1})$ cancel.
Similarly, for each $\ell$ with $1 < \ell \leq r$, fixing $t_{n_\ell + \ell}$, we have 
\begin{multline*}
\int_{ \Delta(\ell) }  
\iota_\vf A_i(t_{n_\ell + \ell}) g(t_{n_\ell + \ell}) \iota_\vf (g^{-1} dg)(t_{n_\ell + \ell+1}) \\
\dots \iota_\vf (g^{-1} dg)(t_{n_{\ell+1} + \ell}) 
dt_{n_\ell + \ell+1} \cdots dt_{n_{\ell+1} + \ell} = \iota_\vf A_i(t_{n_\ell + \ell}) g(t_{n_{\ell+1} + \ell + 1} )
\end{multline*}
where $\Delta(\ell) =  \{t_{n_\ell + \ell} \leq t_{n_\ell + \ell +1 }\leq \dots \leq t_{n_{\ell+1} + \ell} \leq t_{n_{\ell+1} + \ell+1} \}$.
Again,  $g(t_{n_{\ell+1} + \ell + 1} )$ cancels with $g^{-1}(t_{n_{\ell+1} + \ell + 1} )$ and, continuing in this way, we see that entire sum in \eqref{gItAj} collapses to 
\[
\sum_{r \geq 1} \int_{\Delta^{r-1}_{\left[0, 1/ p\right]}} \iota_\vf A_i(t_1) \cdots \iota_\vf A_i(t_{r-1}) g\left(\frac{1}{p} \right) dt_1 \cdots dt_{r-1} =
It (e^{1 \ot A_i}) g \left(\frac{1}{p}\right).
\]
This completes the proof of the proposition.
\end{proof}
In the first two subsections, we constructed a holonomy function under the assumption that the bundle is included in a trivial bundle. If we can choose our cover for $M$ to be $\{M\}$ itself, we obtain the following corollary.
\begin{cor}
The restriction of the section $hol_0$ from lemma \ref{prop:hol_0-locally} to any $\NN(1,\U)$ where $\U=\{U_1\}$ equals the function $hol_0$ defined on $LU_1$ in equation \eqref{EQ:hol_0}.
\end{cor}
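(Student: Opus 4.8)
The plan is to unwind both sides and show that the global section $hol_0$ constructed by patching, when restricted to the simplest possible cover $\U = \{M\}$ with $p=1$, is literally computed by the same iterated-integral formula that defines $hol_0$ in equation \eqref{EQ:hol_0}. First I would observe that for the cover $\{M\}$ there are no transition functions to worry about: all the $g_{i,j}$ appearing in Definition \ref{DEF:h0(p,U)} are the identity (indeed $g_{i_p,i_1} = g_{M,M} = \id$), and $\U=(M)$ forces $p=1$ and $n_1$ the only summation index. Thus $\h_0^{(1,\U)} = \sum_{n\geq 0} 1 \ot A^{\ot n} = e^{1\ot A}$ as an element of $CH_\bullet^{(1,\U)} = CH_\bullet(\Om(M;\gl))$, where $A\in\Om^1(M,\gl)$ is the given (globally defined, by the hypothesis of the first two subsections) connection $1$-form.

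Next I would compare the two iterated integral maps. The local iterated integral $\Ch^{(1,\U)}$ of Definition \ref{localIt}, for $p=1$, reduces to a single factor
\[
\Ch^{(1,\U)}(a_0 \ot a_1 \ot \dots \ot a_n)(\gamma) = \int_{\Delta^n} a_0(t_0)\wedge \iota_\vf a_1(t_1)\wedge\dots\wedge\iota_\vf a_n(t_n)\, dt_1\dots dt_n,
\]
where the reparametrization $\frac{j-1+t}{p}$ with $j=1$, $p=1$ is simply $t$; this is exactly the Chen iterated integral $It$ of Definition \ref{DEF:It-Chapter2}. Applying it to $e^{1\ot A}$ and invoking Lemma \ref{hol=It(e^A)} — which states $hol = It(e^{1\ot A})$ and whose proof notes all the signs in the shuffle products and the iterated integral are positive so that the formula reproduces \eqref{Pexp} — I conclude $hol_0^{(1,\U)} = \Ch^{(1,\U)}(e^{1\ot A}) = It(e^{1\ot A}) = hol_0$ on $\NN(1,\U) = LM$. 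Finally, since Proposition \ref{prop:hol_0-locally} asserts $hol_0|_{\NN(p,\U)} = hol_0^{(p,\U)}$ for the global section, restricting the global $hol_0$ to $\NN(1,\U)$ gives precisely the function defined in \eqref{EQ:hol_0}.

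The only genuine point requiring care — and the step I expect to be the main (minor) obstacle — is the matching of orientation and sign conventions between the two iterated integral maps when $p=1$: one must check that the $\Delta^{n_1}$ appearing in $\Ch^{(1,\U)}$ carries the same orientation as the $\Delta^n$ in $It$, and that the ``up to a sign'' clause in the integration-along-the-fiber description of $\Ch^{(p,\U)}$ specializes to no sign at all for a single simplex with the conventions of Convention \ref{CONVENT1}. Since $A$ is odd and all the relevant Koszul signs vanish (as already exploited in Lemmas \ref{hol=It(e^A)} and \ref{holglue}), this is immediate, and no further computation is needed. Hence the corollary follows.
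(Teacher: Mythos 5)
Your argument is correct and is essentially the paper's own proof, just written out in full detail: the paper simply observes that on a single chart the bundle is trivial and $g_{11}=\id$, so the local iterated integral formula coincides with the Chen iterated integral defining $hol_0$ in equation \eqref{EQ:hol_0}. Your additional remarks about signs and orientations (all trivially positive here) are consistent with what the paper already notes in Lemma \ref{hol=It(e^A)}.
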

\begin{proof}
In this case the bundle is trivial and we have that the iterated integral formulas are the same since $g_{11} = id$ on $U_1$.
\end{proof}

Again, as before, $It^{(p,\U)}$ is not a chain map for $\gl$-valued forms, but we do have the following.
\begin{prop} \label{dItpU} If $\mathfrak a = a^1_0 \ot (a^1_{1}\ot  \dots \ot  a^1_{n_1})\ot a^2_0  \ot \dots\ot a^p_0 \ot(a^p_1\ot\dots\ot a^p_{n_p}) \in CH_\bullet^{(p,\U)}$, then
\begin{multline*}
d_{DR} \left(It^{(p,\U)} (\mathfrak a)\right) \\
= It^{(p,\U)} (D \mathfrak a) - (-1)^{(|a^p_{n_p}| +1)\epsilon} a^p_{n_p}(0) \cdot It^{(p,\U)} (\mathfrak b) + (-1)^{\epsilon} It^{(p,\U)}(\mathfrak b) \cdot a^p_{n_p}(1),
\end{multline*}
where $\mathfrak b = a^1_0 \ot (a^1_{1}\ot  \dots \ot  a^1_{n_1})\ot a^2_0  \ot \dots\ot a^p_0 \ot(a^p_1\ot\dots\ot a^p_{n_p -1})$, and 
\[
\epsilon = \sum_{i=1}^p\left( |a^i_0| + \sum_{j=1}^{n_i} (|a^i_j| +1) \right)- (|a^p_{n_p}| +1).
\]
\end{prop}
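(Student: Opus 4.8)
The plan is to run the same Stokes‑plus‑integration‑along‑the‑fiber argument that proves Proposition~\ref{prop:dIt} (following \cite{C1,C2,GJP}), this time over the product of simplices $\Delta^{n_1}\times\dots\times\Delta^{n_p}$ that appears in Definition~\ref{localIt}. First I would invoke the description of $It^{(p,\U)}$ recorded just after Definition~\ref{localIt}: up to the indicated reordering sign, $It^{(p,\U)}(\mathfrak a)=\int_{\Delta^{n_1}\times\dots\times\Delta^{n_p}}ev^*(\mathfrak a)$, where $ev=ev^1\times\dots\times ev^p\colon\NN(p,\U)\times(\Delta^{n_1}\times\dots\times\Delta^{n_p})\to\prod_j(U_{i_j})^{\times(n_j+1)}$ and $ev^*(\mathfrak a)$ is the pullback of the external product of the forms $a^1_0,a^1_1,\dots,a^p_{n_p}$, each module element $a^j_0$ sitting at the fixed evaluation point $\gamma(\frac{j-1}{p})$ and each $a^j_k$ ($k\geq 1$) at the variable point $\gamma(\frac{j-1+t_k}{p})$. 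Since $ev^*$ commutes with $d_{DR}$, and $d_{DR}$ of an external product is the signed sum of external products with one factor differentiated, I would then apply the integration‑along‑the‑fiber formula of Convention~\ref{CONVENT1}(3) with $X=\Delta^{n_1}\times\dots\times\Delta^{n_p}$, splitting $d_{DR}(It^{(p,\U)}(\mathfrak a))$ into a ``vertical'' piece $\int_X ev^*(d_{DR}\,\mathfrak a)$ and a ``boundary'' piece $\int_{\partial X}ev^*(\mathfrak a)$, up to the overall sign $(-1)^{n_1+\dots+n_p}$.

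The vertical piece is, term by term, the $d_{DR}(a^i_j)$‑part of $It^{(p,\U)}(D\mathfrak a)$. For the boundary piece I would use the Leibniz rule $\partial(\Delta^{n_1}\times\dots\times\Delta^{n_p})=\sum_{i=1}^p(-1)^{n_1+\dots+n_{i-1}}\Delta^{n_1}\times\dots\times\partial\Delta^{n_i}\times\dots\times\Delta^{n_p}$ together with the face structure and orientations of $\Delta^{n_i}$ from Convention~\ref{CONVENT1}(1),(2): the codimension‑one faces are $\{t_1=0\}$, $\{t_\ell=t_{\ell+1}\}$ for $1\leq\ell\leq n_i-1$, and $\{t_{n_i}=1\}$. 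On $\{t_\ell=t_{\ell+1}\}$ the two variable points merge and restriction of the external product replaces $a^i_\ell\ot a^i_{\ell+1}$ by the single form $a^i_\ell\cdot a^i_{\ell+1}$ of block $i$; on $\{t_1=0\}$ the first variable point of block $i$ runs into the fixed point carrying $a^i_0$, producing $a^i_0\cdot a^i_1$ via the module structure of $\Om(U_{i_{i-1}}\cap U_{i_i})$ over $\Om(U_{i_i})$; and for $i<p$ the face $\{t_{n_i}=1\}$ sends the last variable point of block $i$ to the fixed point $\gamma(\frac{i}{p})$ carrying $a^{i+1}_0$, producing $a^i_{n_i}\cdot a^{i+1}_0$ via the module structure of $\Om(U_{i_i}\cap U_{i_{i+1}})$ over $\Om(U_{i_i})$ --- these being exactly the module multiplications used to define $D$ in Definition~\ref{DEF:local-Hoch}. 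Each such face contributes, up to sign, $It^{(p,\U)}$ of the corresponding multiplication term of $D$, so the faces reconstruct all of $It^{(p,\U)}(D\mathfrak a)$ \emph{except} the cyclic term.

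The face that behaves differently is $\{t_{n_p}=1\}$ of the last simplex. There the variable point goes to $\gamma(1)=\gamma(0)$, the fixed point carrying $a^1_0$; but in the noncommutative, $\gl$‑valued iterated‑integral product $a^1_0$ stands at the extreme left and $a^p_{n_p}$ at the extreme right, so they cannot be merged, and restriction simply factors the now $t$‑independent form $a^p_{n_p}$, evaluated at $\gamma(0)$, out of the integral on the right, yielding $It^{(p,\U)}(\mathfrak b)\cdot a^p_{n_p}(1)$. On the other hand, $It^{(p,\U)}$ of the cyclic term $(a^p_{n_p}\cdot a^1_0)\ot a^1_1\ot\dots\ot a^p_{n_p-1}$ of $D$ equals $a^p_{n_p}(0)\cdot It^{(p,\U)}(\mathfrak b)$, since there $a^p_{n_p}$ is multiplied onto $a^1_0$ at the basepoint and, being $t$‑independent, factors out on the left. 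Hence, to write the total boundary contribution in the form $It^{(p,\U)}(D\mathfrak a)$ one must add this cyclic term and subtract $a^p_{n_p}(0)\cdot It^{(p,\U)}(\mathfrak b)$, which, together with the genuine $\{t_{n_p}=1\}$ contribution, produces exactly the two correction terms in the statement. The hard part will be the sign bookkeeping: matching the Koszul signs $\epsilon^i_j$ built into $D$ against the product of the simplex‑orientation signs of Convention~\ref{CONVENT1}(1),(2), the factors $(-1)^{n_1+\dots+n_{i-1}}$ from the Leibniz rule for $\partial$, the global $(-1)^{n_1+\dots+n_p}$, and the reordering sign from commuting the $\iota_\vf$'s and the $dt$'s past the forms in $ev^*(\mathfrak a)$; one should find that $\epsilon$ in the statement is simply the total shifted degree of $\mathfrak b$ and that the corrections come out with the signs shown. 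As a consistency check, for $p=1$ all of this collapses to the proof of Proposition~\ref{prop:dIt}.
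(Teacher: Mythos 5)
Your proof is correct, but it is organized differently from the paper's. The paper proves this proposition ``pointwise'': it writes $d_{DR}$ of the explicit product of one--dimensional iterated integrals, uses the Cartan-type identity $d\iota_\vf=[d,\iota_\vf]-\iota_\vf d$ together with $[d,\iota_\vf]a=da/dt$, integrates the resulting $d/dt$ terms by the fundamental theorem of calculus to produce the evaluations $a(t_{j+1})-a(t_j)$, and then reassembles the multiplication terms of $D$ via the Leibniz rule $((\iota_\vf a_j)a_{j+1})+(-1)^{|a_j|}(a_j(\iota_\vf a_{j+1}))=\iota_\vf(a_ja_{j+1})$. You instead apply Stokes/integration along the fiber to $\int_{\Delta^{n_1}\times\dots\times\Delta^{n_p}}ev^*(\mathfrak a)$ and read off each term of $D$ from a codimension-one face of the product of simplices; your identification of the faces $\{t_1=0\}$, $\{t_\ell=t_{\ell+1}\}$, $\{t_{n_i}=1\}$ with the module and algebra multiplications in $D$, and your account of why the face $\{t_{n_p}=1\}$ yields $It^{(p,\U)}(\mathfrak b)\cdot a^p_{n_p}(1)$ rather than the cyclic term (which must therefore be added and compensated by $-(-1)^{(|a^p_{n_p}|+1)\epsilon}a^p_{n_p}(0)\cdot It^{(p,\U)}(\mathfrak b)$) exactly reproduces the two correction terms in the statement. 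The two computations are ultimately the same boundary bookkeeping; what your version buys is uniformity with the higher-dimensional cases, since it is precisely the argument the paper runs for propositions \ref{LEM:It(q,p,U)-d-sh} and \ref{LEM:It-diff-prod}, while the paper's version of this one-dimensional case makes the role of the contraction $\iota_\vf$ and the noncommutativity of $\gl$ more explicit. The one loose end you correctly flag is the sign comparison: note in particular that the paper records the reordering sign relating $It$ to $\int ev^*$ only in the global case and writes ``up to a sign'' in the local setting, so to make your route fully rigorous you would have to fix that sign convention yourself and check it against $\epsilon=\epsilon^p_{n_p-1}$; this is routine but not done in your sketch (nor, in full detail, in the paper).
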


\begin{proof} The proof uses the same techniques that were used in \cite{GJP} to prove proposition \ref{prop:dIt} stated above. Let $\epsilon^i_j$ be as in definition \ref{DEF:local-Hoch}, and note that $\epsilon = \epsilon^p_{n_p-1}$. We have
\begin{multline*}
d_{DR} (It^{(p,\U)} (\mathfrak a)) = \bigwedge_{i=1}^p  \int_{\Delta^{n_i}} (-1)^{\epsilon^{i-1}_{n_{i-1}} } (da^i_0)\left(\frac{i-1}{p}\right)
\iota_\vf a^i_1(t_1) \cdots \iota_\vf a^i_{n_i}(t_{n_i}) dt_1\dots dt_{n_k}\\
+
\sum_{i=1}^p \sum_{j=1}^{n_i} (-1)^{\epsilon^i_{j-1}} \Bigg[ \bigg(
\int_{\Delta^{n_1}} (a^1_0)\left(\frac{0}{p}\right) \iota_\vf a^1_1(t_1) \cdots  \iota_\vf a^1_{n_1}(t_{n_1}) dt_{1}\dots dt_{n_1}\bigg)\\
\wedge 
\cdots \wedge \bigg(
\int_{\Delta^{n_i}} (a^i_0)\left(\frac{j-1}{p}\right) \iota_\vf a^i_1(t_1) \cdots ([d,\iota_\vf] - \iota_\vf d) a^i_j(t_j) \cdots \iota_\vf a^i_{n_i}(t_{n_i})dt_{1}\dots dt_{n_i}
\bigg)\\
\wedge
\cdots \wedge
\bigg( \int_{\Delta^{n_p}} (a^p_0)\left(\frac{p-1}{p}\right) \iota_\vf a^p_1(t_1) \cdots \iota_\vf a^p_{n_p}(t_{n_p}) dt_{1}\dots dt_{n_p}\bigg) \Bigg]
\end{multline*}
where we have use identity $d \iota_\vf = [d,\iota_\vf] - \iota_\vf d$.
Now we use the fact that $[d,\iota_\vf]a = da/dt$, the fundamental theorem of calculus 
\[
\int_{t_j}^{t_{j+1}} [d,\iota_\vf]a(t) dt=\int_{t_j}^{t_{j+1}} \frac{d}{dt}a(t) dt
= a(t_{j+1})  - a(t_j),
\] 
and the relation 
\[
((\iota_\vf a_j)a_{j+1})(t_j)+ (-1)^{|a_j|} (a_j(\iota_\vf a_{j+1}))(t_j)=(\iota_\vf(a_j a_{j+1}))(t_j).
\]
Then we see this sum contains all of the terms in $It^{(p,\U)}(D\mathfrak a )$, except for the term $(-1)^{(|a^p_{n_p}| +1)\epsilon} a^p_{n_p}(0) \cdot It (\mathfrak b)$,
and it additionally includes the term $(-1)^{\epsilon}  It(\mathfrak b) \cdot a^p_{n_p}(1)$.
\end{proof}

\begin{prop} \label{nablapU}
We have $It^{(p,\U)}(D(\mathfrak h^{(p,\U)}_0))= \nabla^* (It^{(p,\U)}(\mathfrak h^{(p,\U)}_0))=\nabla^*(hol_0^{(p,\U)})$, where $\nabla^* = d_{DR} + [A_{i_p},-]$.
\end{prop}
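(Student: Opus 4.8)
The plan is to follow the pattern of the proof of Proposition~\ref{prop:nabla}, now for the local iterated integral. The equality $\nabla^*(It^{(p,\U)}(\mathfrak h^{(p,\U)}_0))=\nabla^*(hol^{(p,\U)}_0)$ holds by the very definition $hol^{(p,\U)}_0=It^{(p,\U)}(\mathfrak h^{(p,\U)}_0)$, so it remains to establish $It^{(p,\U)}(D\mathfrak h^{(p,\U)}_0)=\nabla^*(hol^{(p,\U)}_0)$. I would expand $\mathfrak h^{(p,\U)}_0$ as the sum over all $n_1,\dots,n_p\geq 0$ of the monomials
\[
\mathfrak a_{n_1,\dots,n_p}=g_{i_p,i_1}\ot A_{i_1}^{\ot n_1}\ot g_{i_1,i_2}\ot A_{i_2}^{\ot n_2}\ot\dots\ot g_{i_{p-1},i_p}\ot A_{i_p}^{\ot n_p},
\]
apply Proposition~\ref{dItpU} to each, and sum. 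Since every $A_{i_j}$ has degree $1$ and every $g_{i_j,i_{j+1}}$ has degree $0$, the integer $\epsilon$ of Proposition~\ref{dItpU} is even for each such monomial, so the two signs $(-1)^{(|a^p_{n_p}|+1)\epsilon}$ and $(-1)^\epsilon$ appearing there are both $+1$.

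First I would treat the monomials with $n_p\geq 1$, for which $a^p_{n_p}=A_{i_p}$ and the truncation $\mathfrak b$ of Proposition~\ref{dItpU} is $\mathfrak a_{n_1,\dots,n_{p-1},n_p-1}$. Unwinding Definition~\ref{localIt}, the matrix $a^p_{n_p}(0)$ — which enters through the cyclic term $(A_{i_p}\cdot g_{i_p,i_1})$ of $D$, placed in the first module slot and evaluated at $\gamma(0)$ — and the matrix $a^p_{n_p}(1)$ are both the value at the basepoint $\gamma(0)=\gamma(1)\in U_{i_p}$ of the $1$-form $A_{i_p}$; write $A_{i_p}$ for this matrix. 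Proposition~\ref{dItpU} then reads
\[
d_{DR}\big(It^{(p,\U)}(\mathfrak a_{n_1,\dots,n_p})\big)=It^{(p,\U)}(D\mathfrak a_{n_1,\dots,n_p})-A_{i_p}\cdot It^{(p,\U)}(\mathfrak b)+It^{(p,\U)}(\mathfrak b)\cdot A_{i_p}.
\]
Summing over all $n_1,\dots,n_{p-1}\geq 0$ and $n_p\geq 1$, and noting that $(n_1,\dots,n_{p-1},n_p-1)$ then ranges over every $p$-tuple of non-negative integers, so that $\sum It^{(p,\U)}(\mathfrak b)=It^{(p,\U)}(\mathfrak h^{(p,\U)}_0)=hol^{(p,\U)}_0$, yields
\[
\sum_{n_p\geq 1}d_{DR}\big(It^{(p,\U)}(\mathfrak a)\big)=\sum_{n_p\geq 1}It^{(p,\U)}(D\mathfrak a)-\big[A_{i_p},\,hol^{(p,\U)}_0\big].
\]

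Next I would dispose of the monomials with $n_p=0$, that is, $\mathfrak a=g_{i_p,i_1}\ot A_{i_1}^{\ot n_1}\ot\dots\ot g_{i_{p-2},i_{p-1}}\ot A_{i_{p-1}}^{\ot n_{p-1}}\ot g_{i_{p-1},i_p}$. Here the final tensor factor is the module element $g_{i_{p-1},i_p}$, so no $\iota_\vf$-contracted form appears at the right end of the iterated integral and $d_{DR}$ produces no boundary term there; moreover the would-be cyclic term $g_{i_{p-1},i_p}\cdot g_{i_p,i_1}$ of $D$ is a product of two module elements, hence absent. The remaining right-hand boundary term of the iterated integral is matched by the multiplication term $A_{i_{p-1}}\cdot g_{i_{p-1},i_p}$ of $D$ (and by the analogous term further to the left if several trailing blocks are header-only), so $It^{(p,\U)}(D\mathfrak a)=d_{DR}\big(It^{(p,\U)}(\mathfrak a)\big)$ for every such monomial. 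Adding this to the previous identity and summing over all $n_1,\dots,n_p\geq 0$ gives
\[
It^{(p,\U)}\big(D\mathfrak h^{(p,\U)}_0\big)=d_{DR}\big(hol^{(p,\U)}_0\big)+\big[A_{i_p},\,hol^{(p,\U)}_0\big]=\nabla^*\big(hol^{(p,\U)}_0\big),
\]
which is the claim, since $\nabla^*=d_{DR}+[A_{i_p},-]$ and $hol^{(p,\U)}_0$ is a $0$-form.

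The step I expect to require the most care is the second one: verifying that the monomials with an empty last block — and, by iteration, with several empty trailing blocks — contribute no leftover boundary term. This amounts to tracking the signs and the placement of the transition functions in the module slots of $CH_\bullet^{(p,\U)}$, and is essentially the only place where the argument genuinely departs from the global computation of Proposition~\ref{prop:nabla}.
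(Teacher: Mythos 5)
Your proof is correct and takes essentially the same route as the paper, which simply invokes Proposition~\ref{dItpU} with $\mathfrak a=\mathfrak h^{(p,\U)}_0$ and notes that the signs work out because $A_{i_p}$ has degree $+1$. You supply the details the paper leaves implicit — the evenness of $\epsilon$, the identification $a^p_{n_p}(0)=a^p_{n_p}(1)=ev_0^*A_{i_p}$ at the basepoint so the two correction terms assemble into $-[A_{i_p},\cdot]$, and the separate treatment of the $n_p=0$ monomials, for which the correction terms are genuinely absent — all of which check out.
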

\begin{proof} 
This follows from the previous proposition \ref{dItpU} by letting $\mathfrak a = \mathfrak h^{(p,\U)}_0$, (\emph{c.f.} also propositions \ref{prop:dIt} and \ref{prop:nabla}). Note, that the signs work out since $A_{i_p}$ is of degree $+1$.
\end{proof}

\begin{rmk}
In the complex $CH_\bullet^{(p,\U)}$ we chose for simplicity the modules on the $j^{\text{th}}$ vertex to be given by $\Om(U_{i_j}\cap U_{i_{j+1}})$, \emph{c.f.} definition \ref{DEF:local-Hoch}. For a conceptual link of this choice with the local approach for gerbes in subsection \ref{SUBSEC:local-gerbe-Hochschild-51} below, we also mention how we could choose alternate modules as follows. Beside the open sets $U_{i_1},\dots , U_{i_p}$, choose additional open sets $U_{\ell_1},\dots, U_{\ell_p}$, which are open sets chosen for the vertices of the subdivided circle. More precisely, we are considering the open subset of $LM$ given by,
\[
 \left\{\gamma\in LM: \left(\gamma\Big|_{\big[\frac{j-1}{p},\frac{j}{p}\big]}\right)\subset U_{i_j}\text{, and } \gamma\left(\frac j p\right)\in U_{\ell_j}, \forall j=1,\dots,p \right\}.
 \]
With this in mind, we consider the left-$\Om(U_{i_j})$ and right-$\Om(U_{i_{j+1}})$ module $\Om(U_{i_j}\cap U_{\ell_j})\otimes_{\Om(U_{\ell_j})}  \Om(U_{\ell_j}\cap U_{i_{j+1}})$. Note that there is a map
$$ \Om(U_{i_j}\cap U_{\ell_j})\otimes_{\Om(U_{\ell_j})}\Om(U_{\ell_j}\cap U_{i_{j+1}})\to \Om(U_{i_j}\cap U_{\ell_j}\cap U_{i_{j+1}}), $$ coming from the inclusions of open sets. 
We have the right module map $\rho:\Om(U_{i_j}\cap U_{\ell_j})\otimes_{\Om(U_{\ell_j})}\Om(U_{\ell_j}\cap U_{i_{j+1}})\to \Om(U_{i_j}\cap U_{i_{j+1}})$, which maps the element $g_{i_j,\ell_j}\otimes g_{\ell_j,i_{j+1}}$ to $g_{i_j,\ell_j}\cdot g_{\ell_j,i_{j+1}}=g_{i_j,i_{j+1}}$. There are appropriate Hochschild and iterated integral constructions, and the above considerations show that this approach is indeed equivalent to the previous setup.
\end{rmk}

\subsection{The higher holonomies $hol_{2k}$}

Let $\Om(LM,\mathcal{E})$ denote differential forms on $LM$ with values in the pullback bundle $\mathcal{E}$.
We now define the higher holonomies $hol_{2k}\in \Om^{2k}(LM,\mathcal{E})$ in a local way on charts $\NN(p,\U)$ and then show that they glue together properly to give a well defined element of $\Om^{2k}(LM,\mathcal{E})$. 
Finally, these will satisfy the relation from equation \eqref{zig-zag-w/trace}, namely $\nabla^* (hol_{2k}) =  -\iota_{\Vf}(hol_{2k+2})$,
\begin{equation*}
\xymatrix{
hol_0 \ar[dr]^{\nabla^*} & & hol_2 \ar[dl]_{-\iota_\vf}  \ar[dr]^{\nabla^*} & & hol_4 \ar[dl]_{-\iota_\vf}\ar[dr]^{\nabla^*} \\
 & \bullet & & \bullet && \cdots
}
\end{equation*}

\begin{defn}\label{DEF:E(p,U)}
For a choice of $p\in \N$ and a tuple of open sets $\U=(U_{i_1},\dots, U_{i_p})$ of $M$, we have induced open sets $\U\times S^1=(U_{i_1}\times S^1, \dots, U_{i_p}\times S^1)$ of $M\times S^1$. Denote by $E^{(p,\U)}:CH_\bullet^{(p,\U\times S^1)}\to \Omega(\NN(p,\U))$ the composition of maps
$$ E^{(p,\U)}:CH_\bullet^{(p,\U\times S^1)}\stackrel {It^{(p,\U\times S^1)}} \longrightarrow\Omega(\NN(p,\U\times S^1)) \stackrel {(\rho|_{\NN})^*} \longrightarrow \Omega(\NN(p,\U)), $$
where $\rho|_\NN:\NN(p,\U)\to \NN(p,\U\times S^1), \gamma\mapsto (\gamma,-id)$, is the restriction of $\rho$ from definition \ref{Def:E}. We call the $E^{(p,\U)}$ the extended iterated integral subject to the local data $\U$.

With this, we define $\h_{2k}^{(p,\U)}\in CH_{2k}^{(p,\U\times S^1)}$ to be given by the formula
\begin{multline*}
\h_{2k}^{(p,\U)} = \sum_{\scriptsize
\begin{matrix}
k_1, \dots, k_p \geq 0 \\
k_1+\dots+k_p=k
\end{matrix}}  
\tilde g_{i_p,i_1}\bullet \left( \Big(1 \ot \underbrace{ (-R_{i_1}dt) \ot \cdots \ot (-R_{i_1}dt) }_{k_1\text{ times}}\Big) \bullet e^{\tilde A_{i_1}}\right) \bullet \tilde g_{i_1,i_2}\\
\bullet \cdots \bullet  \tilde g_{i_{p-1},i_p} \bullet
 \left( \Big(1 \ot \underbrace{ (-R_{i_p}dt) \ot \cdots \ot (-R_{i_p}dt)}_{k_p\text{ times}}\Big) \bullet e^{\tilde A_{i_p}}\right) 
\end{multline*}
where $\tilde g_{i_j,i_{j+1}}$, $\tilde A_{i_j}$ are as before (see definition \ref{DEF:h0(p,U)}). We define
\[
hol_{2k}^{(p,\U)}=E^{(p,\U)}(\h_{2k}^{(p,\U)})\in \Omega^{2k}(\NN(p,\U),\gl).
\]
\end{defn}
Similarly to lemma \ref{lem:Ehol}, applying the definition of $E$ we can rewrite $hol_{2k}^{(p,\U)}$ in terms of iterated integrals.
\begin{lem} \label{lem:hol2k}
For $k\in \mathbb N$, $hol_{2k}^{(p,\U)} \in \Om^{2k}(LM,\gl)$ can be written as
\begin{multline}\label{AAAgA}
\sum_{n_1,\dots,n_p\geq 0} \quad \sum_{\scriptsize
\begin{matrix}
J\subset S\\
|J| = k
\end{matrix}
}\quad g_{i_p,i_1}\left(\gamma (0) \right) 
\wedge \bigg( \int_{\Delta^{n_1}} X^1_{i_1}\left(\frac {t_1} p\right) \cdots X^{n_1}_{i_1}\left(\frac {t_{n_1}} p\right) dt_1 \cdots dt_{n_1} \bigg)
\\
\wedge  g_{i_1,i_2}\left(\gamma\left(\frac 1 p\right)\right)
\cdots   g_{i_{p-1},i_p}\left(\gamma\left(\frac {p-1} p\right)\right)
\\
\wedge \bigg(
\int_{\Delta^{n_p}} X^1_{i_p}\left(\frac {p- 1 + t_1} p \right) \cdots X^{n_p}_{i_p}\left(\frac {p- 1+t_{n_p}} p\right) dt_1 \cdots dt_{n_p} 
\bigg)
\end{multline}
where the second sum is a sum over all $k$-element index sets $J\subset S$ of the sets
$S= \{(i_r,j): r=1,\dots, p,\text{ and } 1\leq j\leq n_r\}$, and
\[
X^j_i = \left\{
\begin{array}{rl}
R_i & \text{if  } (i,j) \in J\\
\iota_\vf A_i & \text{otherwise.}
\end{array} \right.
\]
\end{lem}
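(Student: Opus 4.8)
\section*{Proof proposal}

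The plan is to mirror the proof of Lemma \ref{lem:Ehol}, performing the shuffle-product expansion blockwise and then applying the explicit formula for the extended iterated integral $E^{(p,\U)}=(\rho|_\NN)^*\circ It^{(p,\U\times S^1)}$ from Definition \ref{DEF:E(p,U)}. First I would expand each factor $\big(1\ot(-R_{i_j}dt)^{\ot k_j}\big)\bullet e^{\tilde A_{i_j}}$ occurring in the definition of $\h_{2k}^{(p,\U)}$ using the shuffle product on $CH_\bullet^{(p,\U\times S^1)}$ from Definition \ref{DEF:local-Hoch}. Since both $A_{i_j}\in\Om^1$ and $R_{i_j}dt\in\Om^3$ are odd, and the transition functions $\tilde g_{i_j,i_{j+1}}$ are of degree $0$, every Koszul sign $(-1)^\kappa$ appearing in these shuffles equals $+1$ --- exactly as in the $p=1$ case treated in Lemma \ref{lem:Ehol}. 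Because the shuffle product of $CH_\bullet^{(p,\U\times S^1)}$ only permutes entries within each of the $p$ blocks, the $j$-th block of the expansion becomes a sum of all interleavings of the $k_j$ copies of $-R_{i_j}dt$ with some number of copies of $A_{i_j}$, each interleaving occurring exactly once. Summing over $n_j\ge k_j$ and over all $(k_1,\dots,k_p)$ with $k_1+\dots+k_p=k$ then reorganizes $\h_{2k}^{(p,\U)}$ as the sum, over all $(n_1,\dots,n_p)$ and all $k$-element subsets $J\subset S$, of the monomials $g_{i_p,i_1}\ot(X^1_{i_1}\ot\cdots\ot X^{n_1}_{i_1})\ot g_{i_1,i_2}\ot\cdots\ot g_{i_{p-1},i_p}\ot(X^1_{i_p}\ot\cdots\ot X^{n_p}_{i_p})$, with $X^j_i=-R_idt$ when $(i,j)\in J$ and $X^j_i=A_i$ otherwise.

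Next I would apply $E^{(p,\U)}$ term by term, using Definition \ref{localIt} for $It^{(p,\U\times S^1)}$ together with the local analog of equation \eqref{E}, namely that precomposing the iterated integral on $L(M\times S^1)$ with $\rho$ replaces each factor $\iota_\vf(a+b\,dt)$ by $\iota_\vf a-b$. For $X^j_i=A_i$, which is pulled back from $M$ and so has zero $dt$-component, this produces $\iota_\vf A_i$; for $X^j_i=-R_idt$, whose $a$-component is zero and whose $dt$-component is $-R_i$, this produces $+R_i$; this is precisely the case distinction defining $X^j_i$ in the statement of the lemma. The degree-$0$ entries $g_{i_j,i_{j+1}}\in\Om^0(U_{i_j}\cap U_{i_{j+1}})$ occupy the leading (``$a_0$'') slot of the $(j+1)$-st block, so by the evaluation maps of Definition \ref{localIt} they are evaluated at the vertex $\gamma(j/p)$, while the interior factors of the $j$-th block are evaluated at $\gamma((j-1+t)/p)$ and integrated over $\Delta^{n_j}$. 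Collecting all of this yields exactly the displayed expression \eqref{AAAgA}.

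The step needing the most care is the sign bookkeeping: one has to verify that the shift conventions of Definition \ref{DEF:local-Hoch} really do force all the Koszul signs in these particular shuffle products to be $+1$, and that no additional signs are introduced when $It^{(p,\U\times S^1)}$ is computed via the explicit formula of Definition \ref{localIt} rather than via $ev^*$ followed by integration along the fiber. A lesser, purely combinatorial point is the blockwise reindexing identifying the double sum over compositions $(k_1,\dots,k_p)$ and interleavings with the single sum over $k$-element subsets $J\subset S$; this is formally identical to the computation carried out in the proof of Lemma \ref{lem:Ehol} and involves no new ideas.
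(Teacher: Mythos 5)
Your proposal is correct and follows essentially the same route as the paper: the paper's proof of this lemma is a one-line reference back to the proof of Lemma \ref{lem:Ehol}, i.e.\ expand the shuffle products (all Koszul signs being $+1$ since $A$ and $Rdt$ have even shifted degree) and then apply the formula \eqref{E} for $E$, which sends $-Rdt$ to $+R$ and $A$ to $\iota_\vf A$. Your write-up just makes explicit the blockwise bookkeeping and the placement of the transition functions at the vertices, which the paper leaves implicit.
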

\begin{proof} This follows from the definition of $E$, as in the proof of Lemma \ref{lem:Ehol}.
\end{proof}

\begin{rmk} \label{abelianhol2k}
The expression for $hol_{2k}^{(p,\U)} $ simplifies greatly for line bundles. In this case, the 
forms are all complex valued, so the iterated integral takes shuffle to wedge. Therefore we can unshuffle the terms $Rdt$ in
$\h_{2k}^{(p,\U)}$, then apply the iterated integral, and write
 \begin{align*}
 hol_{2k}^{(p,\U)} (\gamma) &=  \left( \int_{\Delta^k} R(t_1) \cdots R(t_k) dt_1\dots dt_k\right) \cdot \bigwedge_{m=1}^p g_{i_{m-1},i_m} \left( \gamma \left( \frac{m-1}{p}
 \right) \right) e^{\int_{I^m_p} \iota_\vf A_{i_m} dt} \\
 &= \left( \int_{\Delta^k} R(t_1) \cdots R(t_k)dt_1\dots dt_k \right) \wedge hol_0^{(p,\U)}(\gamma)
 \end{align*}
 where we have set $I^m_p = [\frac{m-1}{p} ,\frac{ m}{p}]$ and $i_0 = i_p$. Here we have used that $R$ is a global form, and
 the iterated additivity in Lemma \ref{holglue}, namely
 \[
 \sum_{\scriptsize
\begin{matrix}
k_1, \dots, k_p \geq 0 \\
k_1+\dots+k_p=k
\end{matrix}} 
 \bigwedge_{i=1}^p \int_{\Delta^{k_i}_{\left[\frac{i-1}{p},\frac{i}{p}\right]}} R(t_1) \dots R(t_{k_i}) dt_1\dots dt_{k_i}= \int_{\Delta^k} R(t_1) \dots R(t_k)dt_1\dots dt_k
 \]
 where $k = k_1 + \dots + k_p$ and $\Delta^{k_i}_{\left[\frac{i-1}{p},\frac{i}{p}\right]} = \{ \frac{i-1}{p} \leq t_1 \leq \dots \leq t_{k_i} \leq \frac{i}{p} \}$, and $\Delta^k = \{ 0 \leq t_1 \leq \dots \leq t_{k} \leq 1 \}$. Alternatively, the first factor can also be written as
 \begin{multline*}
  \int_{\Delta^k} R(t_1) \dots R(t_k)dt_1\dots dt_k=E\Big(1\otimes (-Rdt)\otimes\dots\otimes(-Rdt)\Big)
  \\
  =E\left(\frac{1}{k!} (1\otimes(-Rdt))^{\bullet k}\right)=\frac{1}{k!}\left(\int_I R(t) dt \right)^{\wedge k}.
 \end{multline*}
\end{rmk}

\begin{prop}
The locally defined forms $hol_{2k}^{(p,\U)}$ define a globally defined $2k$-form $hol_{2k}\in \Om^{2k}(LM,\mathcal{E})$
with values in $\mathcal{E}$.
\end{prop}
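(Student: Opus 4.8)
The plan is to mirror the proof of Proposition \ref{prop:hol_0-locally} essentially line for line, establishing the two properties \emph{subdivision} and \emph{overlap} for the local forms $hol_{2k}^{(p,\U)}$, with the only changes being that Lemma \ref{holglue} is replaced by Remark \ref{rmk:moreglue} and that the conjugation identity \eqref{moveg} is replaced by a curvature-decorated version. (For line bundles the statement is in fact immediate from Remark \ref{abelianhol2k}, since there $hol_{2k}^{(p,\U)}$ is a \emph{global} $2k$-form on $LM$ wedged with $hol_0^{(p,\U)}$, and the latter already glues by Proposition \ref{prop:hol_0-locally}; so the real content is the non-abelian case.)

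First I would prove subdivision. Suppose $\U'$ is obtained from $\U=\{U_{i_1},\dots,U_{i_p}\}$ by cutting each of the $p$ intervals of $[0,1]$ into $r$ subintervals and repeating the open sets, so that $U'_{mr-r+1}=\dots=U'_{mr}=U_{i_m}$ for $1\leq m\leq p$. Then $\NN(p,\U)=\NN(rp,\U')$, and I claim $hol_{2k}^{(p,\U)}=hol_{2k}^{(rp,\U')}$. Whenever two consecutive open sets of $\U'$ coincide the corresponding transition function is the identity, and the sum over $k_1+\dots+k_r$ fixed that distributes the curvature insertions of a block over its $r$ subintervals is precisely the sum occurring in the gluing formula of Remark \ref{rmk:moreglue}, applied with the odd form $\omega=-R_{i_m}dt$. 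Applying that formula $r-1$ times on each of the $p$ original intervals collapses $\U'$ back to $\U$, exactly as in step (1) of Proposition \ref{prop:hol_0-locally}.

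Next I would prove overlap: for $\U=\{U_{i_1},\dots,U_{i_p}\}$, $\U'=\{U_{j_1},\dots,U_{j_p}\}$ and $\gamma\in\NN(p,\U\cap\U')$,
\[
hol_{2k}^{(p,\U')}(\gamma)=g_{i_p,j_p}^{-1}(\gamma(0))\cdot hol_{2k}^{(p,\U)}(\gamma)\cdot g_{i_p,j_p}(\gamma(1)).
\]
As in Proposition \ref{prop:hol_0-locally} this reduces, via repeated use of the cocycle relation $g_{i_{m-1},i_m}g_{i_m,j_m}=g_{i_{m-1},j_{m-1}}g_{j_{m-1},j_m}$, to an interval identity replacing \eqref{moveg}: over $\left[\frac{m-1}{p},\frac{m}{p}\right]$,
\[
g_{i_m,j_m}\!\left(\tfrac{m-1}{p}\right) E\!\left((1\ot(-R_{j_m}dt)^{\ot \ell})\bullet e^{1\ot A_{j_m}}\right)=E\!\left((1\ot(-R_{i_m}dt)^{\ot \ell})\bullet e^{1\ot A_{i_m}}\right) g_{i_m,j_m}\!\left(\tfrac{m}{p}\right).
\]
To prove this I would set $g=g_{i_m,j_m}$, use $A_{j_m}=g^{-1}A_{i_m}g+g^{-1}dg$ and its consequence $R_{j_m}=g^{-1}R_{i_m}g$ (curvature transforms tensorially), expand the shuffle products so that every factor appearing is one of $g^{-1}A_{i_m}g$, $g^{-1}dg$, or $g^{-1}(-R_{i_m})g$, and then telescope away all the $g^{-1}dg$-blocks by the multiplicative fundamental theorem of calculus \eqref{Gltrans}; the dressing factors $g(t),g^{-1}(t)$ surrounding the $A_{i_m}$- and $R_{i_m}$-insertions then cancel in consecutive pairs, leaving a single $g\!\left(\frac{m}{p}\right)$ at the right end, just as in the $\ell=0$ computation. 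Iterating the interval identity and the cocycle relation from $m=1$ to $p$ yields the overlap formula.

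With subdivision and overlap established, the gluing is formal, word for word as in Proposition \ref{prop:hol_0-locally}: for $\gamma\in\NN(p,\U)\cap\NN(p',\U')$ pass to a common subdivision by subdivision, and then overlap shows the two local forms agree up to conjugation by $g_{i_p,j_p}$ at $\gamma(0)=\gamma(1)$; since the fiber of $\mathcal{E}$ over $\gamma$ is $\End(E_{\gamma(0)})$, on which the transition functions $g_{ij}$ act by conjugation, the $hol_{2k}^{(p,\U)}$ patch to a well-defined $hol_{2k}\in\Om^{2k}(LM,\mathcal{E})$ restricting to $hol_{2k}^{(p,\U)}$ on $\NN(p,\U)$. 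The main obstacle is the curvature-decorated interval identity above: one has to verify that interspersing the tensorial curvature insertions among the $A_{i_m}$- and $g^{-1}dg$-factors does not disrupt the telescoping of the $g^{-1}dg$-blocks in \eqref{Gltrans}, and that the $dt$-factors of $-R_{i_m}dt$ (which become honest curvature $2$-forms on $M$ after applying $E$, cf. Lemma \ref{lem:hol2k}) together with the Koszul signs behave exactly as in the $k=0$ case. Everything else is the same bookkeeping as in Proposition \ref{prop:hol_0-locally}.
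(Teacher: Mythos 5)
Your proposal is correct and follows essentially the same route as the paper: the paper's proof likewise reduces to the subdivision and overlap properties of Proposition \ref{prop:hol_0-locally}, invoking Remark \ref{rmk:moreglue} for subdivision and the tensorial transformation of the curvature (the relation $g_{ij}R_i = R_j g_{ij}$, equivalently your $R_{j_m}=g^{-1}R_{i_m}g$) to push the transition functions past the curvature insertions in the overlap step. Your write-up simply fills in more of the telescoping details that the paper leaves as ``almost verbatim.''
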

\begin{proof}
We can repeat the proof as in proposition \ref{prop:hol_0-locally}, showing the subdivision and overlap properties. The subdivision property follows as in proposition \ref{prop:hol_0-locally}, using the gluing property of the iterated integral, see remark \ref{rmk:moreglue}.
The overlap property follows as in proposition \ref{prop:hol_0-locally} almost verbatim for $hol_{2k}$, where we replace $k$ of the $\iota_\vf A$'s by $R$, and using the fact that  $g_{ij} R_i = R_j g_{ij}$ (since $R$ is a global 2-form, so that $R_i = R_j$ on $U_i \cap U_j$).
\end{proof}

Finally, we state our main result of this section.
\begin{thm}\label{THM:local-nabla-hol_2k} 
For all $k \geq 0$ we have that
\[
\nabla^* hol_{2k} = -\iota_\vf hol_{2k+2} \in \Omega^{2k+1}(LM ; \mathcal{E})
\]
and this is given by the explicit formula
\begin{multline} \label{iddthol2k}
\sum_{n_1,\dots,n_p\geq 0} \quad\sum_{\scriptsize
\begin{matrix}
(i_r,s)\in S \\
J\subset S \\
|J| = k, (i_r,s)\notin J
\end{matrix}
}\quad g_{i_p,i_1}\left(\gamma\left(0 \right)\right)
\\
\wedge \bigg( \int_{\Delta^{n_1}} X^1_{i_1}\left(\frac {t_1} p\right) \cdots X^{n_1}_{i_1}\left(\frac {t_{n_1}} p\right) dt_1 \cdots dt_{n_1} \bigg) \wedge
\cdots
\wedge  g_{i_{p-1},i_p}\left(\gamma\left(\frac {p-1} p\right)\right)
\\
\wedge 
\bigg( \int_{\Delta^{n_p}} X^1_{i_p}\left(\frac {p-1 + t_1} p\right) \cdots X^{n_p}_{i_p}\left(\frac {p-1+t_{n_p}} p\right) dt_1 \cdots dt_{n_p} \bigg)
 \end{multline}
where $S= \{(i_r,j): r=1,\dots, p,\text{ and } 1\leq j\leq n_r\}$ and 
\[
X^j_i = \left\{
\begin{array}{rl}
 R_i & \text{if  } (i,j) \in J \\
-\iota_\vf R_i & \text{if  } (i,j) =(i_r,s) \\
\iota_\vf A_i & \text{otherwise.}
\end{array} \right.
\]
\end{thm}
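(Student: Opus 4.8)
The plan is to mimic, chart by chart, the proof of the global statement Theorem~\ref{THM:nabla=id/dt}, working now in the local Hochschild complex $CH_\bullet^{(p,\U\times S^1)}$ and then patching the resulting identities. Fix $\NN(p,\U)$. The four steps are: (i) show $\nabla^* hol_{2k}^{(p,\U)}=E^{(p,\U)}(D\h_{2k}^{(p,\U)})$, where $\nabla^*=d_{DR}+[A_{i_p},-]$; (ii) compute $D\h_{2k}^{(p,\U)}$ explicitly; (iii) apply $E^{(p,\U)}$ and recognize the outcome as $-\iota_\vf hol_{2k+2}^{(p,\U)}$ via Lemma~\ref{lem:hol2k}; (iv) glue. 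For (i), recall $E^{(p,\U)}=(\rho|_{\NN})^*\circ It^{(p,\U\times S^1)}$ and that $(\rho|_{\NN})^*$ commutes with $d_{DR}$. Applying Proposition~\ref{dItpU} to the monomials of $\h_{2k}^{(p,\U)}$ writes $d_{DR}It^{(p,\U\times S^1)}(\h_{2k}^{(p,\U)})$ as $It^{(p,\U\times S^1)}(D\h_{2k}^{(p,\U)})$ plus two boundary terms involving the last interior factor $a^p_{n_p}\in\{A_{i_p},-R_{i_p}dt\}$ of block $p$, evaluated at $\gamma(0)=\gamma(1)$. When $a^p_{n_p}=-R_{i_p}dt$ these boundary terms die under $(\rho|_{\NN})^*$, since the $S^1$-component of $\rho|_{\NN}(\gamma)$ at a vertex is constant, so $dt$ pulls back to zero, exactly as $\rho^*(Rdt(0))=0$ in the proof of Theorem~\ref{THM:nabla=id/dt}. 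When $a^p_{n_p}=A_{i_p}$, the ``drop the last $A_{i_p}$'' boundary terms, summed over all monomials via the bijection on shuffles given by appending $A_{i_p}$, reassemble to $-A_{i_p}(0)\cdot hol_{2k}^{(p,\U)}+hol_{2k}^{(p,\U)}\cdot A_{i_p}(0)$, which is absorbed into $d_{DR}$ to produce $\nabla^*$; this is the same bookkeeping as in Proposition~\ref{nablapU}.

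For (ii), expanding the $p$ shuffle slots, a monomial of $\h_{2k}^{(p,\U)}$ has the form $g_{i_p,i_1}\ot X^1_{i_1}\ot\cdots\ot g_{i_{p-1},i_p}\ot\cdots\ot X^{n_p}_{i_p}$ with each interior $X^j_i\in\{A_i,-R_idt\}$, exactly $k$ of them equal to $-R_idt$, and all shuffle signs $+1$ since $A_i$ and $R_idt$ are odd. Running the Hochschild differential $D$ of Definition~\ref{DEF:local-Hoch} and using $d_{DR}(A_i)+A_i^2=R_i$, the Bianchi identity $d_{DR}(R_i)+[A_i,R_i]=0$, $(R_idt)^2=0$, and the cocycle relation $d_{DR}(g_{ij})=g_{ij}A_j-A_ig_{ij}$, one checks term by term that the differential of each transition $0$-form cancels against the multiplications of $g_{ij}$ with the adjacent $A$'s (leaving the $g$'s untouched at the vertices, the local telescoping implicit in Proposition~\ref{nablapU}); that $d_{DR}(-R_idt)$ cancels against the adjacent products $A_i\cdot(-R_idt)$ and $(-R_idt)\cdot A_i$; that products of two adjacent $-R_idt$'s vanish; and that differentiating an $A_i$-slot, after cancelling the $A_i^2$ piece against the corresponding product term, deposits a single $-R_i$. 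Exactly as in the passage from \eqref{EQ:D(h2k)-Ch2} to the formula for $D(\h_{2k})$ in Theorem~\ref{THM:nabla=id/dt}, the net result is
\[
D\h_{2k}^{(p,\U)}=\sum_{n_1,\dots,n_p\geq 0}\ \sum_{(i_r,s)\in S}\ \sum_{\substack{J\subset S,\ |J|=k\\ (i_r,s)\notin J}} g_{i_p,i_1}\ot X^1_{i_1}\ot\cdots\ot X^{n_p}_{i_p},
\]
with $X^j_i=-R_i$ if $(i,j)=(i_r,s)$, $X^j_i=-R_idt$ if $(i,j)\in J$, and $X^j_i=A_i$ otherwise, in the notation of \eqref{iddthol2k}.

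For (iii) and (iv), applying $E^{(p,\U)}$ factorwise (sending $-R_idt\mapsto R_i$, the un-$dt$'d $-R_i\mapsto-\iota_\vf R_i$, $A_i\mapsto\iota_\vf A_i$, and leaving the vertex $0$-forms $g_{ij}$ evaluated at the subdivision points) turns the displayed $D\h_{2k}^{(p,\U)}$ into precisely formula \eqref{iddthol2k}. To identify this with $-\iota_\vf hol_{2k+2}^{(p,\U)}$, note by Lemma~\ref{lem:hol2k} that $hol_{2k+2}^{(p,\U)}$ is a wedge of $p$ iterated integrals in which exactly $k+1$ interior factors are the $2$-form $R_i$ while every remaining interior factor and every vertex factor is a $0$-form; since $\iota_\vf$ is a graded derivation of square zero that annihilates $0$-forms and meets no sign moving past the intervening even factors, $\iota_\vf hol_{2k+2}^{(p,\U)}$ is the sum over the choices of one of the $k+1$ factors $R_i$ of the same wedge with that factor replaced by $\iota_\vf R_i$; reindexing the distinguished factor as $(i_r,s)$ and the remaining $k$ as $J$ shows $-\iota_\vf hol_{2k+2}^{(p,\U)}$ equals \eqref{iddthol2k}. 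Since $\nabla^*$, $\iota_\vf$ and the global forms $hol_{2k}$ are defined independently of the chart and the $\NN(p,\U)$ cover $LM$, the local identities patch to $\nabla^* hol_{2k}=-\iota_\vf hol_{2k+2}$.

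\textbf{Main obstacle.} The step demanding the most care is (ii): keeping track of the Koszul signs in the shuffle expansion and in $D$, verifying that the transition-function $0$-forms telescope correctly at the vertices \emph{simultaneously} with the interior cancellations coming from $d_{DR}(A_i)+A_i^2=R_i$ and Bianchi, and confirming that precisely one uncancelled $-R_i$ slot survives in each term.
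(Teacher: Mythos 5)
Your proposal is correct and follows essentially the same route as the paper's proof: establish $\nabla^* E^{(p,\U)}(\h_{2k}^{(p,\U)})=E^{(p,\U)}(D\h_{2k}^{(p,\U)})$ via Proposition~\ref{dItpU}, compute $D\h_{2k}^{(p,\U)}$ using $dA_i+A_i^2=R_i$, Bianchi, $(R_idt)^2=0$ and the cancellation of the transition-function terms through $dg_{i,j}-g_{i,j}A_j+A_ig_{i,j}=0$, and then match the result with $-\iota_\vf hol_{2k+2}$ via Lemma~\ref{lem:hol2k} and the derivation property of $\iota_\vf$. The paper states this tersely by reference to Theorem~\ref{THM:nabla=id/dt}; your write-up simply makes the same steps explicit.
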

\begin{proof}
The proof is the same as in theorem \ref{THM:nabla=id/dt}, where the idea is, as before, that $-\iota_\vf(hol_{2k})$ and $\nabla^* E\left( \h_{2k}^{(p,\U)} \right)$ are both given by formula \eqref{iddthol2k}, \emph{i.e.} a sum over all $n_1,\dots , n_p \geq 0$ of products of iterated integrals where exactly $k$ of the $X_i$'s are equal to $R$ and exactly one is equal to $(-\iota_\vf R)$. 

By lemma \ref{lem:hol2k} we see that $-\iota_\vf (hol_{2k+2})$ is equal to \eqref{iddthol2k}.
For $\nabla^* E\big( \h_{2k}^{(p,\U)} \big) = E\big(D \h_{2k}^{(p,\U)} \big)$, the calculation is the same as in theorem \ref{THM:nabla=id/dt}, where the only new feature is the apparent terms $g_{i,j}$ in $\h_{2k}^{(p,\U)}$. But, all of these terms in $E\big(D \h_{2k}^{(p,\U)} \big)$ cancel with correct sign since $d g_{i,j} - g_{i,j} A_j + A_i g_{i,j} =0$.
\end{proof}
From the previous theorem, it follows that 
\[
Ch(E; \nabla) = \sum_{k\geq 0} tr(hol_{2k})
\]
is in the kernel of the Lie derivative $\mathcal L_\vf=[d_{DR},\iota_\vf]$ (since $(d_{DR})^2=(\iota_\vf)^2=0$), and is therefore an element of  $\Omega(LM)^{inv(\vf)}$, just as in section \ref{SUBSEC:2-higher-hol}. Moreover,
\begin{cor}\label{COR:local-nabla+u(Ch)=0} $($\cite{B}, \cite{GJP}$)$
For any vector bundle $E$ with connection $\nabla$,
\[
( d +  \iota_\vf ) \left(Ch(E;\nabla)\right)= 0,
\] 
\emph{i.e.} $Ch(E;\nabla)$ is a closed element of the space $\Omega(LM)^{inv(\vf)}$ with differential $d+\iota_\vf$.
\end{cor}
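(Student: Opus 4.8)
The plan is to deduce this corollary directly from Theorem \ref{THM:local-nabla-hol_2k}, combined with the single structural fact that the trace intertwines the pullback covariant derivative $\nabla^*$ with the De Rham differential $d$. Everything reduces to a telescoping sum once that intertwining property is in place.

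First I would upgrade Lemma \ref{dtr}, which was stated only for $hol=hol_0$, to the family $\{hol_{2k}\}_{k\ge 0}$: for each $k\ge 0$ one has $d_{DR}(tr(hol_{2k})) = tr(\nabla^*(hol_{2k}))$ as forms on $LM$. This is a local statement, and on each chart $\mathcal N(p,\U)$ Proposition \ref{nablapU} identifies $\nabla^*$ with $d_{DR} + [A_{i_p},-]$, so that $tr(\nabla^* hol_{2k}) = tr(d_{DR} hol_{2k}) + tr([A_{i_p},hol_{2k}]) = d_{DR}(tr(hol_{2k}))$, the commutator term dying under the trace exactly as in the proof of Lemma \ref{dtr}; since the local expressions for $tr(hol_{2k})$ patch together into a globally defined $\C$-valued form (the patching transition is conjugation, which trace absorbs), this identity holds on all of $LM$. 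Applying $tr$ to Theorem \ref{THM:local-nabla-hol_2k} and using that $tr$ commutes with the contraction $\iota_\vf$, I obtain, for every $k\ge 0$,
\[
d_{DR}(tr(hol_{2k})) \;=\; tr(\nabla^* hol_{2k}) \;=\; -\,tr(\iota_\vf hol_{2k+2}) \;=\; -\,\iota_\vf(tr(hol_{2k+2})).
\]
Summing over $k\ge 0$ and recalling $Ch(E;\nabla)=\sum_{k\ge 0} tr(hol_{2k})$, the right-hand sides telescope:
\[
(d+\iota_\vf)\bigl(Ch(E;\nabla)\bigr) \;=\; \sum_{k\ge 0}\Bigl(-\iota_\vf(tr(hol_{2k+2})) + \iota_\vf(tr(hol_{2k}))\Bigr) \;=\; \iota_\vf(tr(hol_0)).
\]
The surviving term vanishes because $hol_0=hol$ is a $0$-form on $LM$, on which $\iota_\vf$ acts as zero; hence $(d+\iota_\vf)(Ch(E;\nabla))=0$.

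To conclude that $Ch(E;\nabla)$ really is an element of $\Omega(LM)^{inv(\vf)}$ I would then check $\mathcal L_\vf(tr(hol_{2k}))=0$ for each $k$: since $\mathcal L_\vf = d\iota_\vf + \iota_\vf d$, the displayed relation and $\iota_\vf^2=0$ reduce this to showing $d\,\iota_\vf(tr(hol_{2k}))=0$; for $k\ge 1$ one rewrites $\iota_\vf hol_{2k} = -\nabla^* hol_{2k-2}$ (Theorem \ref{THM:local-nabla-hol_2k}) so that $d\,\iota_\vf(tr(hol_{2k})) = -d\,d_{DR}(tr(hol_{2k-2})) = 0$, and for $k=0$ it is immediate from $\iota_\vf hol_0=0$. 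Thus $Ch(E;\nabla)\in\Omega(LM)^{inv(\vf)}$ and is closed for $d+\iota_\vf$.

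I do not expect a genuine obstacle here; the content is entirely carried by Theorem \ref{THM:local-nabla-hol_2k}. The one point deserving explicit care is the first step: Lemma \ref{dtr} as written applies to $hol_0$ only, so I should make clear that its argument goes through verbatim for each $hol_{2k}$, using Proposition \ref{nablapU} for the local shape of $\nabla^*$ and the conjugation-invariance of trace to see that $tr(hol_{2k})$ is a well-defined global form on $LM$.
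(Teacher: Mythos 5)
Your proof is correct and follows essentially the same route the paper takes (the paper leaves this corollary as an immediate consequence of Theorem \ref{THM:local-nabla-hol_2k}, with the trace converting $\nabla^*$ into $d_{DR}$ exactly as in Lemma \ref{dtr} and the sum telescoping down to $\iota_\vf(tr(hol_0))=0$). Your explicit verification that the Lemma \ref{dtr} argument extends to each $hol_{2k}$ and that $\mathcal L_\vf(tr(hol_{2k}))=0$ is precisely the content the paper compresses into the sentence preceding the corollary.
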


Just as in section \ref{SUBSEC:2-higher-hol}, we can also define $Ch^{(u)}(E; \nabla) = \sum_{k\geq 0} u^{-k}\cdot tr(hol_{2k})\in \Omega(LM)[u,u^{-1}]]^{inv(\vf)}$, which is closed under the differential $d+u\cdot \iota_\vf$. However, in foresight of the Chern character that will be appearing in the gerbe case in the next section, it will be more useful to focus on $Ch(E;\nabla)$, see also remarks \ref{REM:no-u's} and \ref{REM:no-u's-compat} below.

\section{Equivariant holonomy for abelian gerbes}\label{SEC:Chern-gerbes}
In this section, we work with abelian gerbes with connnection. We develop and apply the local approach from the previous section to the torus mapping space. Starting with holonomy of the gerbe, we define a sequence of higher holonomies 
$hol_{2k,2\ell}\in \Omega^{2k+2\ell}(M^\T)$ that are related via the diagram in figure \ref{DIAG:hol_k,l}, with $hol_{0,0}$ being the usual holonomy. Together these will give a equivariantly closed form on the torus mapping space, see corollary \ref{gerbechern}.

\subsection{Iterated integral for the torus in the $(p,q)$-simplicial model}\label{SUBSEC:torus-hol}

Let $\T$ denote the torus $\T=S^1 \times S^1$, which we will frequently view as a quotient of $[0,1]\times [0,1]$, and let $M^\T=Map(\T,M)$ be the space of smooth maps from the torus to a fixed manifold $M$. Recall that $M^\T$ is a smooth Fr\'echet manifold \cite{Ha} on which  we can perform the usual constructions of differential topology (vector fields, differential forms, exterior derivative, etc.).
We will construct forms on $M^\T$ by first defining them locally on open sets $\NN(p,q,\U)\subset M^\T$. 

Fix numbers $p,q\in \N$, and open sets $U_{i_{(k,\ell)}}$ of $M$ for $k=1, \dots, p$ and $\ell=1, \dots, q$, which we abbreviate by $\U=\{U_{i_{(k,\ell)}}\}_{1\leq k\leq p,1\leq \ell\leq q}$. Define the open subset $\NN(p,q,\U)\subset M^\T$ by,
\[
 \NN(p,q,\U)= \left\{\gamma\in M^\T: \left(\gamma\Big|_{\big[\frac{k-1}{p},\frac{k}{p}\big]\times \big[\frac{\ell-1}{q},\frac{\ell}{q}\big]}\right)\subset U_{i_{(k,\ell)}}, \text{ for }  1\leq k\leq p, 1\leq\ell\leq q \right\}.
 \]
Intuitively, we divide the torus into a $p\times q$ grid, and consider those maps for which the rectangular component labelled by $(k,\ell)$ has image in the open subset $U_{i_{(k,\ell)}}$. We denote the vertices, horizontal and vertical edges of this subdivision as 
in figure \ref{EQ:torus-v-e-f}. Here, the $(k,q)^{\text{th}}$ face is glued to the $(k,1)^{\text{st}}$ vertical edge, and similarly the $(1,\ell)^{\text{th}}$ face is glued to the $(p,\ell)^{\text{th}}$ horizontal edge to obtain a suitable cubical subdivision of the torus.

\begin{figure}
\[
\xymatrix@=10pt{
\ou{v}{1,1} \ar[rr]_{\ou {\ehor}{1,1}}\ar[dd]^{\ou{\ever}{1,1}} &&  \ou{v}{1,2} \ar[dd]^{\ou{\ever}{1,2}}\ar[rr]_{\ou{\ehor}{1,2}} && \ou{v}{1,3} \ar[rr]_{\ou{\ehor}{1,3}}\ar[dd]^{\ou{\ever}{1,3}} &&\dots & \ou{v}{1,q} \ar[dd]^{\ou{\ever}{1,q}}\ar[rr]_{\ou{\ehor}{1,q}}&& \\
&\ou{f}{1,1}&& \ou{f}{1,2}&& &&& \ou{f}{1,q}  \\
\ou{v}{2,1} \ar[rr]_{\ou {\ehor}{2,1}}\ar[dd]^{\ou{\ever}{2,1}}  &&   \ou{v}{2,2} \ar[dd]^{\ou{\ever}{2,2}} \ar[rr]_{\ou{\ehor}{2,2}} && \ou{v}{2,3}  \ar[rr]_{\ou{\ehor}{2,3}} \ar[dd]^{\ou{\ever}{2,3}}  &&\dots  & \ou{v}{2,q} \ar[dd]^{\ou{\ever}{2,q}} \ar[rr]_{\ou{\ehor}{2,q}}&& \\
&\ou{f}{2,1}&& \ou{f}{2,2}&& &&& \ou{f}{2,q}  \\
\ou{v}{3,1} \ar[rr]_{\ou {\ehor}{3,1}}\ar[dd]^{\ou{\ever}{3,1}}  &&   \ou{v}{3,2} \ar[dd]^{\ou{\ever}{3,2}} \ar[rr]_{\ou{\ehor}{3,2}} && \ou{v}{3,3}  \ar[rr]_{\ou{\ehor}{3,3}} \ar[dd]^{\ou{\ever}{3,3}}  &&\dots  & \ou{v}{3,q} \ar[dd]^{\ou{\ever}{3,q}} \ar[rr]_{\ou{\ehor}{3,q}}&& \\
\\ \vdots  &&\vdots  && \vdots  && \ddots&\vdots \\
\ou{v}{p,1} \ar[rr]_{\ou {\ehor}{p,1}}\ar[dd]^{\ou{\ever}{p,1}} &&   \ou{v}{p,2} \ar[dd]^{\ou{\ever}{p,2}}\ar[rr]_{\ou{\ehor}{p,2}} && \ou{v}{p,3}  \ar[rr]_{\ou{\ehor}{p,3}} \ar[dd]^{\ou{\ever}{p,3}} &&\dots  & \ou{v}{p,q} \ar[dd]^{\ou{\ever}{p,q}}\ar[rr]_{\ou{\ehor}{p,q}}&& \\
& \ou{f}{p,1} && \ou{f}{p,2} && &&& \ou{f}{p,q} \\   &&   &&  &&&  &
}
\]
\caption{Vertices, edges, and faces of the torus (The orientation of the edges is as indicated, and faces are oriented by their inclusion in the plane.)}\label{EQ:torus-v-e-f}
\end{figure}
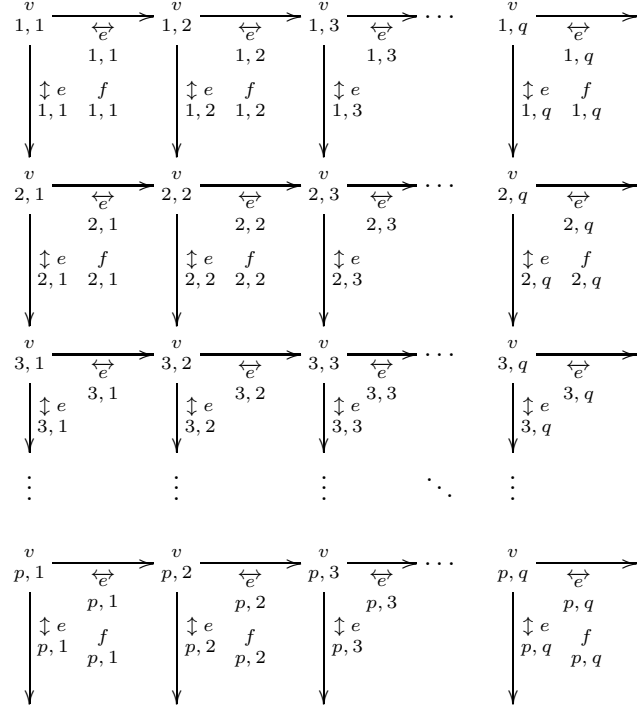

For given $p,q\in \N$, and a choice of open sets $\U$ as above, we now introduce some notation for the various open sets used in the torus subdivision. 
First, denote by $U\ou{f}{k,\ell}:=U_{i_{(k,\ell)}}$ the open set to which the $(k,\ell)^{\text{th}}$ face $\ou{f}{k,\ell}$ will be mapped to for a $\gamma\in \NN(p,q,\U)$. 
We denote the intersections of the open sets along a horizontal edge by $U^\ehor_{k,\ell}$, respectively along a vertical edge by $U^\ever_{k,\ell}$,  \emph{i.e.}
\begin{eqnarray*} 
U^\ehor_{1,\ell}=U_{i_{(1,\ell)}}\cap U_{i_{(p,\ell)}},&& \text{ and for } k>1:\quad  U^\ehor_{k,\ell}=U_{i_{(k-1,\ell)}}\cap U_{i_{(k,\ell)}},\\
 U^\ever_{k,1}=U_{i_{(k,q)}}\cap U_{i_{(k,1)}}, &&\text{ and for } \ell >1:\quad  U^\ever_{k,\ell}=U_{i_{(k,\ell-1)}}\cap U_{i_{(k,\ell)}},
 \end{eqnarray*}
 and the intersection of open sets at a vertex by $U^v_{k,\ell}$, \emph{i.e.}
 \begin{eqnarray*}
&&U^v_{1,1}=U_{i_{(1,1)}}\cap U_{i_{(p,1)}}\cap U_{i_{(1,q)}}\cap U_{i_{(p,q)}},  \\
\text{for } k>1:&& U^v_{k,1}=U_{i_{(k,1)}}\cap U_{i_{(k,q)}}\cap U_{i_{(k-1,1)}}\cap U_{i_{(k-1,q)}},  \\
\text{for } \ell>1:&& U^v_{1,\ell}=U_{i_{(1,\ell)}}\cap U_{i_{(1,\ell-1)}}\cap U_{i_{(p,\ell)}}\cap U_{i_{(q,\ell-1)}},  \\
\text{for } k,\ell>1:&&U^v_{k,\ell}=U_{i_{(k,\ell)}}\cap U_{i_{(k,\ell-1)}}\cap U_{i_{(k-1,\ell)}}\cap U_{i_{(k-1,\ell-1)}}.
\end{eqnarray*}
Differential forms on these open sets are denoted by
$$ \Oma^f_{k,\ell}=\Om(U\ou{f}{k,\ell}), \quad \Oma^\ehor_{k,\ell}=\Om(U^\ehor_{k,\ell}),\quad  \Oma^\ever_{k,\ell}=\Om(U^\ever_{k,\ell}), \quad \Oma^v_{k,\ell}=\Om(U^v_{k,\ell}). $$
We refer to these as \emph{forms on the faces, edges, and vertices}.
We now define the torus-Hochschild chain complex and the iterated integral subject to the data $\U$.
\begin{defn}\label{DEF:Torus-CH}
Fix $p,q\in \N$, and open sets $\U=\{U_{i_{(k,\ell)}}\}_{1\leq k\leq p,1\leq \ell\leq q}$ as above. For integers $n_1,\dots,n_q,m_1,\dots,m_p\geq 0$, we first define $\Oma^{n_1,\dots, n_q}_{m_1,\dots, m_p}$ to be given by the tensor product of differential forms defined in the following matrix of tensor products,
\[
\begin{tabular}{|cc|cc|cc}\hline
$\Oma^v_{1,1}$& $\ot (\Oma^\ehor_{1,1})^{\ot n_1} $& $\ot \Oma^v_{1,2}$&$ \ot (\Oma^\ehor_{1,2})^{\ot n_2}$ & $\dots $&$ \ot (\Oma^\ehor_{1,q})^{\ot n_q}$\\
$\ot(\Oma^\ever_{1,1})^{\ot m_1}$&$ \ot (\Oma^f_{1,1})^{\ot m_1  n_1} $& $\ot (\Oma^\ever_{1,2})^{\ot m_1}$&$ \ot (\Oma^f_{1,2})^{\ot m_1 n_2}$ & $\dots$ &$ \ot (\Oma^f_{1,q})^{\ot m_1 n_q}$\\ \hline
$\ot\Oma^v_{2,1}$&$ \ot (\Oma^\ehor_{2,1})^{\ot n_1} $&$  \ot \Oma^v_{2,2}$&$ \ot (\Oma^\ehor_{2,2})^{\ot n_2} $&$ \dots $&$ \ot (\Oma^\ehor_{2,q})^{\ot n_q} $\\
$\ot(\Oma^\ever_{2,1})^{\ot m_2}$&$ \ot (\Oma^f_{2,1})^{\ot m_2 n_1} $& $\ot (\Oma^\ever_{2,2})^{\ot m_2}$& $\ot (\Oma^f_{2,2})^{\ot m_2 n_2} $&$ \dots $&$ \ot (\Oma^f_{2,q})^{\ot m_2 n_q}$\\ \hline
$\vdots$&$\vdots$&$\vdots$&$\vdots$&$\ddots$&$\vdots$\\
$\ot(\Oma^\ever_{p,1})^{\ot m_p}$&$ \ot (\Oma^f_{p,1})^{\ot m_p n_1} $& $\ot (\Oma^\ever_{p,2})^{\ot m_p}$& $\ot (\Oma^f_{p,2})^{\ot m_p n_2} $&$ \dots $&$ \ot (\Oma^f_{p,q})^{\ot m_p n_q}$
\end{tabular}
\]
If we write $m=m_1+\dots+m_p$ and $n=n_1+\dots+n_q$ for short, then this matrix has $m+p$ rows and $n+q$ columns. Note, that the algebras of forms on the edges, $\Oma^\ehor_{k,\ell}$ and $\Oma^\ever_{k,\ell}$, are modules over the algebras of forms on the adjacent faces $\Oma^f_{k',\ell'}$ via the inclusion of open sets. Similarly the algebras of forms on the vertices $\Oma^v_{k,\ell}$ are modules over the algebras of forms on the adjacent edges and faces.

We would like to remark, that in the above tensor product for $\Oma^{n_1,\dots, n_q}_{m_1,\dots, m_p}$ no particular linear order was chosen to write this tensor product, but it is rather a coequalizer over all possible linear orderings. For example, a monomial $\mathfrak a \in \Oma^{n_1,\dots, n_q}_{m_1,\dots, m_p}$ may given explicitly by a choice of linear ordering as follows,
\begin{equation}\label{EQ:a-in-CH^T}
\mathfrak a= \bigotimes_{1\leq k\leq p} \bigotimes_{1\leq \ell\leq q} 
\left(a^v_{k,\ell}\otimes 
\bigotimes_{1\leq i\leq m_k}(a^{\ever}_{k,\ell})_i \ot 
\bigotimes_{1\leq i\leq n_\ell}(a^{\ehor}_{k,\ell})_i \ot 
\bigotimes_{ 1\leq i\leq m_k}\bigotimes_{ 1\leq j\leq n_\ell } (a^f_{k,\ell})_{i,j}\right).
\end{equation}
To identify $\mathfrak a$ for another choice of linear ordering, a sign factor coming from the Koszul sign rule has to be applied.

 With this notation, the Hochschild complex subject to $\U$ is defined as $$CH_\bullet^{(p,q,\U)}:=\bigoplus_{\scriptsize\begin{matrix}n_1,\dots,n_q\geq 0 \\ m_1,\dots,m_p\geq 0 \end{matrix}} \Oma^{n_1,\dots, n_q}_{m_1,\dots, m_p}[m+n],$$ where the square bracket ``$[m+n]$'' denotes the operation of a shift of total degree down by $n+m$. For an element $\mathfrak a \in \Oma^{n_1,\dots, n_q}_{m_1,\dots, m_p}[m+n]\subset CH_\bullet^{(p,q,\U)}$ as in equation \eqref{EQ:a-in-CH^T} we will call $m+n$ the simplicial degree of an element in $CH_\bullet^{(p,q,\U)}$, and denote by $|\mathfrak a|=\sum_k \sum_\ell
\left(|a^v_{k,\ell}|+ \sum_i |(a^{\ever}_{k,\ell})_i| + \sum_i |(a^{\ehor}_{k,\ell})_i | +
\sum_i \sum_j |(a^f_{k,\ell})_{i,j}|\right)
-(m+n)$ the total degree of $\mathfrak a$.

To obtain a differential, we first define the face maps $d^{\leftrightarrow}_{r,i}:\Oma^{n_1,\dots,n_r,\dots, n_q}_{m_1,\dots,m_p}\to \Oma^{n_1,\dots,n_r-1,\dots, n_q}_{m_1,\dots,m_p}$ (for $r=1,\dots,q$ and $i=0,\dots,n_r$), and $d^{\updownarrow}_{r,i}:\Oma^{n_1,\dots, n_q}_{m_1,\dots,m_r,\dots,m_p}\to \Oma^{n_1,\dots, n_q}_{m_1,\dots,m_r-1,\dots,m_p}$ (for $r=1,\dots,p$ and $i=0,\dots,m_r$). The map $d^{\leftrightarrow}_{r,i}$ is defined by multiplying the $(r+n_1+\dots+n_{r-1}+i)^{\text{th}}$ and the $(r+n_1+\dots+n_{r-1}+i+1)^{\text{th}}$ column,
$$
\begin{bmatrix} (a^\ehor_{1,r})_i\\ \vdots\\ (a^f_{p,r})_{m_p, i} \end{bmatrix} \ot 
\begin{bmatrix} (a^\ehor_{1,r})_{i+1}\\ \vdots\\ (a^f_{p,r})_{m_p, i+1} \end{bmatrix} \mapsto
\begin{bmatrix} (a^\ehor_{1,r})_i\cdot (a^\ehor_{1,r})_{i+1}\\ \vdots\\ (a^f_{p,r})_{m_p, i} \cdot (a^f_{p,r})_{m_p, i+1} \end{bmatrix} 
$$
and leaving the other tensor factors unchanged. Note, that taking products respects the coequalizer over linear orderings, and thus defines a map $d^{\leftrightarrow}_{r,i}:\Oma^{n_1,\dots,n_r,\dots, n_q}_{m_1,\dots,m_p}\to \Oma^{n_1,\dots,n_r-1,\dots, n_q}_{m_1,\dots,m_p}$. 
These maps are defined in a cyclic way, meaning that in the case 
$r=q$ and $i=n_q$ the operation $d^{\leftrightarrow}_{q,n_q}:\Oma^{n_1,\dots, n_q}_{m_1,\dots,m_p}\to \Oma^{n_1,\dots, n_{q-1}}_{m_1,\dots,m_p}$ multiplies the last column to the first column from the left.
  
Similarly, $d^{\updownarrow}_{r,i}$ multiplies the $(r+m_1+\dots+m_{r-1}+i)^{\text{th}}$ and the $(r+m_1+\dots+m_{r-1}+i+1)^{\text{th}}$ row,
\[
\begin{matrix}
[ (a^\ever_{r,1})_i \, \dots \, (a^f_{r,q})_{i,n_q} ] & & \\
 \ot & \mapsto & [ ((a^\ever_{r,1})_i\cdot (a^\ever_{r,1})_{i+1}) \dots((a^f_{r,q})_{i,n_q}\cdot (a^f_{r,q})_{i+1,n_q})] \\
[ (a^\ever_{r,1})_{i+1} \dots(a^f_{r,q})_{i+1,n_q} ] & & 
\end{matrix}
\]
and leaving the other tensor factors unchanged. Again, this is done in a cyclic way so that the last operator multiplies the
last row and the first row. We note that the indices for $d^{\leftrightarrow}_{r,i}$ (and $d^{\updownarrow}_{r,i}$) are chosen in such a way, that two edge columns (respectively two edge rows) are never being multiplied.

With this, we can finally define the differential $D$ on $CH_\bullet^{(p,q,\U)}$ to be given on $\Oma^{n_1,\dots, n_q}_{m_1,\dots,m_p}$ by
\begin{multline}\label{EQ:D-torus-Hoch}
 D:=(-1)^{m+n} \cdot d_{DR}
 \\
\quad \quad \,\,\,\,+ (-1)^{m+n+1}\cdot\sum_{\scriptsize\begin{matrix}j=1,\dots m+p\text{, where}\\j=(1+m_1)+\dots+(1+m_{r-1})+i+1 \end{matrix}} (-1)^{j+1} \cdot d^{\updownarrow}_{r,i}
\\
 +(-1)^{m+n+1}\cdot\sum_{\scriptsize\begin{matrix}j=1,\dots n+q\text{, where}\\j=(1+n_1)+\dots+(1+n_{r-1})+i +1\end{matrix}} (-1)^{m+p+j+1} \cdot d^{\leftrightarrow}_{r,i}.
\end{multline}
If we rewrite the matrix in a linear order of tensor factors and using signs by the usual Koszul rule, a lengthy but straightforward calculation, using that $(d_{DR})^2=(d')^2=0$ and $d'\circ d_{DR}=d_{DR}\circ d'$ for $d'=\sum (-1)^{j+1} d^{\updownarrow}_{r,i}+\sum(-1)^{m+p+j+1} d^{\leftrightarrow}_{r,i}$,  shows $D^2=0$. We denote the homology of this complex by $HH_\bullet^{(p,q,\U)}:=H_\bullet(CH_\bullet^{(p,q,\U)},D)$.
\end{defn}

\begin{defn}
We also have a shuffle product, which is defined using the degeneracy maps $s^{\leftrightarrow}_{r,i}:\Oma^{n_1,\dots,n_r,\dots, n_q}_{m_1,\dots,m_p}\to \Oma^{n_1,\dots,n_r+1,\dots, n_q}_{m_1,\dots,m_p}$ (for $r=1,\dots,p$ and $i=0,\dots,n_r$), and $s^{\updownarrow}_{r,i}:\Oma^{n_1,\dots, n_q}_{m_1,\dots,m_r,\dots,m_p}\to \Oma^{n_1,\dots, n_q}_{m_1,\dots,m_r+1,\dots,m_p}$ (for $r=1,\dots,q$ and $i=0,\dots,m_r$). The map $s^{\leftrightarrow}_{r,i}$ is defined by adding a column of units $1\in\Om(U)$ (for the respective open sets $U$) between the $(r+n_1+\dots+n_{r-1}+i)^{\text{th}}$ and the $(r+n_1+\dots+n_{r-1}+i+1)^{\text{th}}$ column. Similarly, $s^{\updownarrow}_{r,i}$ adds a row of units ``$1$'' between the $(r+m_1+\dots+m_{r-1}+i)^{\text{th}}$ and the $(r+m_1+\dots+m_{r-1}+i+1)^{\text{th}}$ row. The operations are well-defined on the coequalizer due the units $1$ being of degree $0$. With this, we set,
\begin{multline*}
\bullet^\leftrightarrow_r (\mathfrak a \ot\mathfrak b) = 
\sum_{ \sigma\in S(n_r,n'_r)} \sgn(\sigma)\cdot 
 \left(s^\leftrightarrow_{r,\sigma(n_r+n'_r)}\dots  s^\leftrightarrow_{r,\sigma(n_r+1)}(\mathfrak a)\right)
 \otimes \left(s^\leftrightarrow_{r,\sigma(n_1)} \dots  s^\leftrightarrow_{r,\sigma(1)}(\mathfrak b)\right)\\
 \in\Oma^{n_1,\dots,n_r+n'_r,\dots, n_q}_{m_1,\dots, m_p}\ot \Oma^{n'_1,\dots,n_r+n'_r,\dots, n'_q}_{m'_1,\dots, m'_p},
\end{multline*}
\begin{multline*}
\bullet^\updownarrow_r (\mathfrak a \ot\mathfrak b) = 
\sum_{ \sigma\in S(m_r,m'_r)} \sgn(\sigma)\cdot 
 \left(s^\updownarrow_{r,\sigma(m_r+m'_r)}\dots  s^\updownarrow_{r,\sigma(m_r+1)}(\mathfrak a)\right)
 \otimes \left(s^\updownarrow_{r,\sigma(m_1)} \dots  s^\updownarrow_{r,\sigma(1)}(\mathfrak b)\right)\\
 \in\Oma^{n_1,\dots, n_q}_{m_1,\dots,m_r+m'_r,\dots, m_p}\ot \Oma^{n'_1,\dots, n'_q}_{m'_1,\dots,m_r+m'_r,\dots, m'_p},
\end{multline*}
Note, that the two operators $\bullet^ \leftrightarrow_r $ and $\bullet^\updownarrow_{r'}$ commute for any $r$ and $r'$. With this and given $\mathfrak a\ot \mathfrak b\in \Oma^{n_1,\dots, n_q}_{m_1,\dots, m_p}\ot \Oma^{n'_1,\dots, n'_q}_{m'_1,\dots, m'_p}$, the shuffle product $\bullet:CH_\bullet^{(p,q,\U)}\ot CH_\bullet^{(p,q,\U)}\to CH_\bullet^{(p,q,\U)}$ is defined by the composition  $\bullet^\updownarrow_p\circ\dots \circ\bullet^\updownarrow_1\circ\bullet^\leftrightarrow_q\circ\dots \circ\bullet^\leftrightarrow_1$ 
and a multiplication $\star$ by tensor factors,
\begin{multline*}
\Oma^{n_1,\dots, n_q}_{m_1,\dots, m_p}\ot \Oma^{n'_1,\dots, n'_q}_{m'_1,\dots, m'_p}\quad  \stackrel {\bullet^\updownarrow_p\dots \bullet^\updownarrow_1\bullet^\leftrightarrow_q\dots \bullet^\leftrightarrow_1}{\longrightarrow}\\
\Oma^{n_1+n'_1,\dots, n_q+n'_q}_{m_1+m'_1,\dots, m_p+m'_p} \ot \Oma^{n_1+n'_1,\dots, n_q+n'_q}_{m_1+m'_1,\dots, m_p+m'_p} 
\quad \stackrel \star \longrightarrow\quad \Oma^{n_1+n'_1,\dots, n_q+n'_q}_{m_1+m'_1,\dots, m_p+m'_p},
\end{multline*}
\[
\mathfrak a\bullet \mathfrak b := (-1)^{|\mathfrak a|\cdot (m'+n')+n_q\cdot (m'+n'-n'_q)+\dots+m_2\cdot m'_1} \cdot \star \circ \bullet^\updownarrow_p\circ\dots \circ\bullet^\updownarrow_1\circ\bullet^\leftrightarrow_q\circ\dots \circ\bullet^\leftrightarrow_1(\mathfrak a\ot \mathfrak b),
\]
where $\star(\bigotimes_x a_x,\bigotimes_x b_x)=\bigotimes_x (a_x\cdot b_x)$ denotes the multiplication of the individual tensor factors.

The shuffle map is graded commutative and associative, and $D$ is a derivation of the shuffle map.
\end{defn}
Finally, we define the iterated integral map $It^{(p,q,\U)}:CH_\bullet^{(p,q,\U)}\to \Om(\NN(p,q,\U))$. Rather than defining it via an explicit formula as in the previous definitions \ref{DEF:It-Chapter2} and \ref{localIt}, it will now be more convenient to use the pullback description for our definition.
\begin{defn}\label{DEF:torus-It}
Subdivide the torus into a grid determined by $(\Delta^{m_1}\times\dots \times \Delta^{m_p})\times(\Delta^{n_1}\times\dots \times \Delta^{n_q})$ as in figure \ref{FIG:torus-subdiv}. 

\begin{figure}  
\begin{pspicture}(-1,0)(9.5,7.4)
 \pspolygon(1,1)(1,7)(9,7)(9,1) \psline(1,5)(9,5) \psline(1,3)(9,3)
 \psline(3,1)(3,7) \psline(5,1)(5,7)  \psline(7,1)(7,7) 
 \rput[t](2,.5){$\underbrace{\quad\quad\quad\quad\quad}_{\Delta^{n_1}}$} 
 \rput[t](4,.5){$\underbrace{\quad\quad\quad\quad\quad}_{\Delta^{n_2}}$} 
 \rput(6,.4){$\dots$}
 \rput[t](8,.5){$\underbrace{\quad\quad\quad\quad\quad}_{\Delta^{n_q}}$} 
 \rput(0,6){$_{\Delta^{m_1}}\left\{\begin{matrix} \\ \\ \\ \\ \end{matrix}\right.$} 
 \rput(.3,4){$\vdots$}
 \rput(0,2){$_{\Delta^{m_p}}\left\{\begin{matrix} \\ \\ \\ \\ \end{matrix}\right.$} 
 \psline(1,6.8)(.8,6.8)  \psline(1,6.7)(.8,6.7)  \psline(1,6.3)(.8,6.3)  \psline(1,6)(.8,6) 
 \psline(1,5.8)(.8,5.8)  \psline(1,5.3)(.8,5.3)  
 \psline(1,2.7)(.8,2.7)  \psline(1,2.3)(.8,2.3)  \psline(1,1.8)(.8,1.8)  \psline(1,1.5)(.8,1.5) 
 \psline(1,1.3)(.8,1.3) 
 \psline(1.3,1)(1.3,.8)  \psline(2,1)(2,.8)  \psline(2.2,1)(2.2,.8)  \psline(2.8,1)(2.8,.8)
 \psline(3.1,1)(3.1,.8)  \psline(3.2,1)(3.2,.8)  \psline(3.7,1)(3.7,.8)  \psline(4,1)(4,.8)
 \psline(4.3,1)(4.3,.8)  \psline(4.5,1)(4.5,.8)  \psline(4.8,1)(4.8,.8) 
 \psline(7.5,1)(7.5,.8)  \psline(7.8,1)(7.8,.8)  \psline(8.4,1)(8.4,.8)  \psline(8.6,1)(8.6,.8)
 \psline(8.7,1)(8.7,.8)
 \rput(.6,6){$_{t^1_i}$}  \rput(4.5,.6){$_{u^2_j}$} \rput(4.5,6){$\times$}
 \psline[linestyle=dotted](1,6)(4.5,6)  \psline[linestyle=dotted](4.5,1)(4.5,6)
 \rput[l](4.75,5.8){$\left(\frac {t^1_i} p, \frac {1+u^2_j} q\right)$}
\end{pspicture} 
\caption{Coordinates of $m\cdot n$ points on the torus determined by $(\Delta^{m_1}\times\dots \times \Delta^{m_p})\times(\Delta^{n_1}\times\dots \times \Delta^{n_q})$}\label{FIG:torus-subdiv}
\end{figure}
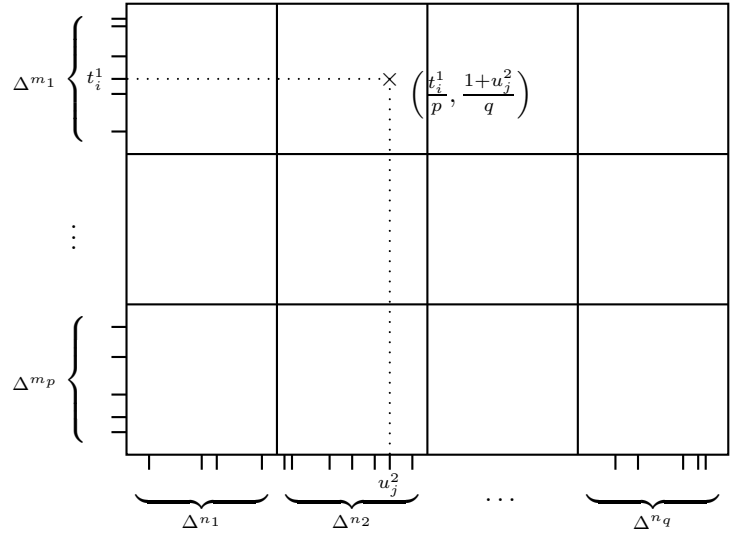

Denote by $m=m_1+\dots+m_p $, and $n=n_1+\dots+n_q $. We have an evaluation map $ev$,
\begin{multline*}
\NN(p,q,\U)\times (\Delta^{m_1}\times\dots \times \Delta^{m_p})\times(\Delta^{n_1}\times\dots \times \Delta^{n_q}) \stackrel {ev} \longrightarrow  M^{(m+p) \times (n+q)},\\
\Big(\gamma, (0=t^k_0\leq t^k_1\leq \dots\leq t^k_{m_k}\leq 1)_{k=1,\dots, p},(0=u^\ell_0\leq u^\ell_1\leq \dots\leq u^\ell_{n_\ell}\leq 1)_{\ell=1,\dots,q}\Big)
\\ \mapsto \left(\gamma\left(\frac {k-1+t^k_i} p, \frac {\ell-1+u^\ell_j} q\right)\right)_{\scriptsize
\begin{matrix}
1\leq k\leq p,\quad 0\leq i\leq m_k\\  1\leq \ell\leq q,\quad 0\leq j\leq n_\ell
\end{matrix}}.
\end{multline*}
Note, that starting from $\gamma\in \NN(p,q,\U)$, the evaluation map lands in the corresponding open set $U_{i_{(k,\ell)}}$, or intersection of open sets for edges or vertices, \emph{e.g.}
$$ \gamma\left(\frac {k-1} p,\frac {\ell-1} q\right)\in U^v_{k,\ell}=U_{i_{(k,\ell)}}\cap U_{i_{(k,\ell-1)}}\cap U_{i_{(k-1,\ell)}}\cap U_{i_{(k-1,\ell-1)}}. $$
Thus, we can define the iterated integral $It:CH_\bullet^{(p,q,\U)}\to \Omega(\NN(p,q,\U))$ to be the pullback along $ev$ 
composed with integration along the fiber,
\begin{equation*}
\xymatrix{
  \NN(p,q,\U)\times (\Delta^{m_1}\times\dots \times \Delta^{m_p})\times(\Delta^{n_1}\times\dots \times \Delta^{n_q}) \ar[r]^{\quad\quad \quad\quad \quad\quad \quad\quad ev} \ar[d]_{\int_{(\Delta^{m_1}\times\dots \times \Delta^{m_p})\times(\Delta^{n_1}\times\dots \times \Delta^{n_q})}} & M^{(m+p)\times (n+q)} \\ \NN(p,q,\U)& }
\end{equation*}
\begin{equation*}
 \Ch^{(p,q,\U)}(\mathfrak a):= \int_{(\Delta^{m_1}\times\dots \times \Delta^{m_p})\times(\Delta^{n_1}\times\dots \times \Delta^{n_q})} ev^*(\mathfrak a),
\quad\quad
\text{for }\mathfrak a \in \Oma^{n_1,\dots, n_q}_{m_1,\dots, m_p}.
\end{equation*}
In order to exhibit the above definition as an explicit iterated integral over the tensor factors of $\mathfrak a$, we note that for a $\T$-space $Y$, the pullback of a form $\omega$ along $\T \times Y \to Y$ is given by 
\begin{equation}\label{EQ:T-pullback}
\omega(t,u) + dt\wedge \iota_{\vf}\omega(t,u) + du\wedge \iota_{\wf} \omega(t,u)+ dtdu\wedge \iota_{\wf}\iota_{\vf}\omega(t,u),
\end{equation}
where $\vf$ and $\wf$ are the canonical vector fields on $Y$ coming from the two circle actions of the first and second factors of $\T$, respectively, and $\omega(t,u)$ is the pullback of $\omega$ at $(t,u) \in \T$. Thus, when taking the integral over the variables $\frac{k-1}{p}\leq t^k_1\leq \dots\leq t^k_{m_k}\leq \frac{k}{p}\in \Delta^{m_k}$ and $\frac{\ell-1}{q}\leq u^\ell_1\leq \dots\leq u^\ell_{m_\ell}\leq \frac{\ell}{q}\in \Delta^{n_\ell}$, and an element $\mathfrak a$ as in \eqref{EQ:a-in-CH^T}, the iterated integral becomes,
\begin{multline}\label{EQ:It-torus-explicit}
It^{(p,q,\U)}(\mathfrak a)=\pm\int_{(\Delta^{m_1}\times\dots \times \Delta^{m_p})\times(\Delta^{n_1}\times\dots \times \Delta^{n_q})} \bigwedge_{k=1}^p \bigwedge_{\ell=1}^q \Bigg[ a^v_{k,\ell}\left(\frac{k-1}{p},\frac{\ell-1}{q}\right)
\\
\wedge \bigwedge_{i=1}^{m_k} \left((a^{\ever}_{k,\ell})_i\left(t^k_i,\frac{\ell-1} q\right)+dt^k_i\wedge \iota_\vf (a^{\ever}_{k,\ell})_i\left(t^k_i,\frac{\ell-1} q\right) \right)
\\
\wedge \bigwedge_{j=1}^{n_\ell} \left((a^{\ehor}_{k,\ell})_j\left(\frac{k-1} p,u^\ell_j\right)+du^\ell_j \wedge \iota_\wf (a^{\ehor}_{k,\ell})_j\left(\frac{k-1} p,u^\ell_j\right) \right)
\\
\wedge\bigwedge_{i=1}^{m_k}\bigwedge_{j=1}^{n_\ell}\bigg(
(a^f_{k,\ell})_{i,j}(t^k_i,u^\ell_j) +dt^k_i \wedge \iota_{\vf}(a^f_{k,\ell})_{i,j}(t^k_i,u^\ell_j)
\\
+ du^\ell_j\wedge \iota_{\wf} (a^f_{k,\ell})_{i,j}(t^k_i,u^\ell_j)+dt^k_i du^\ell_j\wedge \iota_{\wf}\iota_{\vf}(a^f_{k,\ell})_{i,j}(t^k_i,u^\ell_j) 
\bigg)\Bigg].
\end{multline}
Thus, for any of the variables $t^k_i$ (or $u^\ell_j$), there is not just one unique form that depends on this variable, but a whole row (or column) of forms that depend on the variable, and integration over this variable occurs for exactly one differential $dt^k_i$ (respectively $du^\ell_j$).
We will show below, that in specific cases (such as the holonomy function), it is nevertheless possible to simplify the iterated integral and compute it.
\end{defn}

\begin{prop} \label{LEM:It(q,p,U)-d-sh}
$It^{(p,q,\U)}$ commutes with differentials and products, \emph{i.e.},
\[
It^{(p,q,\U)}(D(\mathfrak a))=d_{DR}(It^{(p,q,\U)}(\mathfrak a)), \text{ and } It^{(p,q,\U)}(\mathfrak a\bullet\mathfrak b)=It^{(p,q,\U)}(\mathfrak a)\wedge It^{(p,q,\U)}(\mathfrak b).
\]
\end{prop}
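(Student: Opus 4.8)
The plan is to argue entirely through the pullback description of $It^{(p,q,\U)}$ from Definition \ref{DEF:torus-It}. Write $F=(\Delta^{m_1}\times\dots\times\Delta^{m_p})\times(\Delta^{n_1}\times\dots\times\Delta^{n_q})$ for the fibre, of dimension $m+n$, so that $It^{(p,q,\U)}(\mathfrak a)=\int_F ev^*(\mathfrak a)$. Two inputs are used throughout: that $ev^*$ is multiplicative and intertwines the de Rham differential on $M^{(m+p)\times(n+q)}$ with the total de Rham differential on $\NN(p,q,\U)\times F$; and the Stokes formula for integration along the fibre recorded in Convention \ref{CONVENT1}(3). As a preliminary reduction, since the algebras of forms on faces, edges and vertices are graded commutative, the face maps, degeneracies, products and the shuffle entering $D$ and $\bullet$ all respect the coequalizer over linear orderings of the tensor factors in \eqref{EQ:a-in-CH^T}, and so does $ev^*$ followed by wedging; hence $It^{(p,q,\U)}$ is well defined and it suffices to verify the two identities on monomials $\mathfrak a$ (and $\mathfrak b$) written in a fixed ordering.

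For $It^{(p,q,\U)}(D(\mathfrak a))=d_{DR}(It^{(p,q,\U)}(\mathfrak a))$, I would start from $d_{DR}\int_F ev^*(\mathfrak a)$ and apply Convention \ref{CONVENT1}(3) with $X=F$, $\omega=ev^*(\mathfrak a)$. Since $d_{DR}(ev^*\mathfrak a)$ equals, up to the shift sign $(-1)^{m+n}$, the image under $ev^*$ of the de Rham summand of $D\mathfrak a$ in \eqref{EQ:D-torus-Hoch}, rearranging reduces the claim to the identity $\int_{\partial F}ev^*(\mathfrak a)=\sum(-1)^{j+1}It^{(p,q,\U)}(d^\updownarrow_{r,i}\mathfrak a)+\sum(-1)^{m+p+j+1}It^{(p,q,\U)}(d^\leftrightarrow_{r,i}\mathfrak a)$, with $j$ as in \eqref{EQ:D-torus-Hoch}. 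I would then decompose $\partial F$ into its codimension-one faces: those of $\Delta^{m_r}$ are $\{t^r_1=0\}$, $\{t^r_i=t^r_{i+1}\}$ for $1\le i\le m_r-1$, and $\{t^r_{m_r}=1\}$, sitting at the grid heights $\tfrac{r-1}{p}$, an interior height, and $\tfrac{r}{p}$ respectively. Restricting $ev$ to such a face collapses two adjacent rows of the matrix in Definition \ref{DEF:Torus-CH} to a single row; because $ev^*$ is multiplicative on the face once the $dt^r_i\wedge dt^r_i=0$ contributions drop (a short Leibniz computation for the anti-derivations $\iota_\vf,\iota_\wf$ applied to \eqref{EQ:T-pullback}), one gets $\int_{\text{face}}ev^*\mathfrak a=\pm\,It^{(p,q,\U)}(d^\updownarrow_{r,i}\mathfrak a)$ with the sign dictated by the induced-orientation rule of Convention \ref{CONVENT1}(2); the column faces of $\Delta^{n_s}$ are handled identically and produce the $d^\leftrightarrow$ terms. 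The structural point, and the reason this is a genuine chain map with no uncancelled boundary term as in Propositions \ref{prop:dIt} and \ref{dItpU}, is that the torus is closed: the faces $\{t^r_{m_r}=1\}$ and $\{t^{r+1}_1=0\}$ lie at the same grid height (and cyclically $\{t^p_{m_p}=1\}$ with $\{t^1_1=0\}$), so every boundary face is consumed by one of the cyclic multiplication maps $d^\updownarrow_{r,i}$ or $d^\leftrightarrow_{s,j}$.

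For $It^{(p,q,\U)}(\mathfrak a\bullet\mathfrak b)=It^{(p,q,\U)}(\mathfrak a)\wedge It^{(p,q,\U)}(\mathfrak b)$ I would use the classical triangulation of a product of simplices. For each row block $r$ the maps $\beta^{\sigma_r}:\Delta^{m_r+m'_r}\to\Delta^{m_r}\times\Delta^{m'_r}$, $\sigma_r\in S(m_r,m'_r)$, give a triangulation that is orientation preserving on the $\sigma_r$-piece exactly when $\sgn\sigma_r=+1$ by Convention \ref{CONVENT1}(1), and likewise for each column block. Taking the product of all these over the $p$ rows and $q$ columns, together with the reshuffling of the factors of $F_{\mathfrak a}\times F_{\mathfrak b}$ into $\prod_r(\Delta^{m_r}\times\Delta^{m'_r})\times\prod_s(\Delta^{n_s}\times\Delta^{n'_s})$, exhibits $F_{\mathfrak a}\times F_{\mathfrak b}$ as triangulated by the disjoint union, over tuples $(\sigma_1,\dots,\sigma_p,\tau_1,\dots,\tau_q)$, of copies of $F_{\mathfrak a\bullet\mathfrak b}$. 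On the $(\vec\sigma,\vec\tau)$-piece the evaluation map for $\mathfrak a\bullet\mathfrak b$ factors as the product of those for $\mathfrak a$ and $\mathfrak b$ precomposed with $\prod_r\beta^{\sigma_r}\times\prod_s\beta^{\tau_s}$ — here one uses that $\bullet$ inserts the degenerate unit tensor slots precisely where the inverse shuffles place them and that $ev^*$ of a unit is a unit — so $ev^*(\mathfrak a\bullet\mathfrak b)$ restricts there to $\pm\,ev^*(\mathfrak a)\wedge ev^*(\mathfrak b)$; integrating and applying Fubini then gives $It^{(p,q,\U)}(\mathfrak a)\wedge It^{(p,q,\U)}(\mathfrak b)$, once the factors $\sgn(\sigma_r),\sgn(\tau_s)$ in $\bullet^\updownarrow_r,\bullet^\leftrightarrow_s$ are matched with the triangulation signs and the global prefactor $(-1)^{|\mathfrak a|(m'+n')+n_q(m'+n'-n'_q)+\dots+m_2m'_1}$ in the definition of $\bullet$ is identified with the Koszul sign for interleaving the $\mathfrak b$-factors (and their $dt,du$ companions from \eqref{EQ:T-pullback}) past the $\mathfrak a$-factors plus the orientation sign of the corresponding reshuffling of the $\Delta$-factors of the fibre.

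I expect the only real difficulty to be sign bookkeeping: reconciling the induced-boundary-orientation rule of Convention \ref{CONVENT1}(2) over the many faces of a product of simplices, the Koszul signs coming from the coequalizer presentation \eqref{EQ:a-in-CH^T} and from the $dt^k_i,du^\ell_j$ factors in \eqref{EQ:It-torus-explicit}, and, for the product, the triangulation signs of Convention \ref{CONVENT1}(1), against the explicit combinatorial signs written into \eqref{EQ:D-torus-Hoch} and into the definition of the shuffle. A secondary technical point is checking that $ev$ restricted to a face genuinely factors through the collapsed evaluation map attached to the relevant face map, which comes down to the elementary identity $ev^*(\alpha\cdot\beta)=ev^*(\alpha)\wedge ev^*(\beta)$ on $\{t^r_i=t^r_{i+1}\}$ after $dt^r_i\wedge dt^r_i=0$ is used, together with the analogous behaviour of the degeneracy-inserted unit rows in the product case; both are routine. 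Note that since the coefficient forms here are scalar valued (the gerbe being abelian) there is no obstruction of the kind that spoils the chain-map and shuffle-multiplicativity properties in the non-commutative setting of Section \ref{SEC:local-vector-bundle}.
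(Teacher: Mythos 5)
Your proposal follows essentially the same route as the paper's proof: the chain-map identity is obtained from the fiber-integration Stokes formula of Convention \ref{CONVENT1}(3) with the boundary faces of $(\Delta^{m_1}\times\dots\times\Delta^{m_p})\times(\Delta^{n_1}\times\dots\times\Delta^{n_q})$ matched cyclically to the face maps $d^{\updownarrow}_{r,i}$ and $d^{\leftrightarrow}_{r,i}$, and the multiplicativity is obtained from the shuffle triangulation of products of simplices together with the factorization of the evaluation map through the diagonal and the degeneracy-inserting maps. Your elaboration of the boundary-face identification (which the paper compresses into one displayed computation) and the closing remark about the closedness of the torus eliminating uncancelled boundary terms are consistent with, and slightly more explicit than, the argument given in the paper.
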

\begin{proof}
The proof is similar to \cite[lemma 2.2.2, proposition 2.4.6]{GTZ} and proposition \ref{LEM:It-diff-prod} below (compare this also with \cite[proposition 1.6]{GJP} and proposition \ref{dItpU} above).  We start with comparing the differentials. For this proof we use the integration along a fiber formula $d_{DR}\left(\int_X \omega \right)=(-1)^{dim(X)} \left(\int_{X} d_{DR}(\omega) - \int_{\partial X} \omega\right)$ for $X=(\Delta^{m_1}\times\dots \times \Delta^{m_p})\times(\Delta^{n_1}\times\dots \times \Delta^{n_q})$. With this, we have for $\mathfrak a\in \Oma^{n_1,\dots, n_q}_{m_1,\dots, m_p}$, 
\begin{eqnarray*}
d_{DR}(It^{(p,q,\U)}(\mathfrak a))&=& d_{DR}\left(\int_{(\Delta^{m_1}\times\dots \times \Delta^{m_p})\times(\Delta^{n_1}\times\dots \times \Delta^{n_q})} ev^*(\mathfrak a)\right)\\
&=&(-1)^{m+n}\cdot\Bigg(\int_{(\Delta^{m_1}\times\dots \times \Delta^{m_p})\times(\Delta^{n_1}\times\dots \times \Delta^{n_q})}ev^* (d_{DR}\mathfrak a)\\ 
&&\quad\quad -\int_{\partial((\Delta^{m_1}\times\dots \times \Delta^{m_p})\times(\Delta^{n_1}\times\dots \times \Delta^{n_q}))} ev^* (\mathfrak a)\Bigg)\\
&=&(-1)^{m+n} \cdot It^{(p,q,\U)}\big( d_{DR}(\mathfrak a)\big)\\
&& + \sum_{j=1}^{m+p} (-1)^{m+n+1+j+1} \cdot It^{(p,q,\U)}\big(d^{\updownarrow}_{r,i}(\mathfrak a)\big)\\
&& + \sum_{j=1}^{n+q} (-1)^{m+n+1+(m+p+j+1)} \cdot It^{(p,q,\U)}\big( d^{\leftrightarrow}_{r,i}(\mathfrak a)\big) \\
&=& It^{(p,q,\U)}(D(\mathfrak a)).
\end{eqnarray*}

Next, we compare the shuffle and wedge product.
We start with rewriting the wedge product of two iterated integrals,
\begin{multline*}
It^{(p,q,\U)}(\mathfrak a)\wedge It^{(p,q,\U)}(\mathfrak b)
\\
=\int_{(\Delta^{m_1}\times\dots \times \Delta^{m_p})\times(\Delta^{n_1}\times\dots \times \Delta^{n_q})} ev^*(\mathfrak a)\wedge \int_{(\Delta^{m'_1}\times\dots \times \Delta^{m'_p})\times(\Delta^{n'_1}\times\dots \times \Delta^{n'_q})} ev^*(\mathfrak b)
\\
=(-1)^{\epsilon}\int_{\scriptsize \begin{matrix}((\Delta^{m_1}\times\Delta^{m'_1})\times\dots \times (\Delta^{m_p}\times \Delta^{m'_p}))\\ \times((\Delta^{n_1}\times \Delta^{n'_1})\times\dots \times (\Delta^{n_q}\times \Delta^{n'_q}))\end{matrix}} (ev,ev)^*(\mathfrak a\wedge \mathfrak b)\quad\quad\quad\quad\quad\quad\quad\quad\quad\quad
\\
=(-1)^{\epsilon}\sum_{\scriptsize\begin{matrix}\sigma_1\in S(m_1,m'_1)\\ \dots \\ \sigma_p\in S(m_p,m'_p)\end{matrix}}\quad \sum_{\scriptsize\begin{matrix}\rho_1\in S(n_1,n'_1)\\ \dots \\ \rho_q\in S(n_q,n'_q)\end{matrix}} \quad \int_{\scriptsize \begin{matrix}\beta^{\sigma_1}(\Delta^{m_1+m'_1})\times \dots\times \beta^{\sigma_p}(\Delta^{m_p+m'_p})\\ \times\beta^{\rho_1}(\Delta^{n_1+n'_1})\times \dots\times \beta^{\rho_q}(\Delta^{n_q+n'_q})
\end{matrix}} (ev,ev)^* (\mathfrak a\wedge \mathfrak b),
\end{multline*}
where $\epsilon=|\mathfrak a|\cdot (m'+n')+n_q\cdot (m'+n'-n'_q)+\dots+m_2\cdot m'_1$, and for a $(k,\ell)$-shuffle $\sigma\in S(k,\ell)$, the map $\beta^\sigma:\Delta^{k+\ell}\to \Delta^k\times \Delta^\ell, (t_1\leq\dots\leq t_{k+\ell})\mapsto (t_{\sigma(1)}\leq \dots\leq t_{\sigma(k)},t_{\sigma(k+1)}\leq \dots\leq t_{\sigma(k+\ell)})$ is used to decompose $\Delta^k\times \Delta^\ell=\bigcup_{\sigma\in S(k,\ell)}\beta^\sigma(\Delta^{k+\ell})$. Now, we have a commutative diagram,
\begin{equation*}
\xymatrix{
{ \begin{matrix} \NN(p,q,\U)\quad\quad\\ \times (\Delta^{m_1+m'_1}\times\dots)\\ \times (\Delta^{n_1+n'_1}\times\dots)\end{matrix}}\ar[rr]^{id\times \beta^{\sigma_1}\times\dots\times \beta^{\sigma_p}\times \beta^{\rho_1}\times\dots\times \beta^{\rho_q}\quad\quad} \ar[d]_{ev} && {\begin{matrix} \NN(p,q,\U)\quad\quad\quad\quad\quad\quad\quad \\ \times ((\Delta^{m_1}\times \Delta^{m'_1})\times\dots)\\ \times ((\Delta^{n_1}\times\Delta^{n'_1})\times\dots)\end{matrix}} \ar[d]^{(ev,ev)}\\   
 M^{\scriptsize \begin{matrix}(m+p+m'+p')\\ \times (n+q+n'+q')\end{matrix}} \ar[r]^{diag\quad\quad } &  {\begin{matrix} M^{(m+p+m'+p')\times (n+q+n'+q')}\\ \times   M^{(m+p+m'+p')\times (n+q+n'+q')}\end{matrix}} \ar[r]^{\quad\quad \eta'_{\sigma,\rho}\times \eta''_{\sigma,\rho}} &  {\begin{matrix} M^{(m+p)\times (n+q)}\\ \times M^{(m'+p')\times (n'+q')} \end{matrix}}}
\end{equation*}
Here, $diag$ is the diagonal on $M^{(m+p+m'+p')\times (n+q+n'+q')}$ with $m\pri{}=m\pri{1} +\dots+m\pri{p}$ and $n\pri{}=n\pri{1}+\dots+n\pri{q}$, and $\eta'_{\sigma,\rho}:M^{(m+p+m'+p')\times (n+q+n'+q')}\to M^{(m+p)\times (n+q)}$ and $\eta''_{\sigma,\rho}:M^{(m+p+m'+p')\times (n+q+n'+q')}\to M^{(m'+p')\times (n'+q')}$ are defined as
\begin{eqnarray*}
\eta'_{\sigma,\rho}&=&\big(M{^\updownarrow_{1,\sigma_1(m_1+1)}}\circ \dots\circ M{^\updownarrow_{1,\sigma_1(m_1+m'_1)}}\big)\circ\dots\circ\big( M{^\updownarrow_{p,\sigma_p(m_p+1)}}\circ \dots\circ M{^\updownarrow_{p,\sigma_1(m_p+m'_p)}}\big)
\\
&& \circ \big(M{^\leftrightarrow_{1,\rho_1(n_1+1)}}\circ \dots\circ M{^\leftrightarrow_{1,\rho_1(n_1+n'_1)}}\big)\circ\dots\circ\big( M{^\leftrightarrow_{q,\rho_q(n_q+1)}}\circ \dots\circ M{^\leftrightarrow_{q,\rho_1(n_q+n'_q)}}\big),
\\
\eta''_{\sigma,\rho}&=&\big(M{^\updownarrow_{1,\sigma_1(1)}}\circ \dots\circ M{^ \updownarrow_{1,\sigma_1(m_1)}}\big)\circ\dots\circ\big( M{^\updownarrow_{p,\sigma_p(1)}}\circ \dots\circ M{^ \updownarrow_{p,\sigma_1(m_p)}}\big)
\\
&& \circ \big(M{^\leftrightarrow_{1,\rho_1(1)}}\circ \dots\circ M{^\leftrightarrow_{1,\rho_1(n_1)}}\big)\circ\dots\circ\big( M{^\leftrightarrow_{q,\rho_q(1)}}\circ \dots\circ M{^ \leftrightarrow_{q,\rho_1(n_q)}}\big),
\end{eqnarray*}
where $M^\updownarrow_{r,i}:M^{k\times \ell}\hookrightarrow M^{(k+1)\times \ell}$ misses the $(r+m_1+\dots+m_{r-1}+i)^{\text{th}}$ row, and, similarly, $M^\leftrightarrow_{r,i}:M^{k\times \ell}\hookrightarrow M^{k\times (\ell+1)}$ misses the $(r+n_1+\dots+n_{r-1}+i)^{\text{th}}$ column.
If we further set
\begin{align*}
 0= t_1\leq \dots \leq \frac{r}{p}=t_{r+\sum_{i=1}^{r-1}m_i+m'_i}\leq t_{r+1+\sum_{i=1}^{r-1}m_i+m'_i}  \leq\dots\leq t_{r+\sum_{i=1}^{r}m_i+m'_i}\\
 \quad\quad\quad\quad\quad\quad \leq \dots \leq t_{m+p+m'+p'}\leq 1 \in \Delta^{m_1+m'_1}\times \dots\times\Delta^{m_r+m'_r}\times \dots\times \Delta^{m_p+m'_p},\\
 \sigma':\{1,\dots,m+p\}\to \{1,\dots,m+p+m'+p'\}, \sigma'(r+\sum_{i=1}^{r-1}m_i)=r+\sum_{i=1}^{r-1}m_i+m'_i, \\
\quad\quad \quad\text{and for }1\leq j\leq m_r: \sigma'(r+j+\sum_{i=1}^{r-1}m_i)=r+\sigma_r(j)+\sum_{i=1}^{r-1}m_i+m'_i, \\
 \sigma'':\{1,\dots,m'+p'\}\to \{1,\dots,m+p+m'+p'\}, \sigma''(r+\sum_{i=1}^{r-1}m'_i)=r+\sum_{i=1}^{r-1}m_i+m'_i, \\
\quad\quad \quad\text{and for }1\leq j\leq m'_r: \sigma''(r+j+\sum_{i=1}^{r-1}m'_i)=r+\sigma_r(j)+\sum_{i=1}^{r-1}m_i+m'_i, 
\end{align*}
and similar definitions for $u_1\leq\dots\leq u_{n+q+n'+q'}, \rho'$, and $\rho''$, then we can check that the above diagram commutes:
\begin{multline*} 
(ev,ev)\circ (id\times \beta^{\sigma_1}\times\dots\times \beta^{\rho_q})(\gamma,t_1\leq\dots\leq t_{m+p+m'+p'},u_1\leq \dots\leq u_{n+q+n'+q'})
\\
=\Big(\gamma(t_{\sigma'(1)},u_{\rho'(1)}), \dots,\gamma(t_{\sigma'(m+p)},u_{\rho'(n+q)})
,\gamma(t_{\sigma''(1)},u_{\rho''(1)}), \dots, \gamma(t_{\sigma''(m'+p')},u_{\rho''(n'+q')})\Big)
\\
=(\eta'_{\sigma,\rho}\times \eta''_{\sigma,\rho}) \circ \,\, diag \circ ev (\gamma,t_1\leq\dots\leq t_{m+p+m'+p'},u_1\leq \dots\leq u_{n+q+n'+q'}).
\end{multline*}
(Note also, that in the above diagram, the evaluation maps really land in the corresponding subsets $\prod_{k=1}^p\prod_{\ell=1}^q (U_{i_{(k,\ell)}})^{\times (m_k\times n_\ell)}\times (U^\ever_{k,\ell})^{\times m_k}\times (U^\ehor_{k,\ell})^{\times n_\ell}\times U^v_{k,\ell} \subset M^{m\times n}$, etc., as described in definition \ref{DEF:torus-It}, which we have suppressed for better readability.) Thus,
\begin{multline*}
It^{(p,q,\U)}(\mathfrak a)\wedge It^{(p,q,\U)}(\mathfrak b)\\
=\sum_{\sigma_1,\dots,\sigma_p}\sum_{\rho_1,\dots,\rho_q} (-1)^{\epsilon}\cdot \sgn \cdot \int_{\Delta} ev^*\circ diag^*( (\eta'_{\sigma,\rho})^* (\mathfrak a)\wedge(\eta''_{\sigma,\rho})^* (\mathfrak b)),
\end{multline*}
where we abbreviated $\sgn=\sgn(\sigma_1)\cdot\ldots\cdot \sgn(\sigma_p)\cdot\sgn(\rho_1)\cdot\ldots\cdot\sgn(\rho_q)$, and $\Delta=(\Delta^{m_1+m'_1}\times\dots\times \Delta^{m_p+m'_p})\times (\Delta^{n_1+n'_1}\times\dots\times\Delta^{n_q+n'_q})$.
Now, the $\eta'_\sigma$ and $\eta''_\sigma$ add degeneracies to the $\mathfrak a$ and $\mathfrak b$ which become on forms, $(M{^{\leftrightarrow}_{r,i}}|_{\dots})^*=s^{\leftrightarrow}_{r,i}:\Oma^{n_1,\dots,n_r,\dots, n_q}_{m_1,\dots,m_p}\to \Oma^{n_1,\dots,n_r+1,\dots, n_q}_{m_1,\dots,m_p}$, and similar for $(M{^{\updownarrow}_{r,i}}|_{\dots})^*=s^{\updownarrow}_{r,i}$.
This, together with $diag$, which is the $\star$-product after passing to forms, induces the shuffle product in the above equation,
\[
It^{(p,q,\U)}(\mathfrak a)\wedge It^{(p,q,\U)}(\mathfrak b)
=\int_{\Delta} ev^* (\mathfrak a\bullet \mathfrak b)=It^{(p,q,\U)}(\mathfrak a\bullet \mathfrak b).
\]
This completes the proof of the proposition.
\end{proof}
\subsection{Holonomy for the torus}
Now, we use the local datum of an abelian gerbe with connection to define an element in the Hochschild complex. We start by recalling the definition of an abelian gerbe with connection, see \emph{e.g.} \cite{GR}.
\begin{defn}\label{DEF:gerbe}
Let $M$ be a manifold, and let $\{U_i\}_i$ be a sufficiently fine open cover of $M$. We denote by $U_{i_1,\dots,i_r}=\bigcap_{i=i_1,\dots,i_r } U_i$.
A gerbe $\mathcal G$ with connection consists of local data $g_{i,j,k}\in \Omega^0(U_{i,j,k}, U(1))$, $A_{i,j}\in \Omega^1(U_{i,j},\R)$, and $B_i\in \Omega^2(U_i,\R)$, which are symmetric in their indices,
$$ A_{i,j}=-A_{j,i}, \text{ and }  g_{i,j,k}=g^{-1}_{j,i,k}=g_{j,k,i}$$
and subject to the relations,
\begin{eqnarray*}
&g_{j,k,l}g^{-1}_{i,k,l}g_{i,j,l}g^{-1}_{i,j,k}=1 &\text{on } U_{i,j,k,l}\\
&g_{i,j,k}\cdot (A_{j,k}-A_{i,k}+A_{i,j})=i\cdot dg_{i,j,k}& \text{on } U_{i,j,k}\\
&B_j-B_i=d A_{i,j} &\text{on } U_{i,j}
\end{eqnarray*}
\end{defn}

The second relation may also be rewritten as $A_{j,k}-A_{i,k}+A_{i,j}=i\cdot g^{-1}_{i,j,k}dg_{i,j,k}=i\cdot d(\log(g_{i,j,k}))$. 
The last relation implies $d B_i=dB_j$ on $U_{i,j}$, so that there is an induced closed $3$-form $H\in \Omega^3(M,\R)$ given by $H=dB_i$ on $U_i$. For more details, see \emph{e.g.} \cite[section 2.2]{GR}. This form will be referred to as the $3$-curvature. 
\begin{rmk}
There is another, equivalent description of an abelian gerbe with connection, which we mention briefly in this remark. The data of a gerbe may be given by a surjective submersion $Y \to M$ and a principle $S^1$-bundle $P$ over $Y^{[2]}= Y \times_M Y$, with some mild compatibilities required. The data of a connective structure on a gerbe is given by $A \in \Omega^1(P; \mathbb{R})$ and $B \in \Omega^2(Y; \mathbb{R})$ such that $dA = \pi_1^* (B) - \pi_2^* (B)$. It follows that we have a well defined closed 3-form $H$ on $M$. To see that this description is equivalent with the one in definition \ref{DEF:gerbe} above, we refer the reader to \cite[section 2.3]{GR}.
\end{rmk}

\begin{defn}\label{DEF:torus-hol_0}
Starting from a connection on an abelian gerbe with local data $g_{i,j,k}\in \Omega^0(U_{i,j,k}, U(1))$, $A_{i,j}\in \Omega^1(U_{i,j},\R)$, and $B_i\in \Omega^2(U_i,\R)$, we take the $(p+1) \times (q+1)$ matrix
$$
\tilde B_{{(k,\ell)}}= 1\otimes \dots\otimes B_{{(k,\ell)}}\otimes \dots \otimes 1\in\Oma^{0,\dots,1,\dots,0}_{0,\dots,1,\dots,0},
$$
with the $2$-form $B_{{(k,\ell)}}=B_{i_{(k,\ell)}}$ at the $(k,\ell)^{\text{th}}$ entry on the open set $U_{i_{(k,\ell)}}$  and $1$s elsewhere, the $(p+1) \times q$ matrix
$$
\tilde A^\leftrightarrow_{{(k,\ell)}}= 1\otimes \dots\otimes A^\leftrightarrow_{{(k,\ell)}}\otimes \dots \otimes 1\in\Oma^{0,\dots,1,\dots,0}_{0,\dots,0}, 
$$
with the $1$-form $A^\leftrightarrow_{{(k,\ell)}}=A_{i_{(k-1,\ell)},i_{(k,\ell)}}$ at the $(k,\ell)^{\text{th}}$ horizontal edge on the open set $U^\ehor_{k,\ell}$  and $1$s elsewhere, the $p \times (q+1)$ matrix
$$
\tilde A^\updownarrow_{{(k,\ell)}}= 1\otimes \dots\otimes A^\updownarrow_{{(k,\ell)}}\otimes \dots \otimes 1\in\Oma^{0,\dots,0}_{0,\dots,1,\dots,0}, 
$$
with the $1$-form $A^\updownarrow_{{(k,\ell)}}=A_{i_{(k,\ell)},i_{(k,\ell-1)}}$ at the $(k,\ell)^{\text{th}}$ vertical edge on the open set $U^\ever_{k,\ell}$  and $1$s elsewhere, and the $p \times q$ matrix
$$
\tilde g_{{(k,\ell)}}= 1\otimes \dots\otimes g_{(k,\ell)}\otimes \dots \otimes 1\in\Oma^{0,\dots,0}_{0,\dots,0},
$$
with the $0$-form $g_{(k,\ell)}=g_{i_{(k,\ell)},i_{(k-1,\ell)},i_{(k-1,\ell-1)}}|_{U^v_{k,\ell}}\cdot g^{-1}_{i_{(k,\ell)},i_{(k,\ell-1)},i_{(k-1,\ell-1)}}|_{U^v_{k,\ell}}\in \Oma^v_{k,\ell}$
at the $(k,\ell)^{\text{th}}$ vertex on the open set $U^v_{k,\ell}$  and $1$s elsewhere.

Note, that all elements $\tilde B_{(k,l)}, \tilde A^\leftrightarrow_{{(k,\ell)}}, \tilde A^\updownarrow_{{(k,\ell)}}, \tilde g_{{(k,\ell)}}\in CH_0^{(p,q,\U)}$ are elements of total degree zero.
\end{defn}
The choice of elements in the previous definition is such that under the iterated integral map, the elements $\Ch^{(p,q,\U)}(\tilde B_{(k,l)}), \Ch^{(p,q,\U)}(\tilde A^\leftrightarrow_{{(k,\ell)}}), \Ch^{(p,q,\U)}(\tilde A^\updownarrow_{{(k,\ell)}}), \Ch^{(p,q,\U)}(\tilde g_{{(k,\ell)}})\in \Omega^0(\NN(p,q,\U))$ can be interpreted as functions on $\NN(p,q,\U)$ in an explicit way as follows.
\begin{lem}\label{LEM:local-It-gerbe-data}
We have for $\gamma\in\NN(p,q,\U)$,
\begin{eqnarray*}
\Ch^{(p,q,\U)}(\tilde B_{(k,l)})(\gamma)&=&\int_{\ou{f}{k,\ell}} \gamma^*(B_{i_{(k,\ell)}}),  \\
\Ch^{(p,q,\U)}(\tilde A^\leftrightarrow_{{(k,\ell)}})(\gamma)&=&\int_{\ou{\ehor}{k,\ell} } \gamma^*(A_{i_{(k-1,\ell)},i_{(k,\ell)}}), \\
\Ch^{(p,q,\U)}(\tilde A^\updownarrow_{{(k,\ell)}})(\gamma)&=& \int_{\ou{\ever}{k,\ell} } \gamma^*(A_{i_{(k,\ell)},i_{(k,\ell-1)}}), \\
\Ch^{(p,q,\U)}(\tilde g_{{(k,\ell)}})(\gamma)&=& g_{i_{(k,\ell)},i_{(k-1,\ell)},i_{(k-1,\ell-1)}}(\gamma(\ou{v}{k,\ell}))\cdot g^{-1}_{i_{(k,\ell)},i_{(k,\ell-1)},i_{(k-1,\ell-1)}}(\gamma(\ou{v}{k,\ell})).
\end{eqnarray*}
In the above integrals, the induced orientations of the cells from figure \ref{EQ:torus-v-e-f} is given as follows: $\ou{f}{k,\ell}$ has the induced orientation from the plane $\R^2$, $\ou{\ehor}{k,\ell}$ is oriented from left to right, and $\ou{\ever}{k,\ell}$ is oriented from top to bottom.
\end{lem}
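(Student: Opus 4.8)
The plan is to compute each of the four iterated integrals by unwinding Definition \ref{DEF:torus-It}, exploiting the fact that each of the distinguished elements $\tilde B_{(k,\ell)}$, $\tilde A^\leftrightarrow_{(k,\ell)}$, $\tilde A^\updownarrow_{(k,\ell)}$, $\tilde g_{(k,\ell)}$ has exactly one non-unit tensor factor. The main technical input is the explicit formula \eqref{EQ:It-torus-explicit}: because $\iota_\vf$ and $\iota_\wf$ annihilate the units $1\in\Om(U)$ — and, more generally, every $0$-form — almost all of the $\wedge$-factors appearing there collapse to the constant $1$, and what remains is a single elementary fiber integral.

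Concretely, for $\tilde B_{(k,\ell)}\in\Oma^{0,\dots,1,\dots,0}_{0,\dots,1,\dots,0}$ the only nontrivial simplices in the integration fiber are $\Delta^{m_k}\cong\Delta^1$ and $\Delta^{n_\ell}\cong\Delta^1$, realized in $[\tfrac{k-1}{p},\tfrac{k}{p}]$ and $[\tfrac{\ell-1}{q},\tfrac{\ell}{q}]$; all other $\Delta$'s are points. In \eqref{EQ:It-torus-explicit} every vertex factor and every edge factor is the unit, hence contributes $1$, and the sole face factor is $(a^f_{k,\ell})_{1,1}=B_{i_{(k,\ell)}}$, which contributes
\[
B_{i_{(k,\ell)}}(t^k_1,u^\ell_1)+dt^k_1\wedge\iota_\vf B_{i_{(k,\ell)}}(t^k_1,u^\ell_1)+du^\ell_1\wedge\iota_\wf B_{i_{(k,\ell)}}(t^k_1,u^\ell_1)+dt^k_1 du^\ell_1\wedge\iota_\wf\iota_\vf B_{i_{(k,\ell)}}(t^k_1,u^\ell_1).
\]
Integrating over $\Delta^1\times\Delta^1$ extracts the $dt^k_1\,du^\ell_1$-coefficient, and since $\vf,\wf$ are the two canonical circle-action vector fields — i.e. the two partial velocities of $\gamma$ along the coordinate directions of $\T$ — the result is precisely $\int_{\ou{f}{k,\ell}}\gamma^*(B_{i_{(k,\ell)}})$. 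The remaining point is to check that the product orientation $dt^k_1\wedge du^\ell_1$ on the fiber matches the orientation of $\ou{f}{k,\ell}$ induced from $\R^2$, which is exactly part (1) of Convention \ref{CONVENT1}, and that the global $\pm$-sign in \eqref{EQ:It-torus-explicit} is $+1$ here because there is a single factor of positive degree, so all Koszul signs are trivial.

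The two edge identities go the same way with a one-dimensional fiber. For $\tilde A^\leftrightarrow_{(k,\ell)}\in\Oma^{0,\dots,1,\dots,0}_{0,\dots,0}$ (so $n_\ell=1$, all $m$'s vanish, hence no faces occur) the fiber is $\Delta^{n_\ell}\cong\Delta^1\subset[\tfrac{\ell-1}{q},\tfrac{\ell}{q}]$; the only non-unit factor is the $(k,\ell)$-th horizontal-edge $1$-form $A^\leftrightarrow_{(k,\ell)}=A_{i_{(k-1,\ell)},i_{(k,\ell)}}$, the surviving term after integration over $u^\ell_1$ is $du^\ell_1\wedge\iota_\wf A^\leftrightarrow_{(k,\ell)}$ (the contraction is with $\wf$ because the $(k,\ell)$-th horizontal edge runs in the second $S^1$-direction), and one reads off $\int_{\ou{\ehor}{k,\ell}}\gamma^*(A_{i_{(k-1,\ell)},i_{(k,\ell)}})$ with $\ou{\ehor}{k,\ell}$ oriented so that $u^\ell_1$ increases, i.e. left-to-right. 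Symmetrically, for $\tilde A^\updownarrow_{(k,\ell)}\in\Oma^{0,\dots,0}_{0,\dots,1,\dots,0}$ the fiber is $\Delta^{m_k}\cong\Delta^1\subset[\tfrac{k-1}{p},\tfrac{k}{p}]$, the surviving term is $dt^k_1\wedge\iota_\vf A^\updownarrow_{(k,\ell)}$ with $A^\updownarrow_{(k,\ell)}=A_{i_{(k,\ell)},i_{(k,\ell-1)}}$, and one obtains $\int_{\ou{\ever}{k,\ell}}\gamma^*(A_{i_{(k,\ell)},i_{(k,\ell-1)}})$ with the top-to-bottom orientation. Finally, $\tilde g_{(k,\ell)}\in\Oma^{0,\dots,0}_{0,\dots,0}$ has all $m_r=n_s=0$, so the fiber is a single point and no integration occurs: $It^{(p,q,\U)}$ is just the pullback of the tensor $\bigotimes_{k',\ell'}(\text{units, except } g_{(k,\ell)})$ along $\gamma\mapsto(\gamma(\ou{v}{k',\ell'}))_{k',\ell'}$, namely the product of the factor-values at the vertices, which is $g_{(k,\ell)}(\gamma(\ou{v}{k,\ell}))$; unwinding the definition of $g_{(k,\ell)}$ from Definition \ref{DEF:torus-hol_0} yields the stated product of the two $U(1)$-valued transition functions.

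I expect the substitution into \eqref{EQ:It-torus-explicit} to be painless; the one place where care is needed — and where I would slow down — is the orientation bookkeeping, matching the product orientation on the fiber $\Delta^{m_k}\times\Delta^{n_\ell}$ (or a single $\Delta^1$) against the orientations of faces, horizontal edges, and vertical edges declared in the statement, using the conventions fixed in Convention \ref{CONVENT1}. This is also the only place a spurious sign could creep in, so I would verify the four cases separately rather than appeal to symmetry.
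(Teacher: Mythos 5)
Your proposal is correct and follows essentially the same route as the paper's proof: both simply unwind Definition \ref{DEF:torus-It} for an element with a single non-unit tensor factor, so that the fiber reduces to $\Delta^1\times\Delta^1$, $\Delta^1$, or a point, and the pullback-and-integrate formula collapses to the stated integral of $\gamma^*$ over the corresponding face, edge, or vertex. The only difference is that you spell out the term-by-term collapse via \eqref{EQ:It-torus-explicit} and the orientation/contraction bookkeeping, which the paper's (much terser) proof leaves implicit.
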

\begin{proof}
We have, that $\Ch^{(p,q,\U)}(\tilde B_{(k,l)})(\gamma)=$
\begin{equation*}
\int_{\scriptsize\begin{matrix}(\Delta^0\times\dots\times\Delta^1\times\dots\times\Delta^0)\\
\times(\Delta^0\times\dots\times\Delta^1\times\dots\times\Delta^0)\end{matrix}} ev^*(1\otimes\dots\otimes B_{(k,\ell)}\otimes\dots\otimes 1)(\gamma)
=\int_{\ou{f}{k,\ell}} \gamma^*(B_{i_{(k,\ell)}}).
\end{equation*}
Furthermore, we have, $\Ch^{(p,q,\U)}(\tilde A^\leftrightarrow_{{(k,\ell)}})(\gamma)=$
\begin{multline*}
\int_{\scriptsize\begin{matrix}(\Delta^0\times\dots\times\Delta^0)\quad\quad\quad\quad\quad \\
\times(\Delta^0\times\dots\times\Delta^1\times\dots\times\Delta^0)\end{matrix}} ev^*(1\otimes\dots\otimes A^\leftrightarrow_{{(k,\ell)}}\otimes\dots\otimes 1)(\gamma)\\
=\int_{\ou{\ehor}{k,\ell} } \gamma^*(A_{i_{(k-1,\ell)},i_{(k,\ell)}}).
\end{multline*}
and a similar calculation for $\Ch^{(p,q,\U)}(\tilde A^\updownarrow_{{(k,\ell)}})$. Finally,
\begin{multline*}
\Ch^{(p,q,\U)}(\tilde g_{{(k,\ell)}})(\gamma)= \int_{(\Delta^0\times\dots\times\Delta^0)\times(\Delta^0\times\dots\times\Delta^0)} ev^*(1\otimes\dots\otimes g_{{(k,\ell)}}\otimes\dots\otimes 1)(\gamma)\\
= g_{i_{(k,\ell)},i_{(k-1,\ell)},i_{(k-1,\ell-1)}}(\gamma(\ou{v}{k,\ell}))\cdot g^{-1}_{i_{(k,\ell)},i_{(k,\ell-1)},i_{(k-1,\ell-1)}}(\gamma(\ou{v}{k,\ell})).
\end{multline*}
This completes the proof of the lemma.
\end{proof}

\begin{defn}
With this notation, define $\h^{(p,q,\U)}\in CH_0^{(p,q,\U)}$ to be 
$$
 \h^{(p,q,\U)}=
\exp\left(\sum_{k,\ell}{i \tilde B_{(k,\ell)}}+{i \tilde A^\leftrightarrow_{(k,\ell)}}+{i \tilde A^\updownarrow_{(k,\ell)}}\right)\bullet \prod_{k,\ell} \tilde g_{(k,\ell)},
$$
where $\exp$ is the exponential under the shuffle product $\bullet$, and let $hol^{(p,q,\U)}\in\Om^0(\NN(p,q,\U),\C)$ be given by 
\[
hol^{(p,q,\U)}=It^{(p,q,\U)}(\h^{(p,q,\U)}).
\]
\end{defn}
\begin{rmk}\label{REM:h-via-allowmatrix}
We can give an explicit formula for $hol^{(p,q,\U)}$ by using the combinatorics of the entries in the matrix for $\h^{(p,q,\U)}$. In fact, we obtain, that
\begin{equation*}
\h^{(p,q,\U)}\quad =\quad \sum_{\scriptsize\begin{matrix}m_1,\dots,m_p \geq 0\\ n_1,\dots, n_q\geq 0\end{matrix}}\quad \sum_{\scriptsize\begin{matrix}\text{allowable}\\ \text{ matrix } M\end{matrix}}\quad  \pm \mathfrak a(M), \text{ where } \mathfrak a(M)\in CH_0^{(p,q,\U)}.
\end{equation*}
Here an {\it allowable matrix} is defined as an $(m_1+\dots+m_p+p)\times (n_1+\dots+n_q+q)$ matrix with entries $0$'s, $1$'s or $2$'s, such that each vertex entry (\emph{i.e.} entries at $(m_1+\dots+m_k+k+1,n_1+\dots+n_\ell+\ell+1), \forall k, \ell$) has $0$'s, there is exactly one $2$ in each row except for the edge rows (\emph{i.e.} rows $1, m_1+2, m_1+m_2+3, \dots$), and exactly one $2$ in each column except for the edge columns (\emph{i.e.} columns $1, n_1+2, n_1+n_2+3, \dots$), and $1$'s everywhere else. The Hochschild chain $\mathfrak a(M)$ is given by replacing $0$'s by the appropriate $g_{(k,\ell)}$, and the $2$'s by the appropriate $A^\leftrightarrow_{{(k,\ell)}}$, $A^\updownarrow_{{(k,\ell)}}$, or $B_{{(k,\ell)}}$. An example is displayed in figure \ref{FIG:a(M)}.
\begin{figure}
 \[ \resizebox{12.5cm}{6cm}{
\begin{pspicture}(0.5,0.5)(16.5,9.5)
\pspolygon[fillstyle=solid,fillcolor=lightgray, linestyle=none](.5,.5)(1.5,.5)(1.5,9.5)(.5,9.5)
\pspolygon[fillstyle=solid,fillcolor=lightgray, linestyle=none](7.5,.5)(8.5,.5)(8.5,9.5)(7.5,9.5)
\pspolygon[fillstyle=solid,fillcolor=lightgray, linestyle=none](11.5,.5)(12.5,.5)(12.5,9.5)(11.5,9.5)
\pspolygon[fillstyle=solid,fillcolor=lightgray, linestyle=none](.5,9.5)(.5,8.5)(16.5,8.5)(16.5,9.5)
\pspolygon[fillstyle=solid,fillcolor=lightgray, linestyle=none](.5,5.5)(.5,4.5)(16.5,4.5)(16.5,5.5)
\rput(1,9){$g_{(1,1)}$} \rput(2,9){$1$} \rput(3,9){$1$} \rput(4,9){$A^\leftrightarrow_{(1,1)}$}
\rput(5,9){$1$} \rput(6,9){$1$} \rput(7,9){$1$} \rput(8,9){$g_{(1,2)}$}
\rput(9,9){$1$} \rput(10,9){$A^\leftrightarrow_{(1,2)}$} \rput(11,9){$1$} \rput(12,9){$g_{(1,3)}$}
\rput(13,9){$A^\leftrightarrow_{(1,3)}$} \rput(14,9){$1$} \rput(15,9){$1$} \rput(16,9){$1$}
\rput(1,8){$1$} \rput(2,8){$1$} \rput(3,8){$1$} \rput(4,8){$1$}
\rput(5,8){$B_{(1,1)}$} \rput(6,8){$1$} \rput(7,8){$1$} \rput(8,8){$1$}
\rput(9,8){$1$} \rput(10,8){$1$} \rput(11,8){$1$} \rput(12,8){$1$}
\rput(13,8){$1$} \rput(14,8){$1$} \rput(15,8){$1$} \rput(16,8){$1$}
\rput(1,7){$1$} \rput(2,7){$1$} \rput(3,7){$1$} \rput(4,7){$1$}
\rput(5,7){$1$} \rput(6,7){$1$} \rput(7,7){$1$} \rput(8,7){$1$}
\rput(9,7){$1$} \rput(10,7){$1$} \rput(11,7){$1$} \rput(12,7){$A^\updownarrow_{(1,3)}$}
\rput(13,7){$1$} \rput(14,7){$1$} \rput(15,7){$1$} \rput(16,7){$1$}
\rput(1,6){$1$} \rput(2,6){$1$} \rput(3,6){$1$} \rput(4,6){$1$}
\rput(5,6){$1$} \rput(6,6){$1$} \rput(7,6){$B_{(1,1)}$} \rput(8,6){$1$}
\rput(9,6){$1$} \rput(10,6){$1$} \rput(11,6){$1$} \rput(12,6){$1$}
\rput(13,6){$1$} \rput(14,6){$1$} \rput(15,6){$1$} \rput(16,6){$1$}
\rput(1,5){$g_{(2,1)}$} \rput(2,5){$A^\leftrightarrow_{(2,1)}$} \rput(3,5){$1$} \rput(4,5){$1$}
\rput(5,5){$1$} \rput(6,5){$A^\leftrightarrow_{(2,1)}$} \rput(7,5){$1$} \rput(8,5){$g_{(2,2)}$}
\rput(9,5){$A^\leftrightarrow_{(2,2)}$} \rput(10,5){$1$} \rput(11,5){$1$} \rput(12,5){$g_{(2,3)}$}
\rput(13,5){$1$} \rput(14,5){$1$} \rput(15,5){$A^\leftrightarrow_{(2,3)}$} \rput(16,5){$A^\leftrightarrow_{(2,3)}$}
\rput(1,4){$1$} \rput(2,4){$1$} \rput(3,4){$1$} \rput(4,4){$1$}
\rput(5,4){$1$} \rput(6,4){$1$} \rput(7,4){$1$} \rput(8,4){$1$}
\rput(9,4){$1$} \rput(10,4){$1$} \rput(11,4){$1$} \rput(12,4){$1$}
\rput(13,4){$1$} \rput(14,4){$B_{(2,3)}$} \rput(15,4){$1$} \rput(16,4){$1$}
\rput(1,3){$A^\updownarrow_{(2,1)}$} \rput(2,3){$1$} \rput(3,3){$1$} \rput(4,3){$1$}
\rput(5,3){$1$} \rput(6,3){$1$} \rput(7,3){$1$} \rput(8,3){$1$}
\rput(9,3){$1$} \rput(10,3){$1$} \rput(11,3){$1$} \rput(12,3){$1$}
\rput(13,3){$1$} \rput(14,3){$1$} \rput(15,3){$1$} \rput(16,3){$1$}
\rput(1,2){$1$} \rput(2,2){$1$} \rput(3,2){$1$} \rput(4,2){$1$}
\rput(5,2){$1$} \rput(6,2){$1$} \rput(7,2){$1$} \rput(8,2){$1$}
\rput(9,2){$1$} \rput(10,2){$1$} \rput(11,2){$B_{(2,2)}$} \rput(12,2){$1$}
\rput(13,2){$1$} \rput(14,2){$1$} \rput(15,2){$1$} \rput(16,2){$1$}
\rput(1,1){$1$} \rput(2,1){$1$} \rput(3,1){$B_{(2,1)}$} \rput(4,1){$1$}
\rput(5,1){$1$} \rput(6,1){$1$} \rput(7,1){$1$} \rput(8,1){$1$}
\rput(9,1){$1$} \rput(10,1){$1$} \rput(11,1){$1$} \rput(12,1){$1$}
\rput(13,1){$1$} \rput(14,1){$1$} \rput(15,1){$1$} \rput(16,1){$1$}
\end{pspicture}}
\]
\caption{An example of $\mathfrak a(M)$ for an allowable matrix $M$, with $p=2, m_1=3, m_2=4$, and $q=3, n_1=6, n_2=3, n_3=4$; edge rows and columns are indicated in gray.}\label{FIG:a(M)}
\end{figure}

With this, we can write a formula for $It^{(p,q,\U)}(\mathfrak a(M))$ for an allowable matrix $M$, applied to an element $\gamma\in \NN(p,q,\U)$, up to sign, as follows,
\begin{multline*}
It^{(p,q,\U)}(\mathfrak a(M)) = \pm \prod_{k=1}^p \prod_{\ell=1}^q g_{(k,\ell)}\left(\gamma\left(\frac{k-1}{p},\frac{\ell-1}{q}\right)\right)\\
 \cdot \int_{(\Delta^{m_1}\times\dots \times \Delta^{m_p})\times(\Delta^{n_1}\times\dots \times \Delta^{n_q})} \,\,
\bigwedge_{\scriptsize\begin{matrix}\text{entries ``$2$'' in $M$,}\\ \text{given at position }(r,s) \\ \text{in the $(k,\ell)$'s subrectangle}\end{matrix}}
X_{(k,\ell)}\left(\frac{k-1+t^k_r}{p},\frac{\ell-1+u^\ell_s}{q}\right)\\ dt^1_1 \dots dt^q_{m_q} du^1_1 \dots du^p_{n_p},
\end{multline*}
where 
\[
X_{(k,\ell)}(t,u) = \left\{
\begin{array}{ll}
\iota_\wf A^\leftrightarrow_{(k,\ell)}(u), & \text{if the ``$2$'' is on an edge row} \\
\iota_\vf A^\updownarrow_{(k,\ell)}(t), & \text{if the ``$2$'' is on an edge column} \\
\iota_\wf\iota_\vf B_{(k,\ell)}(t,u), & \text{otherwise}
\end{array} \right.
\]
and where $\vf$ and $\wf$ being the two natural vector fields on $M^\T$ as before. To justify this formula, note that the integral is non-zero  only for the terms that have a volume form for the fiber, and equation \eqref{EQ:T-pullback} provides $dt^k_r$ and $du^\ell_s$ only for the entries of ``$2$'' in the matrix $M$.
\end{rmk}

\begin{prop}\label{PROP:hol-for-torus}
Let $\gamma\in M^\T$ and subdivide the torus $\T$ by some faces $f$, edges $e$ and vertices $v$.  Choose open sets $U_{i_v}$ for each vertex $v$, $U_{i_e}$ for each edge $e$, and $U_{i_f}$ for each face $f$, and assume that $\gamma|_f\subset U_{i_f}, \gamma|_e\subset U_{i_e}$, and $\gamma(v)\in U_{i_v}$ for all faces $f$, edges $e$, and vertices $v$.
Then, the holonomy function $hol:M^\T\to U(1)$ defined by
\begin{multline}\label{EQ:hol(q,p,U)}
 hol(\gamma):=\\
\exp\left(\sum_f i \int_f \gamma^*(B_{i_f})-\sum_{e\subset f} i \cdot \rho(e,f)\int_e \gamma^*(A_{i_e,i_f})\right) \prod_{v\subset e\subset f} g^{\rho(v, e, f)}_{i_v,i_e,i_f}(\gamma(v))
 \end{multline}
is independent of the subdivision and choice of open sets $U_{i_v}, U_{i_e}$, and $U_{i_f}$. Here, we sum over all faces $f$ in the first sum, all possible subset combinations $e\subset f$ in the second sum, and all possible subset combinations $v\subset e\subset f$ in the last product. Furthermore, $\rho(e,f)\in\{+1,-1\}$ with $\rho(e,f)=+1$ iff $e$ has the induced orientation coming from $f$, and $\rho(e,f)=-1$ otherwise, and $\rho(v,e,f)=\rho(e,f)$ if $v$ is the beginning point of $e$, and $\rho(v,e,f)=-\rho(e,f)$ otherwise (\emph{c.f.} definition \ref{DEF:rho(v,e,f)}).

In particular,  we have that $hol^{(p,q,\U)}(\gamma)=hol(\gamma)$.
\end{prop}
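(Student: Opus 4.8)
The plan is to prove the two assertions in turn: first that the right side of \eqref{EQ:hol(q,p,U)} is unchanged under refining the subdivision and under re-choosing any of the open sets $U_{i_v},U_{i_e},U_{i_f}$, so that $hol(\gamma)$ is well defined; and then that, for the $p\times q$ grid of figure~\ref{EQ:torus-v-e-f} with the open sets built into $\h^{(p,q,\U)}$, one has $It^{(p,q,\U)}(\h^{(p,q,\U)})(\gamma)=hol(\gamma)$. Together these give the proposition.

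For the identification $hol^{(p,q,\U)}=hol$, the key point is that by Proposition~\ref{LEM:It(q,p,U)-d-sh} the map $It^{(p,q,\U)}$ takes the shuffle product to the wedge product, and the generators $\tilde B_{(k,\ell)},\tilde A^\leftrightarrow_{(k,\ell)},\tilde A^\updownarrow_{(k,\ell)},\tilde g_{(k,\ell)}$ all have total degree $0$; hence $It^{(p,q,\U)}$ carries the shuffle-exponential $\exp\big(\sum i\tilde B+i\tilde A^\leftrightarrow+i\tilde A^\updownarrow\big)\bullet\prod\tilde g$ to the ordinary exponential of a sum of $0$-forms times the ordinary product of $0$-forms. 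Each factor is computed by Lemma~\ref{LEM:local-It-gerbe-data}, giving $i\int_{\ou{f}{k,\ell}}\gamma^*B_{i_{(k,\ell)}}$, $i\int_{\ou{\ehor}{k,\ell}}\gamma^*A_{i_{(k-1,\ell)},i_{(k,\ell)}}$, $i\int_{\ou{\ever}{k,\ell}}\gamma^*A_{i_{(k,\ell)},i_{(k,\ell-1)}}$, and the displayed product of $g$'s at the points $\gamma(\ou{v}{k,\ell})$. To recognize this as \eqref{EQ:hol(q,p,U)} for the grid, I would take the edge and vertex open sets in \eqref{EQ:hol(q,p,U)} to be the intersections of the adjacent face open sets (matching $\h^{(p,q,\U)}$), and reconcile the two bookkeepings of the edge and vertex contributions using the gerbe relations of Definition~\ref{DEF:gerbe}: the relation $A_{j,k}-A_{i,k}+A_{i,j}=i\,d\log g_{i,j,k}$ together with $A_{i,i}=0$ turns the pair of contributions $-\rho(e,f_1)\int_e\gamma^*A_{i_e,i_{f_1}}-\rho(e,f_2)\int_e\gamma^*A_{i_e,i_{f_2}}$ at a shared edge into a single term $\int_e\gamma^*A_{i_{f_1},i_{f_2}}$ plus a $d\log g$ term, and Stokes on $e$ pushes the latter onto the two endpoints, where (using the cocycle $g_{j,k,l}g^{-1}_{i,k,l}g_{i,j,l}g^{-1}_{i,j,k}=1$ and $g_{i,i,k}=1$) it rearranges the vertex $g$'s into exactly the product of $g_{i_{(k,\ell)},i_{(k-1,\ell)},i_{(k-1,\ell-1)}}\,g^{-1}_{i_{(k,\ell)},i_{(k,\ell-1)},i_{(k-1,\ell-1)}}$ at $\gamma(\ou{v}{k,\ell})$ appearing in Lemma~\ref{LEM:local-It-gerbe-data}.

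For the well-definedness I would follow the subdivision/overlap scheme of Proposition~\ref{prop:hol_0-locally}. Keeping the subdivision fixed and replacing a single face-set $U_{i_f}$ by another admissible $U_{i_{f'}}$, the exponent changes by $i\int_f\gamma^*(B_{i_{f'}}-B_{i_f})=i\int_f\gamma^*dA_{i_f,i_{f'}}=i\int_{\partial f}\gamma^*A_{i_f,i_{f'}}$ (Stokes and $B_j-B_i=dA_{i,j}$), while the edge terms $A_{i_e,i_f}\mapsto A_{i_e,i_{f'}}$ change, by the same cocycle relation, into precisely $-i\int_{\partial f}\gamma^*A_{i_f,i_{f'}}$ plus $d\log g$ terms whose boundary values cancel the corresponding change in the vertex product; relabeling an edge or a vertex is handled the same way. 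Given this freedom, independence of the subdivision reduces, via a common refinement, to the elementary moves of splitting a face along a new edge and splitting an edge at a new vertex; in each case give every new cell the label of the cell it subdivides, so that the new $B$-difference vanishes, the new $A$-terms involve $A_{i,i}=0$, and the new $g$-factors are $g_{i,i,i}=1$, whence \eqref{EQ:hol(q,p,U)} is visibly unchanged.

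The hard part will be the orientation and sign bookkeeping: one must check that the orientation conventions of Convention~\ref{CONVENT1}, the edge and face orientations fixed in figure~\ref{EQ:torus-v-e-f}, the signs $\rho(e,f)$ and $\rho(v,e,f)$, and the signs built into the differential of Definition~\ref{DEF:Torus-CH} and into $It^{(p,q,\U)}$ are all mutually compatible, and in particular that every Stokes boundary term produced above lands on the correct vertex with the correct sign. Once the signs are pinned down, the remainder is a routine application of the gerbe cocycle relations and Stokes' theorem.
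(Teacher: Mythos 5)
Your proposal is correct in outline, but it takes a genuinely different and more laborious route than the paper in both halves. For the independence statement the paper does not give an argument at all: it cites this as a well-known fact (referring to \cite{GR}, \cite{RS} and corollary \ref{LEM:Sigma-hol-glues}), whereas you supply the standard proof by common refinement, elementary subdivision moves with repeated labels, and relabeling via Stokes plus the cocycle relations of definition \ref{DEF:gerbe} --- this is fine, just more than the paper asks of itself. For the identification $hol^{(p,q,\U)}=hol$, the paper exploits the already-granted independence to make the special choice $U_{i_e}=U_{i_v}=U_{i_{(k,\ell)}}$, i.e.\ the edge and vertex open sets are taken \emph{equal to one of the adjacent face sets}; then every $A_{i_e,i_f}$ with $i_e=i_f$ vanishes outright, the vertex product collapses by a short check to $g_{i_{(k,\ell)},i_{(k-1,\ell)},i_{(k-1,\ell-1)}}\cdot g^{-1}_{i_{(k,\ell)},i_{(k,\ell-1)},i_{(k-1,\ell-1)}}$, and lemma \ref{LEM:local-It-gerbe-data} together with proposition \ref{LEM:It(q,p,U)-d-sh} finishes with no cocycle manipulation at all. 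You instead keep general edge and vertex indices and reconcile the two bookkeepings by the cocycle relation and Stokes' theorem; this works, but note that your stated choice of ``edge and vertex open sets equal to the intersections of the adjacent face open sets'' is not literally an admissible instance of \eqref{EQ:hol(q,p,U)}, since the indices $i_e,i_v$ must label members of the cover and the gerbe data $A_{i_e,i_f}$, $g_{i_v,i_e,i_f}$ are only defined for such indices --- what you actually need (and what your subsequent computation effectively does) is to show that the general formula is independent of $i_e,i_v$ and reduces to an expression in face indices only. The trade-off: the paper's special-choice trick makes the identification nearly immediate at the cost of assuming independence; your route proves everything from scratch but concentrates all the difficulty in the sign and orientation bookkeeping you rightly flag at the end.
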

\begin{proof}
The first part on the independence of $hol$ is a well-known fact, see \emph{e.g.} \cite[section 2.2]{GR} or \cite[section 2.2]{RS} and corollary \ref{LEM:Sigma-hol-glues} below.

In order to calculate $hol^{(p,q,\U)}(\gamma)=\Ch^{(p,q,\U)}(\mathfrak h^{(p,q,\U)})(\gamma)$, we make the following choices. Subdivide the torus as in figure \ref{EQ:torus-v-e-f}, and choose the open sets to be $ U_{i_f}=U_{i_e}=U_{i_v}=U_{i_{(k,\ell)}}$ for the face $f=\ou {f}{k,\ell}$, the edges $e=\ou{\ehor}{k,\ell}$ or $e=\ou{\ever}{k,\ell}$, and for the vertex $v=\ou{v}{k,\ell}$. Now, under this choice, the index $i_e$ for each edge $e$ coincides with one of the two faces adjacent to $e$, \emph{i.e.} $i_{\ou {f}{k,\ell}}=i_{\ou {\ehor}{k,\ell}}=i_{\ou {\ever}{k,\ell}}=i_{(k,\ell)}$, so that $A_{i_{\ou {\ehor}{k,\ell}},i_{\ou {f}{k,\ell}}}=A_{i_{\ou {\ever}{k,\ell}},i_{\ou {f}{k,\ell}}}=0$. Furthermore, a straightforward check shows, that for a fixed vertex $v=\ou{v}{k,\ell}$, the above choice gives $\prod_{v\subset e\subset f} g^{\rho(v,e,f)}_{i_v,i_e,i_f}=g_{i_{(k,\ell)},i_{(k-1,\ell)},i_{(k-1,\ell-1)}}\cdot g^{-1}_{i_{(k,\ell)},i_{(k,\ell-1)},i_{(k-1,\ell-1)}}$. Thus, under the above choices the right hand side of the equation \eqref{EQ:hol(q,p,U)} becomes,
\begin{multline}\label{EQ:hol-gamma-for-gerbe-expliciti}
hol(\gamma)=\\
 \exp\left(\sum_{k,\ell} i \int_{\ou f {k ,\ell}} \gamma^*(B_{i_{(k,\ell)}})+i \int_{\ou{\ehor}{k,\ell} } \gamma^*(A_{i_{(k-1,\ell)},i_{(k,\ell)}})+i \int_{\ou{\ever}{k,\ell} } \gamma^*(A_{i_{(k,\ell)},i_{(k,\ell-1)}}) \right)  \\
 \cdot \prod_{k,\ell} g_{i_{(k,\ell)},i_{(k-1,\ell)},i_{(k-1,\ell-1)}}\cdot g^{-1}_{i_{(k,\ell)},i_{(k,\ell-1)},i_{(k-1,\ell-1)}}.
\end{multline}
Now, we may use lemma \ref{LEM:local-It-gerbe-data} and proposition \ref{LEM:It(q,p,U)-d-sh} to see that this coincides with
\begin{multline*}
\exp\left(\sum_{k,\ell}i \cdot \Ch^{(p,q,\U)}(\tilde B_{(k,\ell)})+i \cdot\Ch^{(p,q,\U)}(\tilde A^\leftrightarrow_{(k,\ell)})+i \cdot\Ch^{(p,q,\U)}(\tilde A^\updownarrow_{(k,\ell)})\right)(\gamma)\\
\quad\quad\quad\quad\quad\quad\quad\quad\quad\quad\quad\quad\quad\quad\quad\quad\quad\quad\quad\quad \cdot \prod_{k,\ell} \Ch^{(p,q,\U)}( \tilde g_{(k,\ell)})(\gamma) \\
= It^{(q,p,\U)}\left( \exp\left(\sum_{k,\ell}{i \tilde B_{(k,\ell)}}+{i \tilde A^\leftrightarrow_{(k,\ell)}}+{i \tilde A^\updownarrow_{(k,\ell)}}\right)\bullet \prod_{k,\ell} \tilde g_{(k,\ell)}\right)(\gamma)\\
=\Ch^{(p,q\U)}(\mathfrak h^{(p,q,\U)})(\gamma)=hol^{(p,q,\U)}(\gamma),
\end{multline*}
But this is the claim, which thus completes the proof of the proposition.
\end{proof}
\begin{cor}\label{COR:hol-global}
The functions $It^{(p,q,\U)}(\h^{(p,q,\U)})\in \Omega^0(\NN(p,q,\U),\C)$ glue together to give a global function $hol\in \Om^0(M^\T,\C)$ with
$$ hol|_{\NN(p,q,\U)}=hol^{(p,q,\U)}. $$
\end{cor}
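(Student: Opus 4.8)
The plan is to deduce this gluing statement directly from Proposition \ref{PROP:hol-for-torus}, which already does the essential work. First I would record that $\{\NN(p,q,\U)\}_{p,q,\U}$ is an open cover of $M^\T$: given $\gamma\in M^\T$, compactness of $\T$ together with a Lebesgue-number argument applied to the open cover $\{\gamma^{-1}(U_i)\}_i$ of $\T$ produces, for $p,q$ sufficiently large, a grid such that $\gamma\bigl([\tfrac{k-1}{p},\tfrac{k}{p}]\times[\tfrac{\ell-1}{q},\tfrac{\ell}{q}]\bigr)\subset U_{i_{(k,\ell)}}$ for a suitable choice of indices $i_{(k,\ell)}$; thus $\gamma\in\NN(p,q,\U)$ for this data. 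Each local piece $hol^{(p,q,\U)}=It^{(p,q,\U)}(\h^{(p,q,\U)})$ is a smooth $\C$-valued function on $\NN(p,q,\U)$, being the integral along the fiber of the pullback of a smooth form under the evaluation map of Definition \ref{DEF:torus-It}.

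The key step is agreement on overlaps. Let $\gamma\in\NN(p,q,\U)\cap\NN(p',q',\U')$. The last assertion of Proposition \ref{PROP:hol-for-torus} gives $hol^{(p,q,\U)}(\gamma)=hol(\gamma)$ and $hol^{(p',q',\U')}(\gamma)=hol(\gamma)$, where $hol(\gamma)$ is the quantity defined by the classical formula \eqref{EQ:hol(q,p,U)}; the point is that $hol(\gamma)$ depends only on $\gamma$ and not on the chosen subdivision or open sets, which is precisely the first (independence) assertion of Proposition \ref{PROP:hol-for-torus}. Hence
\[
hol^{(p,q,\U)}(\gamma)=hol(\gamma)=hol^{(p',q',\U')}(\gamma),
\]
so the collection $\{hol^{(p,q,\U)}\}$ patches to a well-defined function $hol:M^\T\to\C$ with $hol|_{\NN(p,q,\U)}=hol^{(p,q,\U)}$. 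It is smooth because it is locally smooth, and it is $U(1)$-valued since each local piece is, by Proposition \ref{PROP:hol-for-torus}. This yields $hol\in\Om^0(M^\T,\C)$ as claimed.

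I do not expect a genuine obstacle here: essentially all the difficulty — the subdivision-independence of the gerbe holonomy and the identification of the iterated integral $It^{(p,q,\U)}(\h^{(p,q,\U)})$ with the classical expression \eqref{EQ:hol(q,p,U)} — has been absorbed into Proposition \ref{PROP:hol-for-torus}. The only points needing attention are the routine Lebesgue-number verification that the $\NN(p,q,\U)$ form an open cover, and the observation that in passing between two overlapping charts one may invoke Proposition \ref{PROP:hol-for-torus} separately with the two grid data $\,(p,q,\U)\,$ and $\,(p',q',\U')\,$, without any compatibility hypothesis relating $\U$ and $\U'$ — which is legitimate exactly because the output $hol(\gamma)$ of that proposition is manifestly independent of all auxiliary choices.
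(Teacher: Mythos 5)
Your argument is correct and is exactly the one the paper intends: the corollary is stated without proof as an immediate consequence of Proposition \ref{PROP:hol-for-torus}, whose independence assertion guarantees that any two local pieces compute the same quantity $hol(\gamma)$ on an overlap. Your added remarks on the Lebesgue-number covering argument and local smoothness are routine and consistent with the paper's setup.
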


\subsection{Higher holonomies for the torus}\label{SUBSEC:torus-hol-2k-2l}
In analogy with section \ref{SEC:local-vector-bundle} for vector bundles, we define higher holonomies in a way that allows us to build an equivariantly closed form on the torus mapping space $M^\T$. In fact, we define a sequence of forms $hol_{2k,2\ell}\in \Om(M^\T,\C)$, that may be put together in a variety of ways to give an equivariantly closed form on the torus mapping space $M^\T$. The guiding diagram, that is the analogy of figure \ref{FIG:nabla_hol=-i_hol} for the vector bundle case, is the diagram depicted in figure \ref{DIAG:hol_k,l}.
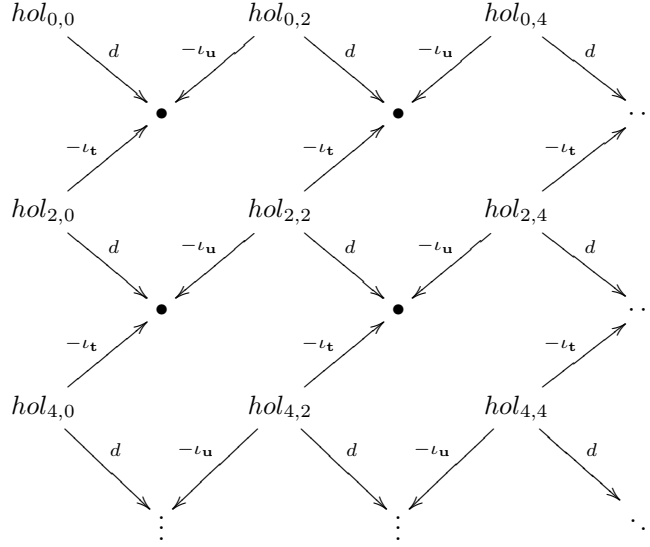
\begin{figure}\[
\xymatrix{
hol_{0,0} \ar[dr]^{d} & & hol_{0,2} \ar[dl]_{-\iota_\wf}  \ar[dr]^{d} & & hol_{0,4} \ar[dl]_{-\iota_\wf}\ar[dr]^{d}\\
 & \bullet & & \bullet && \cdots\\
hol_{2,0} \ar[ur]^{-\iota_{\vf}} \ar[dr]^{d} & & hol_{2,2} \ar[ur]^{-\iota_{\vf}} \ar[dl]_{-\iota_{\wf}}  \ar[dr]^{d} & & hol_{2,4} \ar[ur]^{-\iota_{\vf}} \ar[dl]_{-\iota_{\wf}}\ar[dr]^{d}\\
 & \bullet & & \bullet && \cdots\\
hol_{4,0} \ar[ur]^{-\iota_{\vf}} \ar[dr]^{d} & & hol_{4,2} \ar[ur]^{-\iota_{\vf}} \ar[dl]_{-\iota_{\wf}}  \ar[dr]^{d} & & hol_{4,4} \ar[ur]^{-\iota_{\vf}} \ar[dl]_{-\iota_{\wf}}\ar[dr]^{d}\\
 & \vdots & & \vdots && \ddots
}\]
\caption{Diagram for $hol_{2k,2\ell}$'s, stating that for all $k,\ell\geq 0$, $d(hol_{2k,2\ell})=-\iota_\vf(hol_{2k+2,2\ell})=-\iota_\wf(hol_{2k,2\ell+2})$}\label{DIAG:hol_k,l}
\end{figure}
Here, we have set $hol_{0,0}=hol$ form the previous subsection \ref{SUBSEC:torus-hol}. More precisely, diagram \ref{DIAG:hol_k,l} depicts the relation, 
\begin{equation*}
 d_{DR}(hol_{2k,2\ell})=-\iota_{\vf}(hol_{2k+2,2\ell})=-\iota_{\wf}(hol_{2k,2\ell+2}), \quad \forall k,\ell\in\N_0.
\end{equation*}

Let us first make some motivating remarks. One can calculate that  $d_{DR}(hol)$ is the 1-form on $M^\T$ given by $i\cdot It(H) \wedge hol$ where $H=dB$  is the $3$-curvature (see theorem \ref{d-it-is} below). Recall the similar situation in the previous section, where for a vector bundle, $d_{DR}( hol) = -It(1\otimes R) \wedge hol$ where $R$ is the $2$-curvature (see proposition \ref{prop:De^a}), and we illustrated how this differential 1-form on $LM$ can be expressed using $E = \rho^* \circ It$.
Namely, it was the image under $E$ of an element of local Hochschild complex, roughly given by shuffling $Rdt$ into the expression whose iterated integral is the holonomy. Similarly, shuffling in $(Rdt)^{\ot k}$ gave all the higher holonomies.

In much the same way, we define a map $E$ for the gerbe-torus case, and define elements given by shuffling terms given by the $3$-curvature $H$, using these to define the higher holonomies for this gerbe-torus case. These will satisfy the equations above and the total sum will give an equivariantly closed form on $M^\T$.

First we start be defining a map $E^{(p,q,\U)}$, analogously to $E^{(p,\U)}$ from definition \ref{DEF:E(p,U)}.
\begin{defn}\label{DEF:E(q,p,U)}
For a choice of $p, q \in \N$ and a choice of open sets $\U=\{U_{i_{(k,\ell)}}\}_{k,\ell}$ of $M$, we have induced open sets $\U\times \T=\{U_{i_{(k,\ell)}}\times \T\}_{k,\ell}$ of $M\times \T$. We have the iterated integral map $It^{(p,q,\U\times \T)}:CH_\bullet^{(p,q,\U\times \T)}\to\Omega(\NN(p,q,\U\times \T))$ for $\NN(p,q,\U\times \T)\subset (M\times \T)^\T$, and we furthermore define $\rho_\NN:\NN(p,q,\U)\to \NN(p,q,\U\times \T), \gamma\mapsto \rho_\NN(\gamma)$ by setting $\rho_\NN(\gamma):\T\to M\times \T, \rho_\NN(\gamma)(t,u)=(\gamma(t,u),(-t,-u))$. Using these maps, we define the extended iterated integral $E^{(p,q,\U)}:CH_\bullet^{(p,q,\U\times \T)}\to \Omega(\NN(p,q,\U))$ as the composition $E^{(p,q,\U)}:=(\rho_\NN)^*\circ It^{(p,q,\U\times \T)}$,
$$ E^{(p,q,\U)}:CH_\bullet^{(p,q,\U\times \T)}\stackrel {It^{(p,q,\U\times \T)}} \longrightarrow\Omega(\NN(p,q,\U\times \T)) \stackrel {(\rho_{\NN})^*} \longrightarrow \Omega(\NN(p,q,\U)). $$
\end{defn}
With this, we can now define higher holonomy elements $hol_{2k,2\ell}^{(p,q,\U)} \in \Omega(\NN(p,q,\U),\C)$.
\begin{defn}\label{DEF:h(p,q,u)(2k,2l)}
If we now denote by $dt'$ and $du'$ the two canonical $1$-forms on the torus, then $Hdt'$ and $Hdu'$ are $4$-forms in $\Om^4(M\times \T)$. The restriction of these forms to $U_{i_{(r,s)}}\times \T$ is denoted by $(Hdt')_{(r,s)}, (Hdu')_{(r,s)} \in \Om^4(U_{i_{(r,s)}} \times \T)$, and placing them at the $(r,s)^{\text{th}}$ spot in the Hochschild complex will be denoted by,
\begin{eqnarray*}
\widetilde{Hdt'}_{(r,s)}&=& 1\otimes\dots\otimes (Hdt')_{(r,s)}\otimes \dots\otimes 1\in\Oma^{0,\dots,1,\dots,0}_{0,\dots,1,\dots,0},
\\
\widetilde{Hdu'}_{(r,s)}&=& 1\otimes\dots\otimes (Hdu')_{(r,s)}\otimes \dots\otimes 1\in\Oma^{0,\dots,1,\dots,0}_{0,\dots,1,\dots,0}.
\end{eqnarray*}

Denote by $\h_{2k,2\ell}^{(p,q,\U)}\in CH_{2k+2\ell}^{(p,q,\U\times \T)}$ the following sum,
\begin{multline*}
\h_{2k,2\ell}^{(p,q,\U)} :=\sum_{\scriptsize \begin{matrix}k_{(1,1)},\dots, k_{(p,q)}\geq 0\\ k_{(1,1)}+\dots+ k_{(p,q)} =k \end{matrix}}\quad \sum_{\scriptsize \begin{matrix}\ell_{(1,1)},\dots, \ell_{(p,q)}\geq 0\\ \ell_{(1,1)}+\dots+ \ell_{(p,q)} =\ell \end{matrix}}
 \prod_{r=1}^p \prod_{s=1}^q  \frac{1}{k_{(r,s)}!\cdot \ell_{(r,s)}!}\\
 \cdot  \left(i\widetilde{Hdt'}_{(r,s)}\right)^{\bullet k_{(r,s)}} \bullet \left(i\widetilde{Hdu'}_{(r,s)}\right)^{\bullet \ell_{(r,s)}} \bullet  \exp\left({i \tilde B_{(r,s)}}+{i \tilde A^\leftrightarrow_{(r,s)}}+{i \tilde A^\updownarrow_{(r,s)}}\right)
\bullet \tilde g_{(r,s)}.
\end{multline*}
Furthermore, $hol_{2k,2\ell}^{(p,q,\U)}\in \Omega(\NN(p,q,\U),\C)$ is defined as,
\[
hol_{2k,2\ell}^{(p,q,\U)}:=E^{(p,q,\U)}\left(\h_{2k,2\ell}^{(p,q,\U)}\right)\quad \in \Omega^{2k+2\ell}(\NN(p,q,\U),\C).
\]
\end{defn}
The following proposition expresses each higher holonomy as a product of DeRham forms on the total mapping space $M^\T$.
\begin{prop}\label{PROP:hol_(2k,2l)-glue}
The locally defined forms $hol_{2k,2\ell}^{(p,q,\U)}$ define a global $(2k+2\ell)$-form $hol_{2k,2\ell}\in \Om^{2k+2\ell}(M^\T,\C)$, with the restrictions $hol_{2k,2\ell}\big|_{\NN(p,q,\U)}= hol_{2k,2\ell}^{(p,q,\U)}$. Explicitly, this function is given by
\begin{equation} \label{condensedhol2k2l}
hol_{2k,2\ell} = \frac{(-1)^\ell}{k!\cdot \ell !} \cdot i^{k+\ell} \cdot \left( \int_\T (\iota_\wf H )dtdu\right)^{\wedge k} \cdot \left( \int_\T (\iota_\vf H) dtdu \right)^{\wedge \ell} \cdot hol.
\end{equation}
In particular, $hol_{0,0}=hol$.
\end{prop}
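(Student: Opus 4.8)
The plan is to prove the explicit formula \eqref{condensedhol2k2l} first; the gluing statement then follows for free, since the right-hand side of \eqref{condensedhol2k2l} is manifestly a globally defined form on $M^\T$ — it is built only from the global $3$-curvature $H$ and the global function $hol$ of Corollary \ref{COR:hol-global} — so the local forms $hol_{2k,2\ell}^{(p,q,\U)}$ are forced to be the restrictions of one and the same global form and in particular agree on overlaps. (One could instead run the subdivision/overlap argument of Proposition \ref{prop:hol_0-locally} directly, but the formula route is shorter and also produces \eqref{condensedhol2k2l}.)

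The main tool is that all forms here are $\C$-valued, so by Proposition \ref{LEM:It(q,p,U)-d-sh} the iterated integral $It^{(p,q,\U\times\T)}$ carries the shuffle product to the wedge product; since $(\rho_\NN)^*$ is a ring homomorphism on forms, the extended map $E^{(p,q,\U)}=(\rho_\NN)^*\circ It^{(p,q,\U\times\T)}$ is multiplicative as well. First I would observe that the tensors $\widetilde{Hdt'}_{(r,s)}$ and $\widetilde{Hdu'}_{(r,s)}$ have even total degree ($4-2=2$), hence shuffle-commute with everything, so that $\h_{2k,2\ell}^{(p,q,\U)}$ of Definition \ref{DEF:h(p,q,u)(2k,2l)} can be rewritten as a shuffle product $\mathfrak c_{k,\ell}\bullet(\mathrm{pr}^*\h^{(p,q,\U)})$, where $\mathfrak c_{k,\ell}$ is the ``curvature part'' (the sum over all distributions of $k$ copies of $i\widetilde{Hdt'}$ and $\ell$ copies of $i\widetilde{Hdu'}$ among the $p\times q$ faces, with the coefficients $1/(k_{(r,s)}!\,\ell_{(r,s)}!)$), and $\mathrm{pr}^*\h^{(p,q,\U)}$ is the pullback to $M\times\T$ of the element $\h^{(p,q,\U)}$ used in the construction of the holonomy function (Proposition \ref{PROP:hol-for-torus}, Corollary \ref{COR:hol-global}). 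Applying the multiplicative map $E^{(p,q,\U)}$ gives $hol_{2k,2\ell}^{(p,q,\U)}=E^{(p,q,\U)}(\mathfrak c_{k,\ell})\wedge E^{(p,q,\U)}(\mathrm{pr}^*\h^{(p,q,\U)})$, and — just as in the lemma immediately after Definition \ref{Def:E} in the vector bundle case, using that $\mathrm{pr}\circ\rho_\NN$ is the identity in the appropriate sense — the second factor equals $It^{(p,q,\U)}(\h^{(p,q,\U)})=hol^{(p,q,\U)}=hol|_{\NN(p,q,\U)}$.

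It then remains to compute $E^{(p,q,\U)}(\mathfrak c_{k,\ell})$. By multiplicativity and the multinomial theorem (valid since the relevant images are even-degree forms), this equals $\tfrac{1}{k!\,\ell!}\bigl(i\sum_{r,s}E^{(p,q,\U)}(\widetilde{Hdt'}_{(r,s)})\bigr)^{\wedge k}\wedge\bigl(i\sum_{r,s}E^{(p,q,\U)}(\widetilde{Hdu'}_{(r,s)})\bigr)^{\wedge\ell}$. For a single face $(r,s)$, the pullback description \eqref{EQ:It-torus-explicit} shows that only the $dt\,du$-component of the relevant $4$-form survives integration over the $(r,s)$-rectangle $\Delta^{1}\times\Delta^{1}$, i.e.\ one is left with the integral over that rectangle of $\iota_\wf\iota_\vf$ applied to $\rho_\NN(\gamma)^*(Hdt')$. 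Since $\rho_\NN(\gamma)$ is the map $(t,u)\mapsto(\gamma(t,u),(-t,-u))$, the contraction $\iota_\vf$ pairs the torus-velocity in the first direction with the factor $dt'$, while $\iota_\wf$ hits $H$; moreover any residual term still containing $dt'$ is annihilated by $(\rho_\NN)^*$ because deformations of $\gamma\in M^\T$ have vanishing $\T$-component. Carrying this out, with due attention to Convention \ref{CONVENT1} and the oddness of $\iota_\vf,\iota_\wf$, gives $\sum_{r,s}E^{(p,q,\U)}(\widetilde{Hdt'}_{(r,s)})=\int_\T(\iota_\wf H)\, dt\, du$ and, by the mirror computation with the two circles interchanged, $\sum_{r,s}E^{(p,q,\U)}(\widetilde{Hdu'}_{(r,s)})=-\int_\T(\iota_\vf H)\, dt\, du$. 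Substituting and collecting the scalars $i^{k+\ell}$, $1/(k!\,\ell!)$, and the sign $(-1)^\ell$ coming from the minus in the second sum yields \eqref{condensedhol2k2l}; specializing $k=\ell=0$ recovers $hol_{0,0}=hol$.

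The step I expect to be the main obstacle is the sign bookkeeping in this last computation: one must track simultaneously the signs from $(\rho_\NN)^*(dt')=-dt$ and $(\rho_\NN)^*(du')=-du$, from the ordering $\iota_\wf\iota_\vf$ together with the graded-derivation rule for interior products, and from the orientation conventions for the product simplices and the ``$\pm$'' appearing in \eqref{EQ:It-torus-explicit}, and then check that they conspire to produce exactly the factor $(-1)^\ell$ (with no sign for the $\widetilde{Hdt'}$ contributions), as well as the ``cross'' pairing $\widetilde{Hdt'}\leftrightarrow\iota_\wf H$, $\widetilde{Hdu'}\leftrightarrow\iota_\vf H$ rather than the ``straight'' one.
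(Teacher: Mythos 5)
Your proposal is correct and follows essentially the same route as the paper's proof: factor $hol_{2k,2\ell}^{(p,q,\U)}$ using the multiplicativity of $E^{(p,q,\U)}=(\rho_\NN)^*\circ It^{(p,q,\U\times\T)}$, collapse the sum over distributions via the multinomial identity, identify the exponential factor with $hol^{(p,q,\U)}$, and reduce everything to the two single-face identities $\sum_{r,s}E^{(p,q,\U)}(\widetilde{Hdt'}_{(r,s)})=\int_\T(\iota_\wf H)\,dtdu$ and $\sum_{r,s}E^{(p,q,\U)}(\widetilde{Hdu'}_{(r,s)})=-\int_\T(\iota_\vf H)\,dtdu$, with globality then read off from the resulting formula. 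The sign analysis you flag as the main obstacle is exactly where the paper spends its effort (via the decomposition $\vf(M\times\T')=(\rho_\NN)_*\vf-\partial/\partial t'$ and the vanishing $\iota_{(\rho_\NN)_*\mathbf v}(dt')=0$), and it does yield the cross pairing and the single $(-1)^\ell$ as you predict.
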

\begin{proof}
We will show that \eqref{condensedhol2k2l} holds on any open set $\NN(p,q,\U)\subset M^\T$ for some local data $\U$.
The last statement about $hol_{0,0}$ does not include any $dt'$ or $du'$, so that,
\[
hol_{0,0}^{(p,q,\U)}=(\rho_\NN)^*\circ It^{(p,q,\U\times \T)}\left(\h_{0,0}^{(p,q,\U)}\right)=It^{(p,q,\U)}\left(\h^{(p,q,\U)}\right)=hol^{(p,q,\U)}.
\]
Now, since both $(\rho_\NN)^*$ and $It^{(p,q,\U\times \T)}$ are algebra maps, we obtain,
\begin{eqnarray*}
hol^{(p,q,\U)}_{2k,2\ell}&=& \sum_{\scriptsize \begin{matrix}k_{(1,1)}+\dots+ k_{(p,q)} =k \\ \ell_{(1,1)}+\dots+ \ell_{(p,q)} =\ell \end{matrix}} \left(\prod_{r=1}^p \prod_{s=1}^q  \frac{1}{k_{(r,s)}!}\cdot E^{(p,q,\U)}\left(i\widetilde{Hdt'}_{(r,s)}\right)^{\wedge k_{(r,s)}} \right)
\\
&&\quad\quad\cdot \left(\prod_{r=1}^p \prod_{s=1}^q  \frac{1}{ \ell_{(r,s)}!} 
 \cdot E^{(p,q,\U)} \left(i\widetilde{Hdu'}_{(r,s)}\right)^{\wedge \ell_{(r,s)}} \right)
 \\
 && \quad\quad\cdot   E^{(p,q,\U)}\left(\prod_{r=1}^p \prod_{s=1}^q\exp\left({i \tilde B_{(r,s)}}+{i \tilde A^\leftrightarrow_{(r,s)}}+{i \tilde A^\updownarrow_{(r,s)}}\right) \bullet \tilde g_{(r,s)}\right).
\end{eqnarray*}
The last factor is evaluated as 
\begin{multline}\label{EQ-E(p,q,u)(exp)=hol}
E^{(p,q,\U)}\left(\prod_{r=1}^p \prod_{s=1}^q\exp\left({i \tilde B_{(r,s)}}+{i \tilde A^\leftrightarrow_{(r,s)}}+{i \tilde A^\updownarrow_{(r,s)}}\right) \bullet \tilde g_{(r,s)}\right)
\\ =hol_{0,0}^{(p,q,\U)}=hol^{(p,q,\U)},
\end{multline}
so that,
\begin{multline*}
hol^{(p,q,\U)}_{2k,2\ell}=
\sum_{k_{(1,1)}+\dots+ k_{(p,q)} =k } \left(\prod_{r=1}^p \prod_{s=1}^q  \frac{1}{k_{(r,s)}!}\cdot E^{(p,q,\U)}\left(i\widetilde{Hdt'}_{(r,s)}\right)^{\wedge k_{(r,s)}} \right)
\\
\cdot \sum_{\ell_{(1,1)}+\dots+ \ell_{(p,q)} =\ell } \left(\prod_{r=1}^p \prod_{s=1}^q  \frac{1}{\ell_{(r,s)}!}\cdot E^{(p,q,\U)}\left(i\widetilde{Hdu'}_{(r,s)}\right)^{\wedge \ell_{(r,s)}} \right)\cdot hol^{(p,q,\U)}
\\
=\frac{1}{k!}\left(\sum_{r=1}^p\sum_{s=1}^q E^{(p,q,\U)}\left(i\widetilde{Hdt'}_{(r,s)}\right)\right)^{\wedge k}
\cdot \frac{1}{\ell!}\left(\sum_{r=1}^p\sum_{s=1}^q E^{(p,q,\U)}\left(i\widetilde{Hdu'}_{(r,s)}\right)\right)^{\wedge \ell}\cdot hol^{(p,q,\U)}.
\end{multline*}
We claim that
\begin{eqnarray} 
\label{EQ-E(Hdt)}
E^{(p,q,\U)}\left(\widetilde{Hdt'}_{(r,s)} \right) &=& \int_{\Delta^1\times \Delta^1} (\iota_\wf H)\Big(\frac{r-1+t}{p},\frac{s-1+u}{q} \Big) dtdu
 \\ \label{EQ-E(Hdu)}
E^{(p,q,\U)}\left(\widetilde{Hdu'}_{(r,s)} \right) &=& - \int_{\Delta^1\times \Delta^1} (\iota_\vf H)\Big(\frac{r-1+t}{p},\frac{s-1+u}{q} \Big) dtdu 
\end{eqnarray}
Before proving these identities, we show how to finish the proof using \eqref{EQ-E(Hdt)} and \eqref{EQ-E(Hdu)}. Summing \eqref{EQ-E(Hdt)} and \eqref{EQ-E(Hdu)} over $r=1,\dots,p$ and $s=1,\dots,q$, we obtain
\begin{eqnarray*}
hol^{(p,q,\U)}_{2k,2\ell} &=&\frac{1}{k!}\left(i\cdot \int_{\Delta^1\times \Delta^1} (\iota_{\wf}H)(t,u)dtdu \right)^{\wedge k} \\
&& \cdot \frac{1}{\ell!}\left((-i) \cdot \int_{\Delta^1\times \Delta^1} (\iota_{\wf}H)(t,u)dtdu \right)^{\wedge \ell}\cdot hol^{(p,q,\U)}.
\end{eqnarray*}
Thus, we see that equation \eqref{condensedhol2k2l} holds locally on any $\NN(p,q,\U)$. Equation \eqref{condensedhol2k2l} does in fact define a global $2k+2\ell$-form $M^\T$, since $hol$ is globally defined (see corollary \ref{COR:hol-global}), and so are $\int_\T \iota_\wf H dt du$ and $\int_\T \iota_\vf H dt du$, since $H$ is global on $M$. (For example, $\int_\T \iota_\wf H dt du$ may also be defined via the extended iterated integral $E^{(1,1,\{M\})}\big(\widetilde{Hdt'}_{(1,1)}\big)\in\Omega(\NN(1,1,\{M\}))$ of the global cover $\U=\{M\}$ for which $\NN(1,1,\{M\})=M^\T$.) 

It remains to prove equations \eqref{EQ-E(Hdt)} and\eqref{EQ-E(Hdu)}.
\begin{proof}[Proof of \eqref{EQ-E(Hdt)} and \eqref{EQ-E(Hdu)}]
To prove the two equations, we will need to consider vector fields on $M^\T$ as well as vector fields on $(M\times \T)^\T$. Since the latter space has two natural $\T$ actions, we will distinguish them by writing the torus in the base with a prime, \emph{i.e.} we write the space as $(M\times \T')^\T$ where $\T'=\T$. The two natural vector fields on $M^\T$ are denoted by $\vf$ and $\wf$, and similarly $\vf(M\times \T')$ and $\wf(M\times \T')$ are the vector fields on $(M\times \T')^\T$ coming from the $\T$ action in the exponent, and $\partial/\partial t'$ and $\partial/\partial u'$ are the vector fields on $(M\times \T')^\T$ coming from the $\T$ action in the base $\T=\T'$.
Now, $\rho_\NN:\NN(p,q,\U)\to \NN(p,q,\U\times \T'), \rho_\NN:\gamma\mapsto (\gamma,-id)$, from definition \ref{DEF:E(q,p,U)} is $\T$-equivariant where the $\T$ action on $(M\times \T')^\T$ is the diagonal action of both tori $\T$ and $\T'$,
\begin{multline*}
(t,u).\rho_\NN(\gamma)=(t,u).(\gamma(\_),-id(\_))=(\gamma(\_+(t,u)),-id(\_+(t,u))+(t,u)) 
\\ 
=(\gamma(\_+(t,u)),-id(\_))=((t,u).\gamma,-id)=\rho_\NN((t,u).\gamma).
\end{multline*}
Thus, we see that $(\rho_\NN)_*(\vf)=\vf(M\times\T')+\partial/\partial t'$ and similarly $(\rho_\NN)_*(\wf)=\wf(M\times\T')+\partial/\partial u'$. Note furthermore that $\iota_{(\rho_\NN)_*\mathbf v} (d t')=\iota_{(\rho_\NN)_*\mathbf v} (d u')=0$ for any vector field $\mathbf v$, since $\rho_\NN:\gamma\mapsto (\gamma,-id)$ is constant on the second factor.

With these remarks, we calculate $E^{(p,q,\U)}\big(\widetilde{Hdt'}_{(r,s)} \big)\in \Omega^2(\NN(p,q,\U))$ by applying it to two vectors fields $\mathbf v$ and $\mathbf w$ on $\NN(p,q,\U)$ as follows,
\begin{multline*}
E^{(p,q,\U)}\left(\widetilde{Hdt'}_{(r,s)}\right)(\mathbf v , \mathbf w) =(\rho_\NN)^*\circ\Ch^{(p,q,\U\times \T')}\left(\widetilde{Hdt'}_{(r,s)}\right)(\mathbf v , \mathbf w)\\
=\int_{\scriptsize \begin{matrix}
(\Delta^0\times\dots\times\Delta^1\times\dots\times\Delta^0)\\
\times(\Delta^0\times\dots\times\Delta^1\times\dots\times\Delta^0)
\end{matrix}} ev^*(1\otimes\dots\otimes Hdt'_{(r,s)}\otimes\dots\otimes 1)((\rho_\NN)_*\mathbf v,(\rho_\NN)_*\mathbf w)\\
=\int_{\Delta^1\times \Delta^1}  dtdu \wedge \iota_{\wf(M\times \T')}\iota_{\vf(M\times \T')} (H\wedge dt')\Big(\frac{r-1+t}{p},\frac{s-1+u}{q} \Big) ((\rho_\NN)_*\mathbf v,(\rho_\NN)_*\mathbf w).
\end{multline*}
where we have used from \eqref{EQ:T-pullback} that $\int_{I\times J} ev^*(\omega)=\int_{I\times J} dtdu \wedge \iota_{\wf(M\times \T')}\iota_{\vf(M\times \T')} \omega(t,u)$, for any intervals $I$ and $J$. Using the remarks from the first paragraph, we obtain that this is equal to
\begin{multline*}
\int_{\Delta^1\times \Delta^1}  dtdu \wedge (\iota_{(\rho_\NN)_*\wf}-\iota_{\partial/\partial u'})(\iota_{(\rho_\NN)_*\vf}-\iota_{\partial/\partial t'}) \\
\quad\quad\quad\quad\quad\quad\quad\quad (H\wedge dt')\Big(\frac{r-1+t}{p},\frac{s-1+u}{q} \Big) ((\rho_\NN)_*\mathbf v,(\rho_\NN)_*\mathbf w)
\\
=\int_{\Delta^1\times \Delta^1}  dtdu \wedge (\iota_{(\rho_\NN)_*\wf})(-\iota_{\partial/\partial t'}) (H\wedge dt')\Big(\frac{r-1+t}{p},\frac{s-1+u}{q} \Big) ((\rho_\NN)_*\mathbf v,(\rho_\NN)_*\mathbf w)
\\
=\int_{\Delta^1\times \Delta^1}  dtdu \wedge (\iota_{(\rho_\NN)_*\wf}) H\Big(\frac{r-1+t}{p},\frac{s-1+u}{q} \Big) ((\rho_\NN)_*\mathbf v,(\rho_\NN)_*\mathbf w)
\\
=\int_{\Delta^1\times \Delta^1}  dtdu \wedge (\iota_{\wf}) H\Big(\frac{r-1+t}{p},\frac{s-1+u}{q} \Big) (\mathbf v,\mathbf w).
\end{multline*}
This shows \eqref{EQ-E(Hdt)}, and a similar calculation for $E^{(p,q,\U)}\big(\widetilde{Hdu'}_{(r,s)}\big)$ (with an additional minus coming from commuting $(-\iota_{\partial/\partial u'})(\iota_{(\rho_\NN)_*\vf})=(\iota_{(\rho_\NN)_*\vf})(\iota_{\partial/\partial u'})$) also shows equation \eqref{EQ-E(Hdu)}.
\end{proof}
This completes the proof of proposition \ref{PROP:hol_(2k,2l)-glue}.
\end{proof}
We may rewrite $hol_{2k,2\ell}^{(p,q,\U)}$, at least partially, in a more explicit way using the notion of allowable matrix as defined in remark \ref{REM:h-via-allowmatrix}.
\begin{rmk}
We have for a torus $(\gamma:\T\to M) \in \NN(q,p,\U)$, that
\begin{multline*}
hol_{2k,2\ell}^{(p,q,\U)}|_\gamma = \sum_{\scriptsize\begin{matrix}m_1,\dots,m_p \geq 0\\ n_1,\dots, n_q\geq 0\end{matrix}}\quad \sum_{\scriptsize\begin{matrix}\text{allowable}\\ \text{ matrix } M \\ \text{size $m \times n$} \end{matrix}}\quad  \sum_{\scriptsize\begin{matrix} K,L\subset \{\text{entries of $2$'s in }M\\
\text{that are not in an edge}\\\text{row or edge column}\}\\ K\cap L=\emptyset, |K|=k, |L|=\ell \end{matrix}} \\
\pm \prod_{i=1}^q \prod_{j=1}^p g_{(i,j)}\left(\gamma\left(\frac{i-1}{q},\frac{j-1}{p}\right)\right)\\
 \cdot \int_{(\Delta^{m_1}\times\dots \times \Delta^{m_q})\times(\Delta^{n_1}\times\dots \times \Delta^{n_p})} \,\,
\bigwedge_{\scriptsize\begin{matrix}\text{entries ``$2$'' in $M$,}\\ \text{given at position }(r,s) \\ \text{in the $(i,j)$'s subrectangle}\end{matrix}}
X_{(i,j)}\left(\frac{i-1+t^i_r}{q},\frac{j-1+u^j_s}{p}\right)\\ dt^1_1 \dots du^p_{n_p},
\end{multline*}
where 
\[
X_{(i,j)}(t,u) = \left\{
\begin{array}{ll}
\iota_\wf A^\leftrightarrow_{(i,j)}(u), & \text{if the ``$2$'' is on an edge row} \\
\iota_\vf A^\updownarrow_{(i,j)}(t), & \text{if the ``$2$'' is on an edge column} \\
\iota_\wf H(t,u), & \text{if the ``$2$'' is at a position in }K \\
\iota_\vf H(t,u), & \text{if the ``$2$'' is at a position in }L \\
\iota_\vf\iota_\wf B_{(i,j)}(t,u), & \text{otherwise}
\end{array} \right.
\]
\begin{proof}[Sketch of proof.]
It is $hol_{2k,2\ell}^{(p,q,\U)}:=(\rho_\NN)^*\circ It^{(q,p,\U\times \T)}\left(\h_{2k,2\ell}^{(p,q,\U)}\right)$, and similarly to remark \ref{REM:h-via-allowmatrix} we can write,
\begin{multline}\label{EQ:h_(2k,2l)}
\h_{2k,2\ell}^{(p,q,\U)} = \sum_{\scriptsize\begin{matrix}m_1,\dots,m_p \geq 0\\ n_1,\dots, n_q\geq 0\end{matrix}} \sum_{\scriptsize\begin{matrix}\text{allowable}\\ \text{ matrix } M \\ \text{of size} \\ (m+p) \times (n+q) \end{matrix}}\quad  \sum_{\scriptsize\begin{matrix} K,L\subset \{\text{entries of $2$'s in }M\\
\text{that are not in an edge}\\\text{row or edge column}\}\\ K\cap L=\emptyset, |K|=k, |L|=\ell \end{matrix}}
\quad  \pm \mathfrak a_{K,L}(M), \\ 
\text{where } \mathfrak a_{K,L}(M)\in CH_{2k+2\ell}^{(p,q,\U\times \T)}.
\end{multline}
Here, the allowable matrices are the same as in remark \ref{REM:h-via-allowmatrix}, and $m = m_1 + \dots + m_p $, $n = n_1 + \dots + n_q$. The Hochschild chain $\mathfrak a_{K,L}(M)$ is given by replacing $0$'s by the appropriate $g_{(k,\ell)}$, and the $2$'s by the appropriate $A^\leftrightarrow_{{(r,s)}}$, $A^\updownarrow_{{(r,s)}}$, or $B_{{(r,s)}}$, except for $2$'s in $K$ or $L$, for which we place the appropriate $(Hdt')_{(r,s)}$ or $(Hdu')_{(r,s)}$. An example is displayed in figure \ref{FIG:a_(K,L)(M)}.

\begin{figure}
 \[ \resizebox{12.5cm}{6cm}{
\begin{pspicture}(0.5,0.5)(16.5,9.5)
\pspolygon[fillstyle=solid,fillcolor=lightgray, linestyle=none](.5,.5)(1.5,.5)(1.5,9.5)(.5,9.5)
\pspolygon[fillstyle=solid,fillcolor=lightgray, linestyle=none](7.5,.5)(8.5,.5)(8.5,9.5)(7.5,9.5)
\pspolygon[fillstyle=solid,fillcolor=lightgray, linestyle=none](11.5,.5)(12.5,.5)(12.5,9.5)(11.5,9.5)
\pspolygon[fillstyle=solid,fillcolor=lightgray, linestyle=none](.5,9.5)(.5,8.5)(16.5,8.5)(16.5,9.5)
\pspolygon[fillstyle=solid,fillcolor=lightgray, linestyle=none](.5,5.5)(.5,4.5)(16.5,4.5)(16.5,5.5)
\rput(1,9){$g_{(1,1)}$} \rput(2,9){$1$} \rput(3,9){$1$} \rput(4,9){$A^\leftrightarrow_{(1,1)}$}
\rput(5,9){$1$} \rput(6,9){$1$} \rput(7,9){$1$} \rput(8,9){$g_{(1,2)}$}
\rput(9,9){$1$} \rput(10,9){$A^\leftrightarrow_{(1,2)}$} \rput(11,9){$1$} \rput(12,9){$g_{(1,3)}$}
\rput(13,9){$A^\leftrightarrow_{(1,3)}$} \rput(14,9){$1$} \rput(15,9){$1$} \rput(16,9){$1$}
\rput(1,8){$1$} \rput(2,8){$1$} \rput(3,8){$1$} \rput(4,8){$1$}
\rput(5,8){$(Rdu')_{(1,1)}$} \rput(6,8){$1$} \rput(7,8){$1$} \rput(8,8){$1$}
\rput(9,8){$1$} \rput(10,8){$1$} \rput(11,8){$1$} \rput(12,8){$1$}
\rput(13,8){$1$} \rput(14,8){$1$} \rput(15,8){$1$} \rput(16,8){$1$}
\rput(1,7){$1$} \rput(2,7){$1$} \rput(3,7){$1$} \rput(4,7){$1$}
\rput(5,7){$1$} \rput(6,7){$1$} \rput(7,7){$1$} \rput(8,7){$1$}
\rput(9,7){$1$} \rput(10,7){$1$} \rput(11,7){$1$} \rput(12,7){$A^\updownarrow_{(1,3)}$}
\rput(13,7){$1$} \rput(14,7){$1$} \rput(15,7){$1$} \rput(16,7){$1$}
\rput(1,6){$1$} \rput(2,6){$1$} \rput(3,6){$1$} \rput(4,6){$1$}
\rput(5,6){$1$} \rput(6,6){$1$} \rput(7,6){$B_{(1,1)}$} \rput(8,6){$1$}
\rput(9,6){$1$} \rput(10,6){$1$} \rput(11,6){$1$} \rput(12,6){$1$}
\rput(13,6){$1$} \rput(14,6){$1$} \rput(15,6){$1$} \rput(16,6){$1$}
\rput(1,5){$g_{(2,1)}$} \rput(2,5){$A^\leftrightarrow_{(2,1)}$} \rput(3,5){$1$} \rput(4,5){$1$}
\rput(5,5){$1$} \rput(6,5){$A^\leftrightarrow_{(2,1)}$} \rput(7,5){$1$} \rput(8,5){$g_{(2,2)}$}
\rput(9,5){$A^\leftrightarrow_{(2,2)}$} \rput(10,5){$1$} \rput(11,5){$1$} \rput(12,5){$g_{(2,3)}$}
\rput(13,5){$1$} \rput(14,5){$1$} \rput(15,5){$A^\leftrightarrow_{(2,3)}$} \rput(16,5){$A^\leftrightarrow_{(2,3)}$}
\rput(1,4){$1$} \rput(2,4){$1$} \rput(3,4){$1$} \rput(4,4){$1$}
\rput(5,4){$1$} \rput(6,4){$1$} \rput(7,4){$1$} \rput(8,4){$1$}
\rput(9,4){$1$} \rput(10,4){$1$} \rput(11,4){$1$} \rput(12,4){$1$}
\rput(13,4){$1$} \rput(14,4){$(Rdu')_{(2,3)}$} \rput(15,4){$1$} \rput(16,4){$1$}
\rput(1,3){$A^\updownarrow_{(2,1)}$} \rput(2,3){$1$} \rput(3,3){$1$} \rput(4,3){$1$}
\rput(5,3){$1$} \rput(6,3){$1$} \rput(7,3){$1$} \rput(8,3){$1$}
\rput(9,3){$1$} \rput(10,3){$1$} \rput(11,3){$1$} \rput(12,3){$1$}
\rput(13,3){$1$} \rput(14,3){$1$} \rput(15,3){$1$} \rput(16,3){$1$}
\rput(1,2){$1$} \rput(2,2){$1$} \rput(3,2){$1$} \rput(4,2){$1$}
\rput(5,2){$1$} \rput(6,2){$1$} \rput(7,2){$1$} \rput(8,2){$1$}
\rput(9,2){$1$} \rput(10,2){$1$} \rput(11,2){$B_{(2,2)}$} \rput(12,2){$1$}
\rput(13,2){$1$} \rput(14,2){$1$} \rput(15,2){$1$} \rput(16,2){$1$}
\rput(1,1){$1$} \rput(2,1){$1$} \rput(3,1){$(Rdt')_{(2,1)}$} \rput(4,1){$1$}
\rput(5,1){$1$} \rput(6,1){$1$} \rput(7,1){$1$} \rput(8,1){$1$}
\rput(9,1){$1$} \rput(10,1){$1$} \rput(11,1){$1$} \rput(12,1){$1$}
\rput(13,1){$1$} \rput(14,1){$1$} \rput(15,1){$1$} \rput(16,1){$1$}
\end{pspicture}}
\]
\caption{An example of $\mathfrak a_{K,L}(M)$, with $|K|=1, |L|=2$}\label{FIG:a_(K,L)(M)}
\end{figure}

With this, we can write a formula for $It^{(p,q,\U\times \T)}(\mathfrak a_{K,L}(M))$ for an allowable matrix $M$ and choices of $K$ and $L$, at a point $(\gamma:\T\to M\times \T)\in \NN(p,q,\U\times \T)$, up to sign, as follows,
\begin{multline}\label{EQ:It(a_KL(M))}
It^{(p,q,\U\times \T)}\big(\mathfrak a_{K,L}(M)\big)\Big|_\gamma = \pm \prod_{i=1}^p \prod_{j=1}^q g_{(i,j)}\left(\gamma\left(\frac{i-1}{p},\frac{j-1}{q}\right)\right)\\
 \cdot \int_{(\Delta^{m_1}\times\dots \times \Delta^{m_p})\times(\Delta^{n_1}\times\dots \times \Delta^{n_q})} \,\,
\bigwedge_{\scriptsize\begin{matrix}\text{entries ``$2$'' in $M$,}\\ \text{given at position }(r,s) \\ \text{in the $(i,j)$'s subrectangle}\end{matrix}}
X_{(i,j)}\left(\frac{i-1+t^i_r}{p},\frac{j-1+u^j_s}{q}\right)\\ dt^1_1 \dots du^q_{n_q},
\end{multline}
where
\begin{align*}
\Delta^{m_i} &= \{ 0 \leq t^i_1 \leq \dots \leq t^i_{m_i} \leq 1 \} \\
\Delta^{n_j} & = \{ 0 \leq u^j_1 \leq \dots \leq u^j_{n_j} \leq 1 \} 
\end{align*}
and
\[
X_{(i,j)}(t,u) = \left\{
\begin{array}{ll}
\iota_{\wf} A^\leftrightarrow_{(i,j)}(u), & \text{if the ``$2$'' is on an edge row} \\
\iota_{\vf} A^\updownarrow_{(i,j)}(t), & \text{if the ``$2$'' is on an edge column} \\
\iota_{\vf}\iota_{\wf} Hdt'(t,u), & \text{if the ``$2$'' is at a position in }K \\
\iota_{\vf}\iota_{\wf} Hdu'(t,u), & \text{if the ``$2$'' is at a position in }L \\
\iota_{\vf}\iota_{\wf} B_{(i,j)}(t,u), & \text{otherwise}
\end{array} \right.
\]
with $v$ and $w$ being the two natural vector fields on $(M\times \T)^\T$ coming from the  torus action of the exponent of $(M\times \T)^\T$.
\end{proof}
\end{rmk}

For the next theorem, we denote by $\Ch(H):=\Ch^{(1,1,\{M\})}(\tilde H_{(1,1)})\in\Om^1(M^\T)$ the global 1-form, defined without subdividing the torus (\emph{i.e.} $p=q=1$), using the cover $\{M\}$ for which $\NN(1,1,\{M\})=M^\T$, and where $\tilde H_{(1,1)}\in \Oma^1_1$ is given by placing $H\in\Omega^3(M)$ at the non-degenerate $2$-simplex; \emph{c.f.} definition \ref{DEF:torus-hol_0}.

We now state our main result from this section, namely that the forms $hol_{2k,2\ell}$ satisfy the relation from figure \ref{DIAG:hol_k,l}.
\begin{thm}\label{THM:d(hol)=i(hol)=i(hol)}
For all $k, \ell \geq 0$, we have,
\begin{equation}\label{d-it-is}
 d_{DR}(hol_{2k,2\ell})=-\iota_\vf(hol_{2k+2,2\ell})=-\iota_\wf(hol_{2k,2\ell+2}).
\end{equation}
and all these expressions are equal to $i\cdot \Ch(H)\cdot hol_{2k,2\ell}$.
\end{thm}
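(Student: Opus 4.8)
The plan is to reduce everything to the closed formula \eqref{condensedhol2k2l} from Proposition \ref{PROP:hol_(2k,2l)-glue}, which already expresses each $hol_{2k,2\ell}$ as a universal scalar multiple of $hol$ times powers of the two globally defined $2$-forms $P := \int_\T (\iota_\wf H) dt du$ and $Q := \int_\T (\iota_\vf H) dt du$ on $M^\T$. Once this is in place, the three quantities appearing in \eqref{d-it-is} and the asserted common value $i\cdot \Ch(H)\cdot hol_{2k,2\ell}$ are all built from $hol$, $P$, $Q$ and the $1$-form $\Ch(H)$, so the statement becomes an identity that can be verified by a Leibniz-rule computation, provided we first establish the following three ingredients: (a) $d_{DR}(hol) = i\cdot \Ch(H)\cdot hol$; (b) $P$ and $Q$ are closed, i.e. $d_{DR}(P) = d_{DR}(Q) = 0$; and (c) the contraction relations $\iota_\vf(P) = -i\,\Ch(H)$ up to the appropriate normalization, together with $\iota_\vf(Q) = 0$, $\iota_\wf(P) = 0$, $\iota_\wf(Q) = +i\,\Ch(H)$ up to normalization — more precisely, relations saying that contracting $P$ with $\vf$ (resp. $Q$ with $\wf$) reproduces $\Ch(H)$, while the "cross" contractions $\iota_\vf Q$ and $\iota_\wf P$ vanish because $\iota_\vf\iota_\vf = 0$ and $H(\gamma', \gamma', -) = 0$.

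First I would prove (a). The cleanest route is the same one used for the bundle case: since $hol = hol_{0,0} = E^{(p,q,\U)}(\h^{(p,q,\U)})$ with $\h^{(p,q,\U)} = \exp(\sum i\tilde B + i\tilde A^\leftrightarrow + i\tilde A^\updownarrow)\bullet\prod\tilde g$, and since $E^{(p,q,\U)} = (\rho_\NN)^*\circ It^{(p,q,\U\times\T)}$ where $It^{(p,q,\U\times\T)}$ is a chain map (Proposition \ref{LEM:It(q,p,U)-d-sh}) and $(\rho_\NN)^*$ commutes with $d_{DR}$, we get $d_{DR}(hol) = E^{(p,q,\U)}(D\h^{(p,q,\U)})$. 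Now compute $D$ of the exponential times $\prod\tilde g$ using that $D$ is a derivation of the shuffle product (stated in Definition \ref{DEF:Torus-CH}): the $d_{DR}$ part of $D$ hits $B$, $A^\leftrightarrow$, $A^\updownarrow$, $g$, the face-multiplication part of $D$ multiplies adjacent entries, and the gerbe relations $B_j - B_i = dA_{i,j}$, $A_{j,k} - A_{i,k} + A_{i,j} = i\,d\log g_{i,j,k}$, and the cocycle relation for the $g$'s are exactly what forces massive cancellation, leaving only the term involving $dB_i = H$. The surviving term is $E^{(p,q,\U)}$ applied to the Hochschild chain with $i\tilde H$ shuffled into $\h^{(p,q,\U)}$, which by the argument of Proposition \ref{PROP:hol_(2k,2l)-glue} (the same $\rho_\NN$-equivariance computation that produced \eqref{EQ-E(Hdt)}) equals $i\cdot \Ch(H)\cdot hol$. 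This parallels proposition \ref{prop:De^a} and the proof of Proposition \ref{PROP:hol_(2k,2l)-glue} almost verbatim; the main bookkeeping obstacle is tracking the signs and checking that every boundary/degeneracy term that is not $dB$ actually cancels — this is the analogue of the "$g_{i,j}$ terms cancel since $dg_{i,j}-g_{i,j}A_j+A_ig_{i,j}=0$" step in Theorem \ref{THM:local-nabla-hol_2k}, now with three gerbe relations instead of one bundle relation.

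Then (b) and (c) are comparatively easy. For (b), $P = E^{(1,1,\{M\})}(\widetilde{Hdt'}_{(1,1)})$ and $dH = 0$ (the $3$-curvature is closed), so $d_{DR}(P) = E^{(1,1,\{M\})}(D\,\widetilde{Hdt'}_{(1,1)})$; the only nonzero contribution of $D$ would come from $d_{DR}(H dt')$, but $d(H dt') = (dH) dt' = 0$, hence $d_{DR}(P) = 0$, and symmetrically for $Q$. For (c), use the explicit identities \eqref{EQ-E(Hdt)}–\eqref{EQ-E(Hdu)} combined with the formula \eqref{EQ:T-pullback} for $\T$-pullbacks and the fact that $H(\gamma'(t,u),\gamma'(t,u),-) = 0$ by antisymmetry; contracting $\int_\T(\iota_\wf H)dt du$ by $\vf$ then collapses the $du$-integral by the fundamental theorem of calculus exactly as in the proof of Proposition \ref{PROP:hol_(2k,2l)-glue}, yielding (a multiple of) $\Ch(H)$, while $\iota_\vf$ of $\int_\T(\iota_\vf H)dt du$ vanishes since $\iota_\vf\iota_\vf = 0$. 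With (a), (b), (c) established, substitute \eqref{condensedhol2k2l} into each of the three expressions in \eqref{d-it-is} and expand by the Leibniz rule: $d_{DR}$ of the product of scalars, powers of $P$, powers of $Q$, and $hol$ reduces — using $dP = dQ = 0$ and (a) — to $i\,\Ch(H)$ times the same product, i.e. $i\,\Ch(H)\cdot hol_{2k,2\ell}$; and $-\iota_\vf(hol_{2k+2,2\ell})$, after accounting for the shift $k\mapsto k+1$ in the exponent of $P$ and the combinatorial prefactor $\tfrac{(-1)^\ell}{k!\,\ell!}i^{k+\ell}$, likewise collapses to $i\,\Ch(H)\cdot hol_{2k,2\ell}$ via $\iota_\vf(P^{\wedge(k+1)}) = (k+1) (\iota_\vf P)\wedge P^{\wedge k}$ and $\iota_\vf Q = 0$; symmetrically for $-\iota_\wf(hol_{2k,2\ell+2})$. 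Matching the constants $\tfrac{1}{(k+1)!}$ against $\tfrac{k+1}{(k+1)!} = \tfrac{1}{k!}$ is the only arithmetic to check, and it works out precisely because the normalization in \eqref{condensedhol2k2l} was chosen for this purpose. The single genuine difficulty in the whole argument is step (a), the sign- and cancellation-heavy verification that $D\h^{(p,q,\U)}$ reduces to the single $H$-term; everything else is formal manipulation of the closed formula \eqref{condensedhol2k2l}.
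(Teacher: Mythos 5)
Your proposal is correct and follows essentially the same route as the paper: the contraction identities are read off from the condensed formula \eqref{condensedhol2k2l} using the derivation property of $\iota_\vf,\iota_\wf$ and the identities $\Ch(H)=\iota_\wf\int_\T(\iota_\vf H)\,dtdu=-\iota_\vf\int_\T(\iota_\wf H)\,dtdu$, while the exterior derivative is reduced, via the algebra-map property of $E^{(p,q,\U)}$ and closedness of the $Hdt'$, $Hdu'$ factors, to the cancellation-heavy identity $d_{DR}(hol)=i\,\Ch(H)\,hol$, exactly as in the paper's equation \eqref{EQ-d(hol(p,q,U))=i*H*hol(p,q,U)}. (Only a cosmetic remark: the contraction is $\iota_\vf P=-\Ch(H)$ with no factor of $i$; your ``up to normalization'' hedge covers this.)
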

\begin{proof}
First, we rewrite the $\Ch(H)$ in terms of an iterated integral,
\[
\Ch(H)=\int_\T (\iota_\wf \iota_\vf H) dtdu = \iota_\wf\int_\T ( \iota_\vf H) dtdu=-\iota_\vf \int_\T (\iota_\wf  H) dtdu.
\] 
We will use the equation \eqref{condensedhol2k2l} from proposition \ref{PROP:hol_(2k,2l)-glue}, and since $\iota_\vf$ and $\iota_\wf$ are derivations with $(\iota_\vf )^2=(\iota_\wf)^2=0$, and $\iota_\vf (hol)=\iota_\wf (hol)=0$ by degree reasons, we obtain 
\begin{eqnarray*}
\iota_\vf (hol_{2(k+1),2\ell}) & = (k+1)\cdot \iota_\vf\left (\int_\T \iota_\wf H dtdu \right)\cdot \frac{i}{k+1} hol_{2k,2\ell} & = -i\cdot \Ch(H) hol_{2k,2\ell},\\
\iota_\vf (hol_{2k,2(\ell+1)}) & = (\ell+1)\cdot \iota_\wf\left (\int_\T \iota_\vf H dtdu \right)\cdot \frac{(-i)}{\ell+1} hol_{2k,2\ell} & = -i\cdot \Ch(H) hol_{2k,2\ell}.
\end{eqnarray*}

It remains to check that $d_{DR}(hol_{2k,2\ell})=i \cdot \Ch(H) \cdot hol_{2k,2\ell}$. We apply $d_{DR}$ to $hol_{2k,2\ell}^{(p,q,\U)}$ which is given in definition \ref{DEF:h(p,q,u)(2k,2l)} as
\begin{multline}\label{EQ-E[SPE]}
E^{(p,q,\U)}\Bigg[\Bigg(\sum_{k_{(i,j)},\ell_{(i,j)}} \prod_{r,s} \frac{1}{k_{(r,s)}!\cdot \ell_{(r,s)}!}    \left(i\widetilde{Hdt'}_{(r,s)}\right)^{\bullet k_{(r,s)}} \bullet \left(i\widetilde{Hdu'}_{(r,s)}\right)^{\bullet \ell_{(r,s)}} \Bigg)
\\
\quad\quad\quad \bullet  \prod_{r,s} \exp\left({i \tilde B_{(r,s)}}+{i \tilde A^\leftrightarrow_{(r,s)}}+{i \tilde A^\updownarrow_{(r,s)}}\right)\bullet \tilde g_{(r,s)} \Bigg].
\end{multline}
Here, $E^{(p,q,\U)}=(\rho_\NN)^*\circ \Ch^{(p,q,\U\times \T)}$ is an algebra map, and $d_{DR}$ is a derivation, and
\begin{multline*}
d_{DR}\circ (\rho_\NN)^* \circ \Ch^{(p,q,\U\times \T)}\Big(\sum_{r,s}\widetilde{Hdt'}_{(r,s)}\Big) \\
= \sum_{r,s} (\rho_\NN)^* \circ d_{DR}\int_{\Delta^1\times \Delta^1} ev^*(1\dots H dt'_{(r,s)}\dots 1)
\\
=\sum_{r,s}(\rho_\NN)^*\int_{\Delta^1\times \Delta^1} d_{DR} \circ ev^*(1\dots H dt'_{(r,s)}\dots 1)
\\
-\sum_{r,s}(\rho_\NN)^*\int_{\partial(\Delta^1\times \Delta^1)} ev^*(1\dots H dt'_{(r,s)}\dots 1)=0,  \quad\quad\quad\quad\quad
\end{multline*} 
(since $d_{DR}(Hdt')=0$, and summing the integral of the global form $Hdt'$ over $\partial(\Delta^1\times \Delta^1)$ for all $r,s$ gives the integral over $\partial \T=\varnothing$). Similarly, we obtain that $d_{DR}\left( E^{(p,q,\U)}\big(\widetilde{Hdu'}_{(r,s)}\big)\right)=0$. The last factor in \eqref{EQ-E[SPE]} is $hol^{(p,q,\U)}$, \emph{c.f.} equation \eqref{EQ-E(p,q,u)(exp)=hol}, so that the claim $d_{DR}(hol_{2k,2\ell})=i \cdot \Ch(H) \cdot hol_{2k,2\ell}$ follows from the following equation,
\begin{equation}\label{EQ-d(hol(p,q,U))=i*H*hol(p,q,U)}
d_{DR}(hol)=i\cdot \Ch(H)\cdot hol.
\end{equation}
\begin{proof}[Proof of \eqref{EQ-d(hol(p,q,U))=i*H*hol(p,q,U)}] (The proof is analogous to the proof of the general proposition \ref{D(hol)}.) We calcualte on a local set $\NN(p,q,\U)$ as,
\begin{multline*}
 hol^{(p,q,\U)}
\\
 =  It^{(q,p,\U)}\left( \exp\left(\sum_{k,\ell}{i \tilde B_{(k,\ell)}}+{i \tilde A^\leftrightarrow_{(k,\ell)}}+{i \tilde A^\updownarrow_{(k,\ell)}}\right)\bullet \prod_{k,\ell} \tilde g_{(k,\ell)}\right) \quad\quad\quad\quad \quad\quad\quad\quad \quad
\\
= \exp \Bigg(\sum_{k,\ell} i\cdot\int_{\Delta^1\times \Delta^1} ev^*(1 \dots B_{i_{(k,\ell)}}\Big|_{\ou{f}{k,\ell}}\dots 1) \quad\quad\quad\quad \quad\quad\quad\quad \quad\quad\quad\quad \quad\quad\quad\quad
\\
+ i\cdot \int_{\Delta^1} ev^*(1 \dots A_{i_{(k-1,\ell)},i_{(k,\ell)}}\Big|_{\ou{\ehor}{k,\ell}}\dots 1) 
+ i\cdot \int_{\Delta^1} ev^*(1 \dots A_{i_{(k,\ell)},i_{(k,\ell-1)}}\Big|_{\ou{\ever}{k,\ell}}\dots 1)\Bigg)
\\
\cdot \prod_{k,\ell} g_{i_{(k,\ell)},i_{(k-1,\ell)},i_{(k-1,\ell-1)}}\Big|_{\ou{v}{k,\ell}}\cdot g^{-1}_{i_{(k,\ell)},i_{(k,\ell-1)},i_{(k-1,\ell-1)}}\Big|_{\ou{v}{k,\ell}}.
\end{multline*}
Since $d_{DR}$ is a derivation, we get that,
\begin{multline*}
d_{DR}( hol^{(p,q,\U)})
\\
=hol^{(p,q,\U)} \cdot\Bigg[ d_{DR}\Bigg(\sum_{k,\ell} i\cdot\int_{\Delta^1\times\Delta^1} ev^*(1 \dots B_{i_{(k,\ell)}}\Big|_{\ou{f}{k,\ell}}\dots 1) \quad\quad\quad\quad \quad\quad\quad\quad \quad\quad
\\
 + i\cdot \int_{\Delta^1} ev^*(1 \dots A_{i_{(k-1,\ell)},i_{(k,\ell)}}\Big|_{\ou{\ehor}{k,\ell}}\dots 1) + i\cdot \int_{\Delta^1} ev^*(1 \dots A_{i_{(k,\ell)},i_{(k,\ell-1)}}\Big|_{\ou{\ever}{k,\ell}}\dots 1)\Bigg)
\\
 + \sum_{k,\ell}  g^{-1}_{i_{(k,\ell)},i_{(k-1,\ell)},i_{(k-1,\ell-1)}}\Big|_{\ou{v}{k,\ell}}\cdot g_{i_{(k,\ell)},i_{(k,\ell-1)},i_{(k-1,\ell-1)}}\Big|_{\ou{v}{k,\ell}}\quad\quad\quad
\\
 \cdot d_{DR}\bigg(g_{i_{(k,\ell)},i_{(k-1,\ell)},i_{(k-1,\ell-1)}}\Big|_{\ou{v}{k,\ell}}\cdot g^{-1}_{i_{(k,\ell)},i_{(k,\ell-1)},i_{(k-1,\ell-1)}}\Big|_{\ou{v}{k,\ell}}\bigg) \Bigg].
 \end{multline*}
We will show that the term in the square bracket is equal to $i\cdot \Ch(H)|_{\NN(p,q,\U)}$. Integration along a fiber formula together with the relations of the connection of a gerbe (definition \ref{DEF:gerbe}), give the following results,
\begin{multline*}
d_{DR} \int_{\Delta^1\times\Delta^1} ev^*(\dots B_{i_{(k,\ell)}}\dots ) \\ = (-1)^2 \int_{\Delta^1\times\Delta^1} d_{DR} (ev^*(\dots B_{i_{(k,\ell)}}\dots )) - (-1)^2\int_{\partial (\Delta^1\times\Delta^1)} ev^*(\dots B_{i_{(k,\ell)}}\dots )
\\ 
= \int_{\Delta^1\times\Delta^1} ev^*(\dots H_{i_{(k,\ell)}}\dots ) - \int_{\partial (\Delta^1\times\Delta^1)} ev^*(\dots B_{i_{(k,\ell)}}\dots ),
\end{multline*}
where $H_{i_{(k,\ell)}}:=H|_{U_{i_{(k,\ell)}}}$. Now, integrating over $\partial(\Delta^1\times\Delta^1)$ means, that on the boundary of the face $\ou{f}{k,\ell}$ we integrate $-B_{i_{(k,\ell)}}$ with the following orientations,
\[
\xymatrix@=10pt{ \ar[dd]_{\ou{\ever}{k,\ell}} &&\ar[ll]_{\ou{\ehor}{k,\ell}} \\& -B_{i_{(k,\ell)}} &\\\ar[rr]_{\ou{\ehor}{k+1,\ell}} &&\ar[uu]_{\ou{\ever\quad}{k,\ell+1}} }
\]
Next, we have
\begin{multline*}
d_{DR} \int_{\Delta^1} ev^*(\dots A_{i_1,i_2}\dots )\\ = (-1)^1 \int_{\Delta^1} d_{DR} (ev^*(\dots A_{i_1,i_2})\dots ) - (-1)^1\int_{\partial \Delta^1} ev^*(\dots A_{i_1,i_2}\dots )
\\ 
= -\int_{\Delta^1} ev^*(\dots (B_{i_2}-B_{i_1})\dots ) +  (A_{i_1,i_2}|_{(\text{endpt. $v$ of }\Delta^1)}-A_{i_1,i_2}|_{(\text{beginningpt. $v$ of }\Delta^1)}),
\end{multline*}
Thus, at the horizontal edge $\ou{\ehor}{k,\ell}$, we integrate
\[
\xymatrix@=10pt{ \ar[rrrrrrr]^{ -B_{i_{(k,\ell)}} +B_{i_{(k-1,\ell)}}} &&&&&&& }
\]
which cancel with the terms from the integral over $\partial(\Delta^1\times\Delta^1)$, and similarly at the vertical edge $\ou{\ever}{k,\ell}$, we integrate
\[
\xymatrix@=10pt{ \ar[dd]^{ -B_{i_{(k,\ell-1)}} +B_{i_{(k,\ell)}}} \\ \,  \\ \, }
\]
which also cancels for the same reasons. Note, that at the vertex $\ou{v}{k,\ell}$, we have the following evalutations,
\[
\xymatrix@=10pt{ &&&&&&& \ar[dd]^{A_{i_{(k-1,\ell)},i_{(k-1,\ell-1)}}}  &&&&&&\\ &&&&&&&&&&& \\ \ar[rrrrrrr]^{\quad\quad A_{i_{(k-1,\ell-1)},i_{(k,\ell-1)}}} &&&&&&& \ar[rrrrrr]^{-A_{i_{(k-1,\ell)},i_{(k,\ell)}}\quad\quad} \ar[dd]^{-A_{i_{(k,\ell)},i_{(k,\ell-1)}}} &&&&&& \\ &&&&&&&&&&&&& \\ &&&&&&&&&&&&& }
\]
Furthermore, these terms cancel with the application of $d_{DR}$ on the $g_{i,j,k}$ at the vertex $\ou{v}{k,\ell}$,
\begin{eqnarray*}
&& g^{-1}_{i_{(k,\ell)},i_{(k-1,\ell)},i_{(k-1,\ell-1)}} \cdot g_{i_{(k,\ell)},i_{(k,\ell-1)},i_{(k-1,\ell-1)}}
\\
&&\cdot  d_{DR}\left(g_{i_{(k,\ell)},i_{(k-1,\ell)},i_{(k-1,\ell-1)}} \cdot  g^{-1}_{i_{(k,\ell)},i_{(k,\ell-1)},i_{(k-1,\ell-1)}}\right) 
 \\
 &=& g^{-1}_{i_{(k,\ell)},i_{(k-1,\ell)},i_{(k-1,\ell-1)}} \cdot d_{DR}\left(g_{i_{(k,\ell)},i_{(k-1,\ell)},i_{(k-1,\ell-1)}} \right)
\\
&& +g_{i_{(k,\ell)},i_{(k,\ell-1)},i_{(k-1,\ell-1)}}\cdot d_{DR}\left( g^{-1}_{i_{(k,\ell)},i_{(k,\ell-1)},i_{(k-1,\ell-1)}}\right) 
\\
&=& -i\cdot \left(A_{i_{(k,\ell)},i_{(k-1,\ell)}}+A_{i_{(k-1,\ell)},i_{(k-1,\ell-1)}}+A_{i_{(k-1,\ell-1)}, i_{(k,\ell)}}\right)
\\
&& + i \cdot \left(A_{i_{(k,\ell)},i_{(k,\ell-1)}}+A_{i_{(k,\ell-1)},i_{(k-1,\ell-1)}}+A_{i_{(k-1,\ell-1)},i_{(k,\ell)}}\right)
\\
&=& i\cdot \left(-A_{i_{(k,\ell)},i_{(k-1,\ell)}}-A_{i_{(k-1,\ell)},i_{(k-1,\ell-1)}}+A_{i_{(k,\ell)},i_{(k,\ell-1)}}+A_{i_{(k,\ell-1)},i_{(k-1,\ell-1)}}\right).
\end{eqnarray*}
Therefore, the only terms that are left over in the square bracket are
\begin{eqnarray*}
\sum_{k,\ell} i\cdot \int_{\Delta^1\times\Delta^1} ev^*(1\dots H_{i_{(k,\ell)}}\dots 1)&=& i\cdot \Ch^{(p,q,\U)}\Big(\sum_{k,\ell} (1\dots H_{i_{(k,\ell)}}\dots 1)\Big)
\\
&=& i\cdot It(H)|_{\NN(p,q,\U)}.
\end{eqnarray*}
This is what we needed to show.
\end{proof}
This completes the proof of \eqref{EQ-d(hol(p,q,U))=i*H*hol(p,q,U)} and with this the proof of the theorem.
\end{proof}
Thus, we have constructed the higher holonomy forms $hol_{2k,2\ell}$, which satisfy the relations from equation \eqref{d-it-is}.
Using this equation \eqref{d-it-is}, and a choice of numbers $a,b\in \R$ with $a+b=1$, we can now define the equivariant Chern character $Ch(\mathcal G,a,b)$ for the gerbe $\mathcal G$ as
\[
 Ch(\mathcal G,a,b):=\sum_{k\geq 0, \ell\geq 0}a^{k}  \cdot b^{\ell} \cdot hol_{2k,2\ell}.
 \]
To see where the equivariant Chern character lives, recall \cite[\S 5]{AB}, and define 
\[
\Omega(M^\T)^{inv(\vf+\wf)}=\bigg\{ \omega\in \Omega(M^\T)\Big| \mathcal L_{\vf+\wf}(\omega)=0\bigg\}
\]
the space of $\vf+\wf$ invariant forms on $\Omega(M^\T)$ with the Witten differential $D_\T=d+\iota_\vf+\iota_\wf$. In fact, for every $a,b\in \R$ with $a+b=1$, we can do slightly better by considering the following subcomplex $\Omega(M^\T)^{(a,b)}$ of $\Omega(M^\T)^{inv(\vf+\wf)}$. Define $\Omega(M^\T)^{(a,b)}$ to be given by the space 
\begin{eqnarray*}
\Omega(M^\T)^{(a,b)}&:=& \Omega(M^\T)^{inv(\vf), inv(\wf), hor(-b\vf+a\wf)}\\
&=&\left\{ \omega\in \Omega(M^\T)\Big| \mathcal L_\vf(\omega)=\mathcal L_\wf(\omega)=\iota_{-b\vf+a\wf}(\omega)=0 \right\}.
\end{eqnarray*}
$\Omega(M^\T)^{(a,b)}$ has the same induced differential $D_\T=d+\iota_{\vf}+\iota_{\wf}$ satisfying $(D_\T)^2=0$. As a corollary, we have that $Ch(\mathcal G, a,b)$ is a closed element in this complex.
\begin{cor} \label{gerbechern}
For any $a+b=1$ we have 
\[\mathcal L_\vf(Ch(\mathcal G,a,b))=\mathcal L_\wf(Ch(\mathcal G,a,b))=\iota_{-b\vf+a\wf}(Ch(\mathcal G,a,b))=0, \] and furthermore,
\[D_\T(Ch(\mathcal G,a,b))=0.\]
Thus, $Ch(\mathcal G,a,b)$ is a closed element of $\Omega(M^\T)^{(a,b)}\subset \Omega(M^\T)^{inv(\vf+\wf)}$.
\end{cor}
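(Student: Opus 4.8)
The plan is to verify the four vanishing conditions directly from Theorem~\ref{THM:d(hol)=i(hol)=i(hol)} (the relations $d(hol_{2k,2\ell}) = -\iota_{\vf}(hol_{2k+2,2\ell}) = -\iota_{\wf}(hol_{2k,2\ell+2})$) together with the explicit product formula \eqref{condensedhol2k2l} for the $hol_{2k,2\ell}$. Since $Ch(\mathcal G,a,b) = \sum_{k,\ell \geq 0} a^k b^\ell hol_{2k,2\ell}$ is a sum of homogeneous pieces, I can compute each of the four operators on it term by term and rearrange.

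First I would compute $\iota_{\vf}(Ch(\mathcal G,a,b))$. By \eqref{d-it-is} we have $\iota_{\vf}(hol_{2k+2,2\ell}) = -d(hol_{2k,2\ell})$, and the $k=0$ terms vanish since $\iota_{\vf}$ applied to a form of the type in \eqref{condensedhol2k2l} with $k=0$ is zero (there is no $\int_\T \iota_{\wf}H$ factor to hit, and $\iota_{\vf}hol = 0$ by degree). So $\iota_{\vf}(Ch(\mathcal G,a,b)) = \sum_{k\geq 1,\ell\geq 0} a^k b^\ell \iota_{\vf}(hol_{2k,2\ell}) = -\sum_{k\geq 0,\ell\geq 0} a^{k+1} b^\ell d(hol_{2k,2\ell}) = -a\cdot d(Ch(\mathcal G,a,b))$. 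Similarly $\iota_{\wf}(Ch(\mathcal G,a,b)) = -b\cdot d(Ch(\mathcal G,a,b))$. Adding these two identities gives $(\iota_{\vf}+\iota_{\wf})(Ch(\mathcal G,a,b)) = -(a+b)\,d(Ch(\mathcal G,a,b)) = -d(Ch(\mathcal G,a,b))$ using $a+b=1$, which is exactly $D_\T(Ch(\mathcal G,a,b)) = 0$. For the horizontality condition, $\iota_{-b\vf+a\wf}(Ch(\mathcal G,a,b)) = -b\,\iota_{\vf}(Ch(\mathcal G,a,b)) + a\,\iota_{\wf}(Ch(\mathcal G,a,b)) = (ba - ab)\,d(Ch(\mathcal G,a,b)) = 0$. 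For the Lie derivative conditions, $\mathcal L_{\vf} = d\iota_{\vf} + \iota_{\vf}d$; using $\iota_{\vf}(Ch) = -a\,d(Ch)$ and $d^2=0$ one gets $\mathcal L_{\vf}(Ch(\mathcal G,a,b)) = d(-a\,d(Ch)) + \iota_{\vf}d(Ch) = \iota_{\vf}d(Ch)$, and then I need $\iota_{\vf}d(Ch) = d\iota_{\vf}(Ch) = d(-a\,d(Ch)) = 0$ — so more carefully, $\mathcal L_{\vf}(Ch) = [d,\iota_{\vf}](Ch)$ and one checks $[d,\iota_{\vf}](hol_{2k,2\ell})$ vanishes term by term since $d$ and $\iota_{\vf}$ both act through the same explicit curvature factors in \eqref{condensedhol2k2l} and $d$ of each such factor vanishes ($d(\int_\T \iota_{\wf}H\,dtdu) = 0$ because $H$ is closed, and $d(hol) = i\,It(H)\wedge hol$ is itself closed by the same reasoning, cf.\ the proof of Theorem~\ref{THM:d(hol)=i(hol)=i(hol)}).

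Concretely, the cleanest route is: (1) establish $\iota_{\vf}(Ch(\mathcal G,a,b)) = -a\,d(Ch(\mathcal G,a,b))$ and $\iota_{\wf}(Ch(\mathcal G,a,b)) = -b\,d(Ch(\mathcal G,a,b))$ from the theorem's relations, (2) deduce the three $\mathcal L$ and horizontality vanishing statements as purely formal consequences together with $d^2 = \iota_{\vf}^2 = \iota_{\wf}^2 = 0$ and the observation that $d(Ch(\mathcal G,a,b))$ is itself annihilated by $d$ (so that, e.g., $\mathcal L_{\vf}(Ch) = d(-a\,d\,Ch) + \iota_{\vf}(d\,Ch) = -a\cdot d(d\,Ch) + d(\iota_{\vf}\,Ch) = 0 + d(-a\,d\,Ch) = 0$), and (3) conclude $D_\T(Ch(\mathcal G,a,b)) = d(Ch) + (\iota_{\vf}+\iota_{\wf})(Ch) = d(Ch) - (a+b)d(Ch) = 0$. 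The main obstacle, though it is minor, is keeping the bookkeeping of which homogeneous components shift under each operator straight — in particular confirming the index shifts in the sums $\sum a^k b^\ell \iota_{\vf}hol_{2k,2\ell}$ line up correctly after applying \eqref{d-it-is}, and verifying that $d(Ch(\mathcal G,a,b))$ is $d$-closed, which follows because each $hol_{2k,2\ell}$ is a wedge of $hol$ with closed $2$-forms and $d(hol) = i\,It(H)\wedge hol$ with $It(H)$ closed. All of this is essentially formal once the relations of Theorem~\ref{THM:d(hol)=i(hol)=i(hol)} are in hand, so no genuinely hard estimate or new construction is required.
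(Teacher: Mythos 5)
Your proposal is correct and follows essentially the same route as the paper: everything reduces to the relations $d(hol_{2k,2\ell}) = -\iota_{\vf}(hol_{2k+2,2\ell}) = -\iota_{\wf}(hol_{2k,2\ell+2})$ of Theorem~\ref{THM:d(hol)=i(hol)=i(hol)} together with $d^2=\iota_{\vf}^2=\iota_{\wf}^2=0$ and $a+b=1$, the only difference being that the paper checks the $a^kb^\ell$-components directly while you first package the relations as the global identities $\iota_{\vf}(Ch)=-a\,d(Ch)$ and $\iota_{\wf}(Ch)=-b\,d(Ch)$. The one wrinkle is your intermediate line $\iota_{\vf}(d\,Ch)=d(\iota_{\vf}Ch)$, which is not an operator identity, but your subsequent term-by-term verification that $[d,\iota_{\vf}](hol_{2k,2\ell})=0$ (via $\iota_{\vf}d\,hol_{2k,2\ell}=-\iota_{\vf}^2hol_{2k+2,2\ell}=0$ and $d\iota_{\vf}hol_{2k,2\ell}=-d^2hol_{2k-2,2\ell}=0$, with $\iota_{\vf}hol_{0,2\ell}=0$ in the boundary case) repairs it.
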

\begin{proof}
The first two statements follow from $\mathcal L=d\iota+\iota d$ and theorem \ref{THM:d(hol)=i(hol)=i(hol)}. For the horizontal statement, theorem \ref{THM:d(hol)=i(hol)=i(hol)} gives that 
\[
-b\cdot \iota_\vf (a^{k+1} b^\ell hol_{2k+2,2\ell})+a\cdot \iota_{\wf}(a^k b^{\ell+1} hol_{2k,2\ell+2})=0.
\]
Finally, to evaluate $D_\T(Ch(\mathcal G,a,b))$, we calculate the $a^{k} b^{\ell}$ term of $D_\T(Ch(\mathcal G,a,b))$ as,
\begin{multline*}
 d\left(a^{k} b^{\ell} hol_{2k,2\ell}\right)+ \iota_\vf\left(a^{(k+1)}b^{\ell}hol_{2(k+1),2\ell}\right)  +\iota_\wf\left(a^{k}b^{(\ell+1)}hol_{2k,2(\ell+1)}\right) \\
 \stackrel {\eqref{d-it-is}} {=}(1-a-b)\cdot a^{k} b^{\ell} \cdot d(hol_{2k,2\ell})=0.
\end{multline*}
This completes the proof of the corollary.
\end{proof}

\begin{rmk}\label{REM:no-u's}
As in the previous sections, we could as well define an equivariant Chern character involving formal variables $u$ and $v$ of degree $2$, by setting
\[
 Ch^{(u,v)}(\mathcal G,a,b):=\sum_{k\geq 0, \ell\geq 0}a^{k} u^{-k} \cdot b^{\ell} v^{-\ell} \cdot hol_{2k,2\ell}.
 \]
By the same reasons given in corollary \ref{gerbechern}, we can check that $ Ch^{(u,v)}(\mathcal G,a,b)$ is a closed element of the complex $\Omega(M^\T)[u,v;u^{-1},v^{-1}]]^{inv(\vf), inv(\wf), hor(-b \cdot u \cdot \vf+a\cdot v\cdot \wf)}$ with differential $d+u\cdot \iota_\vf+v\cdot \iota_\wf$. However, the following proposition \ref{THM:Ch(G,1,0)-to-Ch(G,a,b)}, which relates the various equivariant Chern characters for different $a$ and $b$, will not hold for $Ch^{(u,v)}(\mathcal G,a,b)$. There are also similar consequences when relating the equivariant Chern characters for the gerbe and its induced line bundle on $LM$ which we will study in the next section, \emph{c.f.} remark \ref{REM:no-u's-compat} below.
\end{rmk}

We now show how the equivariant Chern characters $Ch(\mathcal G,a,b)$ for different $a, b\in \R$ with $a+b=1$ relate to each other. Denote by $\phi_{a,b}$ the matrix 
\[
\phi_{a,b} =\begin{bmatrix} a & b \\ -1 & 1  \end{bmatrix}
\in SL(2,\R),
\]
with inverse $\phi^{-1}_{a,b} =\begin{bmatrix} 1 & -b \\ 1 & a  \end{bmatrix}$. 
There is an induced map $\Phi_{a,b}:M^\T\to M^\T$ by precomposition. Note, that under this map, we have the pushforwards, $(\Phi^{-1}_{a,b})_*(\vf)=\vf+\wf$, and $(\Phi^{-1}_{a,b})_*(\wf)=-b\vf+a\wf$. Using the standard fact $(\Phi_{a,b})^*\circ \iota_{\mathbf v}=\iota_{(\Phi^{-1}_{a,b})_*(\mathbf v)}\circ (\Phi_{a,b})^*$ for any vector field $\mathbf v$, we see that there is an induced map $\Phi_{a,b}^\T:\Omega(M^\T)^{(1,0)}\to \Omega(M^\T)^{(a,b)}$. We claim that the map $\Phi_{a,b}^\T$ relates the equivariant Chern characters $Ch(\mathcal G,1,0)$ and $Ch(\mathcal G,a,b)$.

\begin{prop}\label{THM:Ch(G,1,0)-to-Ch(G,a,b)}
For $a, b \in \R$ with $a + b = 1$, let $\phi_{a,b}=\begin{bmatrix} a & b \\ -1 & 1 \end{bmatrix}  \in SL(2,\R)$, and denote by $\Phi_{a,b}$ the induced map as above. Then,
\[
(\Phi_{a,b})^* (Ch(\mathcal G, 1 , 0)) = Ch(\mathcal G, a, b).
\]
\end{prop}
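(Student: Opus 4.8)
The plan is to use the closed-form expression \eqref{condensedhol2k2l} for $hol_{2k,2\ell}$ obtained in proposition \ref{PROP:hol_(2k,2l)-glue}, which expresses everything in terms of the global 1-form $It(H)$, the two basic forms $\int_\T(\iota_\wf H)dtdu$ and $\int_\T(\iota_\vf H)dtdu$, and the holonomy function $hol$. Since $Ch(\mathcal G,a,b)=\sum_{k,\ell\geq 0}a^k b^\ell hol_{2k,2\ell}$ and, by \eqref{condensedhol2k2l},
\[
hol_{2k,2\ell}=\frac{(-1)^\ell}{k!\,\ell!}\,i^{k+\ell}\left(\int_\T(\iota_\wf H)dtdu\right)^{\wedge k}\left(\int_\T(\iota_\vf H)dtdu\right)^{\wedge \ell}\cdot hol,
\]
the sum collapses to an exponential: writing $\alpha=i\int_\T(\iota_\wf H)dtdu$ and $\beta=-i\int_\T(\iota_\vf H)dtdu$, one gets $Ch(\mathcal G,a,b)=\exp(a\alpha+b\beta)\cdot hol$, where $\exp$ is the wedge-exponential (the higher powers are nilpotent on any finite-dimensional-domain computation, or one works formally). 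So the whole statement reduces to comparing $\Phi_{a,b}^*\big(\exp(\alpha)\cdot hol\big)$ with $\exp(a\alpha+b\beta)\cdot hol$.

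The first step is to understand $\Phi_{a,b}^*$ on each of the three building blocks. Since $\Phi_{a,b}$ is precomposition with the linear torus map induced by $\phi_{a,b}$, and $hol$ is the gerbe 2-holonomy over the torus $\T$ (a reparametrization-invariant quantity, being an integral of $B$'s, $A$'s and $g$'s over a cellular subdivision of $\T$ — see proposition \ref{PROP:hol-for-torus} and the independence-of-subdivision statement there), one expects $\Phi_{a,b}^* hol = hol$, because $\phi_{a,b}\in SL(2,\Z)$... actually $SL(2,\R)$, but the key point is $\det=+1$, so it is an orientation-preserving diffeomorphism of $\T=\R^2/\Z^2$ when the entries are integers, and more generally the pullback of the holonomy function along any area-preserving linear automorphism of the torus coincides with $hol$ by the invariance already proved. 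The second, and genuinely computational, step is to compute $\Phi_{a,b}^*\alpha$. Here I would use the identity $\Phi_{a,b}^*\circ\iota_{\mathbf v}=\iota_{(\Phi_{a,b}^{-1})_*\mathbf v}\circ\Phi_{a,b}^*$ together with $(\Phi_{a,b}^{-1})_*(\vf)=\vf+\wf$ and $(\Phi_{a,b}^{-1})_*(\wf)=-b\vf+a\wf$ (these push-forward formulas are stated in the paragraph preceding the proposition), and the fact that $\Phi_{a,b}^*$ commutes with integration over $\T$ (again because $\phi_{a,b}$ has determinant $1$, so it preserves the fiber $\T$ with its orientation). One then writes $\alpha=i\iota_\wf\int_\T(\iota_\vf H)dtdu=i\cdot It(H)$ and similarly $\beta=-i\iota_\vf\int_\T(\iota_\wf H)dtdu=i\cdot It(H)$ — note from the proof of theorem \ref{THM:d(hol)=i(hol)=i(hol)} that in fact $\alpha=\beta=i\cdot It(H)$ as forms on $M^\T$, since $It(H)=\int_\T(\iota_\wf\iota_\vf H)dtdu=\iota_\wf\int_\T(\iota_\vf H)dtdu=-\iota_\vf\int_\T(\iota_\wf H)dtdu$. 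This is the simplification that makes the whole thing work: once $\alpha=\beta=i\cdot It(H)$, we have $Ch(\mathcal G,a,b)=\exp\big((a+b)\,i\,It(H)\big)\cdot hol=\exp(i\,It(H))\cdot hol=Ch(\mathcal G,1,0)$ as \emph{forms}, independent of $a,b$, so the statement $\Phi_{a,b}^*(Ch(\mathcal G,1,0))=Ch(\mathcal G,a,b)$ becomes simply $\Phi_{a,b}^*(Ch(\mathcal G,1,0))=Ch(\mathcal G,1,0)$, i.e. $\Phi_{a,b}$-invariance of $\exp(i\,It(H))\cdot hol$.

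So the final step is to prove that $\exp(i\,It(H))\cdot hol$ is invariant under $\Phi_{a,b}$, which decomposes into (i) $\Phi_{a,b}^* hol=hol$ and (ii) $\Phi_{a,b}^*(It(H))=It(H)$. Both follow from the area-preserving nature of $\phi_{a,b}$: for (ii), $It(H)=\int_\T \mathrm{ev}^*H$ along the fiber $\T$, and $\Phi_{a,b}$ acts on the fiber by the automorphism $\phi_{a,b}$ which has $\det=1$, hence preserves the integration-along-the-fiber up to sign $+1$; for (i) one invokes the already-established independence of $hol$ on the choice of cellular subdivision of $\T$, applied to the subdivision obtained by pulling back a grid along $\phi_{a,b}$. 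The main obstacle, I expect, is being careful about orientations and about whether $\phi_{a,b}$, with genuinely real (non-integer) entries, actually induces a well-defined smooth self-map of $\T=\R^2/\Z^2$ — it does not in general! So either the statement is implicitly about the universal cover / about $Ch(\mathcal G,a,b)$ pulled back to maps of $\R^2$, or $\Phi_{a,b}$ must be interpreted more carefully (perhaps via its action on the mapping space through the two circle directions, treating $a,b$ as real scaling parameters that reparametrize but still land in $M^\T$ because holonomy only sees the conformal/area structure). I would resolve this by checking that all three building blocks $hol$, $\alpha$, $\beta$ depend on $\gamma\in M^\T$ only through data that is invariant under the relevant transformations, so that the formula $\Phi_{a,b}^*(Ch(\mathcal G,1,0))=Ch(\mathcal G,a,b)$ holds at the level of the explicit expressions regardless of the subtlety about $\phi_{a,b}$ being a literal diffeomorphism of $\T$.
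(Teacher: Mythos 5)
Your opening move --- reducing everything via the closed formula \eqref{condensedhol2k2l} to a wedge-exponential, $Ch(\mathcal G,a,b)=\exp(a\alpha+b\beta)\cdot hol$ with $\alpha=i\int_\T(\iota_\wf H)dtdu$ and $\beta=-i\int_\T(\iota_\vf H)dtdu$, and then computing $(\Phi_{a,b})^*$ on the building blocks using $(\Phi^{-1}_{a,b})_*(\wf)=-b\vf+a\wf$ --- is exactly how the paper's proof begins. But the step you call ``the simplification that makes the whole thing work'' is a degree error that breaks the argument. The forms $\alpha$ and $\beta$ are $2$-forms on $M^\T$ (they must be, since $hol_{2k,0}=\frac{1}{k!}\alpha^{\wedge k}\cdot hol$ has degree $2k$), whereas $It(H)=\int_\T(\iota_\wf\iota_\vf H)dtdu$ is a $1$-form. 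The identities quoted from the proof of theorem \ref{THM:d(hol)=i(hol)=i(hol)} say that $It(H)$ equals the \emph{contractions} $\iota_\wf\big(\int_\T(\iota_\vf H)dtdu\big)=-\iota_\vf\big(\int_\T(\iota_\wf H)dtdu\big)$, not the $2$-forms themselves; the two $2$-forms are genuinely distinct. If $\alpha=\beta$ held, $Ch(\mathcal G,a,b)$ would be independent of $(a,b)$, which is false --- for instance the horizontality condition $\iota_{-b\vf+a\wf}\,Ch(\mathcal G,a,b)=0$ of corollary \ref{gerbechern} singles out a genuinely $(a,b)$-dependent subcomplex. Consequently the last third of your argument, which reduces the proposition to $\Phi_{a,b}$-invariance of a single fixed form, is proving the wrong statement.

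The repair is to keep the two $2$-forms separate and actually pull back the one occurring in $hol_{2k,0}$: using $(\Phi_{a,b})^*\circ\iota_{\wf}=\iota_{(\Phi^{-1}_{a,b})_*\wf}\circ(\Phi_{a,b})^*$ inside the fiber integral together with $(\Phi^{-1}_{a,b})_*(\wf)=-b\vf+a\wf$, one gets
\[
(\Phi_{a,b})^*\Big(\int_\T\iota_\wf H\,dtdu\Big) = a\int_\T\iota_\wf H\,dtdu - b\int_\T\iota_\vf H\,dtdu,
\]
i.e.\ $(\Phi_{a,b})^*\alpha = a\alpha+b\beta$. Combined with $(\Phi_{a,b})^*hol=hol$ (which you do justify, via reparametrization invariance of the $2$-holonomy, and which the paper also uses), a binomial expansion of $\frac{1}{k!}(a\alpha+b\beta)^{\wedge k}$ gives $(\Phi_{a,b})^*(hol_{2k,0})=\sum_{r+s=k}a^rb^s\,hol_{2r,2s}$, and summing over $k$ yields the proposition; this is precisely the paper's proof. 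Your worry about whether $\phi_{a,b}$ with non-integer entries literally descends to a self-map of $\R^2/\Z^2$ is a fair one that the paper does not address either, but it is orthogonal to the gap above.
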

\begin{proof}  
Using the fact that $(\Phi^{-1}_{a,b})_*(\wf)=-b\vf+a\wf$, and proposition \ref{PROP:hol_(2k,2l)-glue}, we calculate
\begin{multline*}
(\Phi_{a,b})^*(hol_{2k,0})=(\Phi_{a,b})^*\left(\frac{i^k}{k!}\left(\int_\T \iota_\wf H dtdu\right)^{\wedge k} \cdot hol_{0,0}\right)
\\
=\frac{i^k}{k!}\left(\int_\T \iota_{(\Phi^{-1}_{a,b})_* (\wf)} H dtdu\right)^{\wedge k} \cdot (\Phi_{a,b})^*(hol_{0,0})=\frac{i^k}{k!}\left(\int_\T \iota_{-b\vf+a\wf} H dtdu\right)^{\wedge k} \cdot hol_{0,0}
\\
=\frac{i^k}{k!}\left(-b\int_\T \iota_{\vf} H dtdu+a \int_\T \iota_{\wf} H dtdu\right)^{\wedge k} \cdot hol_{0,0}
\\
=\sum_{r+s=k}  \frac{(-1)^s}{r! s!} i^{r+s} \cdot b^s \left(\int_\T \iota_{\vf} H dtdu\right)^{\wedge s}\cdot a^r \left(\int_\T \iota_{\wf} H dtdu\right)^{\wedge r}\cdot hol_{0,0}
\\
=\sum_{r+s=k} a^r b^s hol_{2r,2s}.
\end{multline*}
Summing this equality over all $k\geq 0$ gives the claimed result.
\end{proof}

\subsection{Compatibility check} \label{subsec:compat}
In this subsection we first recall the well known fact that a gerbe on $M$ with connection induces a line bundle on $LM$ with connection. We then show, for a given gerbe with connection, how the equivariantly closed extensions of 2-holonomy to $M^\T$,
are related to the equivariant Chern character of $L(LM)$, given by applying the construction of the previous section to the associated line bundle with connection on $LM$. 

Let $C = \{U_1, \ldots , U_n\}$ be a covering of $M$, and let $(g_{ijk}, A_{ij}, B_i)$ be the datum of an abelian gerbe with connection $\mathcal G$ on $M$. Recall from definition \ref{DEF:gerbe}, that this means $g_{i,j,k}\in \Omega^0(U_{i,j,k}, U(1))$, $A_{i,j}\in \Omega^1(U_{i,j},\R)$, and $B_i\in \Omega^2(U_i,\R)$ are symmetric in their indices
\[ 
A_{i,j}=-A_{j,i}, \text{ and }  g_{i,j,k}=g^{-1}_{j,i,k}=g_{j,k,i}
\]
and 
\begin{eqnarray*}
&g_{j,k,l}g^{-1}_{i,k,l}g_{i,j,l}g^{-1}_{i,j,k}=1 &\text{on } U_{i,j,k,l}\\
&A_{j,k}-A_{i,k}+A_{i,j}=i \cdot g^{-1}_{i,j,k}dg_{i,j,k}& \text{on } U_{i,j,k}\\
&B_j-B_i=d A_{i,j} &\text{on } U_{i,j}
\end{eqnarray*}
As before, we denote the $3$-curvature by $H$, where $H|_{U_i}=dB_i$.
We now construct a line bundle with connection on $LM$. As before, we denote by $I^s_q$ the interval $I^s_q = \left[ \frac{s-1}  q , \frac s q \right]$ for $1 \leq s \leq q$.
\begin{defn}\label{DEF:hat-A-hat-g}
First, we define a covering $\mathcal C$ of $LM$ from the covering $C$ of $M$. Let
\begin{multline*}
\mathcal C =  \{ U(i_1, \ldots, i_q) : q\geq 1, \text{ and } U_{i_j} \in C \text{ for all } j=1,\dots,q \}, \\
 \text{where } \quad U(i_1, \ldots, i_q) = \{ \gamma \in LM : \gamma|_{I^s_q} \subset U_{i_s} \text{ for } 1 \leq s \leq q \}.
\end{multline*}
By compactness and the Lebesgue lemma, $\mathcal C$ is a covering of $LM$.

First, we define a local 1-form $\hat A_{\hat i}$ on $LM$ for a multi-index $\hat i=(i_1,\dots,i_q)$. On the open set  $U_{\hat i}= U(i_1, \ldots, i_q)$, we set $ev^s_q:LM\to M, \gamma\mapsto \gamma\left(\frac {s-1} q\right)$, and define
\[
\hat A_{\hat i} = \sum_{s=1}^q -i\cdot  \left( {\int_{I^s_q} \iota_\wf B_{i_s} du } +(ev^s_q)^* A_{i_{s-1},i_s}\right).
\]

Let $U_{\hat i} = U(i_1, \ldots, i_{q}) \in \mathcal C$ and $U_{\hat j}= U(j_1, \ldots, j_{q'}) \in \mathcal C$. Without loss of generality, we may assume that $q=q'$, (since we may take $q''=\text{lcm}(q,q')$, and write $U(i_1, \ldots, i_{q})=U(k_1, \ldots,k_{q''})$ and $U(j_1, \ldots, j_{q'})=U(\ell_1,\dots,\ell_{q''})$, where the $k_r$ are given by the $i_s$ repeated $q''/q$ times, and the $\ell_r$ by the $j_s$ repeated $q''/q'$ times). For $\gamma \in U_{\hat i} \cap U_{\hat j}$ and $1 \leq s \leq q$, we have that $\gamma|_{I^s_q} \subset U_{i_s} \cap U_{j_s}$.
Define $\hat g_{\hat i,\hat j} : U_{\hat i} \cap U_{\hat j} \to U(1)$  by
\[
\hat g_{\hat i,\hat j}(\gamma) = \prod_{s=1}^q \left( e^{-i\cdot \int_{I^s_q} \iota_\wf A_{j_s,i_s} du} \cdot g^{-1}_{i_{s},j_{s},j_{s-1}} \left( \gamma \left( \frac{s-1}{q} \right) \right) \cdot g_{i_{s},i_{s-1},j_{s-1}} \left( \gamma \left( \frac{s-1}{q} \right) \right) \right)
\]
where we have set $i_0=i_p$ and $j_0=j_p$ to unify notation.

\end{defn}
\begin{lem}\label{LEM:gerbe-to-bundle}
Using the notation from the last definition, the $\hat g_{\hat i,\hat j}$ and $\hat A_{\hat i}$ define a line bundle with connection on $LM$. That is they satisfy the relations
\begin{eqnarray*}
\hat g_{\hat i,\hat j}&=&\hat g_{\hat j,\hat i}^{-1} \quad  \text{on } U_{\hat i, \hat j},\\
\hat g_{\hat i,\hat j} \hat g_{\hat j,\hat k}&=& \hat g_{\hat i,\hat k} \quad \text{on } U_{\hat i, \hat j,\hat k},\\
\hat A_{\hat j}-\hat A_{\hat i}&=&\hat g_{\hat i,\hat j}^{-1} \cdot d(\hat g_{\hat i,\hat j})\quad \text{on } U_{\hat i, \hat j},\\
d \hat A_{\hat i}&=&d \hat A_{\hat j}\quad \text{on } U_{\hat i, \hat j}.
\end{eqnarray*}
The curvature $\hat R$, which is locally given by $\hat R|_{U_{\hat i}}=d \hat A_{\hat i}$, can be written as $$\hat R=i \cdot\int_I \iota_\wf H du. $$
\end{lem}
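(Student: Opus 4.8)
\textbf{Proof plan for Lemma \ref{LEM:gerbe-to-bundle}.}

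The plan is to verify the four cocycle/connection relations and the curvature formula by direct local computation, using the defining relations of the gerbe from Definition \ref{DEF:gerbe}. First I would handle the reduction to equal length: since the passage from $q,q'$ to $q''=\mathrm{lcm}(q,q')$ only repeats indices, and $\int_{I^s_q}\iota_\wf\om\,du = \sum \int_{I^{s'}_{q''}}\iota_\wf\om\,du$ over the subdivided intervals with $ev^{s}_q$ agreeing with $ev^{s'}_{q''}$ at the relevant subdivision points, all formulas are invariant under this refinement, so it suffices to treat a common $q$. This is routine bookkeeping and I would state it once and suppress it thereafter.

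The antisymmetry $\hat g_{\hat i,\hat j}=\hat g_{\hat j,\hat i}^{-1}$ follows immediately from $A_{j_s,i_s}=-A_{i_s,j_s}$ and the index symmetry $g_{i,j,k}=g^{-1}_{j,i,k}$ applied factor by factor. For the cocycle relation $\hat g_{\hat i,\hat j}\hat g_{\hat j,\hat k}=\hat g_{\hat i,\hat k}$, the exponential factors add up since $A_{j_s,i_s}+A_{k_s,j_s}=A_{k_s,i_s}$ (using $A_{j,k}-A_{i,k}+A_{i,j}=0$ in $\R$-valued forms — actually using the combination $A_{k_s,j_s}+A_{j_s,i_s}+A_{i_s,k_s}=i\cdot g^{-1}dg$ modulo the $g$-terms), so the subtlety is that the $A$-discrepancy cocycle term is not zero but equals $i\,g^{-1}_{i_s,j_s,k_s}dg_{i_s,j_s,k_s}$, which however only contributes at interior points of $I^s_q$ and hence to $\hat A$, not to $\hat g$. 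Here I would instead organize the $g$-factors: the product of the four triple-index $g$'s from $\hat g_{\hat i,\hat j}\hat g_{\hat j,\hat k}$ compared with those from $\hat g_{\hat i,\hat k}$ differs exactly by the combination appearing in the tetrahedron relation $g_{j,k,l}g^{-1}_{i,k,l}g_{i,j,l}g^{-1}_{i,j,k}=1$ evaluated at $\gamma((s-1)/q)$, which I would match carefully index by index; similarly the exponential mismatch uses the middle relation of Definition \ref{DEF:gerbe}, $A_{j,k}-A_{i,k}+A_{i,j}=i\,g^{-1}_{i,j,k}dg_{i,j,k}$, with the $g$-terms absorbed. The connection compatibility $\hat A_{\hat j}-\hat A_{\hat i}=\hat g_{\hat i,\hat j}^{-1}d\hat g_{\hat i,\hat j}$ is the main computation: writing $\hat g_{\hat i,\hat j}^{-1}d\hat g_{\hat i,\hat j}=d\log\hat g_{\hat i,\hat j}$, the exponential factor contributes $-i\int_{I^s_q}\iota_\wf dA_{j_s,i_s}\,du = -i\int_{I^s_q}\iota_\wf(B_{i_s}-B_{j_s})\,du$ (using $B_j-B_i=dA_{i,j}$ and commuting $d$ past $\iota_\wf$ via the Cartan identity, the boundary terms telescoping with the $ev^s_q$-terms), and the $g$-factors contribute $d\log(g^{-1}_{i_s,j_s,j_{s-1}}g_{i_s,i_{s-1},j_{s-1}})$ pulled back along $ev^s_q$, which by the middle gerbe relation equals $-i\,ev^{s*}_q(A_{j_s,j_{s-1}}-A_{i_s,j_{s-1}}) - i\,ev^{s*}_q(A_{i_{s-1},j_{s-1}}-A_{i_{s-1},i_s})$ or similar; summing over $s$ and using the telescoping $A_{i_{s-1},i_s}$-sums I would match this with $\hat A_{\hat j}-\hat A_{\hat i}$. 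The relation $d\hat A_{\hat i}=d\hat A_{\hat j}$ is then a formal consequence, $d(\hat g^{-1}d\hat g)=0$. Finally, the curvature formula $\hat R = i\int_I\iota_\wf H\,du$ comes from $d\hat A_{\hat i} = \sum_s -i\,d\int_{I^s_q}\iota_\wf B_{i_s}\,du$ (the $ev^*A$-terms being closed up to the telescoping already accounted for, or rather: $d$ of $(ev^s_q)^*A_{i_{s-1},i_s}$ contributes $(ev^s_q)^*(B_{i_s}-B_{i_{s-1}})$ which telescopes away around the loop), and $d\int_{I^s_q}\iota_\wf B_{i_s}\,du = \int_{I^s_q}\iota_\wf dB_{i_s}\,du + \text{(boundary)} = \int_{I^s_q}\iota_\wf H\,du + \text{(boundary)}$, the boundary pieces being the $ev^{s*}_q B$-terms that cancel the ones just mentioned, and $\sum_s \int_{I^s_q} = \int_I$.

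The main obstacle will be the third relation, $\hat A_{\hat j}-\hat A_{\hat i}=\hat g_{\hat i,\hat j}^{-1}d\hat g_{\hat i,\hat j}$: keeping straight the orientation and boundary conventions from Convention \ref{CONVENT1}, the sign and index placement in the cyclic identification $i_0=i_p$, and in particular showing that the boundary evaluations produced by $d\int_{I^s_q}\iota_\wf B\,du$ (via the integration-along-the-fiber formula with $\partial I^s_q$) cancel precisely against the $(ev^s_q)^*A$ terms after telescoping around the circle. I expect this to be a careful but mechanical unwinding; the conceptual content is entirely contained in the three defining gerbe relations, applied pointwise at the subdivision points $\gamma((s-1)/q)$ and fiberwise over each subinterval $I^s_q$.
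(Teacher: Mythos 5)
Your overall strategy is the same as the paper's: verify each relation by direct local computation from the three gerbe relations of Definition \ref{DEF:gerbe}, use the fundamental theorem of calculus (integration along the fiber over $I^s_q$) to convert interior integrals into boundary evaluations at the subdivision points, and telescope around the circle. The reduction to a common $q$, the antisymmetry, the third relation, the derivation of $d\hat A_{\hat i}=d\hat A_{\hat j}$, and the curvature computation all match the paper's proof in substance.

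There is, however, one step in your cocycle verification that would fail as written. You claim that the correction term $i\,g^{-1}_{i_s,j_s,k_s}dg_{i_s,j_s,k_s}$ appearing in $A_{j_s,i_s}+A_{k_s,j_s}-A_{k_s,i_s}$ ``only contributes at interior points of $I^s_q$ and hence to $\hat A$, not to $\hat g$.'' This is backwards: $\int_{I^s_q}\iota_{\wf}\,d\log g_{i_s,j_s,k_s}\,du$ is precisely a difference of boundary values of $\log g_{i_s,j_s,k_s}$ at $\gamma((s-1)/q)$ and $\gamma(s/q)$, so the exponential mismatch produces two extra $g$-factors sitting at the subdivision points. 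These are indispensable: at each vertex $\gamma((s-1)/q)$ one must combine \emph{six} triple-index $g$'s --- two from $\hat g_{\hat i,\hat j}$, two from $\hat g_{\hat j,\hat k}$, and the boundary factors $g^{-1}_{i_{s-1},j_{s-1},k_{s-1}}\cdot g_{i_s,j_s,k_s}$ contributed by the exponentials of the two adjacent intervals --- into the two $g$'s of $\hat g_{\hat i,\hat k}$. Matching only the four vertex factors against a single instance of the tetrahedron relation at $\gamma((s-1)/q)$, as your ``instead'' reorganization proposes, leaves the count off by two and the identity false. Your closing phrase ``with the $g$-terms absorbed'' points at the right mechanism, but the plan as stated drops exactly the terms that make the absorption work. (A smaller quibble on the third relation: $d\log$ of each triple-index $g$ yields three $A$-terms via $g^{-1}dg=-i(A_{j,k}-A_{i,k}+A_{i,j})$, not two, so your displayed expressions are each short one term, though the ``or similar'' hedge suggests you would catch this in the full write-up.)
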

\begin{proof}
We will keep using the notation from definition \ref{DEF:hat-A-hat-g}. Since $A_{i_s,j_s}=-A_{j_s,i_s}$ and $g^{-1}_{i_{s},j_{s},j_{s-1}}  \cdot g_{i_{s},i_{s-1},j_{s-1}}=g_{j_{s},i_{s},i_{s-1}}  \cdot g^{-1}_{j_{s},j_{s-1},i_{s-1}}$, we have $\hat g_{\hat i,\hat j} = \hat g_{\hat j,\hat i}^{-1}$.  
If we have another open set $U_{\hat k}=U(k_1, \ldots, k_q)\in \mathcal C$ with $\gamma\in U_{\hat i}\cap U_{\hat j}\cap U_{\hat k}$, then $\hat g_{\hat i,\hat j} \hat g_{\hat j,\hat k}= \hat g_{\hat i,\hat k} $ since for all $1 \leq s \leq q$
\begin{multline*}
e^{-i \int_{I^s_q} \iota_\wf A_{j_s,i_s} dt} \cdot e^{-i \int_{I^s_q} \iota_\wf A_{k_s,j_s}du}
 =
e^{-i \int_{I^s_q} \iota_\wf (A_{j_s,i_s} + A_{k_s,j_s})du} \\
= 
e^{ \int_{I^s_q} \iota_\wf (-i\cdot A_{k_s,i_s} - d \log g_{i_s, j_s,k_s}) du}
 \\
 = g_{i_s, j_s,k_s} \left( \gamma \left( \frac{s-1}{q} \right) \right) e^{-i \int_{I^s_q} \iota_\wf A_{k_s,i_s} du } g^{-1}_{i_s, j_s,k_s}  \left(\gamma \left(\frac s q \right)  \right),
\end{multline*}
and, at $\gamma((s-1)/q)$ we obtain,
\begin{multline*}
(g^{-1}_{i_{s},j_{s},j_{s-1}}  \cdot g_{i_{s},i_{s-1},j_{s-1}}) \cdot (g^{-1}_{j_{s},k_{s},k_{s-1}}  \cdot g_{j_{s},j_{s-1},k_{s-1}}) \cdot g^{-1}_{i_{s-1},j_{s-1},k_{s-1}} \cdot g_{i_s,j_s,k_s}\\
=g^{-1}_{i_{s},k_{s},k_{s-1}}  \cdot g_{i_{s},i_{s-1},k_{s-1}}
\end{multline*}

Now, on $U_{\hat i} \cap U_{\hat j}$ as above, we have $\hat A_j - \hat A_i = d \log \hat g_{\hat i,\hat j}$, since
\begin{align*}
& \sum_{s=1}^q -i {\int_{I^s_q} \iota_\wf ( B_{j_s} - B_{i_s} ) du}
= \sum_{s=1}^q -i  {\int_{I^s_q} \iota_\wf d A_{i_s , j_s}du},
\\
& -i \sum_{s=1}^q (ev^s_q)^* A_{j_{s-1},j_s} +i \sum_{s=1}^q (ev^s_q)^* A_{i_{s-1},i_s}  =  \sum_{s=1}^q -i \left((ev^s_q)^* A_{j_{s-1},j_s} - (ev^s_q)^* A_{i_{s-1},i_s}\right),
\\
& d \log \left(\prod_{s=1}^q e^{-i \int_{I^s_q} \iota_\wf A_{j_s , i_s}du}\right)
= d \sum_{s=1}^q {-i \int_{I^s_q}  \iota_\wf A_{j_s , i_s} du},
\\
& d\log\left(\prod_{s=1}^q (ev^s_q)^* g^{-1}_{i_{s},j_{s},j_{s-1}} \right)= \sum_{s=1}^q -i \left( (ev^s_q)^* A_{j_{s-1},j_s} - (ev^s_q)^* A_{j_{s-1},i_s} + (ev^s_q)^* A_{j_{s},i_{s}}\right), 
\\
& d\log \left(\prod_{s=1}^q (ev^s_q)^* g_{i_{s},i_{s-1},j_{s-1}} \right )=\sum_{s=1}^q -i \left(- (ev^s_q)^* A_{j_{s-1},i_{s-1}}+(ev^s_q)^* A_{j_{s-1},i_{s}}-(ev^s_q)^* A_{i_{s-1},i_{s}}\right).
\end{align*}
We see that the claim follows, once we remark that, the integration along the fiber formula implies that
\[
d \Big(\int_{I^s_q}  \iota_\wf A_{j_s , i_s} du\Big)= -\int_{I^s_q}  \iota_\wf dA_{j_s , i_s} du+(ev^{s+1}_q)^* A_{j_s,i_s} - (ev^{s}_q)^* A_{j_s,i_s}.
\]

Finally, on $U_{\hat i}$, we have,
\begin{multline*}
\hat R |_{U_{\hat i}}= d \hat A_i = \sum_{s=1}^q -i\cdot \left( d{\int_{I^s_q} \iota_\wf B_{i_s} du} +(ev^s_q)^* d A_{i_{s-1},i_{s}}\right) 
\\ 
= \sum_{s=1}^q -i\cdot \left( - {\int_{I^s_q} \iota_\wf dB_{i_s} du} + (ev^{s+1}_q)^*B_{i_s}  -(ev^s_q)^*B_{i_s}  + (ev^s_q)^* \left(B_{i_{s}}-B_{i_{s-1}}\right)\right)\\
= i\cdot \int_I \iota_\wf H du,
\end{multline*}
so that the $d \hat A_{\hat i}$ also coincide on intersecting open sets.
\end{proof}
The following theorem gives a relation between the (higher) holonomies of the gerbe and the line bundle on $LM$ defined above.
\begin{thm} \label{compat} 
Let $\mathcal G = (g_{ijk}, A_{ij}, B_i, H)$ be an abelian gerbe with connection on $M$, and let $E=(\hat g_{ij}, \hat A, \hat R)$ be the induced line bundle on $LM$ as in defintion \ref{DEF:hat-A-hat-g} and lemma \ref{LEM:gerbe-to-bundle} above. Denote by $\Gamma: L(LM) \to M^\T$ the adjoint mapping $\Gamma(\gamma)(t,u) = \gamma(t)(u)$. Then, for all $k\geq 0$, we have that
\[
hol_{2k}^E = \Gamma^* ( hol_{2k,0}^{\mathcal G}),
\]
where $hol_{2k}^E$ is the bundle higher holonomy from definition \ref{DEF:E(p,U)}, and $hol_{2k,0}^{\mathcal G}$ is the gerbe higher holonomy from definition \ref{DEF:h(p,q,u)(2k,2l)}.
\end{thm}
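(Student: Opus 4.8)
\textbf{Proof plan for Theorem \ref{compat} ($hol_{2k}^E = \Gamma^*(hol_{2k,0}^{\mathcal G})$).}

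The plan is to reduce everything to the case $k=0$ and then use the product formulas already established. First I would treat the base case $hol_0^E = \Gamma^*(hol_{0,0}^{\mathcal G})$. Using Proposition \ref{prop:hol_0-locally} applied to the line bundle $E = (\hat g_{\hat i,\hat j}, \hat A_{\hat i})$ on $LM$, the holonomy $hol_0^E$ on a loop $\gamma \in L(LM)$ is computed over the cover $\{\NN(p,\mathcal U)\}$ of $L(LM)$ built from the cover $\mathcal C$ of $LM$; by the explicit local formula (Definition \ref{DEF:h0(p,U)} and remark \ref{abelianhol2k} for line bundles) it is a product of terms $e^{\int_{I^m_p} \iota_{\vf} \hat A_{\hat i_m} dt}$ and transition-function values $\hat g_{\hat i_{m-1}, \hat i_m}$ at the subdivision points. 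On the other hand, $hol_{0,0}^{\mathcal G}$ on the torus $\Gamma(\gamma): \T \to M$ is given by Proposition \ref{PROP:hol-for-torus}, equation \eqref{EQ:hol(q,p,U)}, as $\exp(\sum_f i\int_f B - \sum_{e\subset f} i\rho(e,f)\int_e A) \prod g^{\rho}$, and this is independent of subdivision. The core of the base case is a direct comparison: I would choose, for the torus $\Gamma(\gamma)$, a $p\times q$ grid compatible with the $L(LM)$-chart of $\gamma$ (i.e. the $p$-subdivision of the ``$t$'' circle matching $\NN(p,\mathcal U)$ and the $q$-subdivision of the ``$u$'' circle matching the $\mathcal C$-chart), and then expand $\hat A_{\hat i}$ using its definition $\hat A_{\hat i} = \sum_{s} -i(\int_{I^s_q}\iota_{\wf}B_{i_s}du + (ev^s_q)^* A_{i_{s-1},i_s})$ and $\hat g_{\hat i,\hat j}$ using its definition. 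Pulling back along $\Gamma$ (which sends the $\vf$-direction on $L(LM)$ to the $t$-direction on $\T$ and the loop direction of each $\gamma(t)$ to the $u$-direction), the face integrals $\int_{I^m_p}\int_{I^s_q}\iota_{\vf}\iota_{\wf}B$ assemble into $\sum_f i\int_f \gamma^*B$, the edge integrals from the $A_{i_{s-1},i_s}$ terms and the $A_{j_s,i_s}$ terms in $\hat g$ assemble into the horizontal and vertical edge contributions $\int_e \gamma^*A$, and the $g_{ijk}$ factors in $\hat g_{\hat i,\hat j}$ together with those appearing from comparing consecutive charts assemble into $\prod_{v} g^{\rho(v,e,f)}$. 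This is a bookkeeping argument matching the cells of figure \ref{EQ:torus-v-e-f} against the data in Definition \ref{DEF:hat-A-hat-g}, analogous to the computation in Proposition \ref{PROP:hol-for-torus}.

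For general $k$, I would use the two ``condensed'' formulas. By Proposition \ref{PROP:hol_(2k,2l)-glue}, equation \eqref{condensedhol2k2l} with $\ell=0$,
\[
hol_{2k,0}^{\mathcal G} = \frac{i^k}{k!}\Big(\int_\T (\iota_{\wf} H)\,dt\,du\Big)^{\wedge k} \cdot hol_{0,0}^{\mathcal G}.
\]
On the bundle side, remark \ref{abelianhol2k} (for the line bundle $E$ on $LM$, whose forms are $\C$-valued) gives
\[
hol_{2k}^E = \frac{1}{k!}\Big(\int_I \iota_{\vf}\hat R\,dt\Big)^{\wedge k} \wedge hol_0^E,
\]
using that the curvature $\hat R$ of $E$ is a global $2$-form on $LM$. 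By Lemma \ref{LEM:gerbe-to-bundle}, $\hat R = i\int_I \iota_{\wf}H\,du$, so $\int_I \iota_{\vf}\hat R\,dt = i\int_I\int_I \iota_{\vf}\iota_{\wf}H\,dt\,du = i\int_\T \iota_{\vf}\iota_{\wf}H\,dt\,du$ (an integral over the whole torus $\T = S^1\times S^1$, identifying the $\vf$-circle of $LM$ with the $t$-circle and the loop-circle with the $u$-circle). Thus $hol_{2k}^E = \frac{i^k}{k!}(\int_\T \iota_{\vf}\iota_{\wf}H\,dt\,du)^{\wedge k}\wedge hol_0^E$. It then remains to check that $\Gamma^*$ carries $\int_\T \iota_{\wf}H\,dt\,du$ (the $\iota_{\wf}H$-integral on $M^\T$, which is the first factor in $hol_{2k,0}^{\mathcal G}$) to $\int_\T \iota_{\vf}\iota_{\wf}H\,dt\,du$ on $L(LM)$: this is precisely the statement $\Gamma^*\int_\T \iota_{\wf}H\,dt\,du = \int_I \iota_{\vf}\hat R\,dt$, which follows by the same naturality of the iterated-integral/integration-along-the-fiber construction that appears in equations \eqref{EQ-E(Hdt)}–\eqref{EQ-E(Hdu)}, together with $\Gamma_*(\vf_{L(LM)}) = \vf_{M^\T}$ and the identification of the loop-circle of $LM$ with the second torus factor, which matches $\wf$. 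Combining with the base case $\Gamma^* hol_{0,0}^{\mathcal G} = hol_0^E$ and the fact that $\Gamma^*$ is an algebra map gives $\Gamma^* hol_{2k,0}^{\mathcal G} = hol_{2k}^E$.

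The main obstacle I anticipate is the base case: carefully matching orientations and signs of the cells (faces, horizontal edges, vertical edges, vertices) of the torus grid against the sign conventions built into $\hat A_{\hat i}$ and $\hat g_{\hat i,\hat j}$ in Definition \ref{DEF:hat-A-hat-g}, and verifying that the $\int_I \iota_{\wf}B$ pieces of $\hat A$ genuinely reassemble (after summing over the $q$ subintervals and over all $p$ horizontal blocks) into the face integrals $\int_f \gamma^* B$, with the boundary terms produced by integration along the fiber (as in the proof of \eqref{EQ-d(hol(p,q,U))=i*H*hol(p,q,U)}) canceling against the edge $A$-terms and the $g_{ijk}$ vertex factors. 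This is essentially a repeat of the cell-by-cell cancellation already performed in Lemma \ref{LEM:gerbe-to-bundle} (proving $\hat A_{\hat j}-\hat A_{\hat i} = \hat g^{-1}_{\hat i,\hat j}d\hat g_{\hat i,\hat j}$) and in Proposition \ref{PROP:hol-for-torus}, so I would present it by invoking those computations and only spelling out the translation dictionary $\{L(LM)\text{-charts}\} \leftrightarrow \{p\times q\text{ torus grids}\}$ rather than redoing every cancellation.
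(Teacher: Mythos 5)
Your strategy is the same as the paper's: first establish the base case $hol_0^E=\Gamma^*(hol_{0,0}^{\mathcal G})$ by expanding $\hat A_{\hat i}$ and $\hat g_{\hat i,\hat j}$ over a compatible $p\times q$ grid and matching cell-by-cell against the explicit formula \eqref{EQ:hol-gamma-for-gerbe-expliciti}, then handle general $k$ by comparing the condensed product formula \eqref{condensedhol2k2l} (with $\ell=0$) against the line-bundle formula of remark \ref{abelianhol2k} via $\hat R=i\int_I\iota_\wf H\,du$. The base-case outline is accurate and is exactly the computation the paper performs.

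However, your general-$k$ step contains a consistent error: remark \ref{abelianhol2k} gives $hol_{2k}^E=\frac{1}{k!}\bigl(\int_I\hat R(t)\,dt\bigr)^{\wedge k}\wedge hol_0^E$ with the \emph{uncontracted} curvature $\hat R(t)$ (a $2$-form in the loop-space variables, so that the $k$-th power is a $2k$-form), not $\int_I\iota_\vf\hat R\,dt$. Inserting $\iota_\vf$ drops the degree by one in each factor, so your $hol_{2k}^E$ would be a $k$-form rather than a $2k$-form, and the claimed identity $\Gamma^*\int_\T\iota_\wf H\,dt\,du=\int_I\iota_\vf\hat R\,dt$ equates a $2$-form with a $1$-form. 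The correct matching is $\int_I\hat R(t)\,dt=i\int_{I\times I}\iota_\wf H(t,u)\,dt\,du=i\,\Gamma^*\bigl(\int_\T\iota_\wf H\,dt\,du\bigr)$, where $\iota_\wf$ on the left contracts the loop direction of each $\gamma(t)\in LM$ and on the right the second torus factor; the quantity $\int_\T\iota_\vf\iota_\wf H\,dt\,du$ is the $1$-form $It(H)$ and plays no role here. Once the spurious contractions are removed, your argument coincides with the paper's proof and goes through.
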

\begin{proof}
Let $\gamma : S^1 \to LM$ be a loop in $LM$  such that for each $1 \leq r \leq p$, $\gamma |_{I^r_p} \subset U_{\hat {i}_r} = U\left(i_{(r,1)}, \ldots, i_{(r,q_r)}\right) \subset LM$.  We will calculate the $hol_{2k}$, and since these are well defined global forms, we can assume that $q_1 = \dots = q_p = q$, for otherwise we can repeat open sets as before. So we have  $\Gamma(\gamma) |_{I^r_p \times I^{s}_q} \subset U_{i_{(r,s)}}$ for all $1 \leq r \leq p$ and $1 \leq s \leq q$.

We first calculate $hol_{0}^E$. From remark \ref{abelianhol2k}, we know that $hol_0^E$ applied to $\gamma$ can be written as
\[
 hol_{0}^{E} (\gamma) =  \prod_{r=1}^p \hat g_{\hat i_{r-1},\hat i_r} \left( \gamma \left( \frac{r-1}p  \right) \right) \cdot e^{\int_{I^r_p} \iota_\vf \hat A_{\hat i_r} dt}.
\]
Now, we have
\begin{multline*}
\hat g_{\hat i_{r-1},\hat i_r}\left(\gamma \left(\frac{r-1}{p}\right)\right) =  \prod_{s=1}^q \Bigg( \exp\left({(-i)\cdot \int_{I^s_q} \iota_\wf A_{i_{(r, s)}, i_{(r-1,s)}}\left(\frac{r-1}{p},u\right)du}\right)
\\
 \cdot g^{-1}_{i_{(r-1,s)},i_{(r,s)},i_{(r,s-1)}}  \cdot g_{i_{(r-1,s)},i_{(r-1,s-1)},i_{(r,s-1)}} \left( \gamma \left( \frac{r-1}{p} , \frac{s-1}{q} \right) \right) \Bigg) 
\end{multline*}
and
\[
\hat A_{\hat i_r} = \sum_{s=1}^q -i\cdot \left( \int_{I^s_q} \iota_\wf B_{i_{(r,s)}}du +(ev^s_q)^* A_{i_{(r,s-1)},i_{(r,s)}} \right).
\]
Substituting for $\hat A_{\hat i_r}$ we have
\begin{multline*}
 \exp\left(\int_{I^r_p} \iota_\vf 
\sum_{s=1}^q (-i)\cdot \left( \int_{I^s_q} \iota_\wf B_{i_{(r,s)} }du +  (ev^s_q)^* A_{i_{(r,s-1)},i_{(r,s)}}  \right) dt\right)
 \\ =  \exp\left(
\sum_{s=1}^q (-i)\cdot \left( \int_{I^r_p\times I^s_q} \iota_\vf \iota_\wf B_{i_{(r,s)} }dtdu
+ \int_{I^r_p} \iota_\vf  A_{i_{(r,s-1)},i_{(r,s)}} \left(t,\frac{s-1}{q}\right)dt \right)\right)
\end{multline*}
Now, taking the product over $r=1,\dots,p$, and using the fact that
\begin{multline*}
g^{-1}_{i_{(r-1,s)},i_{(r,s)},i_{(r,s-1)}} \cdot g_{i_{(r-1,s)},i_{(r-1,s-1)},i_{(r,s-1)}} \\ = 
g_{i_{(r,s)},i_{(r-1,s)},i_{(r-1,s-1)}}\cdot g^{-1}_{i_{(r,s)},i_{(r,s-1)},i_{(r-1,s-1)}} ,
\end{multline*}
we get that
\begin{multline*}
hol_{0}^E(\gamma) = 
\prod_{r=1}^p \prod_{s = 1}^q \exp  \left(  i\cdot \int_{I^r_p\times I^s_q} \iota_\wf \iota_\vf B_{i_{(r,s)} }dtdu + i\cdot
\int_{I^r_p \times \{\frac{s-1}{q}\}} \iota_\vf  A_{i_{(r,s)},i_{(r,s-1)}} dt \right. 
\\
\left. + i \cdot 
\int_{\{\frac{r-1}{p}\}\times I^s_q} \iota_\wf A_{i_{(r-1, s)}, i_{(r,s)}} du
\right) \cdot g_{i_{(r,s)},i_{(r-1,s)},i_{(r-1,s-1)}}\cdot g^{-1}_{i_{(r,s)},i_{(r,s-1)},i_{(r-1,s-1)}}.
\end{multline*}
Comparing this to equation \eqref{EQ:hol-gamma-for-gerbe-expliciti} for the choices made in proposition \ref{PROP:hol-for-torus}, we see that this is equal to $hol^{\mathcal G}_{0,0}(\Gamma(\gamma))=\Gamma^*(hol^{\mathcal G}_{0,0})(\gamma)$.

Now, again from remark \ref{abelianhol2k}, we write $hol^E_{2k}$ explicitly for the line bundle with connection on $LM$ as follows,
\begin{multline*}
hol_{2k}^E  = \left( \int_{\Delta^k} \hat R(t_1) \dots \hat R(t_k)dt_1\dots dt_k \right) \wedge hol_0^E
\\
=\frac{1}{k!} \left(\int_{I} \hat R(t) dt \right)^{\wedge k} \wedge hol_0^E=\frac{i^k}{k!} \left(\int_{I\times I} \iota_\wf H(t,u) dt du \right)^{\wedge k} \wedge \Gamma^*(hol_{0,0}^{\mathcal G}),
\end{multline*}
where we have used lemma \ref{LEM:gerbe-to-bundle} in the last step. Now, from the explicit formula for the higher gerbe holonomy given in equation \eqref{condensedhol2k2l} in proposition \ref{PROP:hol_(2k,2l)-glue}, we see that the last term equals $\Gamma^*(hol^{\mathcal G}_{2k,0})$.
\end{proof}

We have seen that the equivariant Chern characters live in the spaces $Ch(E,\hat A)\in \Omega(L(LM))^{inv(\vf)}$ and $Ch(\mathcal G, 1,0)\in \Omega(M^\T)^{(1,0)}=\Omega(M^\T)^{inv(\vf), inv(\wf), hor(\wf)}$, and both spaces have a differential given by $D=d+\iota_\vf$, since $\iota_\wf=0$ on $\Omega(M^\T)^{(1,0)}$. Since $\Gamma$ maps $\Gamma :L(LM)\to M^\T$, we obtain an induced chain map
\[
\Gamma^*:\Omega(M^\T)^{inv(\vf), inv(\wf), hor(\wf)}\to \Omega(L(LM))^{inv(\vf)}
\]
 mapping the equivariant Chern characters to each other by the previous theorem. Furthermore, we have the following.
\begin{cor} \label{compatclasses}
Let $\mathcal G$ be an abelian gerbe with connection, and let $Ch(\mathcal G, 1, 0) \in \Om (M^\T)^{(1,0)}$ be the equivariantly closed extension of 2-holonomy, using $a=1$ and $b=0$ in corollary \ref{gerbechern}. Let $Ch(E,\hat A)\in \Omega(L(LM))^{inv(\vf)}$ be the equivariant Chern character on $L(LM)$ of the line bundle $E$ on $LM$ with connection $\hat A$ associated to $\mathcal G$. Then, 
\[
(\Phi^{-1}_{a,b}\circ \Gamma)^* (Ch (\mathcal G, a, b))= Ch(E; \hat A).
\]
\end{cor}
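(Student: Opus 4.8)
The plan is to assemble this corollary from the two results that immediately precede it, namely Proposition \ref{THM:Ch(G,1,0)-to-Ch(G,a,b)} (which relates $Ch(\mathcal G, a, b)$ to $Ch(\mathcal G, 1, 0)$ via $(\Phi_{a,b})^*$) and Theorem \ref{compat} (which relates the bundle higher holonomies $hol_{2k}^E$ on $L(LM)$ to the gerbe higher holonomies $hol_{2k,0}^{\mathcal G}$ on $M^\T$ via $\Gamma^*$). First I would note that $\Phi_{a,b}$ is a diffeomorphism of $M^\T$ with inverse $\Phi_{a,b}^{-1}$, and that $\Phi^{-1}_{a,b}$ precomposed with $\Gamma$ gives a map $L(LM) \to M^\T$; since $(\Phi^{-1}_{a,b})^* = ((\Phi_{a,b})^*)^{-1}$ is just the pullback along $\Phi_{a,b}^{-1}$, the composite $(\Phi^{-1}_{a,b}\circ \Gamma)^*$ equals $\Gamma^* \circ (\Phi_{a,b}^{-1})^*$. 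Applying Proposition \ref{THM:Ch(G,1,0)-to-Ch(G,a,b)} in the form $(\Phi^{-1}_{a,b})^* Ch(\mathcal G, a, b) = Ch(\mathcal G, 1, 0)$ (which is the inverse reading of the stated identity), we get $(\Phi^{-1}_{a,b}\circ \Gamma)^* Ch(\mathcal G, a, b) = \Gamma^* Ch(\mathcal G, 1, 0)$.

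The second step is to identify $\Gamma^* Ch(\mathcal G, 1, 0)$ with $Ch(E; \hat A)$. By definition $Ch(\mathcal G, 1, 0) = \sum_{k\geq 0} 1^k \cdot 0^\ell \cdot hol_{2k,2\ell} = \sum_{k\geq 0} hol_{2k,0}^{\mathcal G}$ (only the $\ell = 0$ terms survive), and $Ch(E; \hat A) = \sum_{k\geq 0} hol_{2k}^E$ by the definition given at the end of section \ref{SEC:local-vector-bundle}. Since $\Gamma^*$ is linear and continuous, $\Gamma^* Ch(\mathcal G, 1, 0) = \sum_{k\geq 0} \Gamma^*(hol_{2k,0}^{\mathcal G})$, and Theorem \ref{compat} says precisely that $\Gamma^*(hol_{2k,0}^{\mathcal G}) = hol_{2k}^E$ for each $k$. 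Summing over $k$ gives $\Gamma^* Ch(\mathcal G, 1, 0) = Ch(E; \hat A)$, and combining with the first step yields the claimed identity.

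A small point worth spelling out is that the pullback maps are well-defined on the relevant complexes of invariant forms: $\Phi_{a,b}^\T$ maps $\Omega(M^\T)^{(1,0)}$ to $\Omega(M^\T)^{(a,b)}$ (this is established in the paragraph preceding Proposition \ref{THM:Ch(G,1,0)-to-Ch(G,a,b)} using $(\Phi^{-1}_{a,b})_*(\vf) = \vf + \wf$ and $(\Phi^{-1}_{a,b})_*(\wf) = -b\vf + a\wf$), so its inverse $(\Phi^{-1}_{a,b})^*$ maps $\Omega(M^\T)^{(a,b)}$ back to $\Omega(M^\T)^{(1,0)}$; and $\Gamma^*: \Omega(M^\T)^{(1,0)} \to \Omega(L(LM))^{inv(\vf)}$ is a chain map as noted in the paragraph just before the corollary. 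Hence the equality holds in $\Omega(L(LM))^{inv(\vf)}$, not merely as an equality of differential forms. I do not anticipate a genuine obstacle here — the corollary is essentially a formal consequence of the two preceding results; the only care needed is in bookkeeping the direction of the pullbacks (distinguishing $(\Phi_{a,b})^*$ from $(\Phi^{-1}_{a,b})^*$) and in observing that the $\ell = 0$ truncation of $Ch(\mathcal G, 1, 0)$ is exactly what Theorem \ref{compat} addresses. If anything requires a sentence of justification it is the interchange of $\Gamma^*$ with the infinite sum defining the Chern character, which is immediate since on any coordinate chart $\NN(p,q,\U)$ the sum is locally finite in each fixed form-degree.
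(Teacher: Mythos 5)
Your proposal is correct and follows essentially the same route as the paper's own proof: invert Proposition \ref{THM:Ch(G,1,0)-to-Ch(G,a,b)} to get $(\Phi^{-1}_{a,b})^* Ch(\mathcal G, a, b) = Ch(\mathcal G, 1, 0)$, then apply Theorem \ref{compat} termwise to identify $\Gamma^* Ch(\mathcal G,1,0)$ with $\sum_{k\geq 0} hol_{2k}^E = Ch(E;\hat A)$. Your extra remarks on the direction of the pullbacks and on the complexes of invariant forms are consistent with the discussion the paper places just before the corollary.
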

\begin{proof}
From proposition \ref{THM:Ch(G,1,0)-to-Ch(G,a,b)}, we know $(\Phi^{-1}_{a,b})^* (Ch(\mathcal G, a, b))=Ch(\mathcal G, 1, 0)$, and using theorem \ref{compat}, we have 
\[
\Gamma^* ( Ch(\mathcal G, 1, 0))= \sum_{k\geq 0}  \Gamma^*(hol^{\mathcal G}_{2k,0})= \sum_{k\geq 0}  hol^E_{2k}=Ch(E; \hat A).
\]
This completes the proof of the corollary.
\end{proof}
\begin{rmk}\label{REM:no-u's-compat}
We remark again that corollary \ref{compatclasses} does not hold for $Ch^{(u,v)} (\mathcal G, a, b)$. What is still true by theorem \ref{compat} is that the chain map
\begin{multline*}
\Gamma^*\otimes (v\mapsto 1):(\Omega(M^\T)[u,v;u^{-1},v^{-1}]]^{inv(\vf), inv(\wf), hor(\wf)},d+u\cdot \iota_\vf)
\\
\longrightarrow ( \Omega(L(LM))[u,u^{-1}]]^{inv(\vf)},d+u\cdot \iota_\vf)
\end{multline*}
maps $Ch^{(u,v)}(\mathcal G, 1,0)$ to $Ch^{(u)}(E;\hat A)$. However, the equivariant Chern characters $Ch^{(u,v)}(\mathcal G, a,b)$ can not be related this way, since proposition \ref{THM:Ch(G,1,0)-to-Ch(G,a,b)} fails in this case, \emph{c.f.} remark \ref{REM:no-u's}.
\end{rmk}

\section{Holonomy along a surface of an abelian gerbe}\label{SEC:surface-hol}
In this section, we define the holonomy of a gerbe along a closed surface $\Sigma$ in terms of a iterated integral construction similar to the one in the previous section. We will show that this coincides with the usual holonomy function defined in the literature, see proposition \ref{hol=exp^int}.

\subsection{Simplicial sets and the local Hochschild complex}\label{SUBSEC:local-gerbe-Hochschild-51}
We now define the notions of simplicial sets, Hochschild complex subject to a local data, and its iterated integral map, which will be needed in the following subsection to obtain the holonomy of along any closed surface of an abelian gerbe. 
\begin{defn}\label{D:simplicial}
$\Delta$ denotes the category whose objects are the ordered sets $[k]=\{0,1,\dots,k\}$, and morphisms $f:[k]\to [\ell]$ are non-decreasing maps $f(i)\geq f(j)$ for $i>j$. A finite simplicial set $X_\bullet$ is a contravariant functor from $\Delta$ to the category of finite sets $\Sets$, or written as a formula, $X_\bullet:\Delta^{op}\to\Sets$. Denote by $X_k=X_\bullet([k])$, and call its elements $x\in X_k$ the $k$-simplicies of $X_\bullet$. The morphisms of $\Delta$ under the functor $X_\bullet$ are represented by face maps $d_i:X_{k}\to X_{k-1}$, for $i=0,\dots,k$, and by degeneracies $s_i:X_{k}\to X_{k+1}$, for $i=0,\dots,k$, satisfying the relations
\begin{eqnarray*}
d_i \circ d_j&=&d_{j-1} \circ d_i, \quad \text{ for } i<j\\
s_i\circ s_j&=&s_{j+1}\circ s_i, \quad \text{ for } i\leq j\\
d_i\circ s_j &=& \left\{
\begin{array}{ll}
s_{j-1}\circ d_i, &  \text{ for } i<j  \\ 
id,  &  \text{ for } i=j, \text{ or }i=j+1\\
s_j\circ d_{i-1}, & \text{ for } i>j+1
\end{array} \right.
\end{eqnarray*}
A $k$-simplex $x\in X_k$ is called degenerate, if it is in the image under some degeneracy $s_i$, in other words $x=s_i(y)$ for some $y\in X_{k-1}$ and some $s_i$. If $x$ is not degenerate, then $x$ is called non-degenerate.

The geomertic realization of $X_\bullet$ is denoted by $X=|X_\bullet|=(\coprod_k X_k\times \Delta^k)/\sim$, where $\Delta^k$ is the standard $k$-simplex. For more details, see \cite{GJ, Ma}.
\end{defn}

For later use, we record the following lemma which shows that every simplex has support on a unique non-degenerate simplex.
\begin{lem}\label{!-non-deg}
For each $x\in X_k$, there exists a unique non-degenerate $\bar x\in X_j$, where $j\leq k$, such that $x=s_{i_{1}}\circ\dots\circ s_{i_{k-j}}(\bar x)$.
\end{lem}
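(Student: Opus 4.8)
\textbf{Proof plan for Lemma \ref{!-non-deg}.}

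The plan is to prove existence and uniqueness separately, both by a straightforward induction on the simplicial degree $k$, using only the simplicial identities recorded in Definition \ref{D:simplicial}. For existence, I would proceed by induction on $k$. If $x\in X_k$ is already non-degenerate, take $\bar x=x$ and the empty composition of degeneracies. Otherwise $x=s_i(y)$ for some $y\in X_{k-1}$; by the inductive hypothesis applied to $y$, there is a non-degenerate $\bar x\in X_j$ with $j\leq k-1\leq k$ and $y=s_{i_1}\circ\cdots\circ s_{i_{(k-1)-j}}(\bar x)$, hence $x=s_i\circ s_{i_1}\circ\cdots\circ s_{i_{(k-1)-j}}(\bar x)$ expresses $x$ as a composition of $k-j$ degeneracies applied to $\bar x$. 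The base case $k=0$ is trivial since $X_0$ has no nontrivial degeneracies below it, so every $0$-simplex is non-degenerate.

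The nontrivial part is uniqueness, and this is where I expect the main obstacle to lie. The standard tool is the following normal-form statement: every composition of degeneracies $s_{i_1}\circ\cdots\circ s_{i_r}:X_j\to X_k$ can be rewritten, using the simplicial identity $s_i\circ s_j=s_{j+1}\circ s_i$ for $i\leq j$, as $s_{a_1}\circ\cdots\circ s_{a_r}$ with $a_1>a_2>\cdots>a_r$, and moreover such a strictly decreasing composition admits a one-sided inverse built from face maps (using $d_i\circ s_j=\mathrm{id}$ when $i=j$ or $i=j+1$, together with $d_i\circ s_j=s_{j-1}\circ d_i$ for $i<j$ and $d_i\circ s_j=s_j\circ d_{i-1}$ for $i>j+1$ to commute the remaining operators past it). Concretely, if $x=s_{a_1}\circ\cdots\circ s_{a_r}(\bar x)$ with $a_1>\cdots>a_r$ and $x=s_{b_1}\circ\cdots\circ s_{b_{r'}}(\bar y)$ with $b_1>\cdots>b_{r'}$, both $\bar x,\bar y$ non-degenerate, I would apply the retraction $d_{a_r}\circ s_{a_r}=\mathrm{id}$-style argument: applying a suitable face map to $x$ strips off the outermost degeneracy, and comparing what it does to the two expressions forces the degeneracy indices to agree step by step; at the end, once all degeneracies are peeled off, non-degeneracy of $\bar x$ and $\bar y$ forces $r=r'$ and $\bar x=\bar y$ (if, say, $r<r'$, then $\bar x$ would still be in the image of a nontrivial degeneracy, contradicting non-degeneracy).

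I would organize the uniqueness argument as a clean sub-lemma: (i) every degeneracy composition equals a strictly decreasing one (Eilenberg--Zilber normal form for degeneracies); (ii) if $s_{a_1}\circ\cdots\circ s_{a_r}(z)=s_{b_1}\circ\cdots\circ s_{b_s}(w)$ with both index sequences strictly decreasing, then peeling via face maps shows $r=s$, $a_t=b_t$ for all $t$, and $z=w$; (iii) conclude that the non-degenerate simplex underlying $x$ is unique. The only real care needed is bookkeeping the index shifts when commuting face maps past degeneracies in step (ii); everything else is a mechanical application of the identities in Definition \ref{D:simplicial}. Since this is the classical Eilenberg--Zilber lemma, I would also note that one may simply cite \cite{GJ} or \cite{Ma} for the statement, and include the short induction only for completeness.
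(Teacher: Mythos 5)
Your plan is correct and matches the paper's proof: existence is obtained by repeatedly stripping off degeneracies until a non-degenerate simplex is reached, and uniqueness by cancelling degeneracies against face maps (using $d_i\circ s_j=\mathrm{id}$ for $i\in\{j,j+1\}$ together with the commutation identities) and observing that any surviving degeneracy would contradict non-degeneracy of $\bar x$ or $\bar y$. The only difference is organizational: you first pass to the Eilenberg--Zilber normal form with strictly decreasing degeneracy indices and peel one operator at a time, whereas the paper applies the whole left-inverse string of face maps at once and concludes by counting how many degeneracies can cancel.
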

\begin{proof}
We first prove the existence. When $x$ itself is non-degenerate, then we can set $\bar x=x$. If $x$ is degenerate, then $x=s_{i_1}(x_1)$ for some $x_1\in X_{k-1}$. If $x_1$ is non-degenerate, then we may take $\bar x=x_1$, or otherwise $x_1=s_{i_2}(x_2)$ for some $x_2$. In this way we arrive at some $x=s_{1}\circ\dots\circ s_{k-p}(x_{k-p})$, which must terminate with some non-degenerate element that we denote by $\bar x=x_{k-p}$.

Now for the uniqueness, assume that $x=s_{i_{1}}\circ\dots\circ s_{i_{k-p}}(\bar x)=s_{j_{1}}\circ\dots\circ s_{j_{k-q}}(\bar y)$ with non-degenerate $\bar x$ and $\bar y$. We need to show that $\bar x = \bar y$. Without loss of generality, we may assume that $p\leq q$. The simplicial relations imply that for each degeneracy $s_{i_r}$ there is a face map $d_{i'_r}$ such that $d_{i'_r} s_{i_r}=id$. Then $\bar x=d_{i'_{k-p}}\dots d_{i'_1}s_{i_1}\dots s_{i_{k-p}}(\bar x)=d_{i'_{k-p}}\dots d_{i'_1}s_{j_{1}}\dots s_{j_{k-q}}(\bar y)$. We now use the simplicial relations to move all the $d_{i'}$'s to the right of the $s_{j}$'s, noting that either a face map $d_{k}$ appears to the right of $s_j$, or they combine to the identity. When $q>p$ we see that there are more degeneracies than face maps, so that not all $s_j$'s can be composed to an identity, and thus $\bar x$ can be written as $\bar x=s_{j'}\dots (\bar y)$, which contradicts the assumption that $\bar x$ is non-degenerate. Similarly, if $p=q$ but not all  $s_j$'s are composed to an identity, we get that $\bar x=s_{j'}\dots (\bar y)$ for some $s_{j'}$ with the same contradiction as before. Thus, the only possible case is when $p=q$, and each $d_{i'}$ composes with some $s_{j}$ to the identity, showing that $\bar x=id(\bar y)=\bar y$. This is precisely what we needed to show.
\end{proof}

We now define the Hochschild complex, shuffle product, and iterated integral subject to local data on a manifold $M$.

Let $\Sigma_\bullet$ be a 2-dimensional set, \emph{i.e.} the non-degenerate simplicies are only in $\Sigma_0$, $\Sigma_1$ and $\Sigma_2$. We use the notation $v\in \Si_0$ for a non-degenerate vertex, $e\in \Si_1$ for a non-degenerate edge, and $f\in \Si_2$ for a non-degenerate face in $\Si_\bullet$. For a vertex $v$, we will often consider inclusions $v\subset e\subset f$ of a vertex $v$ included in an edge $e$, included in a face $f$, all of which are non-degenerate. Here an inclusion $v\subset e$ means that $v=d_i(e)$ for some $i=0,1$, and similarly $e\subset f$ means that $e=d_i(f)$ for some $i=0,1,2$.

For the next three definitions, we fix the following local data, starting from an open cover $\{U_i\}_i$ of $M$. To each non-degenerate vertex $v$ of $\Si_\bullet$, we choose an open set $U_{i_{v}}$ of $M$, similarly, for each non-degenerate edge $e$, we choose an open set $U_{i_{e}}$, and for each non-degenerate face $f$, we choose an open set $U_{i_{f}}$. We denote this choice of open sets by $\U=\{U_{i_{v}}\}_v\cup \{U_{i_{e}}\}_e\cup \{U_{i_{f}}\}_f$. We will be interested in providing an iterated integral map for the open subset $\NN(\Sigma_\bullet,\U)$ of $M^\Si$, which is defined as,
$$\NN(\Sigma_\bullet,\U):=\{\sigma\in M^\Si : \sigma|_v\subset U_{i_{v}}, \sigma|_e\subset U_{i_{e}}, \sigma|_f\subset U_{i_{f}}, \forall \text{ non-deg. }v, e, f \}.$$
(Here, for $\sigma\in M^\Si$ and a simplex $s\in\Si_\ell$, $\sigma|_s$ denotes the restriction of $\sigma: \left((\coprod_k \Si_k\times \Delta^k)/\sim\right) \to M$ to $\{s\}\times \Delta^\ell$.)

For a choice of local data $\U$, we will define the Hochschild complex associated to $\U$.
\begin{defn} \label{DEF:CurlyA}
We write $\Om(U_{i_{v}}), \Om(U_{i_{e}}), \Om(U_{i_{f}})$ for the DeRham algebras on the open sets $U_{i_{v}}, U_{i_{e}}, U_{i_{f}}$, respectively. For non-degenerate simplicies $v, e$, and $f$, define
\begin{equation}\label{AfAeAv}
\left\{
\begin{array}{ll}
\Oma(f) =& \Omega(U_{i_{f}}),\\
\Oma(e) =& \bigotimes_{e\subset f} \Omega(U_{i_{e}}\cap U_{i_{f}}), \\
&\text{where the tensor product is over }\Omega(U_{i_{e}}),\\
\Oma(v) =& \bigotimes_{v\subset e\subset f} \Omega(U_{i_{v}}\cap U_{i_{e}} \cap U_{i_{f}})/ \sim,
\end{array} \right.
\end{equation}
where for the last algebra $\Oma(v)$, we take a tensor product over all non-degenerate edges $e$, and faces $f$, with $v\subset e\subset f$. The relation $\sim$ identifies tensors over common edges and faces. More precisely, for $\alpha=\bigotimes a_{(v,e,f)}\in \bigotimes_{v\subset e\subset f} \Omega(U_{i_{v}}\cap U_{i_{e}} \cap U_{i_{f}})$ and common edges $v\subset e\subset f_1$ and $v\subset e\subset f_2$, we identify,
$$ \big( b\cdot a_{(v,e,f_1)}\otimes a_{(v,e,f_2)}\otimes\dots \big) \sim \big(a_{(v,e,f_1)}\otimes b\cdot a_{(v,e,f_2)}\otimes\dots\big), \quad \forall b\in \Om(U_{i_{v}}\cap U_{i_{e}}), $$
and for common faces $v\subset e_1\subset f$ and $v\subset e_2\subset f$, we identify,
$$ \big( c\cdot a_{(v,e_1,f)}\otimes a_{(v,e_2,f)}\otimes\dots \big) \sim \big(a_{(v,e_1,f)}\otimes c\cdot a_{(v,e_2,f)}\otimes\dots\big), \quad \forall c\in \Om(U_{i_{v}}\cap U_{i_{f}}). $$

Note, that for $e\subset f$, $\Oma(e)$ is a module over $\Oma(f)$, coming from the inclusion $U_{i_{e}}\cap U_{i_{f}}\subset U_{i_{e}}$. Also, for $v\subset e$, $\Oma(v)$ is a module over $\Oma(e)$, since with $v\subset e\subset f_{1/2}$ the map 
\begin{multline*}
\Om(U_{i_{e}}\cap U_{i_{f_1}})\otimes \Om(U_{i_{e}}\cap U_{i_{f_2}}) \to \Om(U_{i_{v}}\cap U_{i_{e}}\cap U_{i_{f_1}})\otimes \Om(U_{i_{v}}\cap U_{i_{e}}\cap U_{i_{f_2}}) 
\\
\to \Om(U_{i_{v}}\cap U_{i_{e}}\cap U_{i_{f_1}})\otimes_{\Om(U_{i_{v}}\cap U_{i_{e}})} \Om(U_{i_{v}}\cap U_{i_{e}}\cap U_{i_{f_2}})
\end{multline*}
factors through $\Om(U_{i_{e}}\cap U_{i_{f_1}})\otimes_{\Om(U_{i_{e}})} \Om(U_{i_{e}}\cap U_{i_{f_2}})$.
\end{defn}
Next, we define a Hochschild-type space $CH^{(\Si_\bullet,\U)}_\bullet $ subject to the local data $\U$ and its shuffle product, as in sections \ref{SEC:local-vector-bundle} and \ref{SEC:Chern-gerbes}. These are essentially the higher Hochschild complex and shuffle product as defined in \cite[definitions 2.1.2 and 2.4.1]{GTZ}. 
\begin{defn}\label{DEF:CH-simplicial}
Let $\Si_\bullet$ be a simplicial set, and let $\U=\{U_{i_{v}}\}_v\cup \{U_{i_{e}}\}_e\cup \{U_{i_{f}}\}_f$ be a choice of local data as before. Define the Hochschild complex with respect to $\Si_\bullet$ and $\U$ to be
$$ CH^{(\Si_\bullet,\U)}_\bullet=\bigoplus_{k\geq 0}\left(\bigotimes_{x\in \Si_k} \Oma(\bar x) \right)[k], $$
where, for any (possibly degenerate) $k$-simplex $x\in \Si_k$, $\bar x$ is the unique non-degenerate simplex associated to $x$ from lemma \ref{!-non-deg}, $\Oma(\bar x)$ is the algebra defined in equation \eqref{AfAeAv} above, and $[k]$ denotes a degree shift by $k$ as in definition \ref{DEF:Torus-CH}. The tensor product over $\Sigma_k$ is defined as in section \ref{SUBSEC:torus-hol}, namely as a coequalizer over all possible linear orderings of the set $\Sigma_k$. For an element $\mathfrak a=\bigotimes_{x\in\Si_k} \alpha_x\in CH_\bullet^{(\Sigma_\bullet,\U)}$, we call $k$ the simplicial degree of $\mathfrak a$, and we write $|\mathfrak a|=\sum_x |\alpha_x|-k$ for the total degree of $\mathfrak a$.

The differential on $CH^{(\Sigma_\bullet,\U)}_\bullet$ is defined similar to the differential in definition \ref{DEF:local-Hoch}. That is, if $d_i:\Si_k\to \Si_{k-1}$ denotes an $i^{\text{th}}$ boundary of the simplicial set $\Si_\bullet$, the simplicial relations show that either $\overline{d_i(x)}=\bar x$, or $\overline{d_i(x)}=d_i(\bar x)$. 
In either case, $d_i$ induces a map $d_i^\Oma:\Oma(\bar x)\to \Oma\left(\overline{d_i(x)}\right)$, $d_i^\Oma(\alpha)=\alpha.1$ where $1\in{\Oma\left(\overline{d_i(x)}\right)}$. The map $d_i^\Oma$ is either the identity or comes from the fact that $\Oma(d_i(\bar x))$ is a module over $\Oma(\bar x)$, as described in definition \ref{DEF:CurlyA}. We therefore obtain an induced map $(d_i)_\sharp:\bigotimes_{x\in \Si_k} \Oma(\bar x) \to \bigotimes_{y\in \Si_{k-1}} \Oma\left( \,\overline{y}\,\right)$,
$$ (d_i)_\sharp: \bigotimes_{x\in\Si_k} \alpha_x \mapsto \bigotimes_{y\in\Si_{k-1}} 1_{\Oma(\bar y)}\cdot \left(\prod_{x\in\Si_k, d_i(x)=y} d_i^\Oma (\alpha_x) \right), $$
where the expression in the parenthesis is a product in $\Oma(\bar y)$. With this notation, the differential $D$ applied to an element $\mathfrak a=\bigotimes_{x\in\Si_k} \alpha_x\in CH_\bullet^{(\Sigma_\bullet,\U)}$ of simplicial degree $k$ is given by a sum of the DeRham differential and the simplicial face maps $(d_i)_\sharp$, 
\begin{equation}\label{EQ:Hoch-differential}
 D\left(\bigotimes_{x\in\Si_k} \alpha_x\right):= (-1)^k d_{DR}\left( \bigotimes_{x\in\Si_k} \alpha_{x}\right) 
 +(-1)^{k+1}\sum_{i=0}^k (-1)^{i+1} (d_i)_\sharp\left( \bigotimes_{x\in\Si_k} \alpha_x \right).
\end{equation}
It is $D^2=0$, since $d_{DR}^2=0$, and $d':=\sum_{i=0}^k (-1)^{i+1} (d_i)_\sharp$ satisfies  $(d')^2=0$ (which can be seen from the simplicial relations of the maps $d_i$) and $d_{DR}\circ d'=d'\circ d_{DR}$. The homology of this complex is denoted by $HH_\bullet^{(\Sigma_\bullet,\U)}:=H_\bullet(CH_\bullet^{(\Sigma_\bullet,\U)},D)$.

For the shuffle product $\bullet$ on $CH^{(\Sigma_\bullet,\U)}_\bullet$, we consider a degeneracy $s_i:\Si_k\to \Si_{k+1}$. It is always $\overline{s_i(x)}=\bar x$, so that we get the induced maps $(s_i)_\sharp:\bigotimes_{x\in \Si_k} \Oma(\bar x) \to \bigotimes_{y\in \Si_{k+1}} \Oma\left( \,\overline{y}\,\right)$,
\begin{equation}\label{EQ:s-sharp}
 (s_i)_\sharp: \bigotimes_{x\in\Si_k} \alpha_x \mapsto \bigotimes_{y\in\Si_{k+1}} 1_{\Oma(\bar y)}\cdot \left(\prod_{x\in\Si_k, s_i(x)=y} \alpha_x \right).
 \end{equation}
If we denote by $\star:\left(\bigotimes_{x\in \Si_k} \Oma(\bar x)\right)^{\otimes 2} \to \bigotimes_{x\in \Si_k} \Oma(\bar x) $ the product
$ \left( \bigotimes_{x\in\Si_k} \alpha_x\right)\star \left( \bigotimes_{x\in\Si_k} \beta_x\right)= \bigotimes_{x\in\Si_k} (\alpha_x\cdot \beta_x), $
then we can define the shuffle product $\bullet$ as the product generated by
\begin{multline*}
\Big(\bigotimes_{x\in \Si_k} \Oma(\bar x)\Big)\otimes\Big( \bigotimes_{x\in \Si_\ell} \Oma(\bar x)\Big)\to\Big(\bigotimes_{x\in \Si_{k+\ell}} \Oma(\bar x)\Big)\otimes\Big( \bigotimes_{x\in \Si_{k+\ell}} \Oma(\bar x)\Big) \stackrel {\star}\to \bigotimes_{x\in \Si_{k+\ell}} \Oma(\bar x)\\
\mathfrak a \bullet \mathfrak b := \sum_{\sigma\in S(k,\ell)} (-1)^{|{\mathfrak a}|\cdot\ell} \cdot \sgn(\sigma)\cdot \quad\quad\quad\quad\quad\quad\quad\quad\quad\quad\quad\quad \quad\quad\quad\quad\quad \\
\cdot \left((s_{\sigma(k+\ell)})_\sharp\circ \dots\circ ( s_{\sigma(k+1)})_\sharp(\mathfrak a)\right)\star \left((s_{\sigma(k)})_\sharp\circ \dots\circ  (s_{\sigma(1)})_\sharp(\mathfrak b)\right).
\end{multline*}
Here, the sum is over all shuffles $\sigma\in S(k,\ell)$.
\end{defn}
We also have an iterated integral map $It^{(\Sigma_\bullet,\U)}:CH^{(\Sigma_\bullet,\U)}_\bullet\to \Om(\NN(\Sigma_\bullet,\U))$, where the image of this map is the De Rham forms on $\NN(\Sigma_\bullet,\U)\subset M^\Sigma$ defined via its Fr\'echet manifold structure (see \cite[I.4.1.3]{Ha}). 
Using this notion of forms, we are now ready to define the iterated integral map analogous to the definition \ref{DEF:torus-It} in section \ref{SUBSEC:torus-hol}.

\begin{defn} \label{DEF:It-simplicial}
The iterated integral map $It^{(\Sigma_\bullet,\U)}:CH^{(\Sigma_\bullet,\U)}_\bullet\to \Om(\NN(\Sigma_\bullet,\U))$ is given by pulling back and integrating along the fiber of the maps defined below.
First, there is an evaluation map $ev:  M^\Si\times \Delta^\ell\to M^{\Si_\ell}=Map(\Si_\ell,M)$, given by
\begin{multline*}
ev\left(\sigma:\bigg(\big(\coprod_k \Si_k\times \Delta^k\big)/\sim\bigg)\to M,0\leq t_1\leq\dots\leq t_\ell\leq 1\right)(x\in\Si_\ell)\\
:= \sigma\Big(\big[x,0\leq t_1\leq\dots\leq t_\ell\leq 1\big]\Big).
\end{multline*}
Now, for an element $x\in \Si_\ell$, we consider $\bar x$ from lemma \ref{!-non-deg}, which is the non-degenerate simplex that supports $x$. Denote by $\inc{x}$ the set consisting of $\bar x$ together with all non-degenerate simplicies, in which $\bar x$ is included as a face, or face of a face,
\begin{eqnarray*}
\bar x = f\in \Si_2 &\Rightarrow & \inc{x} = \{f\},\\
\bar x = e\in \Si_1 &\Rightarrow & \inc{x} = \{e\}\cup \{f: e\subset f\},\\
\bar x = v\in \Si_0 &\Rightarrow & \inc{x} = \{v\}\cup \{e: v\subset e\}\cup \{f:v\subset f\}.
\end{eqnarray*}
{\bf Claim.} We claim that when starting with an element $\sigma\in \NN(\Si_\bullet,\U)$, the evaluation map lands in the combined intersection $ev(\sigma,t_1\leq\dots\leq t_\ell)(x) \in \bigcap_{y\in \inc{x}} U_{i_{y}}$. 
\begin{proof}
To see this claim, first note, that for $x=s_i(y)$, it is $ev(\sigma,t_1\leq\dots\leq t_\ell)(s_i(y)) = \sigma\big([y, t_1\leq\dots\leq \widehat{t_i}\leq\dots\leq t_\ell]\big)$, so that when $x=s_{i_1}\circ \dots \circ s_{i_r} (\bar x)$, we have (for some $j_1,\dots,j_r$)
\[
ev(\sigma,t_1\leq\dots\leq t_\ell)(x)= \sigma\big([\bar x, t_1\leq\dots\leq \widehat{t_{j_1}}\leq\dots\leq \widehat{t_{j_r}}\leq\dots\leq t_\ell]\big) \in U_{i_{\bar x}}.
\]
Similarly, when $\bar x=d_i (y)$ then $ev(\sigma,t_1\leq\dots\leq t_k)(d_i(y)) = \sigma\big([y, t_1\leq\dots\leq t_i\leq t_i \leq\dots\leq t_k]\big)$, so that for non-degenerate $y$, this also lands in $U_{i_y}$. A similar argument works for $\bar x= d_i(d_j(z))$.
\end{proof}
\noindent
Thus, we have the map $ev:\NN(\Si_\bullet,\U)\times \Delta^\ell\to \prod_{x\in\Si_\ell} \left(\bigcap_{y\in \inc{x}} U_{i_{y}}\right)\subset M^{\Si_\ell}$.

On the other hand, we also have an induced map $\psi_x: \Oma(\bar x)\to \Om(\bigcap_{y\in \inc{x}} U_{i_{y}})$. For non-degenerate faces $\bar x=f$ this is the identity $id:\Oma(f)= \Om(U_{i_{f}})$, for non-degenerate edges $\bar x=e$ the projections $\Om(U_{i_e}\cap U_{i_f})\to \Om(U_{i_e}\cap \bigcap_{e\subset f} U_{i_f})$ for all $(e\subset f)$ factor through tensoring over $\Om(U_{i_e})$, and for vertices $\bar x=v$ the maps $\Om(U_{i_v}\cap U_{i_e}\cap U_{i_f})\to \Om(U_{i_v}\cap\bigcap_{v\subset e} U_{i_e}\cap \bigcap_{v\subset f} U_{i_f})$ for all $(v\subset e\subset f)$ factor simultaneous through $\Om(U_{i_v}\cap U_{i_e})$ and $\Om(U_{i_v}\cap U_{i_f})$.

For an element $\mathfrak a \in CH^{(\Si_\bullet,\U)}_\bullet=\bigoplus_{\ell\geq 0}\left(\bigotimes_{x\in \Si_\ell} \Oma(\bar x) \right)$ that is in the simplicial degree-$\ell$ component, the iterated integral $It^{(\Si_\bullet,\U)}(\mathfrak a)\in \Omega(\NN(\Si_\bullet,\U))$ is given by using the maps $\psi_x$ together with the diagram
\begin{equation*}
\xymatrix{
 \NN(\Si_\bullet,\U)\times \Delta^\ell \ar[d]_{\int_{\Delta^\ell}}  \ar[r]^{ev \quad \quad \quad} & \prod_{x\in\Si_\ell} \left(\bigcap_{y\in \inc{x}} U_{i_{y}}\right)\subset M^{\Si_\ell}
 \\ \NN(\Si_\bullet,\U) & }
\end{equation*}
More precisely, we can define the iterated integral on the degree-$\ell$ component $\bigotimes_{x\in \Si_\ell} \Oma(\bar x)$ as the composition
\begin{multline*}
It^{(\Si_\bullet,\U)}:\bigotimes_{x\in \Si_\ell} \Oma(\bar x) \stackrel {\otimes_x \psi_x} \longrightarrow \bigotimes_{x\in \Si_\ell} \Om \left(\bigcap_{y\in W_x} U_{i_y}\right) \longrightarrow \Om \left(\prod_{x\in\Si_\ell} \bigcap_{y\in W_x} U_{i_y}\right)
\\
\stackrel{ev^*}\longrightarrow \Om(\NN(\Sigma_\bullet,\U)\times \Delta^\ell)\stackrel{\int_{\Delta^\ell}}\longrightarrow \Om(\NN(\Sigma_\bullet,\U)).
\end{multline*}
In short, for an element $\mathfrak a\in CH^{(\Si_\bullet,\U)}_\bullet$ in simplicial degree $\ell$, we can simply write,
\begin{equation}\label{EQ:IT^(S,U)(a)}
It^{(\Si_\bullet,\U)}(\mathfrak a)=\int_{\Delta^\ell} ev^*(\otimes_x\psi_x(\mathfrak a))\quad \in \quad \Omega(\NN(\Si_\bullet,\U)).
\end{equation}
\end{defn}
One of the main properties of the iterated integral is that it respects both the differentials and the products, just as in proposition \ref{LEM:It(q,p,U)-d-sh}.
\begin{prop}\label{LEM:It-diff-prod}
$It^{(\Sigma_\bullet,\U)}$ commutes with differentials and products,
\[
It^{(\Sigma_\bullet,\U)}(D(\mathfrak a))=d_{DR}(It^{(\Sigma_\bullet,\U)}(\mathfrak a)), \text{ and } It^{(\Sigma_\bullet,\U)}(\mathfrak a\bullet \mathfrak b)=It^{(\Sigma_\bullet,\U)}(\mathfrak a)\wedge It^{(\Sigma_\bullet,\U)}(\mathfrak b).
\]
\end{prop}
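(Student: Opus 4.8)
The plan is to follow the proof of Proposition~\ref{LEM:It(q,p,U)-d-sh} essentially line by line, with the multi-simplex grid $(\Delta^{m_1}\times\dots)\times(\Delta^{n_1}\times\dots)$ there replaced by the single simplex $\Delta^\ell$ attached to the simplicial degree $\ell$, and with the row/column combinatorics replaced by the face/degeneracy combinatorics of $\Sigma_\bullet$, exactly as in \cite[Lemma 2.2.2, Proposition 2.4.6]{GTZ}. Throughout I will use the pullback description \eqref{EQ:IT^(S,U)(a)}, $It^{(\Sigma_\bullet,\U)}(\mathfrak a)=\int_{\Delta^\ell}ev^*(\otimes_x\psi_x(\mathfrak a))$ for $\mathfrak a$ of simplicial degree $\ell$, together with the fiber-integration identity of Convention~\ref{CONVENT1}(3).

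First I would treat the differential. Since $ev^*$ and each $\psi_x$ commute with $d_{DR}$, Convention~\ref{CONVENT1}(3) gives $d_{DR}\big(It^{(\Sigma_\bullet,\U)}(\mathfrak a)\big)=(-1)^\ell\big(\int_{\Delta^\ell}ev^*(d_{DR}(\otimes_x\psi_x\mathfrak a))-\int_{\partial\Delta^\ell}ev^*(\otimes_x\psi_x\mathfrak a)\big)$. The first term is $(-1)^\ell It^{(\Sigma_\bullet,\U)}(d_{DR}\mathfrak a)$, the De Rham part of $D$ in \eqref{EQ:Hoch-differential}. For the second, decompose $\partial\Delta^\ell$ into its $\ell+1$ codimension-one faces; the inclusion of the $i$-th face is, up to the boundary-orientation sign of Convention~\ref{CONVENT1}(2) (orientation preserving iff $i$ is odd), the geometric coface dual to $d_i\colon\Sigma_\ell\to\Sigma_{\ell-1}$. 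Restricting $ev$ to this face and using the ``Claim'' of Definition~\ref{DEF:It-simplicial} together with Lemma~\ref{!-non-deg}, one identifies evaluation on the $i$-th face of $x\in\Sigma_\ell$ with evaluation on $d_i(x)$; the point that makes this descend to $\Oma$ is the compatibility of the operators $d_i^\Oma$ (hence $(d_i)_\sharp$) with the maps $\psi_x$, which is precisely the module-structure bookkeeping built into Definition~\ref{DEF:CurlyA}, together with the dichotomy $\overline{d_i(x)}=\bar x$ or $\overline{d_i(x)}=d_i(\bar x)$. Collecting signs recovers the simplicial summand $(-1)^{\ell+1}\sum_i(-1)^{i+1}(d_i)_\sharp$ of $D$, hence $d_{DR}\circ It^{(\Sigma_\bullet,\U)}=It^{(\Sigma_\bullet,\U)}\circ D$.

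Next I would treat the product, mimicking the second half of the proof of Proposition~\ref{LEM:It(q,p,U)-d-sh}. For $\mathfrak a$ of simplicial degree $k$ and $\mathfrak b$ of simplicial degree $\ell$, the wedge $It^{(\Sigma_\bullet,\U)}(\mathfrak a)\wedge It^{(\Sigma_\bullet,\U)}(\mathfrak b)$ equals, up to a Koszul sign $(-1)^{\epsilon}$ with $\epsilon=|\mathfrak a|\cdot\ell$, the fiber integral over $\Delta^k\times\Delta^\ell$ of the external product of the two integrands. Writing $\Delta^k\times\Delta^\ell=\bigcup_{\sigma\in S(k,\ell)}\beta^\sigma(\Delta^{k+\ell})$ as a union of $(k,\ell)$-shuffles (Convention~\ref{CONVENT1}(1), contributing $\sgn(\sigma)$), on each piece there is a commutative square relating $ev$ on $\Delta^{k+\ell}$ to $(ev,ev)$ on $\Delta^k\times\Delta^\ell$ through a diagonal and two maps of $M$-products; passing to forms the diagonal becomes the $\star$-product and the two maps become iterated degeneracies $(s_i)_\sharp$ from Definition~\ref{DEF:CH-simplicial} (using that degeneracies fix the supporting non-degenerate simplex, Lemma~\ref{!-non-deg}). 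Summing over shuffles with the signs $(-1)^\epsilon\sgn(\sigma)$ reassembles $It^{(\Sigma_\bullet,\U)}(\mathfrak a\bullet\mathfrak b)$ exactly.

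The hard part is purely combinatorial and sign bookkeeping: one must verify that the identification of the $i$-th face of $\Delta^\ell$ with the coface dual to $d_i$, and of the shuffle cells $\beta^\sigma(\Delta^{k+\ell})$ with iterated degeneracy cells, is compatible not merely with the set-level evaluation $ev$ but with the maps $\psi_x$ and with all the module structures on $\Oma(v),\Oma(e),\Oma(f)$ --- equivalently, that restriction of forms to smaller open sets commutes with the simplicial operations on $CH_\bullet^{(\Sigma_\bullet,\U)}$. This compatibility is exactly what Definition~\ref{DEF:CurlyA} is designed to guarantee; granting it, the remaining steps are the same as in \cite{GTZ} and in Proposition~\ref{LEM:It(q,p,U)-d-sh}, with all signs forced by Convention~\ref{CONVENT1}.
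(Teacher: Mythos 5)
Your proposal is correct and follows essentially the same route as the paper's proof: the fiber-integration/Stokes argument on $\Delta^\ell$ with the boundary faces matched to the $(d_i)_\sharp$ via Convention \ref{CONVENT1}(2) for the differential, and the shuffle decomposition of $\Delta^k\times\Delta^\ell$ together with the diagonal/degeneracy commutative square for the product, exactly as in proposition \ref{LEM:It(q,p,U)-d-sh} and \cite{GTZ}. The compatibility of $\psi_x$ with the module structures that you flag as the "hard part" is indeed the point the paper relies on Definition \ref{DEF:CurlyA} to supply, so nothing is missing.
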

\begin{proof}
The proof is similar to \cite[lemma 2.2.2, proposition 2.4.6]{GTZ} and proposition \ref{LEM:It(q,p,U)-d-sh}, (see also propositions \ref{dItpU}). We start with the differentials. We calculate $d_{DR}(It^{(\Sigma_\bullet,\U)}(\mathfrak a))$ as follows,
\begin{eqnarray*}
d_{DR}\left(It^{(\Sigma_\bullet,\U)}(\mathfrak a)\right) 
&=& d_{DR} \left(\int_{\Delta^\ell} ev^*(\otimes_x\psi_x(\mathfrak a))\right)
\\
&=&(-1)^\ell\cdot \left(\int_{\Delta^\ell} d_{DR} (ev^*(\otimes_x\psi_x(\mathfrak a)))-\int_{\partial\Delta^\ell} ev^*(\otimes_x\psi_x(\mathfrak a))\right).
\end{eqnarray*}
Since $d$ commutes with $ev^*$ and is a graded derivation under the tensor product of maps $\otimes_x\psi_x$, we see that the first term is exactly $(-1)^\ell \cdot\int_{\Delta^\ell}ev^*(d_{DR}(\otimes_x \psi_x(\mathfrak a)))=
 \int_{\Delta^\ell} ev^*(\otimes_x\psi_x \tilde d(\mathfrak a))=It^{(\Sigma_\bullet,\U)}\big(\tilde d \mathfrak a\big)$, where $\tilde d$ is the first term on the right hand side of \eqref{EQ:Hoch-differential}. For the second term that is being integrated over $\partial \Delta^\ell$, we use that the boundary
$$
\partial\Delta^\ell=\bigcup_{i=0}^\ell \partial_i \Delta^\ell=\bigcup_{i=0}^\ell \{0=t_0\leq t_1\leq \dots\leq t_i=t_{i+1}\leq \dots\leq t_\ell\leq t_{\ell+1}=1\}.
$$
Since $\Delta^{\ell-1}\to \partial_i \Delta^\ell$ is orientation preserving exactly when $i$ is odd, we obtain $\sum_{i=0}^\ell (-1)^{\ell+1+i+1}\cdot\int_{\Delta^{\ell-1}} ev^*(\otimes_x\psi_x (d_i)_\sharp(\mathfrak a))=It^{(\Sigma_\bullet,\U)}\big((-1)^{\ell+1}\sum_{i=0}^\ell (-1)^{i+1} (d_i)_\sharp \mathfrak a\big)$, where $(d_i)_\sharp$ multiplies the terms as in the second term on the right hand side of equation \eqref{EQ:Hoch-differential}. This shows that we get exactly $It^{(\Sigma_\bullet,\U)}\big(D(\mathfrak a)\big)$, proving that the iterated integral is a chain map.

Next, we consider the shuffle and wedge products. First we rewrite the wedge product of two iterated integrals,
\begin{multline*}
It^{(\Sigma_\bullet,\U)}(\mathfrak a)\wedge It^{(\Sigma_\bullet,\U)}(\mathfrak b)
= \int_{\Delta^k}ev^*(\otimes_{x\in\Si_k}\psi_x \mathfrak a)\wedge\int_{\Delta^\ell}ev^*(\otimes_{x\in\Si_\ell}\psi_x \mathfrak b)\\
\quad\quad \,\,=(-1)^{|{\mathfrak a}|\cdot \ell}\int_{\Delta^k\times \Delta^\ell}(ev,ev)^*(\otimes_{x\in\Si_k}\psi_x \mathfrak a\wedge\otimes_{x\in\Si_\ell}\psi_x \mathfrak b)\\
=(-1)^{|{\mathfrak a}|\cdot \ell}\sum_{\sigma\in S(k,\ell)} \int_{\beta^\sigma(\Delta^{k+\ell})} (ev,ev)^* (\otimes_{x\in\Si_k}\psi_x \mathfrak a\wedge \otimes_{x\in\Si_\ell}\psi_x \mathfrak b),
\end{multline*} 
where for a $(k,\ell)$-shuffle $\sigma\in S(k,\ell)$, the map $\beta^\sigma:\Delta^{k+\ell}\to \Delta^k\times \Delta^\ell, (t_1\leq\dots\leq t_{k+\ell})\mapsto (t_{\sigma(1)}\leq \dots\leq t_{\sigma(k)},t_{\sigma(k+1)}\leq \dots\leq t_{\sigma(k+\ell)})$ is used to decompose $\Delta^k\times \Delta^\ell=\bigcup_{\sigma\in S(k,\ell)}\beta^\sigma(\Delta^{k+\ell})$. Now, we have a commutative diagram,
\begin{equation*}
\xymatrix{
 \NN(\Si_\bullet,\U)\times \Delta^{k+\ell} \ar[rr]^{id\times \beta^\sigma} \ar[d]_{ev} && \NN(\Si_\bullet,\U)\times \Delta^k\times \Delta^\ell \ar[d]^{(ev,ev)}\\   
 M^{\Si_{k+\ell}} \ar[r]^{diag \quad \quad} & M^{\Si_{k+\ell}}\times  M^{\Si_{k+\ell}}\ar[r]^{\eta'_\sigma\times \eta''_\sigma} &  M^{\Si_{k}}\times  M^{\Si_{\ell}} }
\end{equation*}
Here, $diag$ is the diagonal on $M^{\Si_{k+\ell}}$, $\eta'_\sigma:M^{\Si_{k+\ell}}\to M^{\Si_{k}}$ is given by $\eta'_\sigma=M^{s_{\sigma(k+1)}}\circ \dots\circ M^{s_{\sigma(k+\ell)}}$, and similarly $\eta''_\sigma:M^{\Si_{k+\ell}}\to M^{\Si_{\ell}}$ is $\eta''_\sigma=M^{s_{\sigma(1)}}\circ \dots\circ M^{s_{\sigma(k)}}$, and we can check that the diagram commutes, since for $x\in\Si_k, y\in \Si_\ell$, we have,
\begin{multline*}
(ev,ev)\circ (id\times \beta^\sigma)(\gamma,t_1\leq\dots\leq t_{k+\ell})(x,y)
\\
=\Big(\gamma[x,t_{\sigma(1)}\leq \dots\leq t_{\sigma(k)}],\gamma[y,t_{\sigma(k+1)}\leq \dots\leq t_{\sigma(k+\ell)}]\Big)\quad\quad\quad\quad\quad\quad\quad\quad\quad\quad\quad
 \\
=\Big(\gamma[s_{\sigma(k+\ell)}\circ\dots \circ s_{\sigma(k+1)}(x),t_1\leq\dots\leq t_{k+\ell}],\gamma[s_{\sigma(k)}\circ\dots \circ s_{\sigma(1)}(y),t_1\leq\dots\leq t_{k+\ell}]\Big)
 \\
 =(\eta'_\sigma\times \eta''_\sigma) \circ \,\, diag \circ ev (\gamma,t_1\leq\dots\leq t_{k+\ell})(x,y).
\end{multline*}
(Note also, that in the above diagram, the evaluation maps really land in the corresponding subsets $\prod_{x\in\Si_k} \left(\bigcap_{y\in \inc{x}} U_{i_{y}}\right)\subset M^{\Si_k}$, etc., as described in definition \ref{DEF:It-simplicial}, which we have suppressed for better readability.) Thus, we see that $It^{(\Sigma_\bullet,\U)}(\mathfrak a)\wedge It^{(\Sigma_\bullet,\U)}(\mathfrak b)$ is equal to,
\begin{equation*}
\sum_{\sigma\in S(k,\ell)} (-1)^{|\mathfrak a|\cdot \ell}\sgn(\sigma)\cdot \int_{\Delta^{k+\ell}} ev^*\circ diag^*( (\eta'_\sigma)^* (\otimes_{x}\psi_x \mathfrak a)\wedge(\eta''_\sigma)^* (\otimes_{x}\psi_x \mathfrak b)).
\end{equation*}
Now, the $\eta'_\sigma$ and $\eta''_\sigma$ add degeneracies to the $\mathfrak a$ and $\mathfrak b$ which, together with $diag$, becomes the shuffle product, since we have the commutative diagram
\[
\xymatrix{
\bigotimes_{x\in \Si_k} \Oma(\bar x)\ar[rr]^{(s_i)_\sharp} \ar[d]_{\otimes_{x\in \Si_{k}} \psi_x}  && \bigotimes_{x\in \Si_{k+1}} \Oma(\bar x) \ar[d]^{\otimes_{x\in \Si_{k+1}} \psi_x}\\
\Om \left(\prod_{x\in\Si_k} \bigcap_{y\in W_x} U_{i_y}\right)\ar[rr]^{(M^{s_i}|_{\cdots})^*} && \Om \left(\prod_{x\in\Si_{k+1}} \bigcap_{y\in W_x} U_{i_y}\right)
}
\]
and the fact that $diag$ acts as the $\star$-product in definition \ref{DEF:CH-simplicial}, $diag^*\circ ((\otimes_x\psi_x)\otimes (\otimes_x\psi_x))=(\otimes_x\psi_x)\circ \star$. We thus see that the above equation becomes,
\begin{equation*}
It^{(\Sigma_\bullet,\U)}(\mathfrak a)\wedge It^{(\Sigma_\bullet,\U)}(\mathfrak b)
=\int_{\Delta^{k+\ell}} ev^*(\otimes_{x\in \Si_{k+\ell}}\psi_x (\mathfrak a\bullet \mathfrak b))=It^{(\Sigma_\bullet,\U)}(\mathfrak a\bullet \mathfrak b).
\end{equation*}
This completes the proof the lemma.
\end{proof}

In the case where all open sets $U_i$ are chosen to be $U_i=M$, we see that $\NN(\Sigma_\bullet,\{M\})=M^\Sigma$. If we furthermore assume that $M$ is $2$-connected, then the iterated integral map is a quasi-isomorphism $It^{(\Sigma_\bullet,\{M\})}:CH^{(\Sigma_\bullet,\{M\})}_\bullet\to \Om(\NN(\Sigma_\bullet,\{M\}))=\Omega(M^\Si)$, see \cite[Proposition 2.5.3]{GTZ}.

\subsection{Holonomy along a surface}\label{SUBSECT:holonomy}
Using the datum of a gerbe from definition \ref{DEF:gerbe}, we now define a natural element in a surface-Hochschild complex $CH^{(\Sigma_\bullet,\U)}$ associated to the local data $\U$.

In this section, we assume that $\Sigma_\bullet$ is a regular simplicial decomposition of a closed surface $\Si$. More precisely, we assume that the geometric realization $|\Si_\bullet|=\Si$, and $\Si_\bullet$ is a regular simplicial set according to the following definition.
\begin{defn}\label{DEF:regular}
Let $\Si_\bullet$ be a 2-dimensional simplicial set, \emph{i.e.} the only non-degenerate simplicies are in $\Si_i$ for $i=0,1,2$. Then $\Si_\bullet$ is called regular, if every edge $e$ is include in exactly two faces, $e\subset f'_e$ and $e\subset f''_e$, and for $v\subset f$, a vertex included in a face $f$, there are exactly two edges that are included in between $v$ and $f$, $v\subset e'_{v,f}\subset f$ and $v\subset e''_{v,f}\subset f$. More formally, this means that
\begin{eqnarray*}
\forall \text{ non-deg. } e\in \Si_1 :\quad  \text{There are exactly two non-deg. } f'_e\neq f''_e\in \Si_2 \\
 \text{ with } d_{i'}(f'_e) = e=d_{i''}(f''_e) \text{ for some } i',i''\in \{0,1,2\}.
\end{eqnarray*}
\begin{eqnarray*}
\forall \text{ non-deg. } v \in \Si_0, f\in \Si_2 :\quad \text{If } d_j(d_i(f))=v \text{ for some }  i\in \{0,1,2\}, j\in \{0,1\},\\
\text{then there exist exactly two non-deg. } e'_{v,f}\neq e''_{v,f}\in \Si_1 \text{ with}\\
 d_{i'}(f)=e'_{v,f}  \text{ and } d_{j'}(e'_{v,f})=v \text{ for some } i'\in \{0,1,2\}, j'\in \{0,1\},\\
 d_{i''}(f)=e''_{v,f}  \text{ and } d_{j''}(e''_{v,f})=v \text{ for some } i''\in \{0,1,2\}, j''\in \{0,1\}.
\end{eqnarray*}
\end{defn}
We remark that for every surface $\Si$, there always exists a regular simplicial set $\Si_\bullet$ whose geometric realization is the given surface, $|\Si_\bullet|=\Si$. For example, one such decomposition is given in \cite[equation (3.1)]{GTZ}.
For a regular simplicial surface, we obtain natural elements in the Hochschild complex which we describe next. For this, we need to use the sign induced by comparing orientations of a simplex and its faces. 
\begin{defn}\label{DEF:rho(v,e,f)}
If $v$ is a vertex in the boundary of an edge $e$ with arbitrarily chosen orientation, then we set $\rho(v,e)\in \{+1,-1\}$ to be $+1$ if $v$ is the beginning point of $e$, and $-1$ otherwise. 

Furthermore, if $f$ is a $2$-simplex, and $e$ is an edge in the boundary of the $f$, both with arbitrarily chosen orientations, then we set $\rho(e,f)\in \{+1,-1\}$ to be $+1$, if the orientation of $e$ coincides with the orientation induced by $f$, and $-1$ otherwise. Here, the induced orientation is the one described in our convention \ref{CONVENT1}(2).
(For example, in a simplicial set, if $e=d_i(f)$, then $\rho(e,f)=(-1)^{i+1}$.)

Finally, define $\rho(v,e,f):=\rho(v,e)\cdot \rho(e,f)$.
\end{defn}
\begin{defn}\label{DEF:BAg^Sigma}
First, there are natural elements in each $\Oma(f), \Oma(e)$, and $\Oma(v)$, given by $B_f:=B_{i_{f}}\in \OmA(f), A_e:=(\rho(e,f'_e)\cdot A_{{i_{e}},{i_{f'_e}}}\otimes 1+\rho(e,f'_e) \cdot 1\otimes A_{{i_{e}},{i_{f''_e}}})\in \OmA(e)$ (where $f'_e$ and $f''_e$ are the two faces adjacent to the edge $e$), and $g_v:=\otimes_{(v\subset e\subset f)} (g_{{i_{v}},{i_{e}},{i_{f}}})^{\rho(v,e,f)}\in \OmA(v)$, where the product is over all $e$ and $f$ with $v\subset e\subset f$ for fixed $v$. Furthermore, these give rise to natural elements (each of total degree $0$) in $CH^{(\Sigma_\bullet,\U)}_\bullet$, by:
\begin{eqnarray*}
& B^{\Sigma_\bullet}_f\in \bigotimes_{s\in \Si_2} \OmA(s)\subset CH^{({\Sigma_\bullet},\U)}_\bullet, & B^{\Sigma_\bullet}_f:=1\otimes\dots \otimes1\otimes B_{f}\otimes 1\otimes \dots\otimes 1, \\
& A^{\Sigma_\bullet}_e\in \bigotimes_{s\in \Si_1} \OmA(s)\subset CH^{({\Sigma_\bullet},\U)}_\bullet, &A^{\Sigma_\bullet}_e:=1\otimes\dots \otimes1\otimes A_e\otimes 1\otimes \dots\otimes 1, \\
& g^{\Sigma_\bullet}_v\in \bigotimes_{s\in \Si_0} \OmA(s)\subset CH^{({\Sigma_\bullet},\U)}_\bullet, &g^{\Sigma_\bullet}_v:=1\otimes\dots \otimes 1\otimes g_v\otimes 1\otimes \dots\otimes 1.
\end{eqnarray*}
Then, define
$$ \hloc:= \exp \left(\sum_{\text{non-deg. }f} i\cdot B^{\Sigma_\bullet}_f - \sum_{\text{non-deg. }e} i \cdot A^{\Sigma_\bullet}_e\right)\bullet \left(\prod_{\text{non-deg. }v} g^{\Sigma_\bullet}_v \right). $$

Then, let $hol^{(\Si_\bullet,\U)}\in \Omega^0(M^\Si, U(1))$ by setting $hol^{(\Si_\bullet,\U)}:=It^{(\Si_\bullet,\U)}(\h^{(\Si_\bullet,\U)})$. 
\end{defn}
We can describe the holonomy function $hol^{(\Si_\bullet,\U)}\in\Omega^0(\NN(\Sigma_\bullet,\U),U(1))$ with the usual formula for holonomy, \cf \cite[(2.14)]{GR}.
\begin{prop}\label{hol=exp^int}
Let $\sigma\in \NN(\Sigma_\bullet,\U)\subset M^\Sigma$, \emph{i.e.} $\sigma:\Sigma\to M$, so that there is an induced orientation on each face $f$ coming from $\Sigma$, and chose an arbitrary (but fixed) orientation for each edge $e$. Then we have,
\begin{multline*}
 hol^{(\Si_\bullet,\U)}(\sigma)\\
=\exp\left(\sum_f i\cdot \int_f \sigma^*(B_{i_f})- \sum_{e\subset f}i\cdot\rho(e,f)\cdot \int_e \sigma^*(A_{i_e,i_f})\right) \cdot \prod_{v\subset e\subset f}  g^{\rho(v,e,f)}_{i_v,i_e,i_f}(\sigma(v))
 \end{multline*}
\end{prop}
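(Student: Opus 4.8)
The plan is to compute $It^{(\Si_\bullet,\U)}(\h^{(\Si_\bullet,\U)})$ explicitly by exploiting Proposition \ref{LEM:It-diff-prod}, which says that $It^{(\Si_\bullet,\U)}$ is an algebra map. Since $\h^{(\Si_\bullet,\U)}$ is built as an exponential (under the shuffle product $\bullet$) of the elements $i\cdot B^{\Sigma_\bullet}_f$ and $-i\cdot A^{\Sigma_\bullet}_e$, shuffle-multiplied by the product $\prod_v g^{\Sigma_\bullet}_v$, and since $It^{(\Si_\bullet,\U)}$ takes $\bullet$ to $\wedge$ and sends units to units, we immediately get
\[
hol^{(\Si_\bullet,\U)}(\sigma) = \exp\!\left(\sum_f i\cdot It^{(\Si_\bullet,\U)}(B^{\Sigma_\bullet}_f)(\sigma) - \sum_e i\cdot It^{(\Si_\bullet,\U)}(A^{\Sigma_\bullet}_e)(\sigma)\right)\cdot \prod_v It^{(\Si_\bullet,\U)}(g^{\Sigma_\bullet}_v)(\sigma).
\]
Here I am using that all the forms involved have total degree $0$, so the $\wedge$ on the right-hand side is just multiplication of functions, and the exponential of a sum of commuting degree-$0$ forms is the product of exponentials. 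So the whole proposition reduces to evaluating the iterated integral on each of the three types of generators.

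The key step is therefore to identify $It^{(\Si_\bullet,\U)}$ applied to the three generators, using formula \eqref{EQ:IT^(S,U)(a)}, namely $It^{(\Si_\bullet,\U)}(\mathfrak a) = \int_{\Delta^\ell} ev^*(\otimes_x \psi_x(\mathfrak a))$ where $\ell$ is the simplicial degree. First, for $B^{\Sigma_\bullet}_f$, which sits in simplicial degree $2$ with the $2$-form $B_{i_f}$ placed at the non-degenerate face $f$ and $1$'s elsewhere: here $\psi_f = id$, $ev$ restricted to $\{f\}\times\Delta^2$ is just $\sigma|_f$ composed with the standard parametrization, and integrating $ev^*(B_{i_f})$ over $\Delta^2$ with its standard orientation gives $\int_f \sigma^*(B_{i_f})$, where $f$ carries the orientation induced from $\Sigma$ (by Convention \ref{CONVENT1}). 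Second, for $A^{\Sigma_\bullet}_e$: this has simplicial degree $1$, with $A_e = \rho(e,f'_e)A_{i_e,i_{f'_e}}\otimes 1 + \rho(e,f''_e)\cdot 1\otimes A_{i_e,i_{f''_e}}$ placed at the edge $e$; after applying $\psi_e$ (which projects the tensor product over $\Om(U_{i_e})$ down to $\Om(U_{i_e}\cap\bigcap_{e\subset f}U_{i_f})$) and pulling back along $ev|_{\{e\}\times\Delta^1}=\sigma|_e$, integration over $\Delta^1$ produces $\rho(e,f'_e)\int_e\sigma^*(A_{i_e,i_{f'_e}}) + \rho(e,f''_e)\int_e\sigma^*(A_{i_e,i_{f''_e}})$, which is exactly $\sum_{e\subset f}\rho(e,f)\int_e\sigma^*(A_{i_e,i_f})$ for this fixed $e$. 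Here I must be careful that the fixed but arbitrary orientation chosen for $e$ in the definition of $\rho(e,f)$ matches the one implicit in parametrizing $\Delta^1$, but the product $\rho(e,f)\cdot\sigma^*(A_{i_e,i_f})$ is orientation-independent since flipping $e$ flips both factors. Third, for $g^{\Sigma_\bullet}_v$: this is in simplicial degree $0$, so $\Delta^0$ is a point, $ev(\sigma)(v) = \sigma(v)$, $\psi_v$ sends $g_v = \otimes_{v\subset e\subset f}(g_{i_v,i_e,i_f})^{\rho(v,e,f)}$ to the corresponding function on the common intersection, and the ``integral'' over $\Delta^0$ is just evaluation, giving $\prod_{v\subset e\subset f}g_{i_v,i_e,i_f}^{\rho(v,e,f)}(\sigma(v))$.

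Assembling these three computations into the displayed product yields precisely the claimed formula for $hol^{(\Si_\bullet,\U)}(\sigma)$. The main obstacle I expect is bookkeeping of signs and orientations: getting $\rho(e,f) = (-1)^{i+1}$ for $e = d_i(f)$ to line up with the boundary orientation convention in Convention \ref{CONVENT1}(2), and checking that the module-coequalizer identifications defining $\Oma(e)$ and $\Oma(v)$ do not introduce spurious factors when the maps $\psi_x$ are applied (i.e., that $\psi_x$ is well defined on the coequalizer, which was already essentially observed in Definition \ref{DEF:It-simplicial}). A secondary point to state carefully is that although $A_e$ and $g_v$ live a priori in tensor products (coequalizers) of form algebras, after applying $\psi_x$ and $ev^*$ all the forms are pulled back to the single manifold $\Sigma$ via $\sigma$, so they genuinely multiply as honest differential forms there; this is what lets the exponential collapse to the stated product. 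Everything else is a direct unwinding of definitions, so I would keep the write-up short, citing Proposition \ref{LEM:It-diff-prod} for the algebra-map property and Convention \ref{CONVENT1} for the orientation/sign normalizations.
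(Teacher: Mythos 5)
Your proposal is correct and follows essentially the same route as the paper's proof: invoke Proposition \ref{LEM:It-diff-prod} to convert the shuffle-exponential into an ordinary exponential of iterated integrals, then evaluate $It^{(\Si_\bullet,\U)}$ on the three generators $B^{\Sigma_\bullet}_f$, $A^{\Sigma_\bullet}_e$, and $g^{\Sigma_\bullet}_v$ exactly as in equations \eqref{EQ:It(Bf^S)}--\eqref{EQ:It(gv^S)}. Your additional remarks on orientation conventions and the well-definedness of $\psi_x$ on the coequalizer are sound but not a departure from the paper's argument.
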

\begin{proof}
By proposition \ref{LEM:It-diff-prod}, the iterated integral maps shuffle product on the Hochschild complex to the wedge product on forms of the mapping space $\Omega^\bullet(M^\Sigma)$. Thus, 
\begin{multline*}
 hol^{(\Si_\bullet,\U)}(\sigma)\\
= \Ch^{(\Si_\bullet,\U)}\left(\exp{\left(\sum_{\text{non-deg. }f} i\cdot B^{\Si_\bullet}_f - \sum_{\text{non-deg. }e} i\cdot A^{\Si_\bullet}_e\right)}\bullet \left(\prod_{\text{non-deg. }v} g^{\Si_\bullet}_v \right)\right)(\sigma)\\ 
 =\exp{\left(\sum_{\text{non-deg. }f} i\cdot\Ch^{(\Si_\bullet,\U)}(B^{\Si_\bullet}_f)(\sigma) - \sum_{\text{non-deg. }e}i\cdot \Ch^{(\Si_\bullet,\U)}(A^{\Si_\bullet}_e)(\sigma)\right)}\\
 \cdot \left(\prod_{\text{non-deg. }v} \Ch^{(\Si_\bullet,\U)}(g^{\Si_\bullet}_v)(\sigma) \right).
\end{multline*}
For the result, it now suffices to calculate the iterated integral map on $B^{\Si_\bullet}_f$, $A^{\Si_\bullet}_e$, and $g^{\Si_\bullet}_v$.
\begin{multline}\label{EQ:It(Bf^S)}
\Ch^{(\Si_\bullet,\U)}(B^{\Si_\bullet}_f)(\sigma) \\
=\int_{\Delta^2} \left(M^\Sigma\times \Delta^2\stackrel {ev} \longrightarrow M^{\Sigma_2}\right)^*(1\otimes\dots\otimes B_{i_{f}}\otimes \dots \otimes 1)(\sigma)
= \int_f \sigma^*({B_{i_f}}),
\end{multline}
\begin{multline}\label{EQ:It(Ae^S)}
\Ch^{(\Si_\bullet,\U)}(A^{\Si_\bullet}_e)(\sigma) \\
=\int_{\Delta^1} \left( M^\Sigma\times \Delta^1\stackrel {ev} \longrightarrow M^{\Sigma_1}\right)^*
 (1\otimes\dots\otimes (\rho(e,f'_e)\cdot A_{{i_e,i_{f'_e}}}+\rho(e,f''_e)\cdot A_{{i_{e}},{i_{f''_e}}})\otimes \dots \otimes 1)(\sigma)\\
= \rho(e,f'_e)\int_e \sigma^*({A_{i_e,i_{f'_e}}})+\rho(e,f''_e) \int_e \sigma^*({A_{i_e,i_{f''_e}}}),
\end{multline}
\begin{multline}\label{EQ:It(gv^S)}
\Ch^{(\Si_\bullet,\U)}(g^{\Si_\bullet}_v)(\sigma) \\
=\int_{\Delta^0} \left( M^\Sigma\times \Delta^0\stackrel {ev} \longrightarrow M^{\Sigma_0}\right)^*
(1\otimes\dots\otimes (\otimes_{(v\subset e\subset f)} g^{\rho(v,e,f)}_{i_{v},i_{e},i_{f}})\otimes \dots \otimes 1)(\sigma)\\
= \prod_{v\subset e\subset f} g^{\rho(v,e,f)}_{i_{v},i_{e},i_{f}}(\sigma(v)).
\end{multline}
This completes the proof of the proposition.
\end{proof}
\begin{cor}\label{LEM:Sigma-hol-glues}
The $hol^{(\Si_\bullet,\U)}$ glue together to a global $0$-form $hol\in \Omega^0(M^\Sigma,U(1))$ as
\[
hol|_{\NN(\Si_\bullet,\U)}=hol^{(\Si_\bullet,\U)}.
\]
\end{cor}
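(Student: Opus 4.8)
The plan is to show that for any two choices of simplicial decompositions and local data, the locally defined functions $hol^{(\Si_\bullet,\U)}$ agree on the overlap of their domains, and then invoke the standard sheaf-theoretic gluing principle for $0$-forms on the Fr\'echet manifold $M^\Si$. The crucial observation is that Proposition \ref{hol=exp^int} has already identified $hol^{(\Si_\bullet,\U)}(\sigma)$ with the intrinsic expression
\[
\exp\left(\sum_f i\cdot \int_f \sigma^*(B_{i_f})- \sum_{e\subset f}i\cdot\rho(e,f)\cdot \int_e \sigma^*(A_{i_e,i_f})\right) \cdot \prod_{v\subset e\subset f}  g^{\rho(v,e,f)}_{i_v,i_e,i_f}(\sigma(v)),
\]
which a priori still depends on the chosen decomposition $\Si_\bullet$ and the choice of open sets $\U$ (i.e.\ the indices $i_f, i_e, i_v$). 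So the whole content of the corollary is the \emph{first} assertion of Proposition \ref{PROP:hol-for-torus} in the surface setting: that this expression is in fact independent of these choices.

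First I would reduce the independence statement to two elementary moves: (1) a \emph{refinement} move, passing from a decomposition $\Si_\bullet$ to a subdivided one $\Si'_\bullet$ while keeping the same open sets on the new cells, and (2) an \emph{overlap} move, keeping the decomposition fixed but changing the open set assigned to each cell (replacing each $U_{i_s}$ by $U_{i_s}\cap U_{j_s}$ and then comparing the $i$- and $j$-assignments). Any two configurations $(\Si_\bullet,\U)$ and $(\Si'_\bullet,\U')$ with a common point $\sigma$ in their domains can be connected by a common refinement together with such relabelings, exactly as in the proof of Proposition \ref{prop:hol_0-locally}. For move (1), subdividing a face $f$ into smaller faces and using the additivity of $\int_{(-)}\sigma^*(B)$ over a subdivided face, the additivity of $\int_{(-)}\sigma^*(A)$ over a subdivided edge, and the cocycle relation $g_{j,k,l}g^{-1}_{i,k,l}g_{i,j,l}g^{-1}_{i,j,k}=1$ from Definition \ref{DEF:gerbe} to absorb the new vertex contributions; since all the newly introduced cells carry the \emph{same} open set as their parent, the new $A$-terms along interior edges vanish ($A_{i,i}=0$) and the new $g$-factors telescope to $1$. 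For move (2), on each face $f$ the difference $B_{j_f}-B_{i_f}=dA_{i_f,j_f}$ contributes (via Stokes on $f$) boundary integrals of $A_{i_f,j_f}$ that combine along each edge with the change in the edge term $A_{i_e,i_f}\to A_{j_e,j_f}$ using $A_{j,k}-A_{i,k}+A_{i,j}=i\, d\log g_{i,j,k}$, and the leftover $d\log g$ contributions at each vertex cancel against the change $g_{i_v,i_e,i_f}\to g_{j_v,j_e,j_f}$ using the same cocycle identity. These are precisely the computations already performed inside the proof of equation \eqref{EQ-d(hol(p,q,U))=i*H*hol(p,q,U)} in Theorem \ref{THM:d(hol)=i(hol)=i(hol)}, transported to an arbitrary regular simplicial surface, so I would cite that bookkeeping rather than redo it, and also point to the well-known independence results in \cite[section 2.2]{GR} and \cite[section 2.2]{RS}.

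Once independence of $(\Si_\bullet,\U)$ is established, the gluing is formal: given $\sigma\in M^\Si$, choose (by compactness of $\Si$ and the Lebesgue lemma) a fine regular decomposition $\Si_\bullet$ and local data $\U$ with $\sigma\in\NN(\Si_\bullet,\U)$; the value $hol(\sigma):=hol^{(\Si_\bullet,\U)}(\sigma)$ is well-defined by independence, and it varies smoothly in $\sigma$ because on each fixed $\NN(\Si_\bullet,\U)$ it equals the smooth function $It^{(\Si_\bullet,\U)}(\h^{(\Si_\bullet,\U)})$. This yields a global $hol\in\Omega^0(M^\Si,U(1))$ with $hol|_{\NN(\Si_\bullet,\U)}=hol^{(\Si_\bullet,\U)}$, as claimed. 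I expect the main obstacle to be purely organizational: carefully tracking the orientation signs $\rho(v,e,f)$ through the subdivision and relabeling moves so that the Stokes boundary terms, the $d\log g$ terms, and the vertex cocycle relations cancel with exactly the right signs — there is no conceptual difficulty, but the sign ledger on a general regular simplicial surface is the one place where an error could hide.
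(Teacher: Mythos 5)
Your proposal is correct and follows essentially the same route as the paper: the paper's own proof consists precisely of observing that, by Proposition \ref{hol=exp^int}, $hol^{(\Si_\bullet,\U)}$ coincides with the classical surface-holonomy formula, whose independence of the decomposition and of the choice of open sets is well known (citing \cite[section 2.2]{GR} and \cite[section 2.2]{RS}). Your additional sketch of the subdivision and overlap moves is a reasonable expansion of that citation, but it is not a different method.
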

\begin{proof}
It is a well-known fact that the formula in proposition \ref{hol=exp^int} give a well-defined function, \emph{i.e.} transform correctly under change of the choices of open sets, see \emph{e.g.} \cite[section 2.2]{GR} or \cite[section 2.2]{RS}.
\end{proof}
We can also calculate the differential of the holonomy in general. To this end, denote by $\sum_f H_f^{\Sigma_\bullet}\in CH_1^{(\Sigma_\bullet,\U)}$ the sum of placing the $3$-curvature $H$ at all non-degenerate faces subject to the local data $(\Sigma_\bullet,\U)$, 
\[
 H^{\Sigma_\bullet}_f\in \bigotimes_{s\in \Si_2} \OmA(s)\subset CH^{({\Sigma_\bullet},\U)}_\bullet,  H^{\Sigma_\bullet}_f:=1\otimes\dots 1\otimes H \otimes 1\otimes \dots\otimes 1, 
 \]
similar to the definition of $B_f^{\Si_{\bullet}}$ in definition \ref{DEF:BAg^Sigma}. The local $1$-forms $\Ch^{(\Si_\bullet,\U)}\big(\sum_f H_f^{\Sigma_\bullet}\big)\in \Om^1(\NN(\Si_\bullet,\U))$ coincide on the pullback of their intersections, and we denote by $\Ch(H)\in \Omega^1(M^\Si)$ the global $1$-form obtained from this.
\begin{prop} \label{D(hol)}
If we write $hol\in\Om^0(M^\Si,\C)$ and $\Ch(H)\in \Omega^1(M^\Si,\C)$ by abuse of notation, then we have the identity,
\[
d_{DR} \left(hol\right)=i\cdot It(H)\cdot hol\quad\quad\in\Om^1(M^\Si,\C).
\]
\end{prop}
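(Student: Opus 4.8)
The plan is to compute $d_{DR}(hol)$ locally on each open set $\NN(\Si_\bullet,\U)$ using the explicit formula for $hol^{(\Si_\bullet,\U)}$ from Proposition~\ref{hol=exp^int}, and then show all the boundary contributions cancel, leaving exactly $i\cdot \Ch^{(\Si_\bullet,\U)}\big(\sum_f H^{\Si_\bullet}_f\big)\cdot hol^{(\Si_\bullet,\U)}$. Since this is a local computation and $hol$, $\Ch(H)$ are globally defined, the identity then holds on all of $M^\Si$. The argument is essentially the one already carried out for the torus in the proof of equation \eqref{EQ-d(hol(p,q,U))=i*H*hol(p,q,U)} inside Theorem~\ref{THM:d(hol)=i(hol)=i(hol)}, with the torus grid replaced by an arbitrary regular simplicial surface.

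First I would write $hol^{(\Si_\bullet,\U)}(\sigma) = \exp\big(\sum_f i\int_f \sigma^*(B_{i_f}) - \sum_{e\subset f} i\rho(e,f)\int_e \sigma^*(A_{i_e,i_f})\big)\cdot\prod_{v\subset e\subset f} g^{\rho(v,e,f)}_{i_v,i_e,i_f}(\sigma(v))$ and, since $d_{DR}$ is a derivation and $\exp(x)$ differentiates to $d_{DR}(x)\cdot\exp(x)$, reduce to computing $d_{DR}$ of the exponent plus the logarithmic derivative of the transition-function product. Using the integration-along-the-fiber formula from Convention~\ref{CONVENT1}(3), for each face $f$ we get $d_{DR}\int_f \sigma^*(B_{i_f}) = \int_f \sigma^*(dB_{i_f}) - \int_{\partial f}\sigma^*(B_{i_f}) = \int_f\sigma^*(H_{i_f}) - \int_{\partial f}\sigma^*(B_{i_f})$, where the sign is $(-1)^{\dim\Delta^2}=+1$; for each edge $e$ we get $d_{DR}\int_e \sigma^*(A_{i_e,i_f}) = -\int_e\sigma^*(dA_{i_e,i_f}) + (\text{endpoint terms}) = -\int_e\sigma^*(B_{i_f}-B_{i_e}) + (\text{endpoint terms})$, using $B_j-B_i = dA_{i,j}$. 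The key cancellations are then: (i) the boundary-of-face terms $-\int_{\partial f}\sigma^*(B_{i_f})$ cancel against the $-\int_e\sigma^*(B_{i_f})$ terms coming from the two faces adjacent to each edge, thanks to the regularity of $\Si_\bullet$ (each edge lies in exactly two faces) and the orientation signs $\rho(e,f)$ matching the induced-boundary orientation convention; (ii) the remaining $-\int_e\sigma^*(B_{i_e})$ terms cancel because $A_{i_e,i_e}=0$ is not what appears — rather these are the genuine edge-label $B$'s which I would need to track as contributing zero or pairing off; and (iii) the endpoint terms $A_{i_e,i_f}(\sigma(v))$ at each vertex cancel against the logarithmic derivative of $g_v$, using the gerbe cocycle relation $A_{j,k}-A_{i,k}+A_{i,j} = i\cdot g^{-1}_{i,j,k}dg_{i,j,k}$ and carefully bookkeeping the signs $\rho(v,e,f)=\rho(v,e)\rho(e,f)$. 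After all cancellations, what survives is precisely $\sum_f i\int_f\sigma^*(H_{i_f}) = i\cdot \Ch^{(\Si_\bullet,\U)}\big(\sum_f H^{\Si_\bullet}_f\big)(\sigma)$, which by definition equals $i\cdot \Ch(H)|_{\NN(\Si_\bullet,\U)}(\sigma)$.

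The main obstacle will be the sign bookkeeping in step (iii): verifying that for a fixed vertex $v$, summing the endpoint contributions $\pm A_{i_e,i_f}(\sigma(v))$ over all flags $v\subset e\subset f$, with the orientation signs $\rho(v,e)$ and $\rho(e,f)$, exactly matches $i\cdot d_{DR}\log\prod_{v\subset e\subset f} g^{\rho(v,e,f)}_{i_v,i_e,i_f}$. In the torus case (inside Theorem~\ref{THM:d(hol)=i(hol)=i(hol)}) this was done by an explicit picture of the four flags around each vertex; here one must argue more abstractly using the regularity hypothesis from Definition~\ref{DEF:regular}, namely that around each vertex-in-a-face pair $v\subset f$ there are exactly two edges $e'_{v,f}, e''_{v,f}$ in between, so the local combinatorics near $v$ decomposes into ``corners'' on which the cocycle relation can be applied one face at a time. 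I would organize this by grouping the flags through $v$ according to their face $f$, applying the gerbe identity $A_{j,k}-A_{i,k}+A_{i,j}=i\,d\log g_{i,j,k}$ once per corner, and checking that the boundary identifications in the definition of $\Oma(v)$ (the relation $\sim$ in Definition~\ref{DEF:CurlyA}) make the per-corner contributions consistent. This is the same structure as the proof of equation \eqref{EQ-d(hol(p,q,U))=i*H*hol(p,q,U)}, so I would explicitly note that ``the proof is analogous, with the torus grid replaced by the regular simplicial surface $\Si_\bullet$,'' and only spell out the new input, which is the use of regularity in place of the concrete $p\times q$ grid.
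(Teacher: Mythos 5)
Your proposal matches the paper's proof essentially step for step: the paper also computes $d_{DR}\big(hol^{(\Si_\bullet,\U)}\big)$ locally via the derivation property, the integration-along-the-fiber formula, and the gerbe relations, and organizes the cancellations exactly as in your (i)--(iii), including the use of regularity (two faces per edge, two edges per flag $v\subset f$) for the vertex bookkeeping. The one point you leave tentative, step (ii), is resolved the way you guess: the two terms $\rho(e,f)\int_e\sigma^*(B_{i_e})$ for the two faces $f'_e, f''_e$ adjacent to $e$ pair off because $\rho(e,f'_e)=-\rho(e,f''_e)$.
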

\begin{proof}
We check the formula for an open set $\NN(\Si_{\bullet},\U)\subset M^\Si$ of some local data $\U$, and use the calculation from equations \eqref{EQ:It(Bf^S)}-\eqref{EQ:It(gv^S)} in the proof of proposition \ref{hol=exp^int}. Since $d_{DR}$ is a derivation, we get that $d_{DR}( hol^{(\Si_\bullet,\U)})$ is equal to
\begin{eqnarray*}
&=& d_{DR}\left(It^{(\Si_\bullet,\U)}\left(\exp\left(\sum_f i B_f^{\Si_\bullet}-\sum_e i A_e^{\Si_\bullet}\right)\bullet \prod_v g_v^{\Si_\bullet}\right)\right)
\\
&=& d_{DR}\left[
\exp \left(\sum_{f} i\cdot\int_{\Delta^2} ev^*(1\otimes \dots B_{i_f}\dots \otimes 1) \right.\right.
\\
&& \quad\quad 
\left.\left. - \sum_{e\subset f} i\cdot \rho(e,f) \cdot \int_{\Delta^1} ev^*(1\otimes \dots A_{i_e,i_f}\dots\otimes 1)\right)\cdot \left(\prod_{v\subset e\subset f} g_{i_v,i_e,i_f}^{\rho(v,e,f)} \right)\right]
\\
&=&\exp\left(\sum_f i\cdot\int_{\Delta^2} ev^*(\dots B_{i_f}\dots)- \sum_{e\subset f}i\cdot\rho(e,f)\cdot \int_{\Delta^1} ev^*(\dots A_{i_e,i_f} \dots)\right) 
\\
&&
\cdot \prod_{v\subset e\subset f}  g^{\rho(v,e,f)}_{i_v,i_e,i_f}
\cdot\left[ d_{DR}\left(\sum_f i\cdot\int_{\Delta^2} ev^*(\dots B_{i_f}\dots)\right.\right.
\\
&&
\left.\left. - \sum_{e\subset f}i\cdot\rho(e,f)\cdot \int_{\Delta^1} ev^*(\dots A_{i_e,i_f} \dots)\right)+ \sum_{v\subset e\subset f}  g^{-\rho(v,e,f)}_{i_v,i_e,i_f}\cdot d_{DR}(g^{\rho(v,e,f)}_{i_v,i_e,i_f}) \right].
 \end{eqnarray*}
In the last expression, the terms in front of the square bracket is precisely $hol^{(\Si_\bullet,\U)}$, so that we are left to show that the term in the last square bracket is equal to $i\cdot \Ch^{(\Si_\bullet,\U)}\big(\sum_f H_f^{\Sigma_\bullet}\big)$.

Now, the integration along a fiber formula, and the relations of the connection of a gerbe (definition \ref{DEF:gerbe}), give the following results,
\begin{multline*}
d_{DR} \int_{\Delta^2} ev^*(\dots B_{i_f}\dots ) \\ = (-1)^2 \int_{\Delta^2} d_{DR} (ev^*(\dots B_{i_f}\dots )) - (-1)^2\int_{\partial \Delta^2} ev^*(\dots B_{i_f}\dots )\\ 
= \int_{\Delta^2} ev^*(\dots H\dots ) - \int_{\partial \Delta^2} ev^*(\dots B_{i_f}\dots ),
\end{multline*}
together with,
\begin{multline*}
d_{DR} \int_{\Delta^1} ev^*(\dots A_{i_e,i_f}\dots )\\ = (-1)^1 \int_{\Delta^1} d_{DR} (ev^*(\dots A_{i_e,i_f})\dots ) - (-1)^1\int_{\partial \Delta^1} ev^*(\dots A_{i_e,i_f}\dots )\\ 
= -\int_{\Delta^1} ev^*(\dots (B_{i_f}-B_{i_e})\dots ) +  (A_{i_e,i_f}|_{(\text{endpt. $v$ of }e)}-A_{i_e,i_f}|_{(\text{beginningpt. $v$ of }e)}),
\end{multline*}
and,
\[
g^{-\rho(v,e,f)}_{i_v,i_e,i_f}\cdot d_{DR}\big(g^{\rho(v,e,f)}_{i_v,i_e,i_f}\big)=-i\cdot \rho(v,e,f)\cdot(A_{i_v,i_e}+A_{i_e,i_f}+A_{i_f,i_v}).
\]
Now, several terms from the square bracket cancel as follows. For each boundary component $e$ of $f$, \emph{i.e.} $e=d_i(f)$, we have
\[
- \int_{\partial \Delta^2} ev^*(\dots B_{i_f}\dots )- (-\rho(e,f))\int_{\Delta^1} ev^*(\dots B_{i_f}\dots )=\pm \int_{(other)}ev^*(\dots B_{i_f}\dots ),
\]
whereas the term $- (-\rho(e,f))\int_{\Delta^1} ev^*(\dots (-B_{i_e})\dots )$ appears twice for the two different faces $f=f'_e, f''_e$ with opposite orientations $\rho(e,f'_e)=-\rho(e,f''_e)$, so that these terms also cancel. Next, at the beginning point $v'$ and endpoint $v''$ of $e$, we have $\rho(v',e,f)=\rho(e,f)$ and $\rho(v'',e,f)=-\rho(e,f)$, so that,
\begin{eqnarray*}
\text{at }v': & -(-\rho(e,f)) A_{i_e,i_f}-\rho(v',e,f) A_{i_e,i_f}=0,\\
\text{at }v'': & +(-\rho(e,f)) A_{i_e,i_f}-\rho(v'',e,f) A_{i_e,i_f}=0.
\end{eqnarray*}
Furthermore, $\rho(v,e,f)\cdot A_{i_e, i_f}$ appears twice for beginning and endpoints $v'$ and $v''$ with $\rho(v',e,f)=-\rho(v'',e,f)$, so that these terms cancel. Similarly $\rho(v,e,f)\cdot A_{i_f, i_v}$ appears for the two edges $e'_{v,f}$ and $e''_{v,f}$ (see definition \ref{DEF:regular}), and the induced signs also necessarily differ $\rho(v,e'_{v,f},f)=-\rho(v,e''_{v,f},f)$ making these terms vanish. 
The only terms that are left over in the last square bracket are
\[ 
\sum_{f} i\cdot \int_{\Delta^2} ev^*(\dots H\dots )=i\cdot \Ch^{(\Si_\bullet,\U)}\Big(\sum_f H_f^{\Si_{\bullet}}\Big)=i\cdot It(H)|_{\NN(\Si_\bullet,\U)}.
\]
This is what we needed to show.
\end{proof}
One immediate corollary of proposition \ref{D(hol)} is the analog of corollary \ref{COR:nabla-hol0}(1); compare this also with \cite[p. 239, equation (6-9)]{Br}.
\begin{cor} \label{cor:dgerbehol}
If the 3-curvature vanishes, $H=0$, then also $d_{DR} (hol)=0$.
\end{cor}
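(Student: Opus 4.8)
The statement is an immediate consequence of Proposition \ref{D(hol)}, so the plan is simply to apply that result and observe that the right-hand side vanishes under the hypothesis. First I would recall that Proposition \ref{D(hol)} gives the global identity
\[
d_{DR}(hol) = i\cdot It(H)\cdot hol \in \Omega^1(M^\Sigma,\C),
\]
where $It(H) = \Ch(H)\in\Omega^1(M^\Sigma)$ is the global $1$-form obtained by gluing the local forms $\Ch^{(\Si_\bullet,\U)}\bigl(\sum_f H_f^{\Si_\bullet}\bigr)$ over the open cover $\{\NN(\Si_\bullet,\U)\}$ of $M^\Sigma$. The second step is to use the hypothesis: since $H=dB_i$ on $U_i$ and the $3$-curvature vanishes identically, $H=0$ as a global $3$-form on $M$. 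Consequently every local Hochschild element $H^{\Si_\bullet}_f = 1\otimes\dots\otimes H\otimes\dots\otimes 1$ is zero (each face contributes the zero form), so $\sum_f H_f^{\Si_\bullet}=0$ in $CH_1^{(\Si_\bullet,\U)}$, and by linearity of the iterated integral map $It^{(\Si_\bullet,\U)}$ we get $It(H)|_{\NN(\Si_\bullet,\U)} = It^{(\Si_\bullet,\U)}\bigl(\sum_f H_f^{\Si_\bullet}\bigr) = 0$ on each chart. Since these agree on overlaps and cover $M^\Sigma$, the global form $It(H)$ is zero.

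Putting the two steps together, $d_{DR}(hol) = i\cdot It(H)\cdot hol = i\cdot 0\cdot hol = 0$, which is exactly the assertion of Corollary \ref{cor:dgerbehol}. I would also remark, as the paper does, that this is the surface analog of Corollary \ref{COR:nabla-hol0}(1): there a flat connection ($R=0$) forces $hol$ to be a flat section on $LM$, and here a flat gerbe ($H=0$) forces the $2$-holonomy function to be locally constant on $M^\Sigma$.

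There is essentially no obstacle here: the entire content has already been established in Proposition \ref{D(hol)}, whose proof carries out the genuine work (the fiber integration formula together with the three defining relations of a gerbe connection, with all the cancellations of $B$- and $A$-terms along edges and vertices). The only thing to be mildly careful about is the logical direction — one wants $H=0$ as a form on $M$, not merely $dB_i=dB_j$ on overlaps — but this is precisely the definition of the $3$-curvature $H$ recalled after Definition \ref{DEF:gerbe}, so no extra argument is needed.
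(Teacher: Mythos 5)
Your proof is correct and matches the paper's reasoning exactly: the corollary is stated as an immediate consequence of Proposition \ref{D(hol)}, and substituting $H=0$ into $d_{DR}(hol)=i\cdot It(H)\cdot hol$ gives the result. Your added care about $It(H)$ vanishing chart-by-chart and the analogy with Corollary \ref{COR:nabla-hol0}(1) are consistent with the paper's surrounding discussion.
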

The above proposition \ref{D(hol)} shows that the holonomy is not, in general, a closed form. In the case of the torus, this is rectified by the higher holonomy terms, leading to an equivariantly closed form on the torus mapping space, as in the previous section \ref{SEC:Chern-gerbes}.

\begin{rmk}
In the previous section, we have chosen to give the proofs of the properties of the holonomy function and its higher analogs on the level of forms on the mapping space; see in particular propositions \ref{PROP:hol-for-torus}, \ref{PROP:hol_(2k,2l)-glue}, \ref{hol=exp^int}, \ref{D(hol)}, and theorem \ref{THM:d(hol)=i(hol)=i(hol)}. However, we could have equally well have given these proof on the Hochschild side, before applying the iterated integral map. We hope that these type of considerations may be useful in a future discussion concerning non-abelian gerbes.
\end{rmk}

\end{document}